\newtheorem*{thma}{Theorem~A}
\newtheorem*{thmb}{Theorem~B}
\newtheorem*{thmc}{Theorem~C}
\newtheorem*{thmd}{Theorem~D}
\newtheorem{theorem}{Theorem}[section]
\newtheorem{lemma}[theorem]{Lemma}
\newtheorem{proposition}[theorem]{Proposition}
\newtheorem{corollary}[theorem]{Corollary}
\newtheorem{claim}[theorem]{Claim}
\newtheorem{subclaim}[theorem]{Subclaim}
\newtheorem{fact}[theorem]{Fact}
\newtheorem{question}[theorem]{Question}
\newtheorem{conjecture}[theorem]{Conjecture}
\theoremstyle{definition}
\newtheorem{definition}[theorem]{Definition}
\newtheorem{remark}[theorem]{Remark}
\newcommand{\cf}{\mathrm{cf}}
\newcommand{\dom}{\mathrm{dom}}
\newcommand{\rng}{\mathrm{rng}}
\newcommand{\bb}{\mathbb}
\newcommand{\otp}{\mathrm{otp}}
\newcommand{\pred}{\mathrm{pred}}
\newcommand{\height}{\mathrm{ht}}
\newcommand{\mc}{\mathcal}
\newcommand{\power}{\ensuremath{\mathscr{P}}}
\newcommand{\sub}{\subseteq}
\newcommand{\ra}{\rightarrow}
\newcommand{\Add}{\mathrm{Add}}
\newcommand{\Coll}{\mathrm{Coll}}
\newcommand{\R}{\bb{R}}
\newcommand{\Q}{\bb{Q}}
\renewcommand{\P}{\bb{P}}
\newcommand{\K}{\bb{K}}
\newcommand{\ZFC}{\sf ZFC}
\newcommand{\CH}{\sf CH}
\newcommand{\GCH}{\sf GCH}
\newcommand{\wKH}{\sf wKH}
\newcommand{\PSP}{\mathsf{PSP}}
\newcommand{\KH}{\mathsf{KH}}
\newcommand{\set}[2]{\ensuremath{\{#1 \,|\, #2 \}}}
\newcommand{\seq}[2]{\ensuremath{\langle #1 \,|\, #2 \rangle}}
\newcommand{\rest}[0]{\restriction}
\title{Kurepa trees, continuous images, and perfect set properties}
\author{Chris Lambie-Hanson}
\address[Lambie-Hanson]{
Institute of Mathematics, 
Czech Academy of Sciences, 
{\v Z}itn{\'a} 25, Prague 1, 
115 67, Czech Republic
}
\email{lambiehanson@math.cas.cz}
\urladdr{https://users.math.cas.cz/~lambiehanson/}
\author{{\v S}{\'a}rka Stejskalov{\'a}}
\address[Stejskalov{\'a}]{
Charles University, Department of Logic,
Celetn{\' a} 20, Prague~1, 
116 42, Czech Republic
}
\email{sarka.stejskalova@ff.cuni.cz}
\urladdr{logika.ff.cuni.cz/sarka}
\address{Institute of Mathematics, Czech Academy of Sciences, {\v Z}itn{\'a} 25, Prague 1, 115 67, Czech Republic}
\thanks{Both authors were supported by the Czech Academy of Sciences 
(RVO 67985840) and the GA\v{C}R project 23-04683S}
\subjclass[2020]{03E05, 03E35, 03E47}
\keywords{Kurepa trees, higher Baire space, perfect set properties, continuous images,
full trees}
\begin{document}
\begin{abstract}
  Building upon work of L\"{u}cke and Schlicht, we study (higher) Kurepa trees 
  through the lens of higher descriptive set theory, focusing in particular on 
  various perfect set properties and representations of sets of branches through 
  trees as continuous images of function spaces. Answering a question of L\"{u}cke 
  and Schlicht, we prove that it is consistent with $\CH$ that there exist 
  $\omega_2$-Kurepa trees and yet, for every $\omega_2$-Kurepa tree 
  $T \subseteq {^{<\omega_2}}\omega_2$, the set $[T] \subseteq {^{\omega_2}}\omega_2$ 
  of cofinal branches through $T$ is not a continuous image of ${^{\omega_2}}\omega_2$. 
  We also produce models indicating that the existence of Kurepa trees is not 
  necessary to produce closed subsets of ${^{\omega_1}}\omega_1$ failing to satisfy 
  strong perfect set properties, and prove a number of consistency results regarding 
  \emph{full} and \emph{superthin} trees.
\end{abstract}

\maketitle

\section{Introduction}

Two fundamental features of closed subsets of the classical Baire space ${^\omega}\omega$ 
are the following:
\begin{enumerate}
  \item every nonempty closed subset of ${^{\omega}}\omega$ is a continuous image (and, in fact, 
  a retract) of ${^\omega}\omega$;
  \item every closed subset of ${^{\omega}}\omega$ has the perfect set property.
\end{enumerate}
When passing to the higher Baire spaces of the form ${^{\kappa}}\kappa$ for regular uncountable 
cardinals $\kappa$, both of these properties at least consistently fail. Regarding property (1), 
for an arbitrary regular uncountable cardinal $\kappa$, it is proven in 
\cite[Proposition 1.4]{lucke_schlicht_continuous} that there exists a nonempty closed subset of 
${^{\kappa}}\kappa$ that is not a retract of ${^{\kappa}}\kappa$. If we assume moreover 
that $\kappa^{<\kappa} = \kappa$, which is a standard assumption in the study of higher 
Baire spaces, then it is proven in \cite[Theorem 1.5]{lucke_schlicht_continuous} that there 
exists a nonempty closed subset of ${^{\kappa}}\kappa$ that is not a continuous image of 
${^{\kappa}}\kappa$.

When discussing higher analogues of the perfect set property, we need to fix an appropriate notion 
of perfectness. Unlike in the classical Baire space, where there is a single, unambiguous notion 
of ``perfectness", in the higher Baire space ${^{\kappa}}\kappa$ there exist various degrees 
of perfectness, leading to a corresponding spectrum of perfect set properties. These will be discussed 
in more detail below in Section \ref{section: vaananen}, but for now let us consider the strongest and 
arguably most natural version. We say that a \emph{$\kappa$-perfect} subset of ${^\kappa}\kappa$ is 
the set of all cofinal branches through some cofinally-splitting $({<}\kappa)$-closed subtree 
of ${^{<\kappa}}\kappa$ (see Section \ref{section: prelim} for a precise definition). A set 
$X \subseteq {^{\kappa}}\kappa$ has the \emph{$\kappa$-perfect set property}, denoted 
$\mathsf{PSP}_\kappa(X)$, if either $|X| \leq \kappa$ or $X$ contains a $\kappa$-perfect subset.

As noted in \cite[\S 7]{lucke_definability}, if $\kappa$ is a regular uncountable cardinal 
and every closed subset of ${^{\kappa}}\kappa$ has the perfect set property, then 
$\kappa^+$ is inaccessible in $L$. On the other hand, by an argument of Schlicht 
(cf.\ \cite[\S 9]{lucke_definability}), if $\kappa$ is a regular uncountable cardinal, 
$\nu > \kappa$ is inaccessible, and $G$ is generic for the L\'{e}vy collapse 
$\mathrm{Coll}(\kappa, {<}\nu)$, then, in $V[G]$ $\mathsf{PSP}_\kappa(X)$ holds for every 
closed $X \subseteq {^{\kappa}}\kappa$ (in fact, for every $\Sigma^1_1$ $X \subseteq 
{^{\kappa}}\kappa$. Thus, the $\kappa$-perfect set property for closed subsets of 
${^{\kappa}}\kappa$ for some regular uncountable $\kappa$ is equiconsistent with the 
existence of an inaccessible cardinal.

A starring role in the story of the preceding two paragraphs is played by \emph{Kurepa trees}. 
Indeed, suppose that $\kappa$ is a regular uncountable cardinal that is not strong limit, 
e.g., a successor cardinal. If $T \subseteq {^{<\kappa}}\kappa$ is a $\kappa$-Kurepa tree\footnote{See 
Section \ref{section: prelim} for a precise definition.} then the set $[T]$ of all cofinal 
branches through $T$ is a closed subset of ${^\kappa}\kappa$ that fails to satisfy the 
$\kappa$-perfect set property (see \cite{lucke_definability} for a proof of this; the idea 
of using Kurepa trees as a counterexample for the $\kappa$-perfect set property goes back 
at least to \cite{vaananen} and \cite{mekler_vaananen}). 
The fact that $\mathsf{PSP}_\kappa(X)$ holding for all 
closed $X \subseteq {^{\kappa}}\kappa$ implies that $\kappa^+$ is inaccessible in $L$ 
then follows from an argument of Solovay (cf.\ \cite[\S 4]{jech_trees}).

It is thus natural to also consider Kurepa trees in the context of item (1) above, leading 
to the following general question: for a fixed regular uncountable cardinal $\kappa$, 
given a $\kappa$-Kurepa tree $T$, under what circumstances is 
$[T]$ a continuous image of, or even a retract of, the space ${^{\kappa}}\kappa$. This and 
related questions were considered by L\"{u}cke and Schlicht in \cite{lucke_schlicht_descriptive}, 
where they obtained a number of interesting results, including the following:
\begin{itemize}
  \item If there is $\mu < \kappa$ such that $\mu^\omega \geq \kappa$, and $T \subseteq 
  {^{<\kappa}}\kappa$ is a $\kappa$-Kurepa tree with $|[T]| > \kappa^{<\kappa}$, then 
  $[T]$ is not a continuous image of ${^\kappa}\kappa$. In particular, assuming instances of 
  $\mathsf{GCH}$, the question has an easy negative answer if $\kappa$ is a successor of a cardinal 
  of countable cofinality, e.g., $\omega_1$.
  \item If $V = L$, then, for every regular uncountable cardinal $\kappa$ that is not the successor of 
  a cardinal of countable cofinality, there exists a $\kappa$-Kurepa tree $T \subseteq 
  {^{<\kappa}}\kappa$ such that $[T]$ is a retract of ${^\kappa}\kappa$.
  \item If $\mu < \kappa$ are regular uncountable cardinals, with $\kappa$ inaccessible, then 
  there is a forcing extension in which (1) $\kappa = \mu^+$, (2) there exists a $\kappa$-Kurepa 
  tree $T \subseteq {^{<\kappa}}\kappa$ such that $[T]$ is a retract of ${^\kappa}\kappa$; and 
  (3) there exists a $\kappa$-Kurepa tree $S \subseteq T$ such that $[S]$ is not a continuous 
  image of ${^\kappa}\kappa$.
  \item If there is a $\kappa$-Kurepa tree, then there is a $\kappa$-Kurepa tree 
  $T \subseteq {^{<\kappa}}\kappa$ such that $[T]$ is not a retract of ${^\kappa}\kappa$.
\end{itemize}

The work of L\"{u}cke and Schlicht left open a number of avenues for further research. 
One question, explicitly asked as \cite[Question 6.2]{lucke_schlicht_descriptive}, is whether 
$\CH$ together with the existence of an $\omega_2$-Kurepa tree implies the existence of 
an $\aleph_2$-Kurepa tree $S \subseteq {^{<\omega_2}}\omega_2$ such that $[S]$ is a continuous 
image of ${^{\omega_2}}\omega_2$. We provide a negative answer to this question (see Theorem 
B below).

We also undertake further explorations of topics connecting Kurepa trees, 
(variations on) the perfect set property, and continuous images of higher Baire spaces. 
Here we make use of a hierarchy of generalizations of perfectness introduced by 
V\"{a}\"{a}n\"{a}nen in \cite{vaananen} in terms of the existence of winning strategies for 
certain two-player games. In particular, given a regular cardinal $\kappa$, and an ordinal 
$\delta$ with $\omega \leq \delta \leq \kappa$, V\"{a}\"{a}n\"{a}nen defines a notion of 
\emph{$\delta$-perfectness} for subsets of ${^{\kappa}}\kappa$. These notions strengthen as 
$\delta$ increases, with $\omega$-perfectness being classical perfectness (i.e., closed and 
having no isolated points), and $\kappa$-perfectness being as above. 

Recall the classical Cantor-Bendixson theorem: If $E \subseteq {^\omega}\omega$ is closed, 
then there is a countable set $X \subseteq E$ such that $E \setminus X$ is perfect. This 
generalizes straightforwardly to higher Baire spaces: if $E \subseteq {^\kappa}\kappa$ is 
closed, then there is $X \subseteq E$ such that $|X| \leq \kappa^{<\kappa}$ and 
$E \setminus X$ is perfect. We prove in Section \ref{section: vaananen} that, if 
$E$ is a continuous image of ${^\kappa}\kappa$, then we can obtain a stronger 
conclusion.\footnote{For more on stronger versions of the Cantor-Bendixson theorem 
for higher Baire spaces, see \cite{vaananen}.}

In the other direction, recall that $\kappa$-Kurepa trees yield examples of closed 
subsets of ${^\kappa}\kappa$ of cardinality greater than $\kappa$ that do not contain 
$\kappa$-perfect subsets; in fact, if $\kappa = \mu^+$ and $T \subseteq {^{<\kappa}}\kappa$ 
is a $\kappa$-Kurepa tree, then $[T]$ does not contain a $(\mu+1)$-perfect subset.
In Section \ref{section: weak_kurepa}, we prove results showing 
that Kurepa trees are not necessary for the existence of such sets by producing models in 
which $\GCH$ holds, the Kurepa Hypothesis fails (i.e., there are no $\omega_1$-Kurepa trees), but there is a 
closed subset of ${^{\omega_1}}\omega_1$ of cardinality $\omega_2$ with no 
$(\omega+1)$-perfect subset. In fact, we prove the following stronger theorem:\footnote{Recall 
that a \emph{weak Kurepa tree} is a tree of height and size $\omega_1$ with at least 
$\omega_2$-many cofinal branches.}

\begin{thma}
  Suppose that there is an inaccessible cardinal. Then there is a forcing extension in which
  \begin{enumerate}
    \item $\GCH$ holds;
    \item the Kurepa Hypothesis fails;
    \item there is a weak Kurepa tree that does not contain a copy of ${^{<\omega+1}}2$.
  \end{enumerate}
\end{thma}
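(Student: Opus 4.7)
The plan is to produce the desired model via a two-step forcing $\bb{P} * \dot{\bb{L}}$ over a ground model $V \models \GCH$ containing an inaccessible cardinal $\kappa$. The forcing $\bb{P}$, which I describe below, is designed to generically add an $\omega_1$-tree $\dot T \subseteq {^{<\omega_1}}2$ together with $\omega_2$-many distinguished cofinal branches, while blocking the formation of any copy of ${^{<\omega+1}}2$ inside $\dot T$. The forcing $\dot{\bb{L}}$ is the L\'evy collapse $\Coll(\omega_1, {<}\kappa)$ as computed in $V^{\bb{P}}$; it makes $\kappa$ into $\omega_2$ in the final extension and destroys every $\omega_1$-Kurepa tree by the classical Silver--Solovay argument, leveraging the inaccessibility of $\kappa$ and the $({<}\kappa)$-distributivity of appropriate tails of the collapse.

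For $\bb{P}$, I take conditions to be pairs $p = (t_p, \vec f_p)$, where $t_p$ is a countable downward-closed subtree of ${^{<\omega_1}}2$ of successor height $\alpha_p + 1$ and $\vec f_p = \seq{f^\xi_p}{\xi \in I_p}$ is a countable sequence, indexed by some $I_p \in [\omega_2]^{\leq \omega}$, of cofinal branches of $t_p$; the order is end-extension in both coordinates. This ordering is manifestly $\omega_1$-closed, and a standard $\Delta$-system argument using $\CH$ in $V$ gives $\omega_2$-cc. To enforce non-perfectness, I impose a structural side condition that controls which countable subtrees of type ${^{<\omega}}2$ can appear in $t_p$: roughly, we forbid any embedded copy of ${^{<\omega}}2$ in $t_p$ whose level-$\omega$ branches all acquire upper bounds in $t_p$. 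This condition is engineered so that, for any name of an embedding $\sigma:{^{<\omega}}2\to\dot T$, a density argument shows that the set of $s\in{^{\omega}}2$ whose branches receive upper bounds in $\dot T$ is countable, whence no copy of ${^{<\omega+1}}2$ can materialize.

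In the final model $V^{\bb{P} * \dot{\bb{L}}}$, $\GCH$ is preserved: $\bb{P}$ is $\omega_2$-cc and $\omega_1$-closed of size $\omega_2$, and $\Coll(\omega_1, {<}\kappa)$ for inaccessible $\kappa$ over a $\GCH$ model delivers $\GCH$. The weak Kurepa tree $\dot T$ survives the collapse because $\dot{\bb{L}}$ is $\omega_1$-closed in $V^{\bb{P}}$ and hence adds no new $\omega_1$-sequences, so neither new cofinal branches of $\dot T$ nor new copies of ${^{<\omega+1}}2$ appear; since $[T]$ has cardinality $\omega_2$ already in $V^{\bb{P}}$ and all cardinals $\geq \omega_2$ above $\kappa$ are preserved while $\kappa$ becomes the new $\omega_2$, the tree remains weak Kurepa. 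The failure of $\KH$ in the final model is then the classical conclusion of the L\'evy collapse of an inaccessible.

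The main obstacle is the precise design of the side condition in $\bb{P}$ and the verification of the corresponding density lemma. The tension is intrinsic: branches must proliferate enough to certify weak Kurepa-ness, yet the splitting pattern above any countable embedded $({^{<\omega}}2)$-structure must remain thin enough that no node of the tree witnesses $2^{\aleph_0}$ pairwise incomparable continuations of all of its $\omega$-branches. Making this precise, while ensuring that the $\omega_2$ generic branches do not themselves inadvertently assemble into a perfect $(\omega+1)$-structure, is the technical heart of the argument.
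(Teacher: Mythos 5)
Your overall strategy---a branch-shooting poset that adds a tree of height and size $\omega_1$ together with many designated cofinal branches while forbidding copies of ${^{<\omega+1}}2$, followed by the L\'{e}vy collapse of an inaccessible to kill $\KH$---is essentially the paper's second construction (the poset $\K_\lambda$ in Section 4 combined with $\Coll(\omega_1,{<}\kappa)$). However, there is a concrete error in the setup: you index the distinguished branches by $\omega_2$, i.e.\ $I_p \in [\omega_2]^{\leq\omega}$ with $\omega_2 = \omega_2^V$. After forcing with $\Coll(\omega_1,{<}\kappa)$, the ordinal $\omega_2^V$ has cardinality $\omega_1$ (every cardinal in $(\omega_1,\kappa)$ is collapsed), so your $\omega_2$-many distinguished branches become an $\omega_1$-sized family. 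Moreover, since $\dot T \subseteq {^{<\omega_1}}2$ and $\P$ is (as you claim) $\omega_2$-cc of size $\omega_2$ over a $\GCH$ model, we have $(2^{\omega_1})^{V^{\P}} = \omega_2^V$, so $[\dot T]^{V^{\P}}$ has size at most $\omega_2^V$; and the subsequent collapse, being $\omega_1$-closed, adds no new cofinal branches to a tree of size $\omega_1$ (Lemma \ref{lemma: strat_closed_silver} with $\tau=\omega$, $\nu=\omega_1$). Hence in the final model $|[\dot T]| \leq \omega_1 < \omega_2 = \kappa$, and $\dot T$ is \emph{not} a weak Kurepa tree. The fix is exactly what the paper does: index the branches by $\kappa$ itself (the poset $\K_\kappa$), so that after the collapse the tree has $\kappa = \omega_2$-many branches; this costs you the $\omega_2$-cc (the poset is only $(2^{\omega_1})^+$-Knaster and collapses $2^{\omega_1}$), but those cardinals are being collapsed by the L\'{e}vy collapse anyway.

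The second problem is that the technical core of the argument is left unproved, and you acknowledge this yourself. The paper's resolution is to make ``$T_q$ contains no copy of ${^{<\omega+1}}2$'' part of the definition of a condition and to put all the work into showing that the poset is $\omega_1$-closed (Lemma \ref{L:K-closed}): given a countable decreasing sequence of conditions, one must decide which cofinal branches of the union tree to continue to the new top level; using $\CH$, one enumerates all isomorphic embeddings $\iota_\xi$ of ${^{<\omega}}2$ into the union tree all of whose $\omega$-branches converge, notes that each limit set $[\iota_\xi]$ has size $2^\omega = \omega_1$ while only countably many branches are obligatory (those determined by the $f_{q_n}$'s), and withholds from the top level one branch $d_\xi \in [\iota_\xi]$ for each $\xi$ while still extending enough branches for normality. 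This is precisely the tension you describe between proliferation and thinness, and it is resolvable, but your sketch does not resolve it. You also need (and omit) the preservation step: that the $\omega_1$-closed collapse cannot create a \emph{new} copy of ${^{<\omega+1}}2$ in $\dot T$; the paper proves this via the reduction (Claim \ref{cl:meets}) to meet-preserving, limit-continuous embeddings, which are determined by their restrictions to ${^{<\omega}}2$ and hence cannot be added by $\omega_1$-distributive forcing. Finally, note that the paper also gives a far more economical route to Theorem A (Theorems \ref{TreesCohen} and \ref{Th:negKH+wKH+A-subtree}): after the L\'{e}vy collapse, add a single Cohen real; then \emph{no} normal tree from the intermediate model---in particular the weak Kurepa tree $({^{<\omega_1}}2)^V$---contains a copy of $({^{<\omega+1}}2)$ of the final model. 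If you only need Theorem A as stated (rather than the additional ``no Aronszajn subtree'' feature of Theorem \ref{Th:negKH+noAsubtrees}), that argument avoids the custom poset entirely.
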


We actually produce two models witnessing the conclusion of Theorem A. In the first, every 
weak Kurepa tree contains an Aronszajn (or even Suslin) subtree. In the second, there is a 
weak Kurepa tree that contains neither a copy of ${^{<\omega+1}}2$ nor an Aronszajn subtree.

In Section \ref{sect: cts_image}, we provide a negative answer to the aforementioned 
question of L\"{u}cke and Schlicht. In addition, we show that, in the model we 
construct, ${^{\omega_2}}\omega_2$ satisfies the strongest possible analogue of the 
Cantor-Bendixson theorem compatible with the existence of an $\omega_2$-Kurepa tree 
(recall that, if $T \subseteq {^{<\omega_2}}\omega_2$ is an $\omega_2$-Kurepa tree), 
then $[T]$ cannot contain an $(\omega_1+1)$-perfect subset. In particular, we prove 
the following theorem.

\begin{thmb}
  If there exists an inaccessible cardinal, then there is a forcing extension in which 
  $\GCH$ holds, there exists and $\omega_2$-Kurepa tree, and, for every $\omega_2$-Kurepa tree 
  $S \subseteq {^{<\omega_2}}\omega_2$, $[S]$ is not a continuous image of ${^{\omega_2}}\omega_2$. In addition, in the forcing extension, for every closed subset 
  $E \subseteq {^{\omega_2}}\omega_2$, there is $X \subseteq E$ with $|X| \leq \omega_2$ 
  such that $E \setminus X$ is $\omega_1$-perfect.
\end{thmb}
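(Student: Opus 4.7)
The plan is a two-step forcing construction over a ground model $V$ of $\GCH$ with an inaccessible $\kappa$. First, force with the L\'evy collapse $\Coll(\omega_1, {<}\kappa)$, producing an intermediate model $W$ in which $\kappa = \omega_2^W$ and, by Schlicht's theorem recalled in the introduction, every closed subset of ${^{\omega_2}}\omega_2$ satisfies $\PSP_{\omega_2}$; in particular, $W$ contains no $\omega_2$-Kurepa trees. Second, force over $W$ with the canonical $\omega_2$-closed, $\omega_3$-cc forcing $\K$ of size $\omega_3$ that generically adds an $\omega_2$-Kurepa tree, and let $H$ be $\K$-generic over $W$. The target model is $W[H]$. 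Clauses (1) and (2) follow from standard preservation arguments and from the construction.

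The heart of the theorem is clause (3). Suppose, toward contradiction, that in $W[H]$ there is an $\omega_2$-Kurepa tree $S \subseteq {^{<\omega_2}}\omega_2$ and a continuous surjection $f : {^{\omega_2}}\omega_2 \to [S]$. Since continuous functions on ${^{\omega_2}}\omega_2$ are determined by their action on bounded-length sequences, both $S$ and $f$ are coded by objects of cardinality $\omega_2$. By the $\omega_3$-cc of $\K$, nice names for these objects lie in a sub-forcing $\K_0 \subseteq \K$ of size $\omega_2$. Factor $\K \cong \K_0 \ast \dot\K_1$, so that $S, f \in W[H_0]$ where $H_0 = H \cap \K_0$, and note that $\K_1$ remains $\omega_2$-closed in $W[H_0]$, hence adds no new cofinal branches through $S$; thus $S$ is already $\omega_2$-Kurepa in $W[H_0]$. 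Now apply the strengthened Cantor--Bendixson theorem for continuous images proved in Section~\ref{section: vaananen} to $[S]$ in $W[H_0]$: this yields an $(\omega_1{+}1)$-perfect subset of $[S]$, contradicting the recalled fact that the branch set of an $\omega_2$-Kurepa tree contains no $(\omega_1{+}1)$-perfect subset.

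For clause (4), I would use that $\omega_1$-perfectness is witnessed by a winning strategy in the relevant V\"a\"an\"anen game of length $\omega_1$, and that $\omega_2$-closed forcing adds no new $\omega_1$-sequences and hence preserves such strategies. Given a closed $E \subseteq {^{\omega_2}}\omega_2$ in $W[H]$, approximate $E$ by a closed set $\tilde E \in W$ built from the tree of $\K$-conditions deciding initial segments of $\dot E$; apply the Cantor--Bendixson-plus-$\PSP_{\omega_2}$ decomposition in $W$ to obtain an $\omega_2$-perfect subset $P \subseteq \tilde E$ with $|\tilde E \setminus P| \le \omega_2$; and transfer $P$ to $W[H]$, where it remains $\omega_1$-perfect. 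Absorbing $\tilde E \setminus P$ together with the small residual discrepancy between $E$ and $\tilde E$ into the exceptional set $X$ gives the conclusion.

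The main obstacle is verifying that the Section~\ref{section: vaananen} theorem applies to $[S]$ in the intermediate model $W[H_0]$, which is no longer a pure L\'evy collapse extension of $V$. The plan is to exploit that $\K_0$ is an $\omega_2$-sized sub-order of an $\omega_2$-closed forcing: such forcings add no new $\omega_1$-sequences and cannot destroy the $\omega_2$-perfect kernels of closed sets inherited from the L\'evy collapse, so enough of the ambient perfect-set dichotomy used in Section~\ref{section: vaananen} survives into $W[H_0]$ to run the argument for $[S]$. A secondary technical point is the approximation step in clause (4): one must arrange $\tilde E$ so that the $\omega_2$-perfect set $P \subseteq \tilde E$ produced in $W$ actually lies inside the target $E$ in $W[H]$, rather than only inside its closure-approximation.
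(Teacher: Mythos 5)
Your overall architecture (collapse an inaccessible, generically add a Kurepa tree, then use a chain condition to capture a putative continuous surjection in an intermediate model) matches the paper's in outline, but there are two fatal gaps, one of which sits exactly at the heart of the theorem. First, the cardinals are off: you collapse with $\Coll(\omega_1,{<}\kappa)$ so that $\kappa=\omega_2$, but then Schlicht's theorem gives $\PSP_{\omega_1}$ for closed subsets of ${^{\omega_1}}\omega_1$, not $\PSP_{\omega_2}$ for ${^{\omega_2}}\omega_2$; to control closed subsets of ${^{\omega_2}}\omega_2$ one must collapse $\kappa$ to $\omega_3$ (the paper uses $\Coll(\omega_2,{<}\kappa)$ run as a product with a Kurepa-tree forcing whose generic tree has $\kappa$-many branches). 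Second, and more seriously, the contradiction you invoke for the main clause does not exist: Theorem \ref{thm: omega_plus_one_perfect} only produces winning strategies for Player II in games of length $\omega+1$, and under $\CH$ (where $2^\omega=\omega_1<\omega_2$) such an $(\omega+1)$-level conclusion is perfectly compatible with $S$ being an $\omega_2$-Kurepa tree; the obstruction to Kurepa-ness is $(\omega_1{+}1)$-perfectness, which Section \ref{section: vaananen} does not deliver. The paper has to work for this: from the continuous surjection $g$ it extracts the monotone map $h(\sigma)=\bigwedge\{g(x)\mid x\in N_\sigma\}$ and the club of levels where $h$ catches up, and then, using the countable closure of the quotient forcing and a name $\dot b$ for a new cofinal branch of $S$, builds a tree of conditions indexed by ${^{<\omega_1}}2$ whose associated nodes $b_\sigma$ all lie in $S$ below a fixed level $\xi$, yielding $2^{\omega_1}=\omega_2$ nodes on $S_{\leq\xi}$ and contradicting that $S$ is an $\omega_2$-tree. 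Nothing in your write-up supplies this argument or a substitute for it.

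Your plan for the final clause is also unworkable as stated: you propose to find an $\omega_2$-perfect $P\subseteq\tilde E$ in the ground model and transfer it into $E$. But when $E=[T]$ for the generic Kurepa tree, $E\setminus X$ is nonempty for any $X$ of size $\omega_2$ and can contain no $(\omega_1{+}1)$-perfect (a fortiori no $\omega_2$-perfect) subset, so no such $P$ can exist inside $E$; the statement only asks for $\omega_1$-perfectness, and that is sharp. The paper's proof of this clause is genuinely different: it factors the quotient forcing as a ${<}\omega_2$-support product of $\kappa$-many mutually generic copies of a fixed countably closed poset, takes a name $\dot y$ for a new element of $E\cap N_{\sigma_1}$, and has Player II play the interpretations of $\dot y$ under successively chosen mutually generic coordinates, using closure of a single factor to survive limit rounds below $\omega_1$. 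You would need this mutual-genericity mechanism (or some other device producing $\omega_1$-many pairwise distinct, coherently agreeing elements of $E$) to complete that clause.
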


In Section \ref{section: full}, we prove some new results about full trees. 
A tree $T$ is \emph{full} if, for every limit ordinal $\beta$ below the height of $T$, 
there is at most one branch through $T \restriction \beta$ that is not continued at level 
$\beta$ (see Section \ref{section: full} for a precise definition). Kunen asked whether 
there could consistently exist a full $\kappa$-Suslin tree for some regular uncountable cardinal 
$\kappa$ (cf.\ \cite{miller_list}). This was answered positively by Shelah in \cite{shelah_full} 
for inaccessible $\kappa$ and recently by Rinot, Yadai, and You in \cite{rinot_full} for successors 
of regular uncountable cardinals. Here, we are interested in full, splitting trees that may contain 
some cofinal branches. For example, in Section \ref{section: full}, we prove the following theorem
about full trees of height $\omega_1$:

\begin{thmc}
  \begin{enumerate}
    \item If $\diamondsuit$ holds, then, for every cardinal $\nu \in \omega \cup \{\omega, \omega_1, 
    2^{\omega_1}\}$, there is a normal, full, splitting tree $T \subseteq {^{<\omega_1}}\omega_1$ 
    with exactly $\nu$-many cofinal branches.
    \item $\CH$ does not suffice for the conclusion of clause (1). In particular, 
    $\CH$ is compatible with the assertion that every full, splitting tree of height 
    $\omega_1$ contains a copy of ${^{<\omega_1}}2$, and hence has $2^{\omega_1}$-many cofinal 
    branches.
  \end{enumerate}
\end{thmc}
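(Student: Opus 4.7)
I would use $\diamondsuit$ to carry out a level-by-level construction of the tree $T$. For $\nu = 2^{\omega_1}$, take $T = {^{<\omega_1}}2$, which is manifestly normal, splitting, full (each $s \in {^{\beta}}2$ is continued at level $\beta$ by itself), and has $2^{\omega_1}$-many cofinal branches. For $\nu < 2^{\omega_1}$, fix in advance $\nu$-many distinct branches $\langle b_\xi \mid \xi < \nu \rangle$ of ${^{\omega_1}}\omega_1$, pairwise diverging at unboundedly many levels, and build $T$ level by level so that these are exactly the cofinal branches of $T$. At each successor stage $\alpha+1$, include $\{b_\xi \restriction (\alpha+1) \mid \xi < \nu\}$ in $T_{\alpha+1}$ and insert ``decoy'' splitting nodes. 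At each limit $\beta$, continue every branch through $T\restriction\beta$ except, if the $\diamondsuit$-guess $A_\beta$ codes a branch through $T\restriction\beta$ different from every $b_\xi\restriction\beta$, kill that single branch. Fullness is immediate since at most one branch is killed per limit, and the standard $\diamondsuit$-argument (in the style of Shelah's construction of full Suslin trees) ensures every potential additional cofinal branch is predicted and killed at some limit. The main difficulty is balancing normality, splitting, and the exact branch count for small $\nu$: decoy nodes inserted to force splitting must eventually be killed off within the single-kill-per-limit budget, which requires a careful scheduling of decoys against the $\diamondsuit$-sequence.

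\textbf{Part (2).} I would force over a ground model of $\GCH$ via a countable support iteration of $\sigma$-closed forcings of length $\omega_2$. Using bookkeeping to enumerate candidate full splitting trees of height $\omega_1$ appearing in intermediate models, at each stage $\alpha$ pick such a tree $T_\alpha$ and use as iterand the natural forcing whose conditions are countable, level-coherent partial embeddings $e: {^{<\gamma}}2 \to T_\alpha$ ordered by extension. Successor-level extensions use the splitting of $T_\alpha$ and limit-level extensions use its fullness. Since each iterand is $\sigma$-closed, the iteration preserves $\CH$; $\GCH$ in the ground model yields the $\omega_2$-cc and cardinal preservation; a standard reflection argument then shows every full splitting tree of height $\omega_1$ in the final model has been hit at some stage, hence contains a copy of ${^{<\omega_1}}2$ and so has $2^{\omega_1}$-many cofinal branches.

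The main obstacle is the limit stage of the embedding forcing: at each countable limit $\gamma$ in the embedding's height, one branch of $T_\alpha$ may die (by fullness), and this could coincide with one of the embedded branches, obstructing the extension. The fix is to choose the levels of $T_\alpha$ occupied by the image of $e$ carefully --- for instance, pushing images to successor levels of $T_\alpha$ just above each stage of the approximation --- so that the potential dying branch at each relevant limit of $T_\alpha$ lies outside the image. Particular care is needed for full Aronszajn trees (no cofinal branches), where direct embedding is impossible; these may require a preliminary phase of the iteration that first adds cofinal branches (e.g., forcing with the tree itself, which is $\omega_1$-preserving when the tree is Suslin) before the embedding forcing is applied.
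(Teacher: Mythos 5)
Your Part (1) is essentially the paper's construction: designate the intended branches, split maximally at successors, and at each limit level continue every branch through $T_{<\beta}$ except the single one predicted by $\diamondsuit$ (when it differs from all designated branches). One remark: the ``main difficulty'' you flag is not actually there. Decoy nodes never need to be killed off level by level; they may persist through all levels $<\omega_1$, and the only thing to rule out is a \emph{cofinal} branch through them, which the $\diamondsuit$-guessing handles automatically (any unintended $b \in {^{\omega_1}}\omega_1$ is guessed on a stationary set meeting the club of levels where $b$ has already diverged from all designated branches, and is killed there). No scheduling of decoys against the $\diamondsuit$-sequence is needed, and normality is automatic since killing one branch at a limit level orphans no node.

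Part (2) has a genuine gap. The iterand you describe --- conditions are countable partial embeddings $e : {^{<\gamma}}2 \to T$ ordered by extension --- is \emph{not} $\sigma$-closed, and this failure is the entire difficulty of the theorem. Given a decreasing $\omega$-chain of embeddings, some branch $x$ of the union of the domains has image-chain $\bigcup_n e(x\restriction \eta_n)$ converging to a \emph{vanishing} branch of $T$ at some limit level, and then the union cannot be extended past that branch; such conditions are dead ends. Your proposed fix --- pushing image nodes to successor levels of $T$ --- does not help, because the obstruction concerns where the \emph{limits} of increasing $\omega$-chains of image nodes land, and those limits sit at limit levels of $T$ no matter where the individual nodes are placed. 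What is actually needed (and what the paper does) is: (i) a side condition on each forcing condition requiring every boundary branch of its domain to map to a node \emph{in} $T$ rather than to a vanishing branch, and (ii) an active ``splitting-away'' lemma showing that inside a countable elementary submodel $M$ with $\delta = M\cap\omega_1$ one can extend any condition so that the image splits off of the vanishing branch $b_\delta$ (and, by density, off of $b_\gamma$ for every relevant $\gamma<\delta$), so that the union of a carefully built fusion sequence is again a condition. This yields total properness, and to iterate without adding reals one needs the stronger properties of complete properness and total $\alpha$-properness together with the Shelah/Moore preservation theorem --- none of which is available if one just asserts $\sigma$-closure. Finally, your worry about full Aronszajn trees is misplaced: the embedding forcing generically \emph{creates} the cofinal branches of $T$ (the copy of ${^{<\omega_1}}2$ generates $2^{\omega_1}$-many of them), so no preliminary branch-adding phase is needed or wanted; indeed the point of the theorem is precisely that in the final model no full splitting tree of height $\omega_1$ can fail to have cofinal branches.
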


We then move up a cardinal to prove the consistency of the existence of a normal, splitting, full, 
superthin $\omega_2$-Kurepa tree. We refer the reader to Section \ref{section: full} for the definition 
of \emph{superthin}; we only note that superthin Kurepa trees play a central role in the 
investigations of L\"{u}cke and Schlicht in \cite{lucke_schlicht_descriptive}. In particular, they 
prove there that if there exists a superthin $\kappa$-Kurepa tree, then there is a 
superthin $\kappa$-Kurepa tree $T \subseteq {^{<\kappa}}\kappa$ such that $[T]$ is a retract of 
${^{\kappa}}\kappa$. In Section \ref{section: superthin}, we prove the following result, 
producing a model having $\omega_2$-Kurepa trees with very different behavior from that 
produced in Theorem B:

\begin{thmd}
  Suppose that $\GCH$ holds and $\kappa$ is the successor of a regular uncountable cardinal. 
  Then there is a cofinality-preserving forcing extension in which
  \begin{enumerate}
    \item $\GCH$ holds;
    \item there exists a $\kappa$-Kurepa tree;
    \item every $\kappa$-Kurepa tree contains a normal superthin $\kappa$-Kurepa subtree.
  \end{enumerate}
\end{thmd}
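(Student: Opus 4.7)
Write $\kappa = \mu^+$ with $\mu$ regular uncountable, and work over a ground model $V \models \GCH$. My plan is to construct the desired model via a $<\kappa$-support iteration $\langle \P_\alpha, \dot\Q_\alpha \mid \alpha < \kappa^{++}\rangle$, where a bookkeeping function guides each iterand $\dot\Q_\alpha$ to either add a normal superthin $\kappa$-Kurepa subtree to a designated $\P_\alpha$-name for a potential $\kappa$-Kurepa tree, or be trivial. Using $\GCH$ and its preservation through the iteration, standard reflection together with the $\kappa^+$-cc ensures that every $\P_{\kappa^{++}}$-name for a $\kappa$-Kurepa tree $T \subseteq {^{<\kappa}}\kappa$ is equivalent to a $\P_\alpha$-name for some $\alpha < \kappa^{++}$, so the bookkeeping addresses every such tree at some later stage.

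The central building block is the poset $\Q_T$ for a $\kappa$-Kurepa tree $T$ in some intermediate extension. A first attempt is to take as conditions pairs $(s, B)$ where $B \subseteq [T]$ has size $\leq \mu$, $s$ is the downward closure inside $T$ of $\{b \restriction \alpha : b \in B, \alpha \leq \beta\}$ for some $\beta < \kappa$, and $s$ satisfies the superthinness level-size bound; ordering is by end-extension of $s$ and superset of $B$. The generic filter $\dot G$ then yields a subtree $\dot S = \bigcup\{s : (s, B) \in \dot G\}$ whose cofinal branches include $\bigcup\{B : (s, B) \in \dot G\}$, and density arguments should confirm that $\dot S$ is a normal superthin $\kappa$-Kurepa subtree of $T$. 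Under $\GCH$, $\Q_T$ is $\mu$-closed (using regularity of $\mu$ to take unions of short decreasing sequences) and, via a $\Delta$-system argument on the $B$-coordinates, $\kappa^+$-cc.

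With these ingredients, the iteration $\P_{\kappa^{++}}$ is $\mu$-closed, $\kappa^+$-cc, and preserves $\GCH$. Clause (2) is arranged by setting $\dot\Q_0$ to be the standard $\mu$-closed, $\kappa^+$-cc forcing that adds a $\kappa$-Kurepa tree (alternatively, by working over a ground model already containing one). For clause (3), once a superthin subtree $\dot S_\alpha$ is added at stage $\alpha$, the tail $\P_{[\alpha+1, \kappa^{++})}$ is $\mu$-closed and $\kappa^+$-cc, so a standard preservation lemma guarantees that it adds no new cofinal branches to $\dot S_\alpha$ and does not kill any existing ones, thereby preserving $\dot S_\alpha$ as a $\kappa$-Kurepa subtree of the relevant $T$ all the way to the end.

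The main obstacle is verifying that a single generic for $\Q_T$ produces $\kappa^+$-many cofinal branches while maintaining the superthinness bound on every level: although $T$ has at least $\kappa^+$ branches, extracting $\kappa^+$ of them whose downward closure meets the level-size requirement is delicate, and the naive condition shape $(s, B)$ may need refinement --- perhaps by incorporating an explicit labelling of a ``skeleton'' within $s$, or by decomposing $\Q_T$ as a two-step forcing that first shoots a $\kappa$-sized superthin skeleton through $T$ and then inserts the $\kappa^+$-many branches through it. A subsidiary technical step is verifying the $\kappa^+$-cc for $\Q_T$: here a $\Delta$-system refinement of the $B$-coordinates under $\GCH$ should suffice, but one must check that the associated $s$-parts can be amalgamated compatibly with the superthinness constraint, which is exactly where the correct choice of condition shape matters.
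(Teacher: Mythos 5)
Your overall strategy---iterate a branch-shooting poset whose conditions are pairs $(s,B)$ with $B$ a small set of branches and $s$ the tree of their restrictions up to some level, guided by bookkeeping---is exactly the paper's strategy, and you have correctly located the danger zone. But the proposal leaves the central difficulty unresolved, and the closure claim you make is both insufficient and, in the strengthened form you would actually need, false. You assert that $\Q_T$ is ``$\mu$-closed,'' i.e.\ closed under short decreasing sequences. That is too weak: to preserve $\kappa=\mu^+$ and get $({<}\kappa)$-distributivity of the iterands (needed so that the tails add no new cofinal branches to $\kappa$-trees and so that cardinals and $\GCH$ survive) you need something at the level of $\kappa$-closure or $\kappa$-distributivity. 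And genuine $({<}\kappa)$-closure \emph{fails} for this poset: at a decreasing sequence of length $\mu$ the union tree $t$ reaches a limit level $\gamma$ of cofinality $\mu$, where $[t_{<\gamma}]$ can a priori have size $\mu^{\mu}=2^\mu=\kappa$, destroying the superthinness requirement $|[t_{<\gamma}]|\le\mu$. This is precisely the obstacle you flag in your last paragraph, but the fix is not a ``skeleton'' or a two-step decomposition. The paper's resolution is to prove only \emph{$\kappa$-strategic closure}: Player II plays the minimal condition at each even stage (taking exact unions of the $B$- and $\gamma$-coordinates at limits), and then a Fodor/pressing-down argument along a club in a cofinality-$\mu$ limit shows that every element of $[t_{<\gamma}]$ is already of the form $b\restriction\gamma$ for some $b\in B$, so the level has size at most $|B|\le\mu$. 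This argument depends on the requirement, built into the conditions, that $\gamma^p$ separates the branches of $B^p$; without that separation the pressing-down argument does not go through. Strategic closure (rather than closure) then forces you to verify the iteration theorem by hand: the paper isolates a dense set of ``fully decided'' conditions $\P^*_j$ and proves a compatibility claim to get that the $(\le\mu)$-support iteration is $\kappa^+$-Knaster and $\kappa$-strategically closed; your $\Delta$-system sketch for the single step is fine in spirit but does not address the iteration.

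Two smaller points. First, you take the iteration to have length $\kappa^{++}$; under $\GCH$ and the $\kappa^+$-cc there are only $\kappa^+$-many relevant nice names, and length $\kappa^+$ is what keeps $2^\kappa=\kappa^+$ in the extension---with cofinally many nontrivial iterands along $\kappa^{++}$ you would add $\kappa^{++}$-many subtrees of ${}^{<\kappa}\kappa$ and destroy clause (1). Second, before forcing you must thin the target set of branches so that every node lies on more than $\kappa$-many cofinal branches (the paper replaces $T$ by $T^*=\{\sigma\in T\mid |[T]\cap N_\sigma|>\kappa\}$); otherwise the density argument giving $\kappa^+$-many generic branches, and hence clause (3), breaks down.
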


Finally, in Section \ref{section: questions}, we record some closing remarks and a few questions that remain open.

\subsection{Notation and conventions} Our notation is for the most part standard. 
We refer the reader to \cite{jech} for undefined notions and notations in set theory.
We let $\mathrm{Card}$ denote the class of all cardinals.
Given a well-ordered set $X$, we denote its order type by $\otp(X)$. If 
$X$ and $Y$ are sets, $f:X \ra Y$, and $A \subseteq X$, then both $f[A]$ and 
$f``A$ denote the pointwise image of $A$, i.e., $\{f(a) \mid a \in A\}$.

Given ordinals $\alpha$ and $\beta$, we let ${^{\alpha}}\beta$ denote the set of 
all functions 
$f:\alpha \ra \beta$, and we let ${^{<\alpha}}\beta$ denote $\bigcup_{\eta < \alpha} 
{^{\eta}}\beta$. We use ${^{\leq \alpha}}\beta$ and ${^{<(\alpha+1)}}\beta$ 
interchangeably. If $\sigma,\tau \in {^{<\alpha}}\beta$, then we let $\sigma 
\sqsubseteq \tau$ denote the assertion that $\sigma$ is an \emph{initial segment} of 
$\tau$, i.e., $\dom(\sigma) \leq \dom(\tau)$ and $\sigma = \tau \restriction \dom(\sigma)$.
Given a nonempty set $A \subseteq {^{\leq \alpha}}\beta$, we let 
$\bigwedge A \in {^{\leq \alpha}}\beta$ denote the (unique) 
$\sqsubseteq$-maximal element of ${^{\leq \alpha}}\beta$ that is an 
initial segment of every element of $A$, i.e., 
\[
  \bigwedge A = \bigcup \{\sigma \in {^{\leq \alpha}}\beta \mid 
  \forall \tau \in A ~ (\sigma \sqsubseteq \tau)\}.
\]
If $\sigma$ and $\tau$ are two functions whose domains are ordinals 
$\alpha$ and $\beta$, respectively, then $\sigma ^\frown \tau$ denotes the 
concatenation of $\sigma$ and $\tau$, i.e., the function $\rho$ with domain 
$\alpha + \beta$ such that $\rho(\eta) = \sigma(\eta)$ for all $\eta < \alpha$ 
and $\rho(\alpha + \xi) = \tau(\xi)$ for all $\xi < \beta$. We will sometimes think 
of functions with ordinal domains as sequences, e.g., a sequence of the form 
$\langle \gamma \rangle$ will be thought of as a function $\sigma$ with domain $1$ 
and $\sigma(0) = \gamma$. We will sometimes write, e.g., $\sigma ^\frown \gamma$ 
instead of $\sigma ^\frown \langle \gamma \rangle$ when there is no risk of confusion.

We make use of various standard forcing notions throughout the paper. In particular, 
if $\kappa$ is a regular infinite cardinal, then $\Add(\kappa, 1)$ is the forcing to add 
a single Cohen subset of $\kappa$. If, in addition, $\mu > \kappa$ is a 
cardinal, then $\Coll(\kappa, {<}\mu)$ is the L\'{e}vy collapse that collapses 
every cardinal in the interval $(\kappa, \mu)$ to have cardinality $\kappa$. We 
assume that every forcing notion $\P$ has a maximum element, which we denote $1_{\P}$.
If we define a forcing notion $\P$ that does not have a maximum element, then we 
implicitly add $\emptyset$ as $1_{\P}$.

\section{Combinatorial and topological preliminaries} \label{section: prelim}

In this section, we introduce some of the basic notions forming the subject matter of 
this paper, particularly regarding trees, higher Baire spaces, and forcing.

\begin{definition}
  A \emph{tree} is a partial order $(T, \leq_T)$ such that, for every $t \in T$, 
  the set $\pred_T(t) := \{s \in T \mid s <_T t\}$ is well-ordered by $\leq_T$. 
  We will often abuse notation and simply refer to the tree by $T$, without 
  explicitly mentioning the tree order $\leq_T$.
  Given $t \in T$, the \emph{height} of $t$ in $T$ is 
  $\height_T(t) := \otp(\pred_T(t))$. Given an ordinal $\alpha$, the 
  \emph{$\alpha^{\mathrm{th}}$ level} of $T$, denoted $T_\alpha$, 
  is $\{t \in T \mid \height_T(t) = \alpha\}$. The \emph{height} of $T$, 
  denoted $\height(T)$, is the least ordinal $\beta$ such that $T_\beta = \emptyset$.
  A \emph{subtree} of $T$ is a $\leq_T$-downward closed subset of $T$, with 
  the tree order inherited from $\leq_T$.
  
  We say that a tree $T$ is \emph{normal} if it satisfies the following two conditions:
  \begin{itemize}
    \item for all $\alpha < \beta < \height(T)$ and all $s \in T_\alpha$, there is 
    $t \in T_\beta$ such that $s <_T t$; and
    \item for all limit ordinals $\alpha < \height(T)$ and all $s,t \in T_\alpha$, 
    if $\mathrm{pred}_T(s) = \mathrm{pred}_T(t)$, then $s = t$.
  \end{itemize}
  
  A \emph{branch} through $T$ is a maximal linearly ordered subset of $T$. 
  We say that a branch $b$ through $T$ is a \emph{cofinal branch} through $T$ 
  if $b \cap T_\alpha \neq \emptyset$ for all $\alpha < \height(T)$. The set of 
  all cofinal branches through $T$ is denoted by $[T]$.
  
  Given two nodes $s,t \in T$, we write $s \perp t$ to denote the assertion that 
  $s$ and $t$ are $\leq_T$-incomparable. An \emph{antichain} of $T$ is a set $A \subseteq T$ 
  such that $s \perp t$ for all distinct $s,t \in A$.
\end{definition}

\begin{definition}
  Suppose that $\kappa$ is an infinite cardinal. A tree $T$ is a \emph{$\kappa$-tree} 
  if $\height(T) = \kappa$ and $|T_\alpha| < \kappa$ for every $\alpha < \kappa$. 
  A \emph{$\kappa$-Aronszajn tree} is a $\kappa$-tree with no cofinal branches, and 
  a \emph{$\kappa$-Suslin tree} is a $\kappa$-tree with no cofinal branches and no 
  antichains of cardinality $\kappa$.  	  
  
  A \emph{$\kappa$-Kurepa tree} is a $\kappa$-tree $T$ with at least $\kappa^+$-many 
  cofinal branches. A \emph{weak $\kappa$-Kurepa tree} is a tree $T$ of height and size 
  $\kappa$ with at least $\kappa^+$-many cofinal branches.
  
  The \emph{$\kappa$-Kurepa Hypothesis} ($\KH_\kappa$) is the assertion that there exists 
  a $\kappa$-Kurepa tree. The \emph{weak $\kappa$-Kurepa Hypothesis} ($\wKH_\kappa$) is the 
  assertion that there exists a weak Kurepa tree. If the parameter $\kappa$ is omitted in 
  any of the above, then it should be understood that $\kappa = \omega_1$, e.g., 
  a \emph{Suslin tree} is an $\omega_1$-Suslin tree, 
  a \emph{Kurepa tree} is an $\omega_1$-Kurepa tree, and $\KH$ is $\KH_{\omega_1}$.
\end{definition}

\begin{definition}
Let $T$ be a normal tree. For $t,s \in T$, the meet $s \wedge t$ is the unique node $r \le s, t$ such that there is no $r' >r$ with $r' \le s,t$.
\end{definition}

We will naturally interpret ${^{<\alpha}}\beta$ as a tree by setting, for all $\sigma, \tau \in 
{^{<\alpha}}\beta$, $\sigma \leq \tau$ if and only if $\sigma$ is an initial segment of 
$\tau$ (denoted $\sigma \sqsubseteq \tau$). When we say that a tree $T$ is a 
subtree of ${^{<\alpha}}\beta$, we will implicitly assume that $T$ has height 
$\alpha$. In particular, in this context, the set $[T]$ of cofinal branches through 
$T$ can and will be identified with the set
\[
  \{b \in {^{\alpha}}\beta \mid \forall \eta < \alpha ~ (b \restriction \eta 
  \in T)\}.
\]

If $\kappa$ and $\lambda$ are cardinals, with $\kappa$ 
regular, then we implicitly interpret ${^{\kappa}}\lambda$ as a topological space, where each 
copy of $\lambda$ is given the discrete topology and the product is given the $({<}\kappa)$-box 
topology. In other words, the topology on ${^{\kappa}}\lambda$ is generated by all basic open sets 
of the form
\[
  N_\sigma = \{x \in {^{\kappa}}\lambda \mid \sigma \sqsubseteq x\},
\]
where $\sigma \in {^{<\kappa}}\lambda$. We will be most interested in the space ${^\kappa}\kappa$ 
for a regular uncountable cardinal $\kappa$; this space is typically called the 
\emph{higher Baire space at $\kappa$}.

Fix for now cardinals $\kappa$ and $\lambda$, with $\kappa$ regular. Given a nonempty set 
$X \subseteq {^{\kappa}}\lambda$, let 
\[
  T(X) := \{x \restriction \alpha \mid x \in X \text{ and } \alpha < \kappa\}.
\]
Note that $T(X)$ is a subtree of ${^{<\kappa}}\lambda$. 

In the other direction, given a tree $T \subseteq {^{<\kappa}}\lambda$, 
it is a well-known and easily verified 
fact that $[T]$ is a closed subset of ${^{\kappa}}\lambda$ and, moreover, all closed 
subsets of ${^{\kappa}}\lambda$ are of this form:

\begin{fact}
  Suppose that $\kappa$ and $\lambda$ are cardinals, with $\kappa$ regular. Then a subset 
  $X \subseteq {^{\kappa}}\lambda$ is closed if and only if it is of the form 
  $[T]$ for some tree $T \subseteq {^{<\kappa}}\lambda$.
\end{fact}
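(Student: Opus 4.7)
The plan is to verify both implications separately, using only the definition of the $({<}\kappa)$-box topology on ${^{\kappa}}\lambda$ and basic tree manipulations.

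For the easier direction, I would fix a tree $T \subseteq {^{<\kappa}}\lambda$ and show that the complement of $[T]$ in ${^{\kappa}}\lambda$ is open. Given $x \in {^{\kappa}}\lambda \setminus [T]$, by definition there is some $\alpha < \kappa$ with $x \restriction \alpha \notin T$. The basic open neighborhood $N_{x \restriction \alpha}$ then contains $x$, and every $y \in N_{x \restriction \alpha}$ satisfies $y \restriction \alpha = x \restriction \alpha \notin T$, so $y \notin [T]$. Hence ${^{\kappa}}\lambda \setminus [T]$ is open.

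For the reverse direction, fix a closed set $X \subseteq {^\kappa}\lambda$; if $X = \emptyset$ I take $T = \emptyset$, so assume otherwise and set $T = T(X)$ as defined in the excerpt. Then $T$ is a subtree of ${^{<\kappa}}\lambda$ (closure under initial segments is immediate from the definition), and $X \subseteq [T]$ is also immediate, since for any $x \in X$ and $\alpha < \kappa$ one has $x \restriction \alpha \in T$. To establish $[T] \subseteq X$, I would take $y \in [T]$ and show that $y$ lies in the closure of $X$. For each $\alpha < \kappa$, $y \restriction \alpha \in T$, so by definition of $T$ there is some $x \in X$ and $\beta < \kappa$ with $x \restriction \beta = y \restriction \alpha$; comparing domains forces $\beta = \alpha$, so $x \in N_{y \restriction \alpha} \cap X$. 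Since every basic open neighborhood of $y$ contains some $N_{y \restriction \alpha}$ (using that $\kappa$ is regular, so initial segments of $y$ form a neighborhood base at $y$), we conclude $y$ is in the closure of $X$, and hence $y \in X$ because $X$ is closed.

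There is no real obstacle here; the only minor point requiring care is the verification that basic open sets of the form $N_\sigma$ form a neighborhood base at each point of ${^\kappa}\lambda$, which is where the regularity of $\kappa$ is used (to ensure that the intersection of fewer than $\kappa$-many such basic opens at $y$ can be refined to a single $N_{y \restriction \alpha}$). Everything else is routine bookkeeping with initial segments.
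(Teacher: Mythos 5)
Your proof is correct. The paper states this as a ``well-known and easily verified fact'' and gives no proof, but the argument it has in mind is exactly the one you wrote: $[T]$ is closed because a point outside it is separated by the basic neighborhood $N_{x\restriction\alpha}$ witnessing $x\restriction\alpha\notin T$, and a closed $X$ equals $[T(X)]$ because any $y\in[T(X)]$ meets $X$ in every basic neighborhood $N_{y\restriction\alpha}$ and these form a neighborhood base at $y$. Your closing remark about where regularity enters (so that initial segments of $y$ of length $<\kappa$ cofinally refine the ${<}\kappa$-box neighborhoods) is the right observation; nothing is missing.
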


We now recall a number of relevant definitions from \cite{schlicht_sziraki}.

\begin{definition} \label{def: basic_trees}
  Suppose that $T$ is a subtree of ${^{<\kappa}}\lambda$.
  \begin{enumerate}
    \item $T$ is \emph{normal} if for all $\sigma \in T$ and all $\alpha < \kappa$, there is 
    $\tau \in T$ such that $\sigma \sqsubseteq \tau$ and $\dom(\tau) \geq \alpha$.
    \item Given $\sigma, \tau \in T$, we let $\sigma \perp \tau$ denote the assertion that 
    $\sigma$ and $\tau$ are not comparable in $T$.
    \item $T$ is \emph{splitting} if, for all $\sigma \in T$, if $|\sigma| + 1 < \height(T)$, 
    then there are distinct $i,j < \lambda$ such that $\sigma^\frown i, 
    \sigma^\frown j \in T$. It is \emph{infinitely splitting} if, for all such
    $\sigma \in T$, there are infinitely many $i < \lambda$ such that 
    $\sigma^\frown i \in T$.
    \item $T$ is \emph{cofinally splitting} if, for every $\sigma \in T$, there are 
    $\tau_0, \tau_1 \in T$ such that $\sigma \sqsubseteq \tau_0$, $\sigma \sqsubseteq \tau_1$, 
    and $\tau_0 \perp \tau_1$.
    \item Given a cardinal $\mu \leq \kappa$, $T$ is ${<\mu}$-closed if, for every $\eta < \mu$ 
    and every increasing sequence $\langle \sigma_\xi \mid \xi < \eta \rangle$ from $T$, we have 
    $\bigcup \{\sigma_\xi \mid \xi < \eta\} \in T$.
    \item $T$ is \emph{$\kappa$-perfect} if it is normal, cofinally splitting, and ${<}\kappa$-closed.
    \item \label{def: kappa_perfect} 
    A nonempty closed subset $X \subseteq {^{\kappa}}\lambda$ is said to be 
    $\kappa$-perfect if $T(X)$ is $\kappa$-perfect.
  \end{enumerate}
\end{definition}

\begin{definition}
  Suppose that $S$ and $T$ are trees and $\iota:S \ra T$.
  \begin{enumerate}
    \item $\iota$ is \emph{strict order preserving} if, for all $\sigma, \tau \in S$, if 
    $\sigma < \tau$, then $\iota(\sigma) < \iota(\tau)$.
    \item $\iota$ is \emph{$\perp$-preserving} if, for all $\sigma, \tau \in S$, if 
    $\sigma \perp \tau$, then $\iota(\sigma) \perp \iota(\tau)$.
    \item If $\iota$ is both strict order preserving and $\perp$-preserving, 
    then we call $\iota$ an \emph{isomorphic embedding} of $S$ into $T$. 
    We say that $T$ \emph{contains a copy of $S$} if there is an isomorphic embedding 
    $\iota : S \ra T$; in this situation, the image $\iota[S]$ will be referred to 
    as \emph{a copy of $S$ in $T$}.
  \end{enumerate}
\end{definition}

\begin{remark}
  We note that, in the above definition, the maps $\iota : S \ra T$ need not 
  preserve levels, i.e., there may be $\sigma \in S$ such that 
  $\height_S(\sigma) \neq \height_T(\iota(\sigma)$. In particular, a copy of 
  $S$ in $T$ need not be a subtree of $T$, as it need not be downward closed.
  Also note that $S$ and $T$ need not have the same height, e.g., it could 
  be the case that $S$ is a subtree of ${^{<\alpha}}\beta$ and $T$ is a subtree of 
  ${^{<\gamma}}\delta$ for cardinals $\alpha \neq \gamma$ and $\beta \neq \delta$. 
  Note, however, that if $\iota : S \ra T$ is strict order preserving, then we will 
  have $\height_S(s) \leq \height_T(\iota(s))$ for all $s \in S$.
\end{remark}

\begin{definition} \label{def: psp}
  Let $\kappa$ and $\lambda$ be cardinals, with $\kappa$ regular, and suppose that 
  $X \subseteq {^{\kappa}}\lambda$. We say that $X$ has the \emph{$\kappa$-perfect set property}, 
  denoted $\PSP_\kappa(X)$, if either $|X| \leq \kappa$ or $X$ contains a $\kappa$-perfect 
  subset.
\end{definition}

Note that, in the classical Baire space ${^\omega}\omega$, a nonempty closed subset 
of ${^\omega}\omega$ is perfect in the classical sense (i.e., is 
closed with no isolated points) if and only if it is $\omega$-perfect in the sense 
of Definition \ref{def: basic_trees}(\ref{def: kappa_perfect}), and a set 
$X \subseteq {^\omega}\omega$ has the classical perfect set property if and only if 
it satisfies $\PSP_\omega(X)$ as in Definition \ref{def: psp}.

The following well-known proposition is easily proven.

\begin{proposition}
  Let $\kappa$ and $\lambda$ be cardinals, with $\kappa$ regular, and suppose that 
  $X \subseteq {^{\kappa}}\lambda$ is closed. Then $\PSP_\kappa(X)$ holds if and only if 
  $|X| \leq \kappa$ or $T(X)$ contains a copy of ${^{<\kappa}}2$.
\end{proposition}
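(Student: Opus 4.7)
The easier direction is $(\Rightarrow)$: assume $\PSP_\kappa(X)$ with $|X|>\kappa$, so $X$ contains a $\kappa$-perfect subset $Y$. Then $T(Y)\subseteq T(X)$ is normal, cofinally splitting, and ${<}\kappa$-closed. I would construct an isomorphic embedding $\iota:{^{<\kappa}}2\to T(Y)$ by recursion on $\dom(s)$: start with $\iota(\emptyset)=\emptyset$; at successor stages, use cofinal splitting of $T(Y)$ to choose incomparable extensions $\iota(t^\frown 0),\iota(t^\frown 1)$ of $\iota(t)$ of equal length (using normality); and at limit $b$ set $\iota(b)=\bigcup_{\beta<\dom(b)}\iota(b\restriction\beta)$, which lies in $T(Y)$ by ${<}\kappa$-closure. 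Strict order preservation is built in, and $\perp$-preservation follows by, for any incomparable $s,t$, considering the meet $s\wedge t$ and the incomparability of $\iota((s\wedge t)^\frown 0)$ and $\iota((s\wedge t)^\frown 1)$.

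For $(\Leftarrow)$, the case $|X|\leq\kappa$ is trivial, so let $\iota:{^{<\kappa}}2\to T(X)$ be an isomorphic embedding. I would first refine $\iota$ to $\iota'$ that is continuous at limits, by setting $\iota'(s)=\iota(s)$ on levels of zero or successor length and $\iota'(s)=\bigcup_{\beta<\dom(s)}\iota(s\restriction\beta)$ on limit levels; a routine check confirms $\iota'$ is still an isomorphic embedding into $T(X)$. Extend $\iota'$ to cofinal branches by $\iota'(b):=\bigcup_{\beta<\kappa}\iota'(b\restriction\beta)\in{^\kappa}\lambda$ for $b\in{^\kappa}2$; choosing witnesses $y_\beta\in X$ with $\iota'(b\restriction\beta)\sqsubseteq y_\beta$, we have $y_\beta\to\iota'(b)$ in the ${<}\kappa$-box topology, so $\iota'(b)\in X$ by closedness of $X$. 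Let $Y$ be the topological closure of $\{\iota'(b):b\in{^\kappa}2\}$; then $Y\subseteq X$ is nonempty and closed, with $T(Y)=\{\iota'(b)\restriction\alpha:b\in{^\kappa}2,\alpha<\kappa\}$.

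It remains to verify that $T(Y)$ is $\kappa$-perfect. Normality and cofinal splitting are direct from the construction: every $\sigma\in T(Y)$ is an initial segment of some $\iota'(b)$, longer prefixes of $b$ extend $\sigma$ to any desired level, and $\iota'(s^\frown 0),\iota'(s^\frown 1)$ for sufficiently long $s\sqsubseteq b$ supply incomparable extensions of $\sigma$ in $T(Y)$. The main obstacle is ${<}\kappa$-closure of $T(Y)$: given an increasing chain $(\tau_\xi)_{\xi<\eta}$ in $T(Y)$ of length $\eta<\kappa$ with union $\tau$, I must produce $b\in{^\kappa}2$ with $\tau\sqsubseteq\iota'(b)$. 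The $\perp$-preservation of $\iota'$ makes each set $\{v\in{^{<\kappa}}2:\iota'(v)\sqsubseteq\tau_\xi\}$ a $\sqsubseteq$-linearly ordered, downward-closed chain, and continuity of $\iota'$ at limits closes it under unions; hence each such set has a maximum $v_\xi$, and the $v_\xi$ are increasing in $\xi$. Setting $b$ to extend $\bigcup_\xi v_\xi$ appropriately (arbitrarily if $\bigcup_\xi v_\xi$ lies already in ${^{<\kappa}}2$) and exploiting the incomparability of $\iota'(v_\xi^\frown 0),\iota'(v_\xi^\frown 1)$ to thread the bits of $b$ so as to track $\tau$, I would verify that $\iota'(b)\sqsupseteq\tau$, completing the proof.
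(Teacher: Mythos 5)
The paper states this proposition without proof (it is flagged as ``well-known'' and ``easily proven''), so there is no argument of the authors' to compare against; judged on its own, your proof is correct and follows the standard route. The forward direction and the preliminary reductions in the backward direction (passing to a limit-continuous $\iota'$, extending it to branches, and identifying $T(Y)$ with $\{\iota'(b)\restriction\alpha \mid b\in{^{\kappa}}2,\ \alpha<\kappa\}$) are all sound, as are normality and cofinal splitting of $T(Y)$. The one step you leave as a sketch is the final verification of ${<}\kappa$-closure, and it does close up --- with less ``threading of bits'' than you suggest, since at most one bit of $b$ beyond $\bigcup_\xi v_\xi$ ever needs to be chosen. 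Concretely, with $v_\xi$ the maximum of $V_\xi=\{v\mid \iota'(v)\sqsubseteq\tau_\xi\}$ (pass to a tail on which these sets are nonempty; if all are empty then every $\tau_\xi$ is a proper initial segment of $\iota'(\emptyset)$ and you are done) and $v=\bigcup_\xi v_\xi$, note $\dom(v)<\kappa$ by regularity. If the $v_\xi$ do not stabilize, continuity at limits gives $\iota'(v)=\bigcup_\xi\iota'(v_\xi)\sqsubseteq\tau$, while maximality of $v_\xi$ gives $\tau_\xi\sqsubseteq\iota'(v_{\xi'})$ whenever $\xi<\xi'$ and $v_\xi\neq v_{\xi'}$ (the two are comparable, being initial segments of $\tau_{\xi'}$, and $\iota'(v_{\xi'})\not\sqsubseteq\tau_\xi$); hence $\tau=\iota'(v)$ and any branch $b\sqsupseteq v$ works. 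If the $v_\xi$ stabilize at $v$, choose for each $\xi$ a branch $b_\xi$ with $\tau_\xi\sqsubseteq\iota'(b_\xi)$; then $v$ is a proper initial segment of $b_\xi$ (by $\perp$-preservation), and maximality of $v$ in $V_\xi$ forces $\tau_\xi$ to be a proper initial segment of $\iota'(v^\frown i_\xi)$ where $i_\xi=b_\xi(\dom(v))$. Since the $\tau_\xi$ increase, either they all remain below the longest common initial segment of $\iota'(v^\frown 0)$ and $\iota'(v^\frown 1)$, or $i_\xi$ is eventually constant with value some $i<2$; in either case $\tau\sqsubseteq\iota'(v^\frown i)$ for some $i<2$, and any branch $b\sqsupseteq v^\frown i$ witnesses $\tau\in T(Y)$.
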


We end this section with some preliminaries on forcing. We first recall 
the definition of \emph{strategic closure}. 

\begin{definition}
  Let $\bb{P}$ be a partial order and let $\beta$ be an ordinal.
  \begin{enumerate}
    \item $\Game_\beta(\bb{P})$ is the two-player game in which Players I and II alternate 
    playing conditions from $\bb{P}$ to attempt to construct a $\leq_{\bb{P}}$-decreasing 
    sequence $\langle p_\alpha \mid \alpha < \beta \rangle$. Player I plays at odd stages, and 
    Player II plays at even stages (including limit stages). Player II is required to play 
    $p_0 = 1_{\bb{P}}$. If, during the course of play, a limit ordinal $\alpha < \beta$ is 
    reached such that $\langle p_\eta \mid \eta < \alpha \rangle$ has no lower bound in 
    $\bb{P}$, then Player I wins. Otherwise, Player II wins.
    \item $\bb{P}$ is said to be \emph{$\beta$-strategically closed} if Player II has a winning 
    strategy in $\Game_\beta(\bb{P})$.
  \end{enumerate}
\end{definition}

We now recall the following lemma, due to Silver. The lemma is usually stated 
with the hypothesis that $\bb{R}$ is $\tau^+$-closed (cf.\ \cite[Lemma 4]{U:Fragility}),
but the standard proof is easily seen to work under the weaker assumption that 
$\bb{R}$ is $(\tau+1)$-strategically closed, so we leave it to the reader.

\begin{lemma} \label{lemma: strat_closed_silver}
  Suppose that $\tau < \nu$ are infinite regular cardinals, with $2^\tau \geq \nu$. 
  Suppose that $T$ is a $\nu$-tree and $\bb{R}$ is a forcing poset that is 
  $(\tau + 1)$-strategically closed. Then forcing with $\bb{R}$ cannot add a branch 
  of length $\nu$ through $T$, i.e., every cofinal branch through $T$ in 
  $V^{\bb{R}}$ is in $V$.
\end{lemma}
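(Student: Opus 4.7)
The plan is to argue by contradiction. Suppose some condition $p \in \bb R$ forces that $\dot b$ is a cofinal branch of $T$ lying outside $V$; by replacing $\bb R$ with the restriction to conditions below $p$, I may assume $p = 1_{\bb R}$. Next, I replace $\tau$ by the least cardinal $\tau_0$ with $2^{\tau_0} \geq \nu$. Since $\tau_0 \leq \tau$, the forcing $\bb R$ remains $(\tau_0+1)$-strategically closed, and the minimality of $\tau_0$ ensures $2^\gamma < \nu$ for all $\gamma < \tau_0$, which is exactly the bookkeeping needed to keep the uniform heights introduced below bounded in $\nu$. Fix Player II's winning strategy $\sigma$ in $\Game_{\tau_0+1}(\bb R)$.

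I now build, by recursion on $\gamma \leq \tau_0$, an increasing continuous sequence $\langle \beta_\gamma \mid \gamma \leq \tau_0 \rangle$ of ordinals below $\nu$, together with a labelled binary tree of conditions and tree nodes $\{(p_s, x_s) \mid s \in {^{\leq \tau_0}}2\}$ satisfying: $p_s \Vdash x_s \in \dot b$, $x_s \in T_{\beta_{\dom(s)}}$, $p_t \leq_{\bb R} p_s$ whenever $s \sqsubseteq t$, and $x_{s^\frown 0} \perp_T x_{s^\frown 1}$ for every $s \in {^{<\tau_0}}2$. At a successor stage $\gamma + 1$, for each $s \in {^\gamma}2$ the hypothesis that $p_s$ forces $\dot b \notin V$ yields two extensions of $p_s$ forcing incomparable values of $\dot b$ at some level of $T$ exceeding $\beta_\gamma$; letting $\beta_{\gamma+1}$ be the supremum over $s \in {^\gamma}2$ of these levels, which lies below $\nu$ because $2^\gamma < \nu$ and $\nu$ is regular, I extend each of the pair further to decide $\dot b(\beta_{\gamma+1})$, preserving the incomparability. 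At a limit stage $\gamma \leq \tau_0$ I set $\beta_\gamma = \sup_{\gamma' < \gamma} \beta_{\gamma'} < \nu$ and, for each $s \in {^\gamma}2$, apply $\sigma$ along the branch $s$ to obtain a lower bound for $\langle p_{s \restriction \gamma'} \mid \gamma' < \gamma \rangle$, then extend further to decide $\dot b(\beta_\gamma) = x_s$.

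At stage $\tau_0$, the nodes $\{x_s \mid s \in {^{\tau_0}}2\}$ all lie in the single level $T_{\beta_{\tau_0}}$ and are pairwise distinct (by the splitting condition propagated upward), so $|T_{\beta_{\tau_0}}| \geq 2^{\tau_0} \geq \nu$ contradicts $T$ being a $\nu$-tree. The step I expect to require the most care is the coordination between the binary tree construction and the game $\Game_{\tau_0+1}(\bb R)$: the branching choices at successor stages must be identified with Player I's moves and interleaved with Player II's $\sigma$-responses so that each branch $s \in {^{\tau_0}}2$ really does determine a legal play in which Player II follows $\sigma$, so that $\sigma$ genuinely supplies a lower bound at every limit $\gamma \leq \tau_0$. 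The reduction from $\tau$ to $\tau_0$ is the other technical point worth isolating, since without the bound $2^\gamma < \nu$ for $\gamma < \tau_0$ one could not maintain uniform heights below $\nu$ and the final pigeonhole at level $\beta_{\tau_0}$ would not go through.
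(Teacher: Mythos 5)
Your proof is correct and is precisely the standard Silver branch-splitting argument that the paper invokes and explicitly leaves to the reader: one builds a binary tree of conditions indexed by ${}^{\leq\tau_0}2$ deciding $\dot b$ incompatibly along incomparable branches, interleaving Player II's strategy in $\Game_{\tau_0+1}(\mathbb{R})$ to get lower bounds at limit stages, and concludes that some level of $T$ has size at least $2^{\tau_0}\geq\nu$. Your reduction to the least $\tau_0$ with $2^{\tau_0}\geq\nu$ is exactly the right bookkeeping to keep the levels $\beta_{\gamma+1}$ below $\nu$ (and note that nothing in the argument requires this $\tau_0$ to be regular, only that $\tau_0<\nu=\mathrm{cf}(\nu)$), so the proposal matches the intended proof.
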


\section{V\"{a}\"{a}n\"{a}nen's game} \label{section: vaananen}

The following game was introduced by V\"a\"an\"anen in \cite{vaananen}
in order to generalize the notion of perfectness.

\begin{definition}
  Suppose that $\kappa$ is a regular cardinal, $E$ is a subset of 
  ${^\kappa}\kappa$, $x_0 \in {^\kappa}\kappa$, and $\delta \leq \kappa$
  is an infinite ordinal. Then the two-player game $G_\kappa(E, x_0, \delta)$ 
  is defined as follows. The game consists of rounds indexed by ordinals 
  $\xi$ with $1 \leq \xi < \delta$. In round $\xi$, Player I first plays an 
  ordinal $\alpha_\xi < \kappa$, and then Player II plays an element 
  $x_\xi \in E$. The plays must satisfy the following requirements:
  \begin{itemize}
    \item $\langle \alpha_\xi \mid 1 \leq \xi < \delta \rangle$ is an increasing, 
    continuous sequence of ordinals;
    \item for all $0 \leq \eta < \xi < \delta$, we have 
    \begin{itemize}
      \item $x_\eta \neq x_\xi$;
      \item $x_\eta \restriction \alpha_{\eta + 1} = x_\xi \restriction 
      \alpha_{\eta+1}$.
    \end{itemize}
  \end{itemize}
  Player II wins if they can successfully play $x_\xi$ for all $1 \leq \xi < 
  \delta$; otherwise, Player I wins.
\end{definition}

\begin{definition} \label{def: game_perfect}
  Suppose that $\kappa$ is a regular cardinal, $E$ is a subset of 
  ${^\kappa}\kappa$, and $\delta \leq \kappa$ is an infinite ordinal. 
  Then we say that $E$ is \emph{$\delta$-perfect} if it is closed and,
  for every $x_0 \in E$, Player II has a winning strategy in 
  $G_\kappa(E, x_0, \delta)$.
\end{definition}

Note that $G_\kappa(E, x_0, \delta)$ becomes harder for Player II to win 
as $\delta$ increases; thus, if $\omega \leq \delta_0 \leq \delta_1 \leq \kappa$ 
and $E \subseteq {^{\kappa}}\kappa$ is $\delta_1$-perfect, then it is also 
$\delta_0$-perfect.

\begin{remark}
  Given a regular uncountable cardinal $\kappa$, there is a slight discrepancy between the 
  notion of $\kappa$-perfect isolated in Definition \ref{def: game_perfect} 
  notion of $\kappa$-perfect isolated in Definition \ref{def: basic_trees}(\ref{def: kappa_perfect}), 
  which we will call \emph{strongly $\kappa$-perfect} when we need to distinguish it from the 
  notion in Definition \ref{def: game_perfect}. 
  For example, as noted in \cite{vaananen}, the set of all $x \in {^{\kappa}}\kappa$ such that 
  $x(\alpha) = 0$ for only finitely many $\alpha$ is $\kappa$-perfect but not strongly 
  $\kappa$-perfect. However, it is readily verified that every strongly $\kappa$-perfect 
  subset of ${^{\kappa}}\kappa$ is $\kappa$-perfect and, conversely, every $\kappa$-perfect 
  subset of ${^{\kappa}}\kappa$ contains a strongly $\kappa$-perfect subset. In particular, the 
  $\kappa$-perfect set property is equivalent when defined with either notion. Hence, for the purposes 
  of this paper, it will not be necessarily to distinguish between the two.
\end{remark}

In light of the above remark, the following definition generalizes Definition \ref{def: psp} 
in the setting of ${^\kappa}\kappa$.

\begin{definition}
  Suppose that $\kappa$ is a regular infinite cardinal, $\delta \leq \kappa$ is an infinite 
  ordinal, and $X \subseteq {^\kappa}\kappa$. We say that $X$ has the \emph{$\delta$-perfect 
  set property}, denoted $\PSP_\delta(X)$, if either $|X| \leq \kappa$ or $X$ contains 
  a $\delta$-perfect subset.
\end{definition}

The following proposition is readily verified; we leave the proof to the reader.

\begin{proposition} \label{prop: copy}
  Suppose that $\kappa$ is a regular cardinal, $\delta \leq \kappa$ is an 
  infinite ordinal, and $T \subseteq {^{<\kappa}}\kappa$ is a tree. If 
  $[T]$ is $\delta$-perfect, then $T$ contains a copy of ${^{<\delta}}2$. \qed
\end{proposition}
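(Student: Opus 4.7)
The plan is to fix any $x_0 \in [T]$ and a winning strategy $\Sigma$ for Player II in $G_\kappa([T], x_0, \delta)$, and then use $\Sigma$ to recursively build an isomorphic embedding $\iota : {^{<\delta}}2 \to T$. For each $s \in {^\eta}2$ with $\eta < \delta$, I will simultaneously define a partial play $\pi_s$ of $G_\kappa([T], x_0, \delta)$ of length $\eta$ in which Player II follows $\Sigma$ and an ordinal $\beta_s < \kappa$, and then set $\iota(s) := x^s_\eta \restriction \beta_s$, where $x^s_\eta$ is II's last move in $\pi_s$ (under the convention $x^s_0 := x_0$). The plays are to cohere in the sense that $\pi_s \sqsubseteq \pi_{s'}$ whenever $s \sqsubseteq s'$, and we maintain the inductive invariant $\beta_s \geq \alpha^s_\eta$ for $\eta \geq 1$, where $\alpha^s_\eta$ is Player I's last move in $\pi_s$.

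At $\eta = 0$, let $\pi_\emptyset$ be empty and $\beta_\emptyset := 0$, so $\iota(\emptyset) = \emptyset$. At a successor step $\eta \to \eta+1$, given $s \in {^\eta}2$, have Player I play $\alpha^{s ^\frown 0}_{\eta+1} := \beta_s + 1$ (a valid move by the invariant) and let $\Sigma$ produce the response $x^{s ^\frown 0}_{\eta+1} \in [T]$. Since the game rules force $x^{s ^\frown 0}_{\eta+1} \neq x^s_\eta$, there is a least $\gamma_s$ at which these branches disagree, and $\gamma_s \geq \alpha^{s ^\frown 0}_{\eta+1}$. Now have Player I play $\alpha^{s ^\frown 1}_{\eta+1} := \gamma_s + 1$, eliciting $x^{s ^\frown 1}_{\eta+1}$ via $\Sigma$, and set $\beta_{s ^\frown 0} = \beta_{s ^\frown 1} := \gamma_s + 1$. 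At a limit stage $\eta$, let $\pi_s$ be the natural union of the $\pi_{s \restriction \xi}$, extended by Player I's forced continuous move $\alpha^s_\eta := \sup_{\xi < \eta} \alpha^s_\xi$ and II's response via $\Sigma$; set $\beta_s := \sup_{\xi < \eta} \beta_{s \restriction \xi}$, which stays below $\kappa$ by regularity since $\eta < \delta \leq \kappa$.

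The verification reduces to three points. First, both $\iota(s ^\frown 0)$ and $\iota(s ^\frown 1)$ extend $\iota(s)$: the game's agreement condition gives $x^{s ^\frown i}_{\eta+1} \restriction \alpha^{s ^\frown i}_{\eta+1} = x^s_\eta \restriction \alpha^{s ^\frown i}_{\eta+1}$, and $\alpha^{s ^\frown i}_{\eta+1} > \beta_s$ for $i \in \{0,1\}$. Second, $\iota(s ^\frown 0) \perp \iota(s ^\frown 1)$: by definition of $\gamma_s$, $\iota(s ^\frown 0)(\gamma_s) \neq x^s_\eta(\gamma_s) = \iota(s ^\frown 1)(\gamma_s)$, where the last equality uses that $\alpha^{s ^\frown 1}_{\eta+1} = \gamma_s + 1 > \gamma_s$. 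Third, at limits $\iota(s) = \bigcup_{\xi < \eta} \iota(s \restriction \xi)$, which reduces to the bookkeeping fact $\beta_{s \restriction \xi} < \alpha^s_{\xi+1}$ (direct from the successor-step definitions), after which the game's agreement rule forces each $\iota(s \restriction \xi)$ to coincide with the corresponding initial segment of $x^s_\eta$. The only subtle point is this limit-stage coherence; everything else is a direct unwinding of the definitions, and the resulting $\iota$ is a strict order-preserving, $\perp$-preserving embedding, giving a copy of ${^{<\delta}}2$ in $T$.
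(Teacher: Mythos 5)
Your proof is correct, and since the paper explicitly leaves this proposition to the reader, your argument is exactly the intended one: use the game's distinctness requirement to force a splitting point $\gamma_s$, the agreement requirement $x_\eta \restriction \alpha_{\eta+1} = x_\xi \restriction \alpha_{\eta+1}$ to get coherence of the restrictions, and continuity of the $\alpha$-sequence together with regularity of $\kappa$ to pass through limits. All the bookkeeping (the invariant $\beta_s \geq \alpha^s_\eta$, the inequality $\beta_{s\restriction\xi} < \alpha^s_{\xi+1}$ needed at limits, and the incomparability of $\iota(s^\frown 0)$ and $\iota(s^\frown 1)$ at coordinate $\gamma_s$) checks out.
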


Note that a set $E \subseteq {^\kappa}\kappa$ is $\omega$-perfect if and only 
if it is perfect in the classical sense. Unlike the case with ${^\omega}\omega$, 
it is not necessarily the case that perfect sets have full cardinality. For example, 
if $\kappa$ is regular and uncountable and 
\[
  E = \{x \in {^\kappa}\kappa \mid |\{\alpha < \kappa \mid x(\alpha) \neq 0\}| 
  < \kappa\},
\] 
then $E$ is readily seen to be a perfect subset of ${^\kappa}\kappa$. However, 
$|E| = \kappa^{<\kappa}$, so if $\kappa^{<\kappa} < 2^\kappa$, then 
$|E| < 2^\kappa$. Nonetheless, we do recover a version of the Cantor-Bendixson 
theorem at higher $\kappa$:

\begin{proposition} \label{prop: kpsp}
  Suppose that $E \subseteq {^\kappa}\kappa$ is closed. Then there is 
  $X \subseteq E$ such that $|X| \leq \kappa^{<\kappa}$ and 
  $E \setminus X$ is perfect.
\end{proposition}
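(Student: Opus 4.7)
The plan is to run the Cantor--Bendixson derivative at height $\kappa$. Define a decreasing sequence of closed sets $\langle E_\alpha \mid \alpha \in \mathrm{Ord} \rangle$ by $E_0 = E$, $E_{\alpha+1} = E_\alpha \setminus \mathrm{iso}(E_\alpha)$, where $\mathrm{iso}(E_\alpha)$ denotes the set of isolated points of $E_\alpha$ (in the subspace topology), and $E_\lambda = \bigcap_{\alpha < \lambda} E_\alpha$ for limit $\lambda$. No point is ever removed at a limit stage, so if a point $x$ is removed at all, there is a least $\alpha(x)$ with $x \notin E_{\alpha(x)+1}$, and at that stage $x$ is isolated in $E_{\alpha(x)}$, witnessed by some $\sigma_x \in {^{<\kappa}}\kappa$ with $N_{\sigma_x} \cap E_{\alpha(x)} = \{x\}$.

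Next, observe that the assignment $x \mapsto \sigma_x$ is injective on the set of all removed points. Indeed, if $\sigma_x = \sigma_y$ and, say, $\alpha(x) \le \alpha(y)$, then $y \in E_{\alpha(y)} \cap N_{\sigma_y} \subseteq E_{\alpha(x)} \cap N_{\sigma_x} = \{x\}$, so $x = y$. Since $|{^{<\kappa}}\kappa| = \kappa^{<\kappa}$, this shows that the total set $X$ of removed points satisfies $|X| \le \kappa^{<\kappa}$. Because a successor step either removes a point or leaves $E_\alpha$ unchanged, the sequence must stabilize at some ordinal $\alpha^* \le (\kappa^{<\kappa})^+$; let $\bar E := E_{\alpha^*} = E \setminus X$, a closed set with no isolated points.

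It remains to verify that $\bar E$ is $\omega$-perfect in the sense of Definition \ref{def: game_perfect}; equivalently, that a closed set without isolated points is $\omega$-perfect, which is the promised higher analogue of the classical equivalence. Given $x_0 \in \bar E$, Player II plays as follows: in round $\xi < \omega$, after Player I plays $\alpha_\xi$, Player II picks any $x_\xi \in \bar E \cap N_{x_{\xi-1} \rest \alpha_\xi}$ distinct from $x_0, \ldots, x_{\xi - 1}$. Such an $x_\xi$ exists because $x_{\xi-1}$ belongs to $\bar E \cap N_{x_{\xi-1} \rest \alpha_\xi}$, which is a nonempty relatively open subset of $\bar E$ and hence must be infinite, as $\bar E$ has no isolated points. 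Setting $X$ as above completes the proof.

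The only step with any content is the injectivity of $x \mapsto \sigma_x$, which controls the cardinality; the rest is bookkeeping. The main subtlety to keep in mind is that, in contrast to the classical setting, one cannot hope to bound $|X|$ by $\kappa$ itself, since already closed subsets of ${^\kappa}\kappa$ of cardinality $\kappa^{<\kappa}$ may consist entirely of isolated points.
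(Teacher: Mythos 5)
Your argument is correct, but it takes a genuinely different route from the paper's. The paper's proof is a one-shot removal: it sets $\Sigma = \{\sigma \in {^{<\kappa}}\kappa \mid |E \cap N_\sigma| \leq \kappa^{<\kappa}\}$ and $X = \bigcup\{E \cap N_\sigma \mid \sigma \in \Sigma\}$, so that $|X| \leq \kappa^{<\kappa}$ and every nonempty relative basic neighborhood of $E \setminus X$ has cardinality greater than $\kappa^{<\kappa}$, hence in particular $E\setminus X$ has no isolated points. You instead iterate the Cantor--Bendixson derivative transfinitely and control $|X|$ via the injective assignment $x \mapsto \sigma_x$; that injectivity argument is correct, as is your verification that a closed set with no isolated points is $\omega$-perfect in the game sense. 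The two constructions produce genuinely different sets: on the paper's own example $E = \{x \in {^\kappa}\kappa \mid |\{\alpha \mid x(\alpha) \neq 0\}| < \kappa\}$, which is perfect of cardinality $\kappa^{<\kappa}$, the paper's $X$ is all of $E$ while yours is empty. Your version yields the canonical perfect kernel (the smallest admissible $X$), at the cost of a transfinite recursion; the paper's version is two lines and, more importantly for what follows, yields the stronger conclusion that all nonempty neighborhoods of $E \setminus X$ are large, which is exactly the property reused in the proof of Theorem \ref{thm: omega_plus_one_perfect}, where $X$ is taken ``as in the proof of Proposition \ref{prop: kpsp}.'' Both arguments establish the proposition as stated. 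One small caveat: your closing remark that a closed subset of ${^\kappa}\kappa$ of cardinality $\kappa^{<\kappa}$ ``may consist entirely of isolated points'' is tangential to the proof and would itself require exhibiting a closed discrete set of that size; nothing in your argument depends on it.
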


\begin{proof}
  Let $\Sigma = \{\sigma \in {^{<\kappa}}\kappa \mid |E \cap N_\sigma| 
  \leq \kappa^{<\kappa}\}$, and let $X = \bigcup \{E \cap N_\sigma \mid 
  \sigma \in \Sigma\}$. Then $X$ is as desired.
\end{proof}

\begin{corollary} \label{cor: omega_psp}
  If $\kappa$ is an infinite regular cardinal such that $\kappa^{<\kappa} = \kappa$, 
  then every closed set $E \subseteq {^\kappa}\kappa$ satisfies the $\omega$-perfect set 
  property.
\end{corollary}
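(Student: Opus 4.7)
The plan is to derive this as an immediate consequence of Proposition \ref{prop: kpsp} combined with the cardinal arithmetic hypothesis. Given a closed $E \subseteq {^\kappa}\kappa$, I would apply Proposition \ref{prop: kpsp} to obtain $X \subseteq E$ with $|X| \leq \kappa^{<\kappa}$ and $E \setminus X$ perfect. Using the assumption $\kappa^{<\kappa} = \kappa$, this yields $|X| \leq \kappa$.

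Now I split into two cases. If $|E| \leq \kappa$, then $\PSP_\omega(E)$ holds trivially by the first clause of the definition of the $\omega$-perfect set property. Otherwise, $|E| > \kappa \geq |X|$, so $E \setminus X$ is nonempty (in fact of cardinality $|E|$). Since $E \setminus X$ is perfect in the classical sense, and since, as noted immediately after Proposition \ref{prop: copy}, a subset of ${^\kappa}\kappa$ is $\omega$-perfect if and only if it is perfect in the classical sense, we conclude that $E \setminus X$ is an $\omega$-perfect subset of $E$. Hence $\PSP_\omega(E)$ holds by the second clause of the definition.

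There is really no obstacle here; the work has already been done in Proposition \ref{prop: kpsp}, and this corollary just records the observation that under the standard cardinal arithmetic assumption $\kappa^{<\kappa} = \kappa$, the cardinality bound in that proposition collapses to $\kappa$, matching the threshold in the definition of $\PSP_\omega$.
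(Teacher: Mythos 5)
Your proof is correct and matches the paper's intended argument: the corollary is stated without proof precisely because it follows immediately from Proposition \ref{prop: kpsp} once $\kappa^{<\kappa} = \kappa$ collapses the cardinality bound to $\kappa$. Your case split (handling $|E| \leq \kappa$ separately so that the perfect remainder $E \setminus X$ is genuinely nonempty in the other case) is exactly the right way to make the ``immediate'' deduction explicit.
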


As noted in the introduction, in ${^\omega}\omega$, every nonempty closed set is a continuous 
image of the entire space. For uncountable $\kappa$, this is no longer the case. However, if we 
know that a closed set $E$ is a continuous image of ${^\kappa}\kappa$, then we can 
slightly improve upon the conclusion of the preceding proposition. The proof of the following 
theorem is a variation on that of \cite[Theorem 1.1]{lucke_schlicht_descriptive}

\begin{theorem} \label{thm: omega_plus_one_perfect}
  Suppose that $\kappa$ is an uncountable regular cardinal and $E \subseteq 
  {^\kappa}\kappa$ is a closed set that is a continuous image of 
  ${^\kappa}\kappa$. Then there is $X \subseteq E$ with $|X| \leq 
  \kappa^{<\kappa}$ such that $E \setminus X$ is closed and, for every $x_0 \in E \setminus X$, 
  Player II has a winning strategy in $G_\kappa(E, x_0, \omega+1)$.  
\end{theorem}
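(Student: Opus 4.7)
My plan is to adapt the proof of Proposition~\ref{prop: kpsp} by leveraging the continuous surjection $f : {^\kappa}\kappa \to E$ to control the crucial $\omega$-th round. First I would define the bad set $X \subseteq E$ as an appropriate combination of (i) the Cantor--Bendixson set $X_0 := \{x \in E : \exists \sigma \sqsubseteq x,\ |N_\sigma \cap E| \le \kappa^{<\kappa}\}$ from Proposition~\ref{prop: kpsp}, and (ii) an open-in-$E$ thickening of the set $\{x \in E : \exists \rho \in {^{<\kappa}}\kappa,\ f[N_\rho] = \{x\}\}$ of ``$f$-locally constant'' images. The goal is that $|X| \le \kappa^{<\kappa}$, that $E \setminus X$ is closed, and that for every $x \in E \setminus X$ the preimage $f^{-1}(x) \subseteq {^\kappa}\kappa$ has empty interior and is therefore nowhere dense. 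This last property is crucial because ${^\kappa}\kappa$ in the $({<}\kappa)$-box topology is $\omega$-Baire (in fact $\kappa$-Baire), so a countable union of nowhere dense sets cannot cover any basic open neighborhood.

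Given $x_0 \in E \setminus X$, Player II's strategy lifts the play to the preimage space. Fix $z_0 \in f^{-1}(x_0)$ and maintain by induction on $n < \omega$ a triple $(x_n, z_n, \gamma_n)$ such that $f(z_n) = x_n \in E \setminus X$, the ordinal $\gamma_n < \kappa$ is increasing, $z_n \restriction \gamma_n$ extends $z_{n-1} \restriction \gamma_{n-1}$, and continuity of $f$ at $z_n$ yields $f[N_{z_n \restriction \gamma_n}] \subseteq N_{x_n \restriction \alpha_n}$. In round $n+1$, after Player I plays $\alpha_{n+1}$, continuity provides $\gamma_{n+1} \ge \gamma_n$ with $f[N_{z_n \restriction \gamma_{n+1}}] \subseteq N_{x_n \restriction \alpha_{n+1}}$; the Baire property, applied to the nowhere dense sets $f^{-1}(x_\eta)$ for $\eta \le n$ together with the fact that $f^{-1}(X)$ does not cover $N_{z_n \restriction \gamma_{n+1}}$ (since $z_n$ itself witnesses the contrary), furnishes $z_{n+1} \in N_{z_n \restriction \gamma_{n+1}}$ with $f(z_{n+1}) \notin X \cup \{x_\eta : \eta \le n\}$, and we set $x_{n+1} := f(z_{n+1})$.

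At the limit stage, set $\gamma_\omega := \sup_n \gamma_n < \kappa$ (by regularity of $\kappa$) and $z_\omega^* := \bigcup_n z_n \restriction \gamma_n \in {^{\gamma_\omega}}\kappa$. One more Baire-category step produces $z_\omega \in N_{z_\omega^*}$ with $f(z_\omega) \neq x_n$ for every $n < \omega$, using that each $f^{-1}(x_n)$ is closed and nowhere dense. Setting $x_\omega := f(z_\omega) \in E$, continuity of $f$ together with the fact that $z_\omega$ extends $z_n \restriction \gamma_n$ for every $n$ forces $f(z_\omega) \in \bigcap_n N_{x_n \restriction \alpha_n} = N_{\sigma_\omega}$, so the agreement conditions $x_\omega \restriction \alpha_{n+1} = x_n \restriction \alpha_{n+1}$ hold, and $x_\omega \neq x_n$ by choice of $z_\omega$.

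The main technical obstacle I foresee is the precise definition of $X$: it must simultaneously be open in $E$ (so that $E \setminus X$ is closed), have cardinality at most $\kappa^{<\kappa}$, and be rich enough that $f^{-1}(x)$ is nowhere dense for every $x \notin X$. The Cantor--Bendixson piece $X_0$ is open by construction, but the $f$-locally-constant set $\{x : \exists \rho,\ f[N_\rho] = \{x\}\}$ is not obviously open in $E$, and saturating it into an open subset of $E$ of size $\le \kappa^{<\kappa}$ while preserving the nowhere-dense property is the key place where the continuous-image hypothesis must be used carefully; a direct cellularity bound on ${^{<\kappa}}\kappa$ should give the required cardinality control.
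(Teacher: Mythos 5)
There is a genuine gap in the inductive step, and it stems from choosing the wrong invariant. You maintain only that $f^{-1}(x_n)$ is nowhere dense for the points $x_n$ played so far, and then try to find $z_{n+1}\in N_{z_n\restriction\gamma_{n+1}}$ with $f(z_{n+1})\notin X\cup\{x_\eta \mid \eta\le n\}$ by combining the Baire property (to avoid the nowhere dense sets $f^{-1}(x_\eta)$) with the observation that $f^{-1}(X)$ does not cover the neighborhood. This last inference is a non sequitur: $f^{-1}(X)$ is (relatively) \emph{open}, and an open set failing to cover a neighborhood, together with finitely many nowhere dense sets each failing to cover it, does not imply that their union fails to cover it (in $\mathbb{R}$, the open set $(0,1)\setminus\{1/2\}$ and the nowhere dense set $\{1/2\}$ jointly cover $(0,1)$). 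Concretely, nothing in your setup rules out that $f[N_{z_n\restriction\gamma_{n+1}}]\subseteq X\cup\{x_0,\dots,x_n\}$ --- e.g., that this image has cardinality $\le\kappa^{<\kappa}$ --- even though $x_n\notin X$; the Cantor--Bendixson condition controls the size of $E\cap N_{x_n\restriction\alpha}$, not the size of the image of a neighborhood of the particular lift $z_n$ you happened to pick. If that happens, no admissible $z_{n+1}$ exists, the induction halts, and the limit-stage Baire step (which needs all $f^{-1}(x_n)$ to be nowhere dense) collapses with it.

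The missing idea, which is the heart of the paper's proof, is to carry the largeness invariant on the \emph{lifts} rather than a smallness invariant on the point-preimages: one chooses each $y_n\in f^{-1}(x_n)$ so that $|f[N_{y_n\restriction\gamma}]|>\kappa^{<\kappa}$ for \emph{every} $\gamma<\kappa$. A short pigeonhole argument (the paper's Claim \ref{claim: diverse_image}) shows such a lift exists inside any open $U$ with $|f[U]|>\kappa^{<\kappa}$, and this invariant is inherited by the open set $U=N_{y_n\restriction\gamma_n}\setminus\bigcup_{i\le n}f^{-1}\{x_i\}$ (whose image still has size $>\kappa^{<\kappa}$ because only finitely many points were removed), immediately yielding $y_{n+1}$ and $x_{n+1}=f(y_{n+1})$ distinct from all previous plays. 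With this bookkeeping the distinctness of $x_\omega$ at the limit is automatic (one records ordinals $\beta_n$ with $x_\omega\restriction\beta_n=x_n\restriction\beta_n\neq x_{n-1}\restriction\beta_n$), so no Baire-category argument is needed at all. Relatedly, your proposed ``open-in-$E$ thickening'' of the locally-constant image set of size $\le\kappa^{<\kappa}$ --- which you correctly flag as the main obstacle --- need not exist (a locally constant value can be a condensation point of $E$, forcing any relatively open neighborhood to be large); the paper's $X$ is just the Cantor--Bendixson part, and the locally-constant phenomenon is absorbed entirely by the choice of good lifts.
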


\begin{proof}
  Let $f: {^\kappa}\kappa \ra E$ be a continuous surjection. Let $X$ be as in the proof of 
  Proposition \ref{prop: kpsp}. Let $X_0$ be the set of $x \in E$ such that, 
  for every $y \in f^{-1}\{y\}$, there is $\alpha < \kappa$ such that 
  $|f[N_{y \restriction \alpha}]| \leq \kappa^{<\kappa}$.
  
  \begin{claim} \label{claim: small_x_0}
    $|X_0| \leq \kappa^{<\kappa}$.
  \end{claim}
  
  \begin{proof}
    Let 
    \[
      \Sigma = \{\sigma \in {^{<\kappa}}\kappa \mid |f[N_\sigma]| \leq 
      \kappa^{<\kappa}\},
    \]
    and let $Y = \bigcup \{f[N_\sigma] \mid \sigma \in \Sigma\}$. Then 
    $|Y| \leq \kappa^{<\kappa}$ and $X_0 \subseteq Y$.
  \end{proof}
  
  \begin{claim} \label{claim: closure}
    $X \subseteq X_0$, and $E \setminus X$ is the closure of $E \setminus X_0$ in 
    ${^{\kappa}}\kappa$.
  \end{claim}
  
  \begin{proof}
    The facts that $X \subseteq X_0$ and $E \setminus X$ is closed follow immediately from 
    the definitions. To show that $E \setminus X$ is the closure of $E \setminus X_0$, it 
    suffices to show that, for every $x \in E \setminus X$ and every open set $U$ 
    with $x \in U$, there is $y \in U \cap (E \setminus X_0)$. Thus, fix such $x$ and $U$. 
    Without loss of generality, we may assume that $U = N_{x \restriction \alpha}$ for 
    some $\alpha < \kappa$. Since $x \notin X$, we know that $|U \cap E| > \kappa^{<\kappa}$. 
    Thus, by Claim \ref{claim: small_x_0}, we know that $U \cap (E \setminus X_0) \neq \emptyset$.
  \end{proof}
  
  It remains to show that 
  Player II has a winning strategy in $G_\kappa(E, x_0, \omega+1)$ 
  for all $x_0 \in E \setminus X$. We first establish the following claim.
  
  \begin{claim} \label{claim: diverse_image}
    Suppose that $U \subseteq {^{\kappa}}\kappa$ is an open set such that $|f[U]| > 
    \kappa^{<\kappa}$. Then there is $y \in U$ such that $|f[N_{y \restriction \alpha}]| 
    > \kappa^{<\kappa}$ for all $\alpha < \kappa$.
  \end{claim}
  
  \begin{proof}
    If not, then, for every $y \in U$, there is $\alpha_y < \kappa$ such that $|f[N_{y \restriction 
    \alpha_y}]| \leq \kappa^{<\kappa}$. Then we have $|f[U]| \leq | \bigcup \{ 
    f[N_{y \restriction \alpha_y}] \mid y \in U\}| \leq \kappa^{<\kappa}$, which is 
    a contradiction.
  \end{proof}
  
  \begin{claim} \label{claim: noniso}
    Suppose that $x \in E \setminus X$ and $\alpha < \kappa$. Then there is $x' \in 
    E \setminus X_0$ such that $x \neq x'$ and $x'\restriction \alpha = x \restriction \alpha$.
  \end{claim}
  
  \begin{proof}
    If $x \in X_0$ then we can choose any $x'$ in the set $(E \setminus X_0) \cap 
    N_{x \restriction \alpha}$, which is nonempty by Claim \ref{claim: closure}. Thus, we can 
    assume that $x \notin X_0$. Choose $y \in {^{\kappa}}\kappa$ such that $f(y) = x$ 
    and $|f[N_{y \restriction \beta}]| > \kappa^{<\kappa}$ for all $\beta < \kappa$. By the 
    continuity of $f$, we can find $\beta < \kappa$ such that $f[N_{y \restriction \beta}] 
    \subseteq N_{x \restriction \alpha}$. Let $U = N_{y \restriction \beta} \setminus 
    f^{-1}\{x\}$. Then $U$ is an open set such that $|f[U]| > \kappa^{<\kappa}$. 
    Therefore, by Claim \ref{claim: diverse_image} we can find $y' \in U$ such that 
    $|f[N_{y' \restriction \gamma}]| > \kappa^{<\kappa}$ for all $\gamma < \kappa$. 
    Then $x' = f(y')$ is as desired.
  \end{proof}
  
  Fix $x_0 \in E \setminus X$. We will describe a winning strategy for Player 
  II in $G_\kappa(E \setminus X, x_0, \omega+1)$. In the course of the game, 
  as the players play the sequences $\langle \alpha_n \mid 1 \leq n \leq \omega \rangle$ 
  and $\langle x_n \mid 1 \leq n \leq \omega \rangle$,
  Player II will also construct a sequence $\langle y_n \mid 1 \leq n < \omega \rangle$ 
  of elements of ${^\kappa}\kappa$
  and increasing sequences $\langle \beta_n \mid 1 \leq n < \omega \rangle$ and 
  $\langle \gamma_n \mid 1 \leq n < \omega \rangle$ of ordinals 
  below $\kappa$ such that, for all $1 \leq n < \omega$, we have
  \begin{itemize}
    \item $f(y_n) = x_n \in E \setminus X_0$;
    \item $|f[N_{y_n \restriction \gamma}]| > \kappa^{<\kappa}$ for all $\gamma < \kappa$;
    \item $f[N_{y_n \restriction \beta_n}] \subseteq N_{x_n \restriction 
    \beta_n}$;
    \item $\alpha_n \leq \beta_n$;
    \item $x_n \restriction \beta_n \neq x_{n-1} \restriction \beta_n$;
    \item $\max\{\alpha_{n+1}, \beta_n\} \leq \gamma_n \leq \beta_{n+1}$;
    \item $f[N_{y_n \restriction \gamma_n}] \subseteq N_{x_n \restriction \alpha_{n+1}}$;
    \item $y_{n+1} \restriction \gamma_n = y_n \restriction \gamma_n$.
  \end{itemize}   
  We begin by describing Player II's first play. After Player I plays an ordinal 
  $\alpha_1$, apply Claim \ref{claim: noniso} to find $x_1 \in E \setminus X_0$ 
  such that $x_1 \neq x_0$ and $x_1 \restriction \alpha_1 = x_0 \restriction \alpha_0$.  
  Then fix $y_1 \in {^\kappa}\kappa$ such that $f(y_1) = x_1$ and $|f[N_{y_1 \restriction \gamma}]| 
  > \kappa^{<\kappa}$ for all $\gamma < \kappa$. Finally, using the continuity of $f$ 
  and the fact that $\kappa$ has uncountable cofinality fix $\beta_1$ with $\alpha_1 \leq 
  \beta_1 < \kappa$ such that $f[N_{y_1 \restriction \beta_1}] \subseteq N_{x_1 \restriction 
  \beta_1}$ and $x_1 \restriction \beta_1 \neq x_0 \restriction \beta_0$.
  
  Now suppose that $1 \leq n < \omega$ 
  and $\langle x_0, \alpha_1, x_1, \ldots, \alpha_{n+1} \rangle$ is a partial play 
  of the game, with Player II playing so far according to the strategy that we 
  are about to describe, and that Player II has also specified 
  $\langle (y_i, \beta_i) \mid 1 \leq i \leq n \rangle$. We specify how to choose 
  $x_{n+1}$, as well as $\gamma_n$, $\beta_{n+1}$, and $y_{n+1}$. First, choose $\gamma_n \geq 
  \max\{\alpha_{n+1}, \beta_n\}$ such that $f[N_{y_n \restriction \gamma_n}] 
  \subseteq N_{x_n \restriction \alpha_{n+1}}$, and let
  \[
    U = N_{y_n \restriction \gamma_n} \setminus \bigcup\{f^{-1}\{x_i\} \mid i \leq n\}.
  \]
  Then $U$ is an open set such that $|f[U]| > \kappa^{<\kappa}$, so, by Claim \ref{claim: diverse_image}, 
  we can choose $y_{n+1} \in U$ such that $|f[N_{y_{n+1} \restriction \gamma}]| > \kappa^{<\kappa}$ 
  for all $\gamma < \kappa$. Let $x_{n+1} = f(y_{n+1})$, and choose $\beta_{n+1} > \gamma_n$ such that
  \begin{itemize}
    \item $f[N_{y_{n+1} \restriction \beta_{n+1}}] \subseteq N_{x_{n+1} \restriction 
    \beta_{n+1}}$; and
    \item $x_{n+1} \restriction \beta_{n+1} \neq x_n \restriction \beta_{n+1}$.
  \end{itemize}
  It is readily verified that this satisfies all of the requirements of the construction, and 
  we can move on to the next round of the game.
  
  This completely describes Player II's strategy at rounds indexed by natural 
  numbers. It remains to show that, if they play according to this strategy, 
  then they guarantee that they will be able to play in round $\omega$. To this 
  end, suppose that $\langle (\alpha_n, x_n) \mid 1 \leq n < \omega \rangle$ 
  is an initial segment of the game of length $\omega$, with Player II playing 
  according to the described strategy. Suppose that 
  $\langle (\beta_n, y_n) \mid 1 \leq n < \omega \rangle$ are the auxiliary objects 
  specified by this strategy. 
  
  Now let $\beta_\omega = \sup\{\beta_n \mid n < \omega\} = \sup\{\gamma_n \mid n < \omega\} \geq 
  \alpha_\omega = \sup\{\alpha_n \mid n < \omega\}$. We know that, for all 
  $1 \leq n < \omega$, we have $y_{n+1} \restriction \gamma_n = y_n \restriction 
  \gamma_n$. We can therefore find a $y_\omega \in {^\kappa}\kappa$ such 
  that, for all $1 \leq n < \omega$, we have $y_\omega \restriction \gamma_n 
  = y_n \restriction \gamma_n$. Let $x_\omega = f(y_\omega)$. Then 
  $x_\omega \in E$ and, moreover, for all $1 \leq n < \omega$, we have 
  \[
    f[N_{y_\omega \restriction \gamma_n}] = f[N_{y_n \restriction 
    \gamma_n}] \subseteq N_{x_n \restriction \alpha_{n+1}}.
  \]
  
  Therefore, for all $n < \omega$ we have $x_\omega \restriction \alpha_{n+1} = x_n \restriction 
  \alpha_{n+1}$. By a similar argument, using the fact that $y_\omega \restriction \beta_n 
  = y_n \restriction \beta_n$ for all $1 \leq n < \omega$, we know that $x_\omega \restriction 
  \beta_n = x_n \restriction \beta_n \neq x_{n-1} \restriction \beta_n$, and hence 
  $x_\omega \notin \{x_n \mid n < \omega\}$. Thus, $x_\omega$ is a valid play for 
  Player II in round $\omega$, completing our description of a winning strategy for Player II 
  in $G(E, x_0, \omega+1)$.
\end{proof}

For some observations about this theorem and some remaining open questions, see 
Section \ref{section: questions} below.

\section{Kurepa trees and the perfect set property} \label{section: weak_kurepa}

As mentioned in the introduction, $\kappa$-Kurepa trees often provide natural examples 
of closed subsets of ${^\kappa}\kappa$ that fail to have various perfect set properties. 
Indeed, suppose that $\kappa$ is a regular uncountable cardinal that is not 
strongly inaccessible and $T \subseteq {^\kappa}\kappa$ is a $\kappa$-Kurepa tree. Let 
$\lambda < \kappa$ be the least cardinal such that $2^\lambda \geq \kappa$. Then we claim 
that $[T]$ cannot contain a $(\lambda + 1)$-perfect subset. If it did, then, by Proposition 
\ref{prop: copy}, we could find an isomorphic embedding $\iota : {^{<\lambda + 1}}2 \ra T$.
Since $2^{<\lambda} < \kappa$, we can find $\alpha < \kappa$ such that 
$\iota[{^{<\lambda}}2] \subseteq T_{<\alpha}$. Then $\{\iota(x) \restriction \alpha \mid 
x \in {^{\lambda}}2\}$ is a subset of $T_\alpha$ of cardinality $2^\lambda \geq \kappa$, 
contradicting the fact that $T$ is a $\kappa$-tree.

In this section, we show that $\kappa$-Kurepa trees are not needed to produce closed subsets 
of ${^{\kappa}}\kappa$ failing various perfect set properties. For concreteness and readability, 
we will focus on the important special case $\kappa = \omega_1$, but the techniques can readily 
be generalized to produce similar results at other cardinals. 

Beginning in a model with an inaccessible cardinal, we will produce two models 
in which $\GCH$ holds, the Kurepa Hypothesis fails, and there is a 
closed subset of ${^{\omega_1}}\omega_1$ that fails to have the $(\omega+1)$-perfect set 
property.\footnote{Note that this is sharp, by Corollary \ref{cor: omega_psp}} In both models, 
the closed subset of ${^{\omega_1}}\omega_1$ failing to have the $(\omega_1)$-perfect set property 
will be of the form $[T]$ for some weak Kurepa tree $T \subseteq {^{<\omega_1}}_{\omega_1}$. 
The fact that $\mathsf{PSP}_{\omega+1}([T])$ fails will be implied by the fact that, in both case, 
$T$ will fail to contain a copy of ${^{<\omega + 1}}2$. In the first model, constructed in 
Theorem \ref{Th:negKH+wKH+A-subtree}, every weak Kurepa tree will contain an Aronszajn subtree. 
In the second, constructed in Theorem \ref{Th:negKH+noAsubtrees}, the weak Kurepa tree $T$ we 
satisfying $\neg \mathsf{PSP}_{\omega + 1}([T])$ will not contain any Aronszajn subtree. Note 
that either theorem individually will establish Theorem A from the introduction.

We first need the following preliminary result.

\begin{theorem}\label{TreesCohen}
Assume that $G$ is $\Q=\Add(\omega,1)$-generic over $V$. Suppose $T$ is an arbitrary normal tree in $V$. Then $T$ does not contain a copy of ${}^{<\omega+1}2^{V[G]}$ in $V[G]$.
\end{theorem}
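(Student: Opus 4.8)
The plan is to work entirely in $V[G]$ with a supposed isomorphic embedding $\iota : {^{<\omega+1}}2^{V[G]} \to T$ and to reformulate what such a copy provides. Writing $u = \iota \restriction {^{<\omega}}2$, the map $u$ is itself an isomorphic embedding of the countable tree ${^{<\omega}}2$ into $T$, and for every $x \in ({^{\omega}}2)^{V[G]}$ the node $\iota(x)$ lies strictly $<_T$-above every $u_{x \restriction n}$, so the branch $b_x := \{u_{x \restriction n} \mid n < \omega\}$ has an upper bound in $T$. Since distinct $x,y$ split at some finite level, giving $u_{x \restriction (n+1)} \perp u_{y \restriction (n+1)}$, the branches $b_x$ are pairwise distinct. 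Letting $\gamma_x = \sup_n \height_T(u_{x \restriction n})$ (a countable limit ordinal, since $\omega_1$ is preserved) and taking the $<_T$-predecessor of $\iota(x)$ at level $\gamma_x$, I obtain a node $c_x \in T_{\gamma_x}$ lying above all of $b_x$. The first key use of \emph{normality} is here: at the limit level $\gamma_x$, a node is uniquely determined by its set of predecessors, so $c_x$ is the unique continuation of $b_x$ at level $\gamma_x$, and the map $x \mapsto c_x$ is injective. Crucially, each $c_x$ is an element of $T$, hence of $V$, and the branch $b_x = \{v \in T \mid v <_T c_x\} \cap u[{^{<\omega}}2]$ it continues is likewise definable from ground-model data once $c_x$ is known.

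Next I would exploit that $\Add(\omega,1)$ is \emph{countable}. For each fixed finite string $s \in {^{<\omega}}2$, the name $\dot\iota(\check s)$ can be forced to only countably many values (distinct decided values require pairwise incompatible conditions, and there are only countably many conditions). Collecting these over all $s$ and closing downward in $T$ yields a fixed \emph{countable} subtree $\hat U \in V$ of $T$ with the property that, for every generic $G$, the image $u[{^{<\omega}}2]$ is contained in $\hat U$. Consequently every branch $b_x$ is a branch through the ground-model countable tree $\hat U$, and its continuation $c_x$ is a ground-model node. Thus the set $\mathcal{B}$ of branches of $\hat U$ that are continued at a limit level in $T$ is a ground-model object, and $x \mapsto b_x$ is an injection of $({^{\omega}}2)^{V[G]}$ into $\mathcal{B}$.

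The main step, and the point where the real difficulty lies, is to produce a contradiction from the totality of $\iota$ by building a branch through the embedded copy that \emph{cannot} be continued. Because $u$ embeds ${^{<\omega}}2$, at each node $u_{x \restriction n}$ the copy splits into the two $\perp$-incomparable nodes $u_{(x \restriction n)^\frown 0}, u_{(x \restriction n)^\frown 1}$, and exactly one of them lies $<_T c_x$. I would run a fusion argument below a condition forcing $\iota$ to be a copy: construct a $\sqsubseteq$-increasing sequence of Cohen conditions together with a sequence of decided values of $\dot u$ along the generic real, using the binary splitting of the copy at each stage to steer the resulting branch of $\hat U$. The goal is to force the generic branch to be a \emph{new} branch of $\hat U$, i.e.\ one not lying in $V$; then, since any continuation node would lie in $T \subseteq V$ and (by normality) would be the unique node at the relevant limit level recovering that branch as its set of predecessors, the branch would in fact be coded in $V$ — contradicting its newness. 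This would contradict the assumption that $\iota$, and hence $u$, continues every branch coming from a real of $V[G]$ (in particular the one just built). The delicate part — the main obstacle — is verifying that the splitting freedom of the copy, against the fixed countable ground-model tree $\hat U$ and arbitrary $T$, genuinely suffices to drive the fused branch off of every ground-model branch of $\hat U$; this is where the interaction between Cohen genericity and the uniqueness of limit-level extensions guaranteed by normality must be used carefully, and where the hypothesis that $T$ is normal is essential.
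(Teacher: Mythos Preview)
The setup in your first two paragraphs is sound and overlaps with the paper's preliminaries: you correctly observe that from an embedding $\iota$ one obtains, for each $x \in ({}^\omega 2)^{V[G]}$, a node $c_x \in T \subseteq V$ uniquely continuing the branch $b_x$, and that the finite-level values of $\iota$ range inside a fixed countable ground-model tree $\hat U$.

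The gap is in the final step, and it is a genuine one. Your proposed contradiction --- build a branch that is ``new'' yet coded by $c_x \in V$ --- does not work as stated: the downward closure of $b_x$ in $T$ (equivalently in $\hat U$) is simply the set of $<_T$-predecessors of $c_x$, which is \emph{always} in $V$, regardless of $x$. What is potentially new is the real $x$ itself, and to recover $x$ from $c_x$ you must determine, for each $n$, which of $u_{s^\frown 0}, u_{s^\frown 1}$ lies below $c_x$ --- but this requires knowing $u$, and $u$ lives in $V[G]$, not in $V$. Your fusion sketch does not explain how to pin down $u$ (or enough of it) in the ground model, and a naive diagonalization against ground-model branches of $\hat U$ cannot succeed in $\omega$ steps when there may be continuum many such branches.

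The paper resolves exactly this point. It first normalizes the embedding to be meet-preserving and continuous at limits (so that $f(x) = \sup_n f(x\restriction n)$ for $x \in {}^\omega 2$). Then, since $\Add(\omega,1)$ has only countably many conditions, a Baire-category argument yields a single condition $q$ such that the set $\{x \in ({}^\omega 2)^V : q \text{ decides } \dot f(x)\}$ is somewhere dense, say above some $a^* \in {}^{<\omega}2$. Meet-preservation then forces $q$ to decide $\dot f(a)$ for \emph{every} finite $a \supseteq a^*$ (since each such $a$ is $x \wedge y$ for $x,y$ in the dense set), and continuity extends this to all ground-model $x \supseteq a^*$. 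At this point the inverse map is available in $V$: taking $c' = (a^*)^\frown c$ for the Cohen generic $c$, the node $t^* = f(c') \in T$ lies in $V$, and one reads off each bit of $c$ by checking (in $V$, using only the $q$-decided values) which of $\dot f((a^*)^\frown d\restriction k ^\frown \langle 0\rangle)$, $\dot f((a^*)^\frown d\restriction k ^\frown \langle 1\rangle)$ lies below $t^*$. This is the missing idea your sketch needs: not a fusion producing a ``new'' branch, but a single condition deciding the embedding on an entire cone.
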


\begin{proof}
We will proceed by contradiction and will assume that, in $V[G]$, there is an isomorphic embedding from ${}^{<\omega+1}2^{V[G]}$ into $T$. We first observe that it is suffices to consider embeddings which preserve meets and are continuous at limits:

\begin{claim}\label{cl:meets} 
Let $T$ be a normal tree, and let $f$ be an isomorphic embedding from  ${^{<\omega+1}}2$ to $T$.\footnote{For the purposes of the proof of Theorem \ref{TreesCohen}, think of this Claim as being applied in $V[G]$.} Then there is an isomorphic embedding $g$ which moreover preserves meets and is continuous at limits, i.e.:
\begin{enumerate}[(i)]
\item (preserves meets)\footnote{Note that if $g$ is an isomorphic embedding then $g$ preserves meets for all comparable nodes in  ${^{<\omega+1}}2$.} for all $a, b \in {^{<\omega+1}}2$, $g(a\wedge b)=g(a)\wedge g(b)$;
\item (continuous at limits) for all $x\in {^{\omega}}2$, $g(x)= \sup\set{g(x\rest n)}{n<\omega}$.
\end{enumerate}
\end{claim}

\begin{proof}
We first define $g$ which preserves meets on ${^{<\omega}}2$, and then argue that we can extend $g$ to the whole tree ${^{<\omega+1}}2$. For $a\in {^{<\omega}}2$, define $$g(a)= f(a^\smallfrown 0)\wedge f(a^\smallfrown 1).$$ First note that $f(a)\le g(a)$ for all $a\in {^{<\omega}}2$; from this it is easy to see that $g$ is $\perp$-preserving because $f$ is $\perp$-preserving. Moreover for all $a\subset b\in {^{<\omega}}2$ we have $g(a)< f(b)$ since $g(a)<f(b\rest (\dom(a)+1))\le f(b)$. Therefore for $a\sub b\in  {^{<\omega}}2$ we have $g(a)\le f(b)\le g(b)$, so $g$ preserves $\le$. To see that $g$ preserves meets, it is enough to verify that $g$ preserves meets for incomparable nodes in  ${^{<\omega}}2$, since we already verified that $g$ is an isomorphic embedding from ${^{<\omega}}2$ to $T$. Assume that  $a,b\in {^{<\omega}}2$ are incomparable. Then $g(a)\wedge g(b)=f(a)\wedge f(b)=g(a\wedge b)$: the first equality holds since $f(a)\le g(a)$ and $f(b)\le g(b)$ and the second holds by the definition of $g$ (observe that for all $a,a',b,b' \in T$, with $a,b$ incomparable, $a \le a', b \le b'$ implies $a \wedge b = a' \wedge b'$).

We extend the definition of $g$ to $x\in  {^{\omega}}2$ by letting $g(x)=\sup\set {g(x\rest n)}{n<\omega}$. This makes $g$ continuous at limits by definition provided we argue that the suprema exist: This is true because $g(x\rest n)\le f(x)$ for all $n<\omega$, and by normality of $T$, there is a unique node in $T$ on the supremum of the levels of $g(x\rest n)$, and this node is $\le f(x)$. Finally notice that if $g$ is an isomorphic embedding from ${^{<\omega+1}}2$ into $T$ which preserves meets on ${^{<\omega}}2$, then it also preserves meets for all $x,y\in {^{\omega}}2$, so $g$ is as required.
\end{proof}

\begin{remark}
Note that, if $T$ is a normal tree and $f$ is an isomorphic embedding from ${^{<\omega+1}}2$ to $T$ which is continuous at limits, then $f$ is determined by its restriction to ${^{<\omega}2}$.
\end{remark}

Let us work in $V[G]$. Let $T$ be a normal tree such that $T \in V$. Assume for a contradiction that  $f: ({}^{<\omega+1}2)^{V[G]} \to T$ is an isomorphic embedding; by Claim \ref{cl:meets}, we can assume that $f$ preserve meets and is continuous at limits. 

Let $p \in G$ force this property about some name $\dot{f}$ for $f$. Let us work in $V$ now. For each $x\in ({^{\omega}}2)^V$, there is a condition $q_x \le p$ in $\Q$ which decides the value of $\dot{f}(x)$. Since the forcing is countable, the Baire category theorem implies that there is some $q \in \Q$ such that $$Y =\set {x\in ({^{\omega}}2)^V}{q_x = q}$$ is not nowhere dense (we say it is \emph{somewhere dense}), i.e.\
$$\exists a^*\in {}^{<\omega}2 \; \forall a \in {}^{<\omega}2 \; (a^* \sub a \to \exists x \in Y \; x \rest |a| = a),$$ equivalently
\begin{equation} \label{eq:baire}\exists a^*\in {}^{<\omega}2 \; \forall a \in {}^{<\omega}2 \; (a^* \sub a \to \exists x \in ({}^\omega 2)^V\; x \rest |a| = a \mbox{ and } q \mbox{ decides } \dot{f}(x)).\end{equation}

We will now show that the fact that $\dot{f}$ is forced to be continuous allows us to prove that $$\mbox{$q$ decides $\dot{f}[N^v_{a^*}]$},$$ where $N^v_{a^*}$ denotes the set of all $x \in ({}^{\omega}2)^V$ which extend $a^*$.
 
Consider the subset $C=\set{x \wedge y}{ x,y \in Y, \ x \neq y}$ of ${^{<\omega}}2$. This set is in $V$, since $Y$ is in $V$. More importantly, $q$ decides the value of $\dot{f}(a)$ for all $a\in C$ since $q$ decides $\dot{f}(x)$ for all $x\in Y$ and $\dot{f}$ is forced to preserve meets. Since $Y$ satisfies (\ref{eq:baire}), it holds that $$a^*\uparrow=\set{a\in {^{<\omega}}2 }{a^*\sub a}$$ is a subset of $C$. Since $q$ decides $\dot{f}(a)$ for all $a\in C$, $q$ decides the values of $\dot{f}(a)$ for all $a\supseteq a^*$. Since $\dot{f}$ is forced to be continuous at limits, $q$ also decides the values of $\dot{f}(x)$ for all $x\in ({^{\omega}}2)^V$ such that $a^*\sub x$. 

Now we finish the proof by arguing that we can read off in $V$ the Cohen subset $c=\bigcup G$ added by $\Q$, which gives the desired contradiction. In $V[G]$, $c$ can be used to define a cofinal branch $c'$ through $a^*\uparrow$ as follows: Identify $c$ with a function from $\omega$ to $2$ and define $c'= (a^*)^\smallfrown c$; i.e.\ letting $n=\dom (a^*)$, $c'(k)= a^*(k)$ for $k<n$ and $c'(k)=c(k-n)$ for $k\ge n$. Recall that $f$ is an isomorphic embedding from $({^{<\omega+1}}2)^{V[G]}$ into $T$; therefore there is $t^*\in T$ such that $f(c')=t^*$. Since $t^*$ is in $T$, $t^*$ is in $V$. Define $d:\omega\to 2$ in $V$ inductively as follows:
\begin{enumerate}[(a)]
\item $d(0)=0$, provided $q\Vdash \dot{f}((a^*)^\smallfrown\langle n,0\rangle)\le t^*$; otherwise let $d(0)=1$. 
\item If $d\rest k$ is defined, set $d(k)=0$, provided $q\Vdash \dot{f}((a^*)^\smallfrown d\rest k^\smallfrown \langle k,0 \rangle )\le t^*$; otherwise let $d(k)=1$. 
\end{enumerate}
Now $q\Vdash\dot{f}((a^*)^\smallfrown d)\le t^*$ since $\dot{f}$ is forced to be an isomorphic embedding. We claim that $q\Vdash d=\dot{c}$: Since $q\Vdash\dot{f}((a^*)^\smallfrown d)\le t^*$ and $q\Vdash\dot{f}(\dot{c}')=t^*$ and $\dot{f}$ is forced to be an isomorphic embedding, $q\Vdash(a^*)^\smallfrown d=\dot{c}'$ and hence by the definition of $c'$, $q\Vdash d=\dot{c}$. This yields the desired contradiction.
\end{proof}

We now prove the first of the two main theorems of this section.

\begin{theorem}\label{Th:negKH+wKH+A-subtree}
  Suppose that there is an inaccessible cardinal $\kappa$. Then there is a forcing extension 
  in which 
  \begin{enumerate}
    \item $\kappa = \omega_2$;
    \item $\GCH$;
    \item $\neg \KH$;
    \item every weak Kurepa tree contains an Aronszajn subtree, moreover if we assume that $\Diamond$ holds in $V$, every weak Kurepa tree contains a Suslin subtree.
    \item there is a weak Kurepa tree that does not contain a copy of ${^{< \omega+1}}2$.
  \end{enumerate}
\end{theorem}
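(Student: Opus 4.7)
The plan is to use a two-step iteration $\mathbb{P} := \Coll(\omega_1, {<}\kappa) \ast \dot{\Add}(\omega,1)$ over a ground model $V$ satisfying GCH with an inaccessible $\kappa$ (assuming also $\diamondsuit_{\omega_1}$ for the Suslin strengthening of clause~(4)). Write $V_1$ for the collapse extension and $V_2$ for the full extension. Silver's standard argument gives that, in $V_1$, $\kappa = \omega_2$, GCH holds, and there are no Kurepa trees. Since $\Add(\omega,1)$ is ccc and countable it preserves cardinals and GCH via nice-name counting; an additional short argument shows it also preserves the non-existence of Kurepa trees, so clauses (1)--(3) hold in $V_2$.

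For clause~(5), I would take $T := ({^{<\omega_1}}2)^{V_1}$ as the witnessing weak Kurepa tree. In $V_2$, the set $({^{\omega_1}}2)^{V_1}$ retains cardinality $\omega_2^{V_1} = \omega_2^{V_2}$ (since Cohen preserves cardinals), and each of its elements is a cofinal branch of $T$; combined with $|T| = \omega_1$, this makes $T$ a weak Kurepa tree in $V_2$. Since $T$ is a normal tree of $V_1$, Theorem~\ref{TreesCohen} applied with $V_1$ as the ground model and $\Add(\omega,1)$ as the forcing yields directly that $T$ contains no copy of $({^{<\omega+1}}2)^{V_2}$ in $V_2$.

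For clause~(4), I would exploit the absence of Kurepa trees in $V_2$. Any weak Kurepa tree $S \in V_2$ must have at least one level $S_\alpha$ of size $\omega_1$, for otherwise $S$ would be an $\omega_1$-tree with $\omega_2$-many branches, a Kurepa tree. A branch-counting argument on the $\omega_1$-many cones above level $\alpha$ then permits extraction, via a careful thinning, of a downward-closed subtree $A \subseteq S$ with countable levels and no cofinal branch, i.e., an Aronszajn subtree. For the Suslin strengthening, one uses that $\diamondsuit_{\omega_1}$ is preserved through both the $\omega_1$-closed collapse and the ccc Cohen forcing, and then carries out a $\diamondsuit$-guided recursion inside $S$ to ensure that all maximal antichains in the constructed subtree are countable.

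The main obstacle is clause~(4): producing the Aronszajn (or Suslin) subtree inside an \emph{arbitrary} weak Kurepa tree — not just the tree witnessing~(5) — is significantly more delicate, requiring a general branch-counting and pruning argument that applies uniformly to every such $S$. A secondary technical issue is verifying that the Cohen forcing over $V_1$ introduces no new Kurepa trees, handling both the case of $V_1$-trees that could gain branches and the case of trees constructed freshly in $V_2$. The remaining clauses reduce to routine combinations of Silver's collapse argument, Theorem~\ref{TreesCohen}, and nice-name computations.
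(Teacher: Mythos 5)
Your treatment of clauses (1)--(3) and (5) is essentially the paper's: the forcing is $\Coll(\omega_1,{<}\kappa)$ followed by $\Add(\omega,1)$, clause (5) is witnessed by $({^{<\omega_1}}2)^V$ (which equals $({^{<\omega_1}}2)^{V_1}$ by countable closure of the collapse) via Theorem~\ref{TreesCohen}, and (3) is the standard reflection-plus-quotient argument. The one caveat on (3) is that you cannot cleanly separate ``no Kurepa trees in $V_1$'' from ``Cohen preserves this'': an $\omega_1$-tree of $V_2$ need not lie in $V_1$, and after reflecting to $\P_\theta\times\Q$ the tail collapse $\P^\theta$ must be shown not to add branches \emph{over} $V[G_\theta][H]$, which requires the mixed closed-times-ccc branch lemma rather than closure of $\P^\theta$ alone. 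This is standard, but it is the product argument, not a two-stage preservation.

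The genuine gap is clause (4). Your plan is to derive it from $\neg\KH$ together with a ``branch-counting argument on the $\omega_1$-many cones above a wide level'' followed by ``careful thinning.'' No such argument can exist: Theorem~\ref{Th:negKH+noAsubtrees} of this very paper produces a model of $\GCH$ plus $\neg\KH$ containing a weak Kurepa tree with \emph{no} Aronszajn subtree, so the conclusion of clause (4) is not a consequence of $\neg\KH$ and cardinal arithmetic. The same objection applies to your proposed ``$\diamondsuit$-guided recursion inside an arbitrary $S$'' for the Suslin strengthening. The actual content of clause (4) is a property of the specific forcing: one shows that \emph{every} weak Kurepa tree of the extension contains a copy of $({^{<\omega_1}}2)^V$. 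This is proved by reflecting the name $\dot T$ to $\P_\theta\times\Q$, fixing a name $\dot b$ for a cofinal branch not in $V[G_\theta][\dot H]$, and building in $V[G_\theta]$ a tree of conditions in $\P^\theta$ indexed by ${^{<\omega_1}}2$, together with, at each node, a maximal antichain of the ccc forcing $\Q$ along which the two successor conditions decide $\dot b$ differently; the generic for $\Q$ then selects, at each node, the splitting pair that yields an isomorphic embedding of $({^{<\omega_1}}2)^V$ into $T$. Clause (4) then follows because $({^{<\omega_1}}2)^V$ contains (in $V$) a special Aronszajn subtree, and under $\diamondsuit$ a Suslin subtree, and these remain Aronszajn (respectively Suslin, by Easton's lemma: $\omega_1$-closed then Knaster) in the final model. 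Without this fusion-style argument over the product, clause (4) is unproved.
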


\begin{proof}

Assume that $\GCH$ holds. Let $\P = \Coll(\omega_1, {<}\kappa)$ and let $\Q=\Add(\omega,1)$. We claim that the generic extension by $\P\times \Q$ is the desired forcing extension in which $(1)$--$(5)$ hold. \textbf{Item (1)} and \textbf{item (2)} are clear, and \textbf{item (3)} follows by standard arguments for the tree property as in \cite{U:Fragility} or \cite{HS:TPbelow}.

\medskip

\textbf{The proof of item (4).} To prove item (4), it suffices to show that every weak Kurepa tree in a generic extension by $\P\times\Q$  contains a copy of $({^{<\omega_1}}2)^V$. To see this, let $G\times H$ be $\P\times\Q$-generic over $V$. In $V$, we can construct a special $\omega_1$-Aronszajn tree as a subtree of $({^{<\omega_1}}2)^V$ and this tree is preserved in all forcing extensions which preserve $\omega_1$. In particular, it is still a special $\omega_1$-Aronszajn tree in $V[G][H]$. Moreover, note that $\P\times\Q$ does not add cofinal branches to any $\omega_1$-tree in $V$ since $\P$ is $\omega_1$-closed in $V$ and $\Q$ is $\omega_1$-Knaster in $V[G]$. It follows that all (not only special) $\omega_1$-Aronszajn subtrees of $({^{<\omega_1}}2)^V$  remain Aronszajn  in $V[G][H]$.  In particular, if $S$ is an $\omega_1$-Suslin subtree of $({^{<\omega_1}}2)^V$ in $V$, it is still Aronszajn in $V[G][H]$. In fact it is still a Suslin tree in $V[G][H]$: by Easton's Lemma it is Suslin in $V[G]$ since $\P$ is $\omega_1$-closed and it is still Suslin in $V[G][H]$ since $\Q$ is $\omega_1$-Knaster and $S$ is ccc in $V[G]$.

Let us now proceed to show that every weak Kurepa tree in a generic extension by $\P\times\Q$  contains a copy of $({^{<\omega_1}}2)^V$. If $\dot{T}$ is a $\P\times\Q$-name for a weak Kurepa tree then, since $\P\times\Q$ is $\kappa$-cc, $\dot{T}$ is a $\P_\theta\times \Q$-name for some regular cardinal $\theta < \kappa$, where we denote $\P_\theta=\Coll(\omega_1,{<}\theta)$. Let $G_\theta$ be $\P_\theta$-generic over $V$ and $H$ be $\Q$-generic over $V[G_\theta]$. We will work in $V[G_\theta]$ and we will show that $T$ contains a copy of $({^{<\omega_1}}2)^{V[G_\theta]}$ in $V[G_\theta][H]$. Note that $({^{<\omega_1}}2)^V=({^{<\omega_1}}2)^{V[G_\theta]}$ since $\P_\theta$ is $\omega_1$-closed.

Note that, in $V[G_\theta][H]$, we have $2^{\omega_1} < \kappa$, and hence $T$ has fewer than 
$\kappa$-many cofinal branches in $V[G_\theta][H]$. Since $\dot{T}$ is a $\P\times\Q$-name for a weak Kurepa tree, it is forced to have $\kappa$-many branches in the extension by $\P\times\Q \cong \P_\theta*(\Q\times\P^\theta)$, where $\P^\theta=\Coll(\omega_1,[\theta,{<}\kappa))^{V[G_\theta]}$. Therefore, we can fix in $V[G_\theta]$ conditions $p^*\in \P^\theta$ and $q^* \in H$ and a $\Q\times\P^\theta$-name $\dot{b}$ for a cofinal branch through $\dot{T}$ such that \begin{equation} \label{eq:pforce} (q^*,p^*) \Vdash \dot{b} \not \in V[G_\theta][\dot{H}].\end{equation}

Without loss of generality, we can assume that the underlying set of $\dot{T}$ is forced to be a 
subset of $\omega_1 \times \omega_1$ and that if $(\alpha,\gamma)\in\dot{T}$, then $(\alpha,\gamma)\in \dot{T}_\gamma$. In $V[G_\theta]$, we will build by induction on $\omega_1$ the following objects:

\begin{itemize}
\item a labeled tree $\mathcal{T}=\set{p_s}{s\in {^{<\omega_1}}2 }$ of conditions in $\P^\theta$, all of them extending $p^*$ from (\ref{eq:pforce});
\item a labeled tree $\set{\gamma_s}{ s\in {^{<\omega_1}}2 }$ of ordinals below $\omega_1$;
\item a maximal antichain $A_s$ of conditions in $\Q$ below $q^*$ from (\ref{eq:pforce}) for each $s\in   {^{<\omega_1}}2$;
\end{itemize}

such that the following hold for each $s\in {^{<\omega_1}}2$:

\begin{enumerate}[(a)]

\item $p_t\le p_s$ for each $s\subseteq t$ in  ${^{<\omega_1}}2$;
\item $\gamma_s<\gamma_t$ for each $s\subset t$  in  ${^{<\omega_1}}2$;
\item for each $q\in A_s$, the conditions $(q,p_{s^{\smallfrown}0})$  and  $(q,p_{s^{\smallfrown}1})$ decide $\dot{b}$ up to $\gamma_{s}$ differently; i.e., there are $\gamma
\le\gamma_s $ and $\tau_{q,p_{s^{\smallfrown}0}} \neq \tau_{q,p_{s^{\smallfrown}1}}$ both 
forced by $q$ to be in $\dot{T}_\gamma$ such that $(q,p_{s^{\smallfrown}0})\Vdash \dot{b}(\gamma)= \tau_{q,p_{s^{\smallfrown}0}}$ and $(q,p_{s^{\smallfrown}1})\Vdash \dot{b}(\gamma)= \tau_{q,p_{s^{\smallfrown}1}}$.
\end{enumerate}

The construction of $\mathcal{T}$ uses the standard method of diagonalizing over antichains in $\Q$ while taking lower bounds in $\P^\theta$, using the $\omega_1$-closure of $\P^\theta$, but we will give details to make the argument self-contained.

Set $p_\emptyset=p^*$. First assume that $\alpha$ is a limit ordinal and for every $\beta<\alpha$ and every $s\in {^\beta}2$ the conditions $p_s$ have been constructed. For $s\in {^\alpha}2$ let $p_s$ be a lower bound of $\seq{p_{s\rest\beta}}{\beta<\alpha}$. Note that $A_s$ and $\gamma_s$ for $s\in {^\alpha}2$ will be constructed in the successor stage.

Now, assume that $\alpha$ is a successor ordinal $\alpha=\beta+1$ and for every  $s\in {^\beta}2$, $p_s$ has been constructed, and for every $s\in {^{<\beta}}2$, $A_s$ and $\gamma_s$ have been constructed. Given $s\in{^\beta}2$, we describe the construction of $p_{s^{\smallfrown}0}$, $p_{s^{\smallfrown}1}$, $A_s$ and $\gamma_s$.

\begin{claim}\label{Cl:diff}
For every $q \le q^*$ in $\Q$, all $r^0,r^1\in\P^\theta$ with $r^0,r^1 \le p^*$, and all 
$\gamma'<\omega_1$, there are $\gamma'<\gamma<\omega_1$, $(q',p^0)\le (q,r^0)$  and $(q',p^1)\le (q,r^1)$ such that $(q',p^0)$ and $(q',p^1)$ decide $\dot{b}(\gamma)$ differently.
\end{claim}

\begin{proof}
Let $q\in\Q$, $r^0, r^1\in \P^\theta$ and $\gamma'<\omega_1$ be given. Since $\dot{b}$ is forced by $(p^*,q^*)$ to be a cofinal branch through $\dot{T}$ that is not in $V[G_\theta][\dot{H}]$, there are $(\bar{q},\bar{p}^0)\le (q,r^0)$, $(\bar{q},\bar{p}^1)\le (q,r^0)$ and $\gamma>\gamma'$ such that $(\bar{q},\bar{p}^0)$ and $(\bar{q},\bar{p}^1)$ decide $\dot{b}(\gamma)$ differently; i.e.\ there are $\tau^0\neq\tau^1$, both forced by $\bar{q}$ to be in $\dot{T}_\gamma$, such that $(\bar{q},\bar{p}^0)\Vdash \dot{b}(\gamma)=\tau^0$ and $(\bar{q},\bar{p}^1)\Vdash \dot{b}(\gamma)=\tau^1$. Now, consider the condition $(\bar{q},r^1)$: since $\dot{b}$ is a $\P^\theta\times\Q$ name for a cofinal branch through $\dot{T}$, there is an extension $(q', p^1)\le (\bar{q},r^1)$ which decides $\dot{b}(\gamma)$. Since $\tau^0\neq \tau^1$, $(q', p^1)$ cannot decide $\dot{b}(\gamma)$ as being equal to both of them. Let $p^0$ be $\bar{p}^i$, for some $i<2$, such that $(q', p^1)$ and $(q',\bar{p}^i)$ disagree on $\dot{b}(\gamma)$. Then $q',p^0,p^1$ and $\gamma'$ are as required.
\end{proof}

We use the previous claim to inductively construct in $\delta$-many stages for some $\delta < \omega_1$ a maximal antichain $A_s=\set{q_i\in\Q}{i<\delta}$ below $q^*$, an increasing sequence of ordinals $\seq{\gamma_i<\omega_1}{i<\delta}$ whose supremum will be $\gamma_s$, and decreasing sequences $\seq{p^0_i\in\P^\theta}{i<\delta}$ and $\seq{p^1_i\in\P^\theta}{i<\delta}$ with lower bounds $p_{s^{\smallfrown}0}$ and $p_{s^{\smallfrown}1}$, respectively. 

Let us initialize the construction and define the required objects for $i = 0$. First set $\gamma'_s=\sup\set{\gamma_{s\rest\beta'}}{\beta'<\beta}$ (in case $\beta=0$, take $\gamma'_s=0$). By Claim \ref{Cl:diff} there are $\gamma'_s<\gamma_0<\omega_1$ and $(q_0,p^0_0), (q_0,p^1_0)\le (q^*,p_s)$ such that $(q_0,p^0_0)$ and $(q_0,p^1_0)$ decide $\dot{b}(\gamma_0)$ differently. The condition $q_0$, $p^0_0$, $p^1_0$ and the ordinal $\gamma_0$ are as required.

Now assume that $0 < i < \omega_1$ and for all $j<i$ we already have $q_j$, $p^0_j$, $p^1_j$ and $\gamma_j$. 

If there is $q\in \Q$ below $q^*$ such that $q$ is incompatible with all $q_j$ for $j<i$, let us fix such $q$. If $i$ is a limit ordinal, fix some lower bounds $r^0$ and $r^1$ of $\seq{p^0_j\in\P^\theta}{j<i}$ and
 $\seq{p^1_j\in\P^\theta}{j<i}$, respectively, and a supremum $\gamma'$ of $\seq{\gamma_j<\omega_1}{j<i}$. If $i$ is a successor of $j$, set $r^0=p^0_j$, $r^1=p^1_j$ and $\gamma'=\gamma_j$. By Claim \ref{Cl:diff}, there are $\gamma'<\gamma_i<\omega_1$ and $(q_i,p^0_i)\le (q,r^0)$, $(q_i,p^1_i)\le (q,r^1)$ such that $(q_i,p^0_i)$ and $(q_i,p^1_i)$ decide $\dot{b}(\gamma_i)$ differently. The conditions $q_i$, $p^0_i$, $p^1_i$ and the ordinal $\gamma_i$ are as required.

If there is no $q\in \Q$ below $q^*$ such that $q$ is incompatible with $q_j$ for all $j<i$, we stop the construction and set $\delta=i$ and $A_s=\set{q_j\in\Q}{j<\delta}$. If $i$ is a limit ordinal, we set $p_{s^{\smallfrown}0}$ and $p_{s^{\smallfrown}1}$ to be lower bounds of $\seq{p^0_j\in\P^\theta}{j<\delta}$ and  $\seq{p^1_j\in\P^\theta}{j<\delta}$, respectively, and $\gamma_s$ to be a supremum of $\seq{\gamma_j<\omega_1}{j<\delta}$. If $i$ is a successor of $j$, then we set $p_{s^{\smallfrown}0}=p_j^0$, $p_{s^{\smallfrown}1}=p_j^1$ and $\gamma_s=\gamma_j$. Note that the construction will end after countably many steps since $\Q$ is ccc, and hence $\delta<\omega_1$.

This ends the construction of the labeled tree $\mathcal{T}$ and the related objects.

In $V[G_\theta][H]$, we define a copy of  $({^{<\omega_1}}2)^{V[G_\theta]}$ in $T = \dot{T}^{V[G_\theta][H]}$ using the tree $\mathcal{T}$. This copy is given by an embedding $h: ({^{<\omega_1}}2)^{V[G_\theta]}\to T$ which maps sequences $s^\smallfrown 0$ and $s^\smallfrown 1$ to the nodes $\tau_{q,p_{s^{\smallfrown}0}}$ and $\tau_{q,p_{s^{\smallfrown}1}}$, respectively where $q$ is the unique element of $H \cap A_s$ (see item (c) in the properties of $\mathcal{T}$ for definitions). The definition of $h$ extends continuously the limit levels: if $s \in ({}^\gamma 2)^{V[G_\theta]}$ for a limit ordinal $\gamma < \omega_1$, then let $h(s)$ be the supremum of $\{h(s \restriction \alpha) \mid \alpha < \gamma\}$. This supremum exists in $T$ by normality and the fact that, for instance, 
$h(s ^\frown 0)$ is above $h(s \restriction \alpha)$ for all $\alpha < \delta$.
Since the $q$'s are chosen from $H$, the forcing statements from item (c)
$$(q,p_{s^{\smallfrown}0})\Vdash \dot{b}(\gamma)= \tau_{q,p_{s^{\smallfrown}0}} \mbox{ and }(q,p_{s^{\smallfrown}1})\Vdash \dot{b}(\gamma)= \tau_{q,p_{s^{\smallfrown}1}}$$ respect the tree $T$, i.e.\ the embedding $h$ preserves the strict ordering and the incompatibility of nodes between the trees $({}^{<\omega_1}2, \sub)^{V[G_\theta]}$ and $(T,<_T)$. It follows that $T$ contains a copy of $({}^{<\omega_1}2)^{V[G_\theta]} = ({}^{<\omega_1}2)^V$ as required.

\medskip
 
\textbf{The proof of item (5).} Item (5) is a consequence of Theorem \ref{TreesCohen}: In $V[G]$ there are normal weak Kurepa trees, e.g., $({^{<\omega_1}}2)^V$. Such a tree remains a weak Kurepa tree in 
$V[G][H]$ and, by Theorem \ref{TreesCohen}, cannot contain a copy of $({}^{<\omega+1}2)^{V[G][H]}$.
\end{proof}

We now turn to proving the second main theorem of this section, which will produce a model similar to that of Theorem \ref{Th:negKH+wKH+A-subtree}, except we will obtain a weak Kurepa tree which does not contain a copy of  ${}^{<\omega+1}2$ and does not contain an Aronszajn subtree. First we define a forcing which adds a weak Kurepa tree with these properties and establish some basic facts about the forcing.

\begin{definition}  
Let $\lambda$ be an uncountable cardinal. We define a poset $\K_\lambda$ which adds a tree with size and height $\omega_1$ with $\lambda$-many cofinal branches. Conditions $q \in \K_\lambda$ are pairs $(T_q, f_q)$ such that
  \begin{itemize}
    \item there is $\eta_q < \omega_1$ such that $T_q$ is a normal, infinitely splitting subtree of 
    ${^{<\eta_q + 1}}\omega_1$ 
    that does not contain a copy of ${^{< \omega + 1}}2$;
    \item $f_q$ is a countable partial function from $\lambda$ to $T_q \cap {^{\eta_q}}\omega_1$.
  \end{itemize}
  If $q_0, q_1 \in \Q$, then $q_1 \leq q_0$ if and only if
  \begin{itemize}
    \item $\eta_{q_1} \geq \eta_{q_0}$;
    \item $T_{q_1} \cap {^{<\eta_{q_0} + 1}}\omega_1 = T_{q_0}$;
    \item $\dom(f_{q_1}) \supseteq \dom(f_{q_0})$;
    \item for all $\alpha \in \dom(f_{q_0})$, $f_{q_1}(\alpha) \supseteq f_{q_0}(\alpha)$.
  \end{itemize}
We also include the pair $(\emptyset, \emptyset)$ in $\K_\lambda$ as $1_{\K_\lambda}$.  
\end{definition} 

Note that $\K_\lambda$ is not $\omega_2$-cc: in fact, it collapses $2^{\omega_1}$ to $\omega_1$ since we can code subsets of $\omega_1$ in the ground model into the levels of the generic tree added by $\K_\lambda$. Therefore if $\lambda\le 2^{\omega_1}$, then the generic tree added by $\K_\lambda$ is not a weak Kurepa tree.

\begin{lemma}\label{L:K-Knaster}
Let $\lambda$ and $\mu$ be uncountable cardinals such that $\mu>2^{\omega_1}$ is regular and $\mu>\gamma^\omega$ for all $\gamma<\mu$. Then $\K_\lambda$ is $\mu$-Knaster. In particular $\K_\lambda$ is $(2^{\omega_1})^+$-Knaster.
\end{lemma}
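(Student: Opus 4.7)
My plan is a standard $\Delta$-system plus pigeonhole argument: given any family $\{q_\xi : \xi < \mu\}$ of $\K_\lambda$-conditions, I successively thin it down to a $\mu$-sized subfamily on which (i) the heights $\eta_{q_\xi}$ are constant, (ii) the trees $T_{q_\xi}$ are constant, (iii) the domains $\dom(f_{q_\xi})$ form a $\Delta$-system with countable root $R$, and (iv) the restrictions $f_{q_\xi} \restriction R$ are constant. Any two surviving conditions will then be compatible, with explicit common extension obtained by taking the union of the $f$-parts.

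The four thinning steps go as follows. First, since $\eta_{q_\xi} < \omega_1 < \mu$ and $\mu$ is regular, pigeonhole yields a $\mu$-sized subfamily on which $\eta_{q_\xi} = \eta^*$ is constant. Second, each $T_{q_\xi}$ is (read as) a countable subtree of ${^{\leq \eta^*}}\omega_1$, and the number of such subtrees is bounded by $|{^{\leq \eta^*}}\omega_1|^\omega \leq (\omega_1^\omega)^\omega = \omega_1^\omega \leq 2^{\omega_1} < \mu$, so I may further refine so that $T_{q_\xi} = T^*$ is constant. Third, the domains $\dom(f_{q_\xi})$ are countable subsets of $\lambda$, and the hypothesis that $\mu$ is regular with $\gamma^\omega < \mu$ for all $\gamma < \mu$ is precisely what the $\Delta$-system lemma requires to extract a $\mu$-sized $\Delta$-system with countable root $R \subseteq \lambda$. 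Fourth, the restrictions $f_{q_\xi} \restriction R$ are countable partial functions into the countable set $T^* \cap {^{\eta^*}}\omega_1$, and there are only $\omega^\omega = 2^\omega \leq 2^{\omega_1} < \mu$ such functions, so a final thinning makes $f_{q_\xi} \restriction R = f^*$ constant.

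In the resulting $\mu$-sized subfamily, any two distinct conditions $q_\xi$ and $q_\zeta$ share $\eta^*$ and $T^*$, and their $f$-parts have $\dom(f_{q_\xi}) \cap \dom(f_{q_\zeta}) = R$ with $f_{q_\xi} \restriction R = f_{q_\zeta} \restriction R = f^*$. Therefore $f := f_{q_\xi} \cup f_{q_\zeta}$ is a well-defined countable partial function from $\lambda$ into $T^* \cap {^{\eta^*}}\omega_1$, and $(T^*, f)$ is a $\K_\lambda$-condition extending both, witnessing compatibility. For the ``in particular'' clause, take $\mu = (2^{\omega_1})^+$: it is regular, exceeds $2^{\omega_1}$, and for any $\gamma \leq 2^{\omega_1}$ we have $\gamma^\omega \leq (2^{\omega_1})^\omega = 2^{\omega_1} < \mu$. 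The only substantive point of care is the counting in the second thinning step, which relies on the fact that the trees appearing in conditions are bounded in cardinality (in particular that they can be enumerated by at most $2^{\omega_1}$-many codes); everything else is bookkeeping around the $\Delta$-system lemma.
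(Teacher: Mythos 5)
Your argument is essentially the paper's proof: stabilize the tree part by a counting argument (at most $2^{\omega_1}<\mu$ possibilities), apply the $\Delta$-system lemma to the countable domains using the hypothesis that $\gamma^\omega<\mu$ for all $\gamma<\mu$, stabilize the restrictions to the root by one more counting, and note that the union of the $f$-parts witnesses compatibility. The one inaccuracy is your claim that the trees $T_q$ are countable: the definition of $\K_\lambda$ allows $\omega_1$-fold splitting and does not require limit levels to be countable, so $T_q$ can have size $\omega_1$ (the paper simply asserts the $2^{\omega_1}$ bound on the number of possible trees); since a subtree of ${}^{<\omega_1}\omega_1$ of size at most $\omega_1$ still admits at most $2^{\omega_1}<\mu$ possibilities under the cardinal arithmetic in force where the lemma is used, this does not affect the structure or conclusion of your argument.
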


\begin{proof}
Let a set of conditions $\set{q_\alpha=(T_\alpha,f_\alpha)\in\K_\lambda}{\alpha<\mu}$ be given. Since $\mu>2^{\omega_1}$ is regular and there are only $2^{\omega_1}$-many possibilities for $T_\alpha$'s, there is a tree $T\sub {^{<{\eta+1}}\omega_1}$ of countable height and $I\sub\mu$ of size $\mu$ such that $T_\alpha=T$ for all $\alpha\in I$.

Since $\gamma^\omega<\mu$ for all $\gamma<\mu$, there is $I'\sub I$ of size $\mu$ such that the set $\set{\dom(f_\alpha)}{\alpha\in I'}$ forms a $\Delta$-system with root $a\sub\lambda$. Since $a$ is at most countable, there are at most $2^\omega<\mu$ many functions from $a$ to $T_\eta$ and therefore there is a countable $f$ from $a$ to $T_\eta$ and $J\sub I$ of size $\mu$ such $f=f_\alpha\cap f_\beta$ for all $\alpha\neq\beta\in J$. Then all conditions in $\set{q_\alpha}{\alpha\in J}$ are compatible.
\end{proof}

\begin{lemma}\label{L:K-closed}
Let $\lambda$ be an uncountable cardinal. If $\CH$ holds, then $\K_\lambda$ is $\omega_1$-closed.
\end{lemma}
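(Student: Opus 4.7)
The successor case is immediate. For limit $\gamma < \omega_1$, I would first form the natural unions $\eta = \sup_{\xi < \gamma} \eta_{q_\xi}$ (a countable limit ordinal), $T^* = \bigcup_\xi T_{q_\xi}$ (a normal, infinitely splitting subtree of ${^{<\eta}}\omega_1$), and, for each $\alpha \in \dom(f^*) := \bigcup_\xi \dom(f_{q_\xi})$ (a countable set), $f^*(\alpha) = \bigcup \{f_{q_\xi}(\alpha) : \alpha \in \dom(f_{q_\xi})\} \in [T^*]$. The plan is to define $q_\gamma = (T^* \cup B, f^*)$, where $B \subseteq [T^*]$ is a set of length-$\eta$ branches of $T^*$ (identified with new top-level nodes at level $\eta$) chosen so that (i) every $s \in T^*$ is extended by some $b \in B$ (for normality of $T_{q_\gamma}$); (ii) $f^*(\alpha) \in B$ for every $\alpha \in \dom(f^*)$ (so $q_\gamma \leq q_\xi$ for each $\xi$); and (iii) $T^* \cup B$ contains no copy of ${^{<\omega+1}}2$.

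The heart of the argument is a dichotomy for isomorphic embeddings $\iota_0 \colon {^{<\omega}}2 \to T^*$. Given such $\iota_0$, set $F(x) := \sup_n \dom(\iota_0(x \restriction n))$ for $x \in {^\omega}2$ and $X_{\iota_0} = \{x : F(x) = \eta\}$. Any copy $\iota \colon {^{<\omega+1}}2 \to T^* \cup B$ extending $\iota_0$ is forced, for each $x \in X_{\iota_0}$, to send $x$ to the length-$\eta$ node $\bigcup_n \iota_0(x \restriction n)$, which therefore must lie in $B$. The key lemma I will prove is: \emph{if any such extension $\iota$ exists for some $B$ at all, then $X_{\iota_0}$ is a dense $G_\delta$ subset of ${^\omega}2$, hence of cardinality $2^{\aleph_0} = \aleph_1$ under $\CH$.} The idea is that for each $\sigma \in {^{<\omega}}2$ the image of the $\iota_0$-subtree above $\sigma$ must have heights cofinal in $\eta$: otherwise this subtree lies inside some $T_{q_\xi}$ at heights bounded by $M < \eta_{q_\xi}$, and for each $x \sqsupseteq \sigma$ in ${^\omega}2$ one uses normality of $T_{q_\xi}$ to find the unique predecessor of $\iota(x)$ at height $F(x) \leq M$ inside $T_{q_\xi}$; these pullbacks together with $\iota_0 \restriction \{\tau : \sigma \sqsubseteq \tau\}$ form a copy of ${^{<\omega+1}}2$ inside $T_{q_\xi}$, contradicting the defining property of $\K_\lambda$-conditions. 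Intersecting the resulting dense open sets over a cofinal $\omega$-sequence in $\eta$ gives the Baire-category conclusion.

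With this dichotomy in hand, I will construct $B$ by an $\omega_1$-recursion. Since $|T^*| \leq \aleph_1$ and $\aleph_1^{\aleph_0} = \aleph_1$ under $\CH$, I can enumerate $T^*$ as $\langle s_\delta : \delta < \omega_1 \rangle$ and all isomorphic embeddings $\iota \colon {^{<\omega}}2 \to T^*$ as $\langle \iota_\delta : \delta < \omega_1 \rangle$. Starting from $B_0 = \{f^*(\alpha) : \alpha \in \dom(f^*)\}$ and a disjoint ``forbidden'' set $F_0 = \emptyset$, at stage $\delta+1$ I: (a) if no element of $B_\delta$ extends $s_\delta$, pick a length-$\eta$ branch through $s_\delta$ lying outside $F_\delta$ and add it to $B$; this is possible because threading choices at the infinitely splitting top levels of the $T_{q_\xi}$'s produces $\aleph_1$-many length-$\eta$ branches through any node while $|F_\delta| < \aleph_1$; (b) if $X_{\iota_\delta} \neq \emptyset$ and no length-$\eta$ branch of the form $\bigcup_n \iota_\delta(x \restriction n)$ for some $x \in X_{\iota_\delta}$ has yet been placed in $F_\delta$, pick such an $x$ whose branch is outside the updated $B$ and add that branch to $F$; this is possible by the key lemma. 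The main obstacle I anticipate is the verification of clause (iii) for the resulting $B$: given any putative copy $\iota \colon {^{<\omega+1}}2 \to T^* \cup B$ with $\iota_0 := \iota \restriction {^{<\omega}}2$ equal to some $\iota_\delta$, one must show $\iota_\delta$ has been blocked — either $X_{\iota_\delta} = \emptyset$, in which case the subtree argument applied with $\sigma = \emptyset$ already produces a copy of ${^{<\omega+1}}2$ inside some $T_{q_\xi}$, or else the stage-$\delta$ blocking step placed into $F$ precisely a length-$\eta$ branch that $\iota$ would require to live in $B$, contradicting $F \cap B = \emptyset$.
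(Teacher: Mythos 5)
Your proposal is correct and follows essentially the same route as the paper's proof: form the union tree and the obligatory branches, then close off under normality while using a $\CH$-enumeration of embeddings of ${^{<\omega}}2$ together with a disjoint ``forbidden'' family of cofinal branches to kill every potential copy of ${^{<\omega+1}}2$. The only real difference is that you make explicit, via the dense-$G_\delta$ lemma, the Baire-category fact the paper leaves implicit when it asserts that any copy in the lower bound must arise from an embedding all of whose limit branches are cofinal in the union tree (note only that in your final case analysis the alternative to blocking should be ``$X_{\iota_\delta}$ is not comeager'' rather than ``$X_{\iota_\delta}=\emptyset$'', and that case is already excluded by your key lemma).
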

  
\begin{proof}
    Let $\langle q_n \mid n < \omega \rangle$ be a decreasing sequence from $\K_\lambda$. To avoid 
    trivialities, assume that the sequence $\langle \eta_{q_n} \mid n < \omega \rangle$ is strictly 
    increasing. Let $\eta := \sup\{\eta_{q_n} \mid n < \omega\}$. We will construct a lower bound 
    $q$ for $\langle q_n \mid n < \omega \rangle$ such that $\eta_q = \eta$. Let $T = 
    \bigcup_{n < \omega} T_{q_n}$. To define $T_q$, we simply need to decide which cofinal 
    branches through $T$ should continue. 
    
    We first note that there may be countably many branches that we are obliged to extend 
    because of the functions $\{f_{q_n} \mid n < \omega\}$. Namely, let $a = \bigcup_{n < \omega} 
    \dom(f_{q_n})$ and, for each $\alpha \in a$, let 
    \[
      b_\alpha := \bigcup \{f_{q_n}(\alpha) \mid n < \omega \wedge \alpha \in \dom(f_{q_n})\}.
    \]
    Then each $b_\alpha$ is a cofinal branch through $T$, and we are obliged to put $b_\alpha$ 
    in $T_q$.
    
    We may need to extend additional branches through $T$ in order to ensure that $T_q$ is 
    normal; i.e., for each $\sigma \in T$, we need to ensure that there is $\tau \in T_q \cap 
    {^{\eta}}\omega_1$ with $\sigma \subseteq \tau$. However, when doing so, we must be careful 
    not to add a copy of ${^{<\omega + 1}}2$ to $T_q$. We will do so through the use of the following 
    bookkeeping device.
    
    Let $\langle \iota_\xi \mid \xi < \omega_1 \rangle$ enumerate all isomorphic embeddings 
    $\iota$ from ${^{<\omega}}2$ to $T$ such that, for every 
    $x \in {^{\omega}}2$, the union $\bigcup \{\iota(x \restriction n) \mid n < \omega\}$ 
    is a cofinal branch through $T$. Note that this is possible, due to the fact that 
    $\CH$ holds and $|T| \leq \omega_1$. For each $\xi < \omega_1$, let 
    \[
      [\iota_\xi] := \left\{\bigcup \{\iota_\xi(x \restriction n) \mid n < \omega\} \ \middle| \ x \in 
      {^{\omega}}2\right\}.
    \]
    Note that $[\iota_\xi] \subseteq [T]$, and $|[\iota_\xi]| = 2^\omega = \omega_1$. Let us enumerate 
    $T$ as $\langle \sigma_\xi \mid \xi < \omega_1 \rangle$, with repetitions if $T$ is countable.
    We now recursively construct disjoint subsets $\{c_\xi \mid \xi < \omega_1\}$ and 
    $\{d_\xi \mid \xi < \omega_1\}$ of $[T]$. Suppose that $\xi < \omega_1$ and we have 
    constructed $c_\zeta$ and $d_\zeta$ for all $\zeta < \xi$. First, choose $c_\xi \in [T]$ such 
    that
    \begin{itemize}
      \item $\sigma_\xi \subseteq c_\xi$;
      \item $c_\xi \notin \{d_\zeta \mid \zeta < \xi\}$.
    \end{itemize}
    Note that this is possible to do: $T$ is normal and splitting, and $\cf(\gamma) = \omega$, so 
    there are $2^\omega$ many elements of $[T]$ extending $\sigma_\xi$. Next, choose 
    $d_\xi \in [T]$ such that
    \begin{itemize}
      \item $d_\xi \in [\iota_\xi]$;
      \item $d_\xi \notin \{b_\alpha \mid \alpha \in a\} \cup \{c_\zeta \mid \zeta \leq \xi\}$.
    \end{itemize}
    At the end of the construction, set $T_q := T \cup \{b_\alpha \mid \alpha \in a\} \cup 
    \{c_\xi \mid \xi < \omega_1\}$. Let $f_q$ be such that $\dom(f_q) = a$ and, for all 
    $\alpha \in a$, $f_q(a) = b_\alpha$. 
    
We claim that $q$ is a condition in $\bb{\K_\lambda}$ and it is a lower bound of $\langle q_n \mid n < \omega \rangle$. The only nontrivial thing to verify is the fact that $T_q$ does not contain a copy of ${^{<\omega+1}}2$. Assume for a contradiction that $T_q$ contains a copy of ${^{<\omega+1}}2$. Since, for every $n < \omega$, we know that $T_{q_n}$ does not contain a copy of ${^{<\omega+1}}2$, there must be a $\xi<\omega_1$ such that 
$[\iota_\xi]\sub (T_q)_\eta=\set{b_\alpha}{\alpha \in a } \cup \set{c_\zeta}{\zeta < \omega_1}$. However, this means that $d_\xi \in\set{b_\alpha}{\alpha \in a } \cup \set{c_\zeta}{\zeta < \omega_1}$ which contradicts our choice of $d_\xi$.
\end{proof}

\begin{lemma}\label{L:wKT-projection}
Let $\lambda<\kappa$ be uncountable cardinals. Then there is a projection from $\K_\kappa$ to $\K_\lambda$.
\end{lemma}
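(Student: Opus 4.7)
The plan is to verify that the natural restriction map
\[
  \pi : \K_\kappa \to \K_\lambda, \qquad \pi(T_q, f_q) := (T_q, \, f_q \restriction \lambda)
\]
is a projection. The two easy clauses, namely $\pi(1_{\K_\kappa}) = 1_{\K_\lambda}$ and the fact that $\pi$ is order-preserving, are immediate from the definitions: the tree component of a condition is passed through verbatim, and restriction to $\lambda$ commutes with extension of the partial function component. So the real content is to check the lifting property.

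Given $p = (T_p, f_p) \in \K_\kappa$ and $q = (T_q, f_q) \leq \pi(p)$ in $\K_\lambda$, I would build $p' \leq p$ in $\K_\kappa$ with $\pi(p') \leq q$ by setting $T_{p'} := T_q$ and defining $f_{p'}$ on $\dom(f_q) \cup \dom(f_p)$ as follows. On $\dom(f_q)$ let $f_{p'}$ agree with $f_q$. For each $\alpha \in \dom(f_p) \setminus \lambda$ (these are precisely the indices carried by $f_p$ that $q$ says nothing about), use normality of $T_q$ together with the fact that $T_q$ extends $T_p$ as a tree to choose some $\tau_\alpha \in T_q \cap {^{\eta_q}}\omega_1$ with $f_p(\alpha) \sqsubseteq \tau_\alpha$, and set $f_{p'}(\alpha) := \tau_\alpha$. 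The two clauses of the definition are consistent since $q \leq \pi(p)$ forces $\dom(f_q) \supseteq \dom(f_p) \cap \lambda$, so there is no overlap requiring comparison.

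The remaining checks are routine: $\dom(f_{p'})$ is countable as a union of two countable sets; $(T_q, f_{p'})$ is a legitimate condition in $\K_\kappa$ because $T_q$ inherits normality, infinite splitting, and the absence of a copy of ${^{<\omega+1}}2$ from its being a valid $\K_\lambda$-condition, and $f_{p'}$ maps into $T_q \cap {^{\eta_q}}\omega_1$ by construction; finally $p' \leq p$ holds by definition, and $\pi(p') = (T_q, f_{p'} \restriction \lambda) = (T_q, f_q) = q$, so in particular $\pi(p') \leq q$. I do not expect any genuine obstacle: the only mild subtlety is the coherent extension of the values $f_p(\alpha)$ for $\alpha \in \dom(f_p) \setminus \lambda$, which is immediately supplied by the normality clause of Definition of $\K_\lambda$.
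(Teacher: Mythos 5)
Your proposal is correct and is essentially identical to the paper's proof: the same restriction map $\pi(T_q,f_q)=(T_q,f_q\restriction\lambda)$, and the same lift obtained by taking the tree from the stronger $\K_\lambda$-condition and extending the values $f_p(\alpha)$ for $\alpha\in\dom(f_p)\setminus\lambda$ to top-level nodes via normality. No gaps.
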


\begin{proof}
We define $\pi$ from $\K_\kappa$ to $\K_\lambda$ by letting $\pi(T_q,f_q)=(T_q,f_q\rest\lambda)$ for all $q\in\K_\kappa$. It is routine to verify that $\pi$ is order-preserving and that $\pi(1_{\K_\kappa})=1_{\K_\lambda}$.

Let $q\in\K_\kappa$ and $r\in\K_\lambda$ be such that $r\le \pi(q)=(T_q,f_q\rest\lambda)$. Define $r'\le q$ by first letting $T_{r'}= T_r$ (note that $T_r$ is an end-extension of $T_q$). Let $\dom (f_{r'})=\dom(f_{q})\cup\dom(f_r)$ and define 
\begin{itemize}[--]
\item $f_{r'}(\alpha)=f_r(\alpha)$ for every $\alpha\in\dom(f_r)$ and
\item  $f_{r'}(\alpha)=\tau$, where $\tau\in (T_{r'})_{\eta_{r'}}$ with $\tau\supseteq f_q(\alpha)$, for every $\alpha\in \dom(f_q)\setminus\dom(f_r)$. 
\end{itemize}
It is easy to check that $\pi(r')=r$. Therefore $\pi$ is a projection.
\end{proof}

Let $H$ be a $\K_\lambda$-generic filter, $T=\bigcup\set{T_q}{q\in H}$, and let $\K_{\kappa}/H=\set{r\in\K_\kappa}{\pi(r)\in H}$ be the quotient given by $H$ and the projection $\pi$. Then $\K_\kappa$ is forcing equivalent to a two step iteration $\K_\lambda* \K_\kappa/H$. It is easy to see that in $V[H]$, $\K_\kappa/H$ is forcing equivalent to the forcing notion $\K_{\lambda,\kappa}$, where conditions in $\K_{\lambda,\kappa}$ are pairs $r=(\eta_r,f_r)$ such that $f_r$ is a countable partial function from $\kappa\setminus\lambda$ to $T_{\eta_r}$ and $r\le q$ if and only if $\eta_r\ge\eta_q$, $\dom(f_q)\sub\dom(f_r)$, and $f_q(\alpha)\sub f_r(\alpha)$ for all $\alpha\in\dom(f_q)$.

\begin{lemma}
Let $\lambda<\kappa$ be uncountable cardinals and $H$ be a $\K_\lambda$-generic filter. Then $\K_{\kappa}/H$ is $\omega_1$-distributive in $V[H]$.
\end{lemma}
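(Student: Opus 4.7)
The plan is to avoid arguing directly inside $V[H]$ and instead deduce $\omega_1$-distributivity of $\K_\kappa/H$ from the $\omega_1$-closure of the full forcing $\K_\kappa$ in $V$. First, applying Lemma \ref{L:K-closed} with $\kappa$ in place of $\lambda$ (using the standing $\CH$ hypothesis of the section), the forcing $\K_\kappa$ is $\omega_1$-closed in $V$, and in particular adds no new $\omega$-sequences of ordinals. Second, by Lemma \ref{L:wKT-projection}, the projection $\pi : \K_\kappa \to \K_\lambda$ yields the standard two-step decomposition: forcing with $\K_\kappa$ is equivalent to first forcing with $\K_\lambda$ to produce $H$, and then forcing with the quotient $\K_\kappa/H = \{r \in \K_\kappa \mid \pi(r) \in H\}$ over $V[H]$.

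With this decomposition in hand, the remainder is a standard pullback argument. Suppose for a contradiction that $\K_\kappa/H$ is not $\omega_1$-distributive in $V[H]$; then we may fix a condition $r \in \K_\kappa/H$ and a $\K_\kappa/H$-name $\dot{x}$ in $V[H]$ for an $\omega$-sequence of ordinals such that $r \Vdash \dot{x} \notin V[H]$. Let $G$ be $\K_\kappa/H$-generic over $V[H]$ with $r \in G$. Combining $H$ with $G$ yields a $\K_\kappa$-generic filter $G^*$ over $V$, and $\dot{x}^G$ is an $\omega$-sequence of ordinals living in $V[H][G] = V[G^*]$. By the $\omega_1$-closure of $\K_\kappa$ in $V$, however, $\dot{x}^G \in V \subseteq V[H]$, contradicting the choice of $r$.

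The only step that requires real care is verifying the compatibility of the two-step factorization with the projection, i.e.\ that $V[G^*] = V[H][G]$ and that $\K_\kappa/H$ really is the correct quotient for the purposes of this pullback. This is precisely the standard content of the projection machinery from \cite{jech} and so is not itself the obstacle: the conceptual work is carried entirely by Lemmas \ref{L:K-closed} and \ref{L:wKT-projection}. This approach has the advantage of sidestepping the genuinely subtle issue one would face in a direct argument, namely whether a cofinal branch through $T \restriction \eta$ built from a decreasing $\omega$-sequence of conditions in $V[H]$ is extended by a node of $T_\eta$ --- a question that is delicate because $T$ is required not to contain a copy of ${^{<\omega+1}}2$.
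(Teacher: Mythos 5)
Your proof is correct and follows essentially the same route as the paper's: factor $\K_\kappa$ as $\K_\lambda \ast \K_\kappa/\dot{H}$ via the projection of Lemma \ref{L:wKT-projection}, and use the $\omega_1$-closure of $\K_\kappa$ from Lemma \ref{L:K-closed} to conclude that no new countable sequences of ordinals appear over $V[H]$. The paper states this in two sentences; you have merely spelled out the standard pullback contradiction in more detail.
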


\begin{proof}
The forcing $\K_\kappa$ is forcing equivalent to a two step iteration $\K_\lambda*\K_{\kappa}/\dot{H}$. Since $\K_\kappa$ is $\omega_1$-closed by Lemma \ref{L:K-closed}, $\K_{\kappa}/H$ cannot add new countable sequences of ordinals over $V[H]$, hence it is $\omega_1$-distributive in $V[H]$.
\end{proof}

We fix the following notation: for $r\in \K_\kappa$ and $\lambda<\kappa$, let $r\rest\lambda$ denote $(T_r,f_r\rest\lambda)$, i.e.\  $r\rest\lambda=\pi(r)$ for the projection $\pi$ defined in Lemma \ref{L:wKT-projection}.

\begin{lemma}\label{L:into}
Let $\lambda<\kappa$ be uncountable cardinals and $\dot{H}$ be a canonical $\K_\lambda$-name for the $\K_\lambda$-generic filter. Let $q\in \K_\lambda$ and $r\in\K_\kappa$. Then the following are equivalent.
\begin{enumerate}[(i)]
\item  $q\Vdash r\in \K_\kappa/\dot{H}$;
\item $q\le r\rest\lambda$.
\end{enumerate}
\end{lemma}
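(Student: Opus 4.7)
The plan is to unpack the definition of the quotient and then reduce the equivalence to a standard separativity argument for $\K_\lambda$. Since $\K_\kappa/\dot{H} = \{r' \in \K_\kappa \mid \pi(r') \in \dot{H}\}$ and $\pi(r) = r\rest\lambda$, statement (i) is exactly $q \Vdash r\rest\lambda \in \dot{H}$. The direction (ii) $\Rightarrow$ (i) is then immediate by upward closure of the generic filter: any $\K_\lambda$-generic filter containing $q$ also contains every condition above $q$, in particular $r\rest\lambda$.

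For (i) $\Rightarrow$ (ii), I would argue contrapositively: given $q \not\le r\rest\lambda$, I produce $q' \le q$ in $\K_\lambda$ that is incompatible with $r\rest\lambda$. A generic filter containing such a $q'$ contains $q$ but not $r\rest\lambda$, contradicting (i). The existence of such a $q'$ is obtained by short case analysis on which clause of the ordering of $\K_\lambda$ fails. Write $s = r\rest\lambda$. If $\eta_q \ge \eta_s$ but $T_q \cap {^{<\eta_s + 1}}\omega_1 \ne T_s$, or if some $\alpha \in \dom(f_q) \cap \dom(f_s)$ satisfies $f_s(\alpha) \not\sub f_q(\alpha)$, then by the definition of the order any common extension would be forced to satisfy contradictory tree- or function-equations at level $\eta_s$, so $q$ is already incompatible with $s$ and we take $q' = q$. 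If instead some $\alpha \in \dom(f_s) \setminus \dom(f_q)$, extend $q$ to $q'$ with $T_{q'} = T_q$ and $\dom(f_{q'}) = \dom(f_q) \cup \{\alpha\}$, choosing $f_{q'}(\alpha)$ to be a node of $T_q$ at level $\eta_q$ whose restriction to level $\eta_s$ differs from $f_s(\alpha)$; such a node exists because $T_q = T_s \cap {^{<\eta_q+1}}\omega_1$ is splitting and normal, so $T_q$ has nodes at level $\eta_s$ other than $f_s(\alpha)$, each of which has an extension to level $\eta_q$. Finally, if $\eta_q < \eta_s$ and $T_q = T_s \cap {^{<\eta_q+1}}\omega_1$, build $q'$ with $\eta_{q'} = \eta_q + 1$ and $T_{q'}$ obtained from $T_q$ by picking, for at least one $\sigma \in T_q$ at level $\eta_q$, an infinite successor set differing from the one $T_s$ prescribes for $\sigma$ at level $\eta_q + 1$.

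The main (mild) obstacle is just to check that the one-level extension in the final case is a legitimate condition of $\K_\lambda$: it is plainly normal and infinitely splitting, and adding only a single new level to a tree with no copy of ${^{<\omega+1}}2$ cannot create such a copy, since every embedding of ${^{<\omega+1}}2$ uses a limit level. Hence separativity holds, yielding (i) $\Rightarrow$ (ii).
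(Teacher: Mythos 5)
Your argument is correct and follows essentially the same route as the paper: both directions come down to the observation that (i) is precisely $q \Vdash \pi(r) \in \dot{H}$, with (ii)$\Rightarrow$(i) immediate from upward closure of the generic filter and (i)$\Rightarrow$(ii) obtained from separativity of $\K_\lambda$. The paper simply cites separativity without proof, whereas you verify it by case analysis; your verification is sound apart from one imprecision (when $\eta_q < \eta_{r\restriction\lambda}$, the failure $f_{r\restriction\lambda}(\alpha) \not\sqsubseteq f_q(\alpha)$ does not by itself yield incompatibility, since the two nodes may still be comparable, but that situation is absorbed by your final case, which manufactures an incompatible extension through the tree coordinate).
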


\begin{proof}
To see that (i) implies (ii), note that if $q\not\le r\rest\lambda$, then by the separativity of $\K_\lambda$ there is $q'\le q$ which is incompatible with $r\rest\lambda$, and hence $q$ does not force $r$ into $\K_\kappa/\dot{H}$. The other direction is clear, since $q\le r\rest\lambda$ means that $q\le\pi(r)$.
\end{proof}

\begin{lemma}\label{C:wKurepaTree}
Assume $\GCH$ and let $\lambda > \omega_2$ be a cardinal. Let $H$ be a $\K_\lambda$-generic filter over $V$. Then the generic tree $T=\bigcup\set{T_q}{q\in H}$ is a weak Kurepa tree with $\lambda$-many cofinal branches which does not contain a copy of ${^{<\omega+1}}2$.
\end{lemma}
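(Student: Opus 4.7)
The plan is to verify in $V[H]$ three properties of $T$: (a) $T$ has height and size $\omega_1$; (b) $T$ has $\lambda$-many distinct cofinal branches, and $\lambda\ge\omega_2^{V[H]}$; (c) $T$ contains no copy of ${^{<\omega+1}}2$.

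Parts (a) and (b) follow from standard density arguments using the $\omega_1$-closure of $\K_\lambda$ (Lemma \ref{L:K-closed}). For (a), for each $\beta<\omega_1$ the set of $q$ with $\eta_q\ge\beta$ is dense---extend by single successor levels and take lower bounds at limit stages---so $T$ has height $\omega_1$; and since $\omega_1$-closure preserves $\CH$, $|T|\le|{^{<\omega_1}}\omega_1|^{V[H]}=\omega_1$. For (b), each $\alpha<\lambda$ yields the cofinal branch $b_\alpha:=\bigcup\{f_q(\alpha)\mid q\in H,\ \alpha\in\dom(f_q)\}$; distinct $\alpha,\beta$ give distinct branches because the infinitely-splitting requirement on $T_q$ lets us extend any condition to separate $f_q(\alpha)$ from $f_q(\beta)$. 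Since $\K_\lambda$ is $(2^{\omega_1})^+$-Knaster (Lemma \ref{L:K-Knaster}), under $\GCH$ it preserves cardinals $\ge\omega_3^V$; the excerpt already notes that $\omega_2^V$ is collapsed, so $\omega_2^{V[H]}\le\omega_3^V\le\lambda$, making $T$ a weak Kurepa tree.

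The heart of the argument is (c). Assume for contradiction that $\iota\colon{^{<\omega+1}}2\to T$ is an isomorphic embedding in $V[H]$. By Claim \ref{cl:meets} applied in $V[H]$, we may assume $\iota$ preserves meets and is continuous at limits. The crucial point is that since $\K_\lambda$ is $\omega_1$-closed, hence $\omega$-distributive, the countable function $\iota\rest{^{<\omega}}2$ already lies in $V$. Consequently the ordinal $\alpha:=\sup\{\height_T(\iota(a))+1\mid a\in{^{<\omega}}2\}$ is a countable supremum of countable ordinals, so $\alpha<\omega_1$. By continuity at limits, for each $x\in{^{\omega}}2$ the node $\iota(x)$ lives at level $\sup_n\height_T(\iota(x\rest n))\le\alpha$, so $\iota[{^{<\omega+1}}2]\sub T\rest(\alpha+1)$.

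To finish, choose $q\in H$ with $\eta_q\ge\alpha$ by density; a filter compatibility argument using the end-extension clause of $\le_{\K_\lambda}$ shows that every node of $T$ at level $\le\eta_q$ already belongs to $T_q$, so $\iota[{^{<\omega+1}}2]\sub T_q$. Moreover, the limit layer of $\iota$ is determined in $V$ by the continuity requirement applied to $\iota\rest{^{<\omega}}2$ inside the normal tree $T_q\in V$, so $\iota\in V$ and remains an isomorphic embedding there by absoluteness of the tree order and of incomparability on $T_q$. This contradicts the defining requirement that $T_q$ contain no copy of ${^{<\omega+1}}2$. The main obstacle is exactly this final absoluteness step: one must verify that the ``continuous at limits'' version of $\iota$ pulls back to $V$ so that the condition-level prohibition applies.
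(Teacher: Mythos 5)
Your proof is correct and follows essentially the same route as the paper's: $\omega_1$-closure for the height/size and preservation of $\CH$, the Knaster property plus a density argument for the $\lambda$-many branches, and Claim \ref{cl:meets} to reduce any copy of ${^{<\omega+1}}2$ to a bounded, meet-preserving, continuous one sitting inside some $T_q$ with $q\in H$. Your explicit verification that such a copy pulls back to $V$ (via $\omega_1$-distributivity and the determination of the limit layer by $\iota\restriction{^{<\omega}}2$) is exactly the absoluteness point the paper handles only implicitly here, so the argument matches in substance.
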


\begin{proof}
Since  $\K_\lambda$ is $\omega_1$-closed, $\omega_1$ is preserved by $\K_\lambda$ and the generic tree $T=\bigcup\set{T_q}{q\in H}$ is thus a tree with height and size $\omega_1$. By a standard density argument $T$ has $\lambda$-many cofinal branches in $V[H]$. Since $\GCH$ holds in the ground model, $\K_\lambda$ is $\omega_3$-Knaster by Lemma \ref{L:K-Knaster}, hence all cardinals greater than $\omega_2$ are preserved (recall that $2^{\omega_1}$ is always collapsed); in particular $\lambda$ is preserved. Since $\lambda>\omega_2$ in the ground model, $\lambda>\omega_1$ in $V[H]$; therefore the generic tree $T$ is a weak Kurepa tree in $V[H]$.

By Claim \ref{cl:meets}, if there is a copy of ${^{<\omega+1}2}$ in $T$, there is one which is bounded in the height of $T$ and therefore there is a condition $q\in H$ such that $T_q$ contains a copy of ${^{<\omega+1}2}$; this would contradict the definition of the forcing $\K_\lambda$. 
\end{proof}

We show now that the generic tree added by $\K_\lambda$ does not contain an Aronszajn subtree and that this property is preserved by all $\omega_1$-closed forcings.  

\begin{lemma}\label{L:NoA}
Assume $\CH$.
Let $\lambda> \omega_1$ be a cardinal and let $H$ be a $\K_\lambda$-generic filter over $V$. Assume that $\P$ is an $\omega_1$-closed forcing in $V[H]$. Then the generic tree $T=\bigcup\set{T_q}{q\in H}$ does not contain an Aronszajn subtree in the generic extension of $V[H]$ by $\P$. In particular $T$ does not contain an Aronszajn subtree in $V[H]$.
\end{lemma}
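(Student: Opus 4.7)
The plan is a Silver-style splitting-tree-of-conditions argument combining the $\omega_1$-closure of $\bb{R}:=\K_\lambda\ast\dot{\P}$ (which holds by Lemma \ref{L:K-closed} together with the hypothesis on $\P$) with the no-copy-of-${}^{<\omega+1}2$ property of the generic tree $T$ established in Lemma \ref{C:wKurepaTree}. Assume toward contradiction that in $V[H][G]$ there is an Aronszajn subtree $S$ of $T$, witnessed by an $\bb{R}$-name $\dot S$ and a condition $(q^*,\dot p^*)\in\bb{R}$ forcing that $\dot S$ is a subtree of $\dot T$ of height $\omega_1$ with no cofinal branch. By restricting to the \emph{live} part of $\dot S$ (those nodes with extensions in $\dot S$ at arbitrarily high levels), one may reduce to the case where $\dot S$ is forced to be normal and splitting, since any failure of the live part to reach height $\omega_1$ already produces a cofinal branch.

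Working in $V$, attempt to recursively construct a binary tree of pairs $\{((q_s,\dot p_s),\sigma_s):s\in 2^{<\omega+1}\}$ below $(q^*,\dot p^*)$ such that, for each $s\in 2^{<\omega}$: $(q_s,\dot p_s)\Vdash\sigma_s\in\dot S$; the conditions are $\sqsubseteq$-decreasing and the $\sigma_s$'s are $\sqsubseteq$-increasing; $\sigma_{s^\frown 0}$ and $\sigma_{s^\frown 1}$ are chosen $\sqsubseteq$-incomparable; and for $x\in 2^\omega$, $(q_x,\dot p_x)$ is a lower bound of $((q_{x\restriction n},\dot p_{x\restriction n}))_{n<\omega}$ produced via $\omega_1$-closure, with $\sigma_x:=\bigcup_n\sigma_{x\restriction n}$ placed into $T_{q_x}$ by a suitable extension of $q_x$. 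If this construction breaks down at some $s\in 2^{<\omega}$, then no extension of $(q_s,\dot p_s)$ forces two $\sqsubseteq$-incomparable extensions of $\sigma_s$ inside $\dot S$; iterating the failure cofinally then shows that the extensions of $\sigma_s$ in $\dot S$ all lie along a single $\omega_1$-chain, producing a cofinal branch of $\dot S$ above $\sigma_s$ and contradicting the Aronszajn hypothesis.

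If the construction instead succeeds, then the map $s\mapsto\sigma_s$ is an isomorphic embedding of ${}^{<\omega+1}2$ into $\dot T$. Since each $T_{q_x}$ is required to be a valid $\K_\lambda$-condition (and so, in particular, must contain no copy of ${}^{<\omega+1}2$), an amalgamation of the countably many finite-level nodes $\{\sigma_s:s\in 2^{<\omega}\}$ together with sufficiently many of the limits $\{\sigma_x:x\in 2^\omega\}$ into the tree part $T_{q'}$ of a single extended condition $q'\le q^*$ would produce the forbidden copy of ${}^{<\omega+1}2$ in $T_{q'}$, contradicting the defining property of $\K_\lambda$-conditions. The main technical obstacle is precisely this amalgamation: the $2^\omega$ many conditions $\{q_x:x\in 2^\omega\}$ are pairwise incompatible, so no single generic contains more than one of them, and the no-copy property moreover prevents any single $\K_\lambda$-condition from absorbing all $\sigma_x$'s simultaneously. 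The resolution---which constitutes the heart of the proof---is a compatibility analysis of the tree parts $\{T_{q_x}\}$ showing that the no-copy constraint itself is what forces the construction to abort, reducing to the first case and yielding the cofinal branch in $\dot S$ that furnishes the desired contradiction.
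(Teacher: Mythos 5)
There is a genuine gap, and it sits exactly where you placed your ``heart of the proof.'' Your plan is to derive the contradiction from the no-copy-of-${}^{<\omega+1}2$ constraint on the tree parts of $\K_\lambda$-conditions, via some unspecified ``compatibility analysis'' showing that this constraint forces the binary splitting construction to abort. This cannot work as described. The continuum-many conditions $(q_x,\dot p_x)$ for $x\in{}^{\omega}2$ are pairwise incompatible, so each one is free to place its own limit node $\sigma_x$ into its own tree part while omitting all the others; no single condition is ever asked to absorb the whole family $\{\sigma_x \mid x\in{}^{\omega}2\}$, so the no-copy requirement on individual conditions is never violated and the construction does not abort. More fundamentally, ``$T$ contains no copy of ${}^{<\omega+1}2$'' simply does not preclude Aronszajn subtrees: in the model of Theorem \ref{Th:negKH+wKH+A-subtree} the tree $({}^{<\omega_1}2)^V$ contains no copy of ${}^{<\omega+1}2$ yet does contain Aronszajn (even Suslin) subtrees. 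So any proof of Lemma \ref{L:NoA} must exploit something about $\K_\lambda$ beyond the no-copy clause, and your proposal never identifies what that is.

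The mechanism the paper actually uses is the control that $\K_\lambda$-conditions exercise over which cofinal branches of the tree-so-far get continued at limit stages, mediated by the functions $f_q$. Given a name $\dot S$ for an Aronszajn subtree, one builds a single decreasing $\omega$-sequence of conditions $(r^*_n,\dot p^*_n)$ deciding ever higher levels of $\dot S$, while repeatedly extending each named branch $f_{r^*_n}(\alpha)$ so that it is \emph{forced out of} $\dot S$ (possible precisely because $\dot S$ is Aronszajn, so the cofinal branch $b_\alpha$ of $\dot T$ cannot lie in $\dot S$; this is Claim \ref{C:indstep}). At the limit level $\gamma$, the lower bound is then assembled so that the only branches of $T^\gamma=\bigcup_n T_{r^*_n}$ continued to level $\gamma$ are the obligatory $b_\alpha$'s (none of which is a cofinal branch of the decided part $S^\gamma$) plus carefully chosen branches above nodes of $T^\gamma\setminus S^\gamma$ (needed for normality, chosen to avoid creating a copy of ${}^{<\omega+1}2$ exactly as in the proof of Lemma \ref{L:K-closed}). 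The resulting condition forces $\dot S\cap\dot T_\gamma=\emptyset$, contradicting that $\dot S$ has height $\omega_1$. Note that this argument requires no binary tree of conditions at all; the contradiction is ``$\dot S$ dies at a countable level,'' not ``some condition contains a forbidden copy.'' If you want to salvage your outline, the step to replace is the amalgamation: instead of trying to collect the $\sigma_x$'s into one condition, you should use the freedom at the limit stage to \emph{exclude} the branches of $\dot S$ from the next level of the generic tree.
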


\begin{proof}
We work in $V$.
Let $\dot{T}$ denote a canonical $\K_\lambda$-name for the generic tree added by $\K_\lambda$. Assume for a contradiction that $\dot{S}$ is a $\K_\lambda*\dot{\P}$-name for an Aronszajn subtree of $\dot{T}$ and $(q,\dot p)\in\K_\lambda*\dot{\P}$ is a condition which forces this. 

We begin with some easy observations. For every $r\in \K_\lambda$ and all $\sigma, \tau \in 
{^{<\omega_1}}\omega_1$, it holds that $r\Vdash \sigma\le_{\dot{T}} \tau$ if and only if $\sigma\sub\tau\in T_r$. Moreover, if  $(r,\dot{p}')\le (q,\dot{p})$ decides that level $\dot{S}_\gamma=x$ for some $\gamma<\omega_1$, then $\height(T_r)\ge\gamma+1$ and $x\sub (T_r)_\gamma$.

\begin{claim}\label{C:indstep}
Let $(r,\dot{p}')\le (q,\dot{p})$ decide that level $\dot{S}_\gamma=x$ for some $\gamma<\omega_1$. Then there is a condition $(r^*,\dot{p}^*)=((T_{r^*},f_{r^*}),\dot{p}^*)$ stronger than $(r,\dot{p}')$ such that the following holds:
\begin{enumerate}[(i)]
\item for each $\sigma\in x$ there is $\alpha\in\dom(f_{r^*})$ such that $f_{r^*}(\alpha)\supseteq \sigma$;\label{C:1}
\item for each $\alpha\in\dom(f_{r^*})$, $(r^*,\dot{p}^*)\Vdash f_{r^*}(\alpha)\not\in\dot{S}$.\label{C:2}
\end{enumerate}
\end{claim}
\begin{proof}
Note that we can assume that $(r,\dot{p}')$ is such that for each $\sigma\in x$ there is $\alpha\in\dom(f_r)$ with $f_r(\alpha)\supseteq\sigma$; in particular $(r,\dot{p}')$ forces that $\sigma$ is in some cofinal branch of $T$. If this is not the case, then we can extend $f_r$ appropriately using that $T_r$ is normal. We now build by induction on $\omega$ a decreasing sequence of conditions $\seq{(r_n,\dot{p}_n)\in\K_\lambda*\dot{\P}}{n<\omega}$ such that the desired $(r^*,\dot{p}^*)$ will be a lower bound of this sequence.)

Begin by letting $(r_0, \dot{p}_0) = (r,\dot{p}')$. Now fix $n < \omega$ and suppose that we have 
constructed $(r_n, \dot{p}_n)$. Note that, for each $\alpha \in \dom(f_{r_n})$, 
$\bigcup \{f_s \mid s \in \dot{H}\}$ is forced to be a cofinal branch through $\dot{T}$, where 
$\dot{H}$ is a name for the $\K_\lambda$-generic filter. Therefore,
since $\dot{S}$ is forced by $(r_n,\dot{p}_n)$ to be an Aronszajn subtree of $\dot{T}$, 
and since $\dom(f_{r_n})$ is countable, there is $(r_{n+1},\dot{p}_{n+1})\le (r_n,\dot{p}_n)$ 
such that, for all $\alpha \in \dom(f_{r_n})$, we have $(r_{n+1},\dot{p}_{n+1})\Vdash f_{r_{n+1}}(\alpha)\not\in \dot{S}$. Let $(q^*,\dot{p}^*)$ be a lower bound of $\seq{(r_n*\dot{p}_n)\in\K_\lambda*\dot{\P}}{n<\omega}$ such that $\dom(f_{q^*}) = \bigcup\{\dom(f_{r_n}) \mid n < \omega\}$. Then it is easy to see that $(q^*,\dot{p}^*)$ satisfy condition (\ref{C:1})  and (\ref{C:2}) above.
\end{proof}

Now, we build a decreasing sequence of conditions $\seq{(r^*_n,\dot{p}^*_n)\in \K_\lambda*\dot{\P}}{n<\omega}$ such that each condition will satisfy an instance of Claim \ref{C:indstep}. Let $(r_0,\dot{p}_0)\le (q,\dot{p})$ decide that level $\dot{S}_{\gamma_0}=x_{\gamma_0}$ for some $\gamma_0<\omega_1$. Then take $(r^*_0,\dot{p}^*_0)\le (r_0,\dot{p}_0)$ to be a condition which satisfies Claim \ref{C:indstep} for $\gamma_0$. Note that $\height(T_{r^*_0})\ge\gamma_0$. Let $(r^*_n,\dot{p}^*_n)$ be constructed for some $n<\omega$. Let $(r_{n+1},\dot{p}_{n+1})$ be a condition which decides $\dot{S}_{\gamma_{n+1}}=x_{\gamma_{n+1}}$ for some $\gamma_{n+1}>\height(T_{r^*_n})$, and then take $(r^*_{n+1},\dot{p}^*_{n+1}) 
\leq (r_{n+1}, \dot{p}_{n+1})$ to be a condition which satisfies  Claim \ref{C:indstep} for $\gamma_{n+1}$. Note that $\height(T_{r^*_{n+1}})\ge\gamma_{n+1}$. 

It now follows from the construction that $T^\gamma=\bigcup_{n<\omega} T_{r^*_{n}}$ is a countable subtree of ${^{<\gamma}}\omega_1$ of limit height $\gamma$, where $\gamma = \sup\set{\gamma_n}{n<\omega}$. Let 
\[
  S^\gamma=\bigcup\set{\sigma\in {^{<\gamma}}\omega_1 }{\exists n<\omega \ (r^*_{n},\dot{p}^*_n)\Vdash \sigma\in\dot{S}}. 
\]
Note that $S^\gamma$ is a subtree of $T^\gamma$ with countable levels and height $\gamma$ in $V$, and 
that any lower bound for $\langle (r^*_n, \dot{p}^*_n) \mid n < \omega \rangle$ will force that 
$\dot{S} \cap {^{<\gamma}}\omega_1 = S^\gamma$.

Let $a=\bigcup_{n<\omega} \dom(f_{r^*_n})$, then for each $\alpha\in a$ 
      $$b_\alpha = \bigcup \set{f_{r^*_n}(\alpha) }{ n < \omega \wedge \alpha \in \dom(f_{r^*_n})}.$$
is a cofinal branch through $T^\gamma$. However, $b_\alpha$ is not a cofinal branch through $S^\gamma$ since there is $n<\omega$ such that $(r^*_n,\dot{p}^*_n)\Vdash f_{r^*_n}(\alpha)\not\in\dot{S}$. Hence we are not obligated to put any cofinal branch through $S^\gamma$ into lower bound of $\seq{r^*_n\in \K_\lambda}{n<\omega}$.

We claim that we can find a lower bound $r^*$ of $\seq{r^*_n}{n<\omega}$ such that $T_{r^*}$ has height $\gamma+1$ and for each $\alpha\in\dom(f_{r^*})$, $f_{r^*}(\alpha)$ is not a cofinal branch through $S^\gamma$. Consider $r=(T_r, f_r)$, where $T_r=T^\gamma\cup \set{b_\alpha}{\alpha\in a}$, $\dom(f_r)=a$ and for each $\alpha\in a$, $f_r(\alpha)=b_\alpha$. Each lower bound of $\seq{r^*_n\in \K_\lambda}{n<\omega}$ has to extend $r$, but $r$ itself does not have to be a condition in $\K_\lambda$, since $T_r$ does not have to be normal. However for each $\sigma\in S^\gamma$ there is $\alpha\in a$ such that $\sigma\sub b_\alpha$, hence to extend $T_r$ to a normal tree we only need to extend nodes in $T^\gamma\setminus S^\gamma$. As in the proof that $\K_\lambda$ is $\omega_1$-closed, extend $T_r$ to a normal tree $T^* = T \cup \{c_\tau \mid \tau \in T^\gamma \setminus S^\gamma\}$ by carefully picking cofinal branches $c_\tau$ in $T^\gamma$ for each $\tau\in T^\gamma\setminus S^\gamma$ to ensure that $T^*$ does not contain a copy of ${^{<\omega+1}2}$. Since we add only nodes to level $\gamma$ above $T^\gamma\setminus S^\gamma$, we did not add any cofinal branch through $S^\gamma$ into $T^*$. Therefore every cofinal branch $b$ through $S^\gamma$ is vanishing in $T^*$; i.e.\ there is no node on level $\gamma$ above $b$. 

Let $r^*=(T^*,f_r)$. Then $r^*$ is a condition in $\K_\lambda$ which is a lower bound of $\seq{r^*_n\in \K_\lambda}{n<\omega}$. Consider $(r^*,\dot{p}^*)$, where $\dot{p}^*$ is chosen so that $(r^*,\dot{p}^*)$ is a lower bound of $\seq{(r^*_n,\dot{p}^*_n)\in \K_\lambda*\dot{\P}}{n<\omega}$. Then the condition $(r^*,\dot{p}^*)$ forces that $\dot{S}\cap \dot{T}_
\gamma=\emptyset$, which is a contradiction since $(q,\dot{p})$ forces that $\dot{S}$ is an Aronszajn subtree of $\dot{T}$ and in particular unbounded in $\dot{T}$. 
\end{proof}

We are now ready to prove the following variation on Theorem \ref{Th:negKH+wKH+A-subtree}.

\begin{theorem}\label{Th:negKH+noAsubtrees}
  Suppose that there is an inaccessible cardinal $\kappa$. Then there is a forcing extension 
  in which 
  \begin{enumerate}
    \item $\kappa = \omega_2$;
    \item $\GCH$;
    \item $\neg \KH$;
    \item there is a weak Kurepa tree $T \subseteq {^{<\omega_1}}\omega_1$ that does not contain 
    a copy of ${^{< \omega+1}}2$ and does not contain an Aronszajn subtree.
  \end{enumerate}
\end{theorem}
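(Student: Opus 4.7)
The plan is to adapt the forcing of Theorem \ref{Th:negKH+wKH+A-subtree} by replacing $\Add(\omega,1)$ with the poset $\K_\kappa$ developed in this section. Concretely, I would force with $\P = \Coll(\omega_1, {<}\kappa) \times \K_\kappa$, where both factors are taken in $V$. Since $\kappa$ is inaccessible, $\Coll(\omega_1, {<}\kappa)$ is $\omega_1$-closed and $\kappa$-cc, while $\K_\kappa$ is $\omega_1$-closed (Lemma \ref{L:K-closed}) and $\kappa$-Knaster (Lemma \ref{L:K-Knaster} applies because $\kappa$ inaccessible gives $\gamma^\omega < \kappa$ for all $\gamma < \kappa$). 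Thus $\P$ is $\omega_1$-closed, $\kappa$-cc, and of size $\kappa$, so it preserves $\omega_1$ and $\kappa$ and collapses every cardinal in $(\omega_1, \kappa)$ to $\omega_1$, establishing item (1). A standard nice-name count then yields $\GCH$, item (2).

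For item (4), write the generic as $H \times G$ and let $T^* := \bigcup\{T_q : q \in H\}$. By Lemma \ref{C:wKurepaTree}, in $V[H]$ the tree $T^*$ is a weak Kurepa tree with $\kappa$-many cofinal branches and no copy of ${}^{<\omega+1}2$. The $\omega_1$-closure of $\K_\kappa$ gives $\Coll(\omega_1, {<}\kappa)^V = \Coll(\omega_1, {<}\kappa)^{V[H]}$, and this $\omega_1$-closed forcing neither removes branches of $T^*$ nor introduces a copy of ${}^{<\omega+1}2$: by Claim \ref{cl:meets}, any such copy is determined by its countable restriction to ${}^{<\omega}2$ and would already live in $V[H]$, contradicting Lemma \ref{C:wKurepaTree}. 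The absence of any Aronszajn subtree of $T^*$ in $V[H][G]$ is then an immediate application of Lemma \ref{L:NoA}.

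Item (3) is the main obstacle. Assume toward a contradiction that $K \in V[H][G]$ is an $\omega_1$-Kurepa tree. The plan is to reflect $K$ to a small intermediate model and then apply Silver's lemma. Since $\P$ is $\kappa$-cc and each of its conditions has countable support in both factors, a nice name for $K$ involves fewer than $\kappa$-many conditions and hence has support contained in $\K_\nu \times \Coll(\omega_1, {<}\mu)$ for some $\nu, \mu < \kappa$; here Lemma \ref{L:wKT-projection} supplies the required projection on the $\K$-side. Thus $K \in V[H_\nu \times G_\mu]$, an intermediate model in which $2^{\omega_1} < \kappa$, so $K$ has fewer than $\kappa$-many cofinal branches there. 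The tail quotient is $(\K_\kappa / H_\nu) \times \Coll(\omega_1, [\mu, \kappa))$: the second factor is $\omega_1$-closed and by Silver's lemma (Lemma \ref{lemma: strat_closed_silver}, with $\tau = \omega$ and $\nu = \omega_1$) adds no new cofinal branches through $K$. The delicate remaining point is to handle the $\K$-side quotient, for which the lemma preceding Lemma \ref{L:into} asserts only $\omega_1$-distributivity; to upgrade this to the $(\omega+1)$-strategic closure required by Silver's lemma, I would revisit the proof of that distributivity result and extract a winning strategy for Player II in $\Game_{\omega+1}(\K_\kappa / H_\nu)$ from the $\omega_1$-closure of the ambient $\K_\kappa$. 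Two successive applications of Silver's lemma then force $|[K]|^{V[H][G]} < \kappa = \omega_2^{V[H][G]}$, the desired contradiction.
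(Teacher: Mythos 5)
Your setup and your treatment of items (1), (2) and (4) match the paper's proof: the forcing is $\Coll(\omega_1,{<}\kappa)\times\K_\kappa$, item (4) follows from Lemma \ref{C:wKurepaTree}, Lemma \ref{L:NoA}, and the observation (via Claim \ref{cl:meets}) that an $\omega_1$-distributive forcing cannot add a copy of ${}^{<\omega+1}2$ to a normal tree of height $\omega_1$. The gap is in item (3), precisely at the point you flag as ``delicate.'' You propose to upgrade the $\omega_1$-distributivity of the quotient $\K_\kappa/H_\theta$ to $(\omega+1)$-strategic closure so that Silver's Lemma \ref{lemma: strat_closed_silver} applies. This upgrade is not available, and the paper explicitly says so (``$\K_{\kappa}/H_\theta$ is only $\omega_1$-distributive \dots and therefore we cannot use Fact \ref{lemma: strat_closed_silver}''). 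The obstruction is concrete: in $V[H_\theta]$ the quotient is equivalent to the poset of pairs $(\eta,f)$ with $f$ a countable partial function from $\kappa\setminus\theta$ into level $\eta$ of the generic tree $T$. A lower bound for a decreasing $\omega$-sequence $\langle(\eta_n,f_n)\rangle$ requires each branch $\bigcup_n f_n(\alpha)$ of $T\restriction\eta$ (where $\eta=\sup_n\eta_n$) to be \emph{present at level $\eta$ of $T$}; but which cofinal branches of $T\restriction\eta$ survive to level $\eta$ was already decided by $H_\theta$, and Player II has no control over either this or over the sequence of heights $\eta_n$ chosen by Player I. So Player II cannot guarantee a lower bound exists, and the distributivity proof (which goes through the full $\omega_1$-closed $\K_\kappa$ and only shows no new $\omega$-sequences of ordinals appear) does not yield a strategy.

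This is exactly why the paper's argument for $\neg\KH$ is long: instead of working in $V[H_\theta]$, it works in $V[G_\theta]$ below a fixed condition and builds a \emph{labeled system} --- a tree $\{r_s : s\in{}^{<\omega}2\}$ of conditions in $\K_\kappa$ splitting the putative new branch $\dot b$, guided by a single decreasing sequence $\langle q_n\rangle$ in $\K_\theta$ with $q_n=r_s\restriction\theta$, so that the $q_n$'s force the $r_s$'s into the quotient. Even then one cannot take lower bounds of all $2^\omega$ paths: a lower bound of $\langle q_n\rangle$ that forces lower bounds of $\langle r_{x\restriction n}\rangle$ into the quotient for uncountably many $x$ must extend uncountably many cofinal branches of the limit tree, and doing this naively could create a copy of ${}^{<\omega+1}2$ and so fall out of $\K_\kappa$. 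The paper handles this with the disjointness condition (d) on the ranges of the $f_{r_s}\restriction[\theta,\kappa)$ and the bookkeeping over the embeddings $\iota_\xi$ (Claims \ref{Cl:notperfect}--\ref{cl:last}), producing uncountably many compatible conditions that force level $\gamma$ of $\dot S$ to be uncountable. None of this is recoverable from a two-fold application of Silver's lemma, so as written your proof of item (3) does not go through.
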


\begin{proof}
Let $\kappa$ be an inaccessible cardinal, and assume that $\GCH$ holds.
 Let $\P = \Coll(\omega_1,<\kappa)$ and let $G\times H$ be $\P\times\K_\kappa$-generic over $V$. $\P\times\K_\kappa$ is $\omega_1$-closed and $\kappa$-Knaster and hence preserves $\omega_1$ and all cardinals greater or equal $\kappa$. Since $\P$ collapses cardinals between $\omega_1$ and $\kappa$ to $\omega_1$, $\kappa$ is $\omega_2$ in $V[G][H]$. Since $\P \times\K_\kappa$ is $\omega_1$-closed and has cardinality $\kappa$, it follows that $\GCH$ holds in $V[G][H]$. By Lemma \ref{C:wKurepaTree}, $T=\bigcup\set{T_q}{q\in H}$ is a weak Kurepa tree with $\kappa$-many cofinal branches which does not contain a copy of ${^{<\omega+1}}2$ in $V[H]$. Note that $T$ is still a weak Kurepa tree in $V[H][G]$ since $\P$ preserves $\omega_1$ and $\kappa$ over $V[H]$. Moreover, since $\P$ is $\omega_1$-closed in $V[H]$, $T$ does not contain any Aronszajn subtree in $V[H][G]=V[G][H]$ by Lemma \ref{L:NoA}. To see that $T$ does not contain a copy of ${^{<\omega+1}2}$ in $V[H][G]$, we will prove the stronger assertion that no $\omega_1$-distributive forcing can add a copy of ${^{<\omega+1}2}$ to a tree of height $\omega_1$ which does not contain such a copy in the ground model.

\begin{claim}
Let $T$ be a normal tree of height $\omega_1$ which does not contain a copy of ${^{<\omega+1}2}$ and let $P$ be an $\omega_1$-distributive forcing. Then $P$ does not add a copy of ${^{<\omega+1}2}$ to $T$.
\end{claim}

\begin{proof}
This follows from Claim \ref{cl:meets} which implies that we can assume that a copy of ${^{<\omega+1}2}$ is determined by its restriction to ${^{<\omega}2}$. Let us give some details. Let $F$ be $P$-generic over $V$ and let $f$ be an isomorphic embedding from ${^{<\omega+1}2}$ to $T$ in $V[F]$. Then since ${^{<\omega}2}$ has size $\omega$ and $P$ is $\omega_1$-distributive, $f\rest {^{<\omega}2}$ is in the ground model and we can extend it to an isomorphic embedding $g$ from ${^{<\omega+1}2}$ by defining $g(x)=\sup\set{x\rest n}{n<\omega}$ for $x\in {^\omega2}$ and $g(a)=f(a)$ for $a\in {^{<\omega}2}$. Note that $g(x)$ is well defined for all $x\in {^\omega2}$ since $f(x)$ is above $\set{x\rest n}{n<\omega}$ in $V[F]$ and $T$ is normal. The mapping $g$ determines a copy of ${}^{<\omega+1}2$ in the ground model, which is a contradiction.
\end{proof}

Since $\P$ is $\omega_1$-closed in $V[H]$, it follows by the previous claim that $T$ does not contain a copy of ${^{<\omega+1}2}$ in $V[H][G]$ because it has no such copy in $V[H]$.

To finish the proof we need to show that there are no Kurepa trees in $V[G][H]$. Assume that $S$ is an $\omega_1$-tree in $V[G][H]$. Since $S$ has size $\omega_1$ and $\P\times\K_\kappa$ is $\kappa$-Knaster, there is a nice $\P\times\K_\kappa$-name $\dot{S}$ of size less than $\kappa$ for $S$. Since $\dot{S}$ has size less than $\kappa$,  $\dot{S}$ is a $\P_\theta\times \K_\theta$-name for some regular cardinal $\theta < \kappa$, where $\P_\theta=\Coll(\omega_1,{<}\theta)$. Let $G_\theta$ denote the $\P_\theta$-generic over $V$ determined by $G$; i.e.\ $G_\theta=\set{p\rest\theta}{p\in G}$, and let $H_\theta$ denote the $\K_\theta$-generic filter over $V[G_\theta]$ determined by $H$ and $\pi$, where $\pi$ is the projection from $\K_\kappa$ to $\K_\theta$ from Lemma \ref{L:wKT-projection}. The $\omega_1$-tree $S$ is an element of $V[G_\theta][H_\theta]$ and has at most $(2^{\omega_1})^{V[G_\theta][H_\theta]}$-many cofinal branches here, which is less than $\kappa$, since $\kappa$ is still inaccessible in $V[G_\theta][H_\theta]$. We show that the quotient forcing $\P^\theta\times \K_\kappa/H_\theta$, where $\P^\theta$ denotes $\Coll(\omega_1,[\theta,{<}\kappa))$, cannot add a cofinal branch to $S$ over $V[G_\theta][H_\theta]$. It will follow that $S$ has ${<}\kappa=\omega_2$-many cofinal branches in $V[G][H]$, i.e., it is not a Kurepa tree. Since $S$ was an arbitrary $\omega_1$-tree, this will conclude the proof of Theorem \ref{Th:negKH+noAsubtrees}. The proof that there are no Kurepa trees in $V[G][H]$ is relatively long; for easier reading, it is divided into several Claims (Claims \ref{C:diff} to \ref{cl:last}).

First we observe that we can express the generic extension $V[G][H]$ as a forcing extension given by $(\P_\theta\times\K_\theta)*(\K_{\kappa}/\dot{H}_\theta\times \P^\theta)$: Since $\P$ is forcing equivalent to $\P_\theta\times \P^\theta$, we can view $G$ as a filter $G_\theta\times g$  which is $\P_\theta\times \P^\theta$-generic over $V$. Similarly, since $\K_\kappa$ is forcing equivalent to $\K_\theta*\K_{\kappa}/\dot{H}_\theta$, we can view $H$ as a filter $H_\theta*h$ which is generic over $V[G_\theta][g]$. Therefore $V[G][H]$ is equal to the generic extension $V[G_\theta][g][H_\theta][h]$. Moreover, since $\P^\theta$ and $\K_\theta$ live in $V[G_\theta]$ and $H_\theta$ is generic over $V[G_\theta][g]$, $V[G_\theta][g][H_\theta][h]=V[G_\theta][H_\theta][g][h]$. Similarly, we can exchange $g$ and $h$ since both $\P^\theta$ and $\K_{\kappa}/H_\theta$ live in $V[G_\theta][H_\theta]$ and $h$ is generic over $V[G_\theta][H_\theta][g]$. It follows that $V[G_\theta][H_\theta][g][h]=V[G_\theta][H_\theta][h][g]$.

Since $\P^\theta$ and $\P_\theta\times \K_\kappa$ are both $\omega_1$-closed in $V$, $\P^\theta$ is still $\omega_1$-closed in $V[G_\theta][H_\theta][h]$. By Lemma \ref{lemma: strat_closed_silver}, $\omega_1$-closed forcing cannot add new cofinal branches to an $\omega_1$-tree and hence any cofinal branch through $S$ is already in $V[G_\theta][H_\theta][h]$.

To show that $\K_{\kappa}/H_\theta$ cannot add a cofinal branch to $S$ over $V[G_\theta][H_\theta]$, we will work in $V[G_\theta]$. Note that $\K_{\kappa}/H_\theta$ is only $\omega_1$-distributive in $V[G_\theta][H_\theta]$ and therefore we cannot use Fact \ref{lemma: strat_closed_silver}. We will show that if there is a $\K_\theta*(\K_{\kappa}/\dot{H}_\theta)$-name $\dot{b}$ and a condition $(q^*,r^*)\in \K_\theta*(\K_\kappa/\dot{H}_\theta)$ which forces that $\dot{b}$  is a cofinal branch through $\dot{S}$ that is not in $V[G_\theta][\dot{H}_\theta]$, then there is a condition $q\le q^*$ which forces that $\dot{S}$ has an uncountable level. This is a contradiction, since we can assume that $(q^*,r^*)$ is in $H_\theta*h$ and that $q^*$ forces that $\dot{S}$ is an $\omega_1$-tree.

The proof is similar to Silver's argument for $\omega_1$-closed forcings: we build a tree $\mathcal{T}$ of conditions in $\K_\kappa$ labeled by ${^{<\omega}2}$, but since we are working with the quotient $\K_{\kappa}/\dot{H}_\theta$ and we work in $V[G_\theta]$ and not in $V[G_\theta][H_\theta]$, we will guide this construction by a decreasing sequence of length $\omega$ of conditions in $\K_\theta$, which will force conditions from $\mathcal{T}$ into the quotient $\K_{\kappa}/\dot{H}_\theta$.

We make a natural identification and view $(\dot{S}, <_{\dot{S}})$ as a name for a tree with underlying set $\omega_1$.

Work in $V[G_\theta]$. Assume that $\dot{b}$ is a $\K_\theta*(\K_{\kappa}/\dot{H}_\theta)$-name and $(q^*,\dot{r}^*)\in \K_\theta*(\K_{\kappa}/\dot{H}_\theta)$ forces that $\dot{b}$ is a cofinal branch through $\dot{S}$ that is not in $V[G_\theta][\dot{H}_\theta]$. We will build by induction on $\omega$ the following objects:

\begin{itemize}
\item a decreasing sequence $\seq{q_n}{ n<\omega }$  of conditions in $\K_\theta$ with $q_0 \le q^*$;
\item a labeled tree $\mathcal{T}=\set{r_s}{s\in {^{<\omega}}2 }$ of conditions in $\K_{\kappa}$, with $r_s \le r^*$ for all $s$;
\item a strictly increasing sequence $\seq{\gamma_n}{ n<\omega }$ of ordinals below $\omega_1$,
\end{itemize}

such that the following hold for all $n$ and all $s\in {^{<\omega}}2$ of length $n$:

\begin{enumerate}[(a)]
\item $q_n=r_s\rest\theta$; in particular $q_n\Vdash r_s\in \K_\kappa/\dot{H}_\theta$;
\item the conditions $(q_{n+1},r_{s^{\smallfrown}0})$  and  $(q_{n+1},r_{s^{\smallfrown}1})$ decide $\dot{b}$ up to $\gamma_{n+1}$ differently; i.e.\ there are $\delta
\le\gamma_{n+1} $ and $\tau_{s^{\smallfrown}0}\neq \tau_{s^{\smallfrown}1}$ forced to be in $\dot{S}_{\delta}$ such that $(q_n,r_{s^{\smallfrown}0})\Vdash \dot{b}(\delta)= \tau_{s^{\smallfrown}0}$ and $(q_n,r_{s^{\smallfrown}1})\Vdash \dot{b}(\delta)= \tau_{s^{\smallfrown}1}$;
\item $(q_{|t|},r_t)\le (q_{|s|},r_s)$ for all $s\subseteq t$ in  ${^{<\omega}}2$;
\item $\rng (f_{q_s}\rest [\theta,\kappa))\cap \rng (f_{q_t}\rest [\theta,\kappa))=\emptyset$ for distinct $s,t$ in ${^{<\omega}}2$ of the same length.
\end{enumerate}

\begin{definition}\label{def:system}
Let us call a system above, satisfying conditions (a)--(d), a \emph{labeled system} for $\dot{b}$. We say just a labeled system if $\dot{b}$ is clear from the context.
\end{definition}

A labeled system will be constructed below, using Claims \ref{C:diff} and \ref{cl:conditions}. First we prove an auxiliary claim which will be useful for the definition of the construction.

\begin{claim}\label{C:diff}
For every $(q,r^0) , (q,r^1) \le (q^*,r^*)\in \K_\theta*(\K_{\kappa}/\dot{H}_\theta)$ and $\gamma'<\omega_1$ there are $\gamma'<\gamma<\omega_1$, $(q',p^0)\le (q,r^0)$  and $(q',p^1)\le (q,r^1)$ such that $(q',p^0)$ and $(q',p^1)$ decide $\dot{b}(\gamma)$ differently and the condition $q'$ extends both $p^0\rest\theta$ and $p^1\rest\theta$.
\end{claim}

\begin{proof}
Let $(q,r^0) , (q,r^1) \le (q^*,r^*)$ and $\gamma'<\omega_1$ be given. Fix for the moment a $\K_\theta$-generic $F$ over $V[G_\theta]$ such that $q\in F$ and work in $V[G_\theta][F]$. Since $\dot{b}$ is forced by $(q^*,r^*)$ to be a new cofinal branch through $\dot{S}$ and $q^*\in F$, there are $\bar{p}^0\le r^0$, $\bar{p}^1\le r^0$ and $\gamma>\gamma'$ such that $\bar{p}^0$ and $\bar{p}^1$ decide $\dot{b}(\gamma)$ differently; i.e.\ there are $\tau^0\neq\tau^1$ in $\dot{S}_{\gamma}$ such that $\bar{p}^0\Vdash \dot{b}(\gamma)=\tau^0$ and $\bar{p}^1\Vdash \dot{b}(\gamma)=\tau^1$. 

Since $\bar{p}^0$ and $\bar{p}^1$ are in $\K_{\kappa}/F$, $\bar{p}^0\rest\theta$ and $\bar{p}^1\rest\theta$ are in $F$ and hence they are compatible. Let $\bar{q}$ be a common extension of $\bar{p}^0\rest\theta$, $\bar{p}^1\rest\theta$ and $q$ such that, for each $i < 2$, 
we have $(\bar{q}, \bar{p}^i) \Vdash \dot{b}(\gamma) = \tau^i$.

Now, we return back to working in $V[G_\theta]$. Since $\bar{q}$ extends both $\bar{p}^0\rest\theta$ and $\bar{p}^1\rest\theta$, it forces both of them into the quotient $\K_{\kappa}/\dot{H}_\theta$. It follows $(\bar{q},\bar{p}^0)$ and $(\bar{q},\bar{p}^1)$ are conditions in $\K_\theta*(\K_{\kappa}/\dot{H}_\theta)$ which extend $(q,r^0)$ and decide $\dot{b}(\gamma)$ differently.

Now, consider the condition $(\bar{q},r^1)$. Since $\dot{b}$ is a $\K_\theta*(\K_{\kappa}/\dot{H}_\theta)$ name for a cofinal branch through $\dot{S}$, there is an extension $(q',p^1)\le (\bar{q},r^1)$ which decides $\dot{b}(\gamma)$. Since $\tau^0\neq \tau^1$, $(q', p^1)$ cannot decide $\dot{b}(\gamma)$ as being equal to both of them. Let $p^0$ be $\bar{p}^i$, for $i<2$, such that $(q',p^1)$ and $(q',\bar{p}^i)$ disagree on $\dot{b}(\gamma)$. Then $q',p^0,p^1$ and $\gamma$ are as required. Note that $q'\le p^1\rest\theta$ since $q'$ forces $p^1$ into $\K_{\kappa}/\dot{H}_\theta$ and $\K_\theta$ is separative.
\end{proof}

Now, we are ready to construct our labeled system. The construction is by induction on $\omega$.

Let $\gamma_0$ be an arbitrary ordinal below $\omega_1$ and let $(q_0,r_\emptyset)$ be $(q^*,r^*)$. By Lemma \ref{L:into}, $q^*\le r^*\rest\theta$ since $q^*$ forces $r^*$ into $\K_{\kappa}/\dot{H}_\theta$ and $\K_\theta$ is separative. If $q^*\neq r^*\rest\theta$, we can extend $r^*$ appropriately to ensure the condition (a); for more details, see Claim \ref{cl:conditions} below in the successor step of the construction.

Now fix $n < \omega$ and assume that we have constructed $\gamma_n$, $q_n$ and $r_s$ for all $s \in {^n2}$. Let $\seq{s_i}{i<2^n}$ enumerate ${^n2}$. We describe how to construct 
$\gamma_{n+1}$, $q_{n+1}$, and $r_s$ for $s \in {^{n+1}}2$.

We proceed by induction on $2^n=m$. Let us start with $s_0$. By Claim \ref{C:diff} there are $q^0\le q_n$, $r'_{s_0{}^\smallfrown 0}$,$r'_{s_0{}^\smallfrown 1}\le r_{s_0}$ and $\gamma^0\ge\gamma_n$ such that $(q^0,r'_{s_0{}^\smallfrown 0})$, $(q^0,r'_{s_0{}^\smallfrown 1})$ decide $\dot{b}(\gamma^0)$ differently and $q^0$ extends both $r'_{s_0{}^\smallfrown 0}\rest\theta$ and $r'_{s_0{}^\smallfrown 1}\rest\theta$. Now fix $1 \leq i < m$, and suppose that $\gamma^{i-1}$ and $q^{i-1}$ have been constructed. By Claim \ref{C:diff} there are $q^{i}\le q^{i-1}$, $r'_{s_{i}{}^\smallfrown 0}$, $'r_{s_{i}{}^\smallfrown 1}\le r'_{s_{i}}$ and $\gamma^{i}\ge\gamma^{i-1}$ such that $(q^{i},r'_{s_{i}{}^\smallfrown 0})$, $(q^{i},r'_{s_{i}{}^\smallfrown 1})$ decide $\dot{b}(\gamma^{i})$ differently and $q^{i}$ extends both $r'_{s_i{}^\smallfrown 0}\rest\theta$ and $r'_{s_{i}{}^\smallfrown 1}\rest\theta$. 

Let $q'_{n+1}$ be $q^{m-1}$ and $\gamma_{n+1}$ be $\gamma^{m-1}$. It follows by the construction that the objects  $q'_{n+1}$, $\gamma_{n+1}$, $r'_{s^\smallfrown j}$, for $j<2$ and all $s\in {^n2}$ satisfy the desired conditions (b) and (c).

However, we have only ensured that $q'_{n+1}$ extends $r'_{s}\rest\theta$ for $s\in {^{n+1}2}$, but not that they are equal as is required in condition (a), and we have not ensured condition (d) either. To ensure conditions (a) and (d), we define appropriate extensions of $q'_{n+1}$ and $r'_{s}$ for all $s\in {^{n+1}2}$.  

\begin{claim}\label{cl:conditions}
The objects constructed above can be extended to satisfy conditions (a)--(d) of a labeled system in Definition \ref{def:system}.
\end{claim}

\begin{proof}
Since $q'_{n+1}$ extends $r'_{s}\rest\theta$ for all $s\in {^{n+1}2}$, $T_{q'_{n+1}}$ is an end-extension of $T_{r'_{s}}$ for all $s\in {^{n+1}2}$. Let $\eta+1$ be the height of $T_{q'_{n+1}}$ and let $T'$ be a one-level extension of $T_{q'_{n+1}}$ such that for every node $\tau$ in $T_{q'_{n+1}}$ on level $\eta$ we add at least countably many new nodes above $\tau$ into $T'$. The height of $T'$ is $\eta+2$, and $T'$ is normal and infinitely splitting since $T_{q'_{n+1}}$ is normal and infinitely splitting. Let $f'$ be a function such that $\dom(f')=\dom(f_{q'_{n+1}})$ and for each $\alpha\in\dom(f')$ let $f'(\alpha)\supseteq f_{q'_{n+1}}(\alpha)$ be some node of $T'$ on level $\eta+1$. Then $q_{n+1}=(T',f')$ is a condition in $\K_\theta$ and it extends $q'_{n+1}$. 

Now, we define extensions of $r'_{s}$ for $s\in {^{n+1}2}$. For $s\in {^{n+1}2}$, let $f_s$ be a function such that $\dom(f_s)=\dom(f_{r'_{s}})\cap [\theta,\kappa)$ and for each $\alpha\in\dom(f_s)$, let $f_s(\alpha)\supseteq f_{r'_{s}}(\alpha)$ be some node of $T'$ on level $\eta+1$. Moreover, ensure that $\rng(f_s)\cap\rng(f_t)=\emptyset $ for all $s\neq t \in {^{n+1}2}$. Note that we can find such functions since $T'$ is infinitely splitting and the sets $\dom(f_{r'_{s}})$ for $s\in {^{n+1}2}$ are at most countable. Set $r_s=(T',f'\cup f_s)$ for $s\in {^{n+1}2}$. Clearly, $r_s$ are conditions in $\K_\kappa$ for all $s\in {^{n+1}2}$ such that $q_{n+1}$ is equal to $r_s\rest\theta$, hence condition (a) holds. Condition (d) follows from the definition of $f_s$ for $s\in {^{n+1}2}$.

Since $q_{n+1}$ extends $q'_{n+1}$ and $r_{s}$ extends $r'_{s}$ for all 
$s\in {^{n+1}2}$, conditions (b) and (c) are still satisfied for $q_{n+1}$, $\gamma_{n+1}$, $r_{s^\smallfrown j}$, for $j<2$ and all $s\in {^n2}$.
\end{proof}

This completes our construction of a labeled system. Let $\gamma$ be the supremum of $\seq{\gamma_n}{ n<\omega }$. To finish the proof of the whole Theorem \ref{Th:negKH+noAsubtrees}, we would like to find a lower bound $q$ of the sequence $\seq{q_n}{ n<\omega }$ and a lower bound $r_x$ of sequences $\seq{r_{x\rest n}}{n<\omega}$ for all $x\in {^{\omega}2}$ such that $q$ forces $r_x$ into the quotient $\K_{\kappa}/\dot{H}_\theta$ for every $x$, thus ensuring that every $(q,r_x)$ is a condition in $\K_\theta*(\K_{\kappa}/\dot{H}_\theta)$. If this is the case, then $q$ forces that level $\gamma$ of $\dot{S}$ has size $2^\omega$, and hence $q$ forces that $\dot{S}$ is not an $\omega_1$-tree, finishing the proof. 

In the rest of the proof we will construct such conditions. Before we start, note that we do not need to ensure that $q$ forces $r_x$ into quotient for all $x\in {^\omega2}$, it is enough to have this for uncountably many $x\in{^\omega2}$.

Let $T^*=\bigcup_{n<\omega} T_{q_n}$ and $a=\bigcup_{n<\omega}\dom(f_{q_n})$. Then $T^*$ is a normal tree with limit height which does not contain a copy of ${^{<\omega+1}2}$. Let $\eta<\omega_1$ denote the height of $T^*$. By condition (a), $T^*=\bigcup_{n<\omega} T_{r_{x\rest n}}$ and $a=\bigcup_{n<\omega}\dom(f_{r_{x\rest n}})\cap\theta$ for all $x\in {^{\omega}2}$.

Note that if we want to ensure that a lower bound of $\seq{q_n}{ n<\omega }$ forces a lower bound of $\seq{r_{x\rest n}}{n<\omega}$, for some $x\in{^\omega 2}$, into the quotient, we are obliged not only to extend all cofinal branches through $T^*$ which are given by functions $\set{f_{q_n}}{n < \omega}$ (recall the proof that $\K_\kappa$ is $\omega_1$-closed), but also to extend all cofinal branches given by $\set{f_{r_{x\rest n} }}{n < \omega}$, otherwise it can happen that the lower bounds will be incompatible. Therefore if we want to ensure that a lower bound of $\seq{q_n}{ n<\omega }$ forces lower bounds of $\seq{r_{x\rest n}}{n<\omega}$ for uncountably many $x\in {^\omega 2}$ into the quotient, we are obliged to extend uncountably many cofinal branches of $T^*$. This can be done because the trees in the conditions can be wide, but we need to make sure that while doing it, we do not add a copy of ${^{<\omega+1}2}$. To argue that $T_q$ does not contain a copy of ${^{<\omega+1}2}$, we use following claim which says that if we add only countably many cofinal branches to a tree which does not contain a copy of ${^{<\omega+1}2}$, then we do not add a copy of ${^{<\omega+1}2}$.

\begin{claim}\label{Cl:notperfect}
Let $T$ be a tree with countable height $\eta+1$, where $\eta$ is limit. Let $D$ be a countable set of cofinal branches of $T\rest\eta$. If $T$ does not contain a copy of ${^{<\omega+1}2}$, then $T'$ also does not contain a copy of ${^{<\omega+1}2}$, where $T'=T\cup \set{ d}{d\in D}$.
\end{claim}

\begin{proof}
This is a straightforward consequence of the following well-known fact: If $P$ is a perfect subset of ${^\omega 2 }$ and $Q\sub P$ is countable, then $P\setminus Q$ contains a perfect set.
\end{proof}

To build suitable lower bounds, we will proceed similarly as in the proof of Lemma \ref{L:K-closed}. First apply the construction in Lemma \ref{L:K-closed} to find $q'$ which is a lower bound of $\seq{{q_n}}{n < \omega}$ such that $T_{q'}$ has height $\eta+1$ and $\dom(f_{q'})=a$. Note that each node $\tau\in T_{q'}$ on level $\eta$ determines a cofinal branch through $T^*$ which we denote $b_\tau$. Note also that $T_{q'}=T^*\cup\set{b_\tau}{\tau\in (T_{q'})_\eta}$. 

For each $x\in {^\omega2}$, let $a_x=\bigcup_{n<\omega} \dom(f_{r_x\rest n})\setminus\theta$. In order to ensure that $q$ is compatible with a lower bound of  $\seq{r_{x\rest n} }{n < \omega}$, we are obliged to extend cofinal branches given by $\set{r_{x\rest n} }{n < \omega}$. For $\alpha \in a_x$, let 

\begin{equation}
      d^x_\alpha := \bigcup \set{f_{r_{x\rest n} }(\alpha)}{ n < \omega \wedge \alpha \in \dom(f_{r_{x\rest n} })}
\end{equation}
and let $D_x=\set{d^x_\alpha}{\alpha\in a_x}$. Note that $D_x\cap D_y=\emptyset$ for all $x\neq y\in {^\omega2}$ by condition (d). Moreover, let as define $f_x$ to be a function with domain $a_x$ such that for each $\alpha\in a_x$, $f_x(\alpha)=d^x_\alpha$.

We will use the following bookkeeping device to extend cofinal branches in uncountably many $D_x$ (but not in all $D_x$ because this might add a copy of  ${^{<\omega + 1}}2$).
    
Let $\seq{\iota_\xi}{\xi < \omega_1}$ enumerate all isomorphic embeddings $\iota$ from ${^{<\omega}}2$ to $T^*$ such that, for every $x \in {^{\omega}}2$, the function $\bigcup \set{\iota(x \restriction n)}{n < \omega}$ is a cofinal branch through $T^*$. Note that this is possible, due to the fact that $\CH$ holds and $|T^*| \leq \omega_1$. For each $\xi < \omega_1$, let 
\begin{equation}
      [\iota_\xi] := \left\{\bigcup \{\iota_\xi(x \restriction n) \mid n < \omega\} \ \middle| \ x \in 
      {^{\omega}}2\right\}.
\end{equation}
Note that $[\iota_\xi] \subseteq [T^*]$, and $|[\iota_\xi]| = 2^\omega = \omega_1$. 

We now recursively build two disjoint sequences $\seq{x_\xi}{\xi<\omega_1}$ and $\seq{y_\xi}{\xi<\omega_1}$ of functions in ${^\omega 2}$. The first sequence picks functions $x$ in ${^\omega 2}$ for which we will extend the corresponding cofinal branches $D_x$. The second sequence picks functions $y$ in ${^\omega 2}$ for which we will forbid the extension of corresponding cofinal branches in $D_y$ to ensure that $T_q$ does not contain a copy of ${^{<\omega + 1}}2$. Note that we need here that the $D_x$'s are pairwise disjoint.

Suppose that we have constructed $x_\zeta$ and $y_\zeta$ for all $\zeta < \xi$, where $\xi < \omega_1$. First, set $x_\xi$ to be any function such that $x_\xi\not\in \set{x_\zeta}{\zeta<\xi}\cup\set{y_\zeta}{\zeta<\xi}$. Next, if there is $y \not\in \set{x_\zeta}{\zeta\le\xi}\cup\set{y_\zeta}{\zeta<\xi}$ such that
\begin{equation}\label{eq:nonempty}
([\iota_\xi]\setminus \set{b_\tau}{\tau\in (T_{q'})_\eta})\cap D_y\neq\emptyset,
\end{equation}
then let $y_\xi$ be any such $y$. If there is no such $y$, then let $y_\xi$ be any $y$ which is not in $\set{x_\zeta}{\zeta\le\xi}\cup\set{y_\zeta}{\zeta<\xi}$.

At the end of the construction, set $T_q = T_{q'} \cup \bigcup_{\xi<\omega_1} D_{x_\xi}$ and $f_q=f_q'$. Set $T_{r_{x_\xi}}=T_q$ and $f_{r_{x_\xi}}=f_{q'}\cup f_{x_\xi}$ for every $\xi<\omega_1$.
   
\begin{claim} \label{cl:last} The following hold:
\begin{enumerate}[(i)]
\item $q$ is a condition in $\K_\theta$;
\item $r_{x_\xi}$ is a condition in $\K_\kappa$ for all $\xi<\omega_1$;
\item $q$ forces $r_{x_\xi}$ into the quotient $\K_\kappa/\dot{H}_\theta$, for all $\xi<\omega_1$.
\end{enumerate}
\end{claim}

\begin{proof}
Note that $T_q$ is a normal tree since $T_{q'}$ is normal and $T_q = T_{q'} \cup \bigcup_{\xi<\omega_1} D_{x_\xi}$. The only nontrivial thing to check is that $T_q$ does not contain a copy of ${^{<\omega+1}}2$. Assume for contradiction that $T_q$ contains a copy of ${^{<\omega+1}}2$; hence there is $\xi<\omega_1$ such that $[\iota_\xi] \subseteq (T_q)_\eta$. Let $I$ be the set of all $\zeta<\omega_1$ such that $(D_{x_\zeta}\setminus (T_{q'})_\eta) \cap [\iota_\xi] \neq \emptyset$; in particular, $[\iota_\xi] \sub  (T_{q'})_\eta \cup (\bigcup_{\zeta\in I} D_{x_\zeta})$. Note that $I$ cannot be countable by Claim \ref{Cl:notperfect}, since $T_{q'}$ does not contain copy of ${^{<\omega+1}}2$. Hence $I$ is unbounded in $\omega_1$ which means that there is $\zeta>\xi$ such that $(D_{x_\zeta}\setminus (T_{q'})_\eta) \cap[\iota_\xi] \neq \emptyset$. Therefore in step $\xi$ of the induction, condition (\ref{eq:nonempty}) was satisfied and we chose $y_\xi$ such that the intersection $(D_{y_\xi}\setminus (T_{q'})_\eta) \cap[\iota_\xi]$ is nonempty. This is a contradiction since $D_{y_\xi}$ should be disjoint from $D_{x_\zeta}$ for all $\zeta\in I$, and we did not add any cofinal branches from $D_{y_\xi}\setminus (T_{q'})_\eta$ into $T_q$. This shows that $q$ is a condition in $\K_\theta$ and $r_{x_\xi}$ are conditions in $\K_\kappa$ for all $\xi<\omega_1$.

By the definition of $r_{x_\xi}$, $r_{x_\xi}\rest\theta= q$ and hence $q$ forces $r_{x_\xi}$ into the quotient $\K_\kappa/\dot{H}_\theta$ for every $\xi<\omega_1$.
\end{proof}

It is straightforward to check that $q$ forces $\dot{S}_\gamma$ to be uncountable: Let $F$ be any $\K_\theta$-generic which contains $q$. Then $r_{x_\xi}$ is a condition in the quotient $\K_\theta/F$ for every $\xi<\omega_1$. Let $r^*_{x_\xi}$ be an extension of $r_{x_\xi}$ which decides $\dot b(\gamma)=\tau_\xi$. Then for every $\xi\neq\zeta<\omega_1$, $\tau_\xi\neq\tau_\zeta$ since $(q,r_{x_\xi})$ and $(q,r_{x_\zeta})$ decide $\dot{b}$ differently below $\gamma$. This concludes the proof of Theorem \ref{Th:negKH+noAsubtrees}.
\end{proof}

\section{Kurepa trees and continuous images} \label{sect: cts_image}

In this section, we prove Theorem B, answering a question of L\"{u}cke and Schlicht from 
\cite{lucke_schlicht_descriptive}. In particular, we will construct a model of 
$\ZFC$ in which $\GCH$ holds, there exist $\omega_2$-Kurepa trees, and, for every 
$\omega_2$-Kurepa tree $S \subseteq {^{<\omega_2}}\omega_2$, $[S]$ is not a continuous 
image of ${^{\omega_2}}\omega_2$. 

We will also prove that, in the model that we construct, 
closed subsets of ${^{\omega_2}}\omega_2$ satisfy the strongest possible perfect 
set property compatible with the existence of $\omega_2$-Kurepa trees. Note that, 
by the discussion at the beginning of Section \ref{section: weak_kurepa}, if 
$S \subseteq{^{<\omega_2}}\omega_2$ is a Kurepa tree, then 
$\mathsf{PSP}_{\omega_1+1}([S])$ necessarily fails. In the model we construct, 
this failure will be sharp: clause (5) in the statement of Theorem \ref{main_thm} 
below will imply that, for every closed $E \subseteq {^{\omega_2}}\omega_2$, 
we have $\mathsf{PSP}_{\omega_1}(E)$.

In what follows, when referring to trees, we will sometimes write 
\emph{countably closed} in place of \emph{$({<}\omega_1)$-closed}.

\begin{theorem} \label{main_thm}
  Suppose that there is an inaccessible cardinal $\kappa$. Then there is a 
  forcing extension in which
  \begin{enumerate}
    \item $\kappa = \omega_3$;
    \item $\GCH$ holds;
    \item there is an $\omega_2$-Kurepa tree;
    \item for every $\omega_2$-Kurepa tree $S \subseteq {^{<\omega_2}}\omega_2$, 
    $[S]$ is not a continuous image of ${^{\omega_2}}\omega_2$;
    \item for every closed subset $E \subseteq {^{\omega_2}}\omega_2$, there 
    is $X \subseteq E$ with $|X| \leq \omega_2$ such that $E \setminus X$ 
    is $\omega_1$-perfect.
  \end{enumerate}
\end{theorem}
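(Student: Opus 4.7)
The plan is to construct the model via $\P = \Coll(\omega_2, {<}\kappa) * \dot{\U}$ over a ground model $V \models \GCH$, where $\dot{\U}$ is the natural $\omega_2$-closed Silver-style forcing for adding an $\omega_2$-Kurepa tree with $\omega_3$-many cofinal branches: conditions are pairs $(t, g)$ with $t$ a normal, splitting subtree of ${^{<\omega_2}}\omega_2$ of successor height below $\omega_2$ and with all levels of size at most $\omega_1$, and $g$ a ${<}\omega_2$-partial function from $\omega_3$ to the top level of $t$. The L\'evy collapse makes $\kappa = \omega_3$, and the combined forcing is $\omega_2$-closed and $\omega_3$-Knaster, so $\GCH$ is preserved in the extension. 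The generic tree added by $\dot{\U}$ is an $\omega_2$-Kurepa tree, delivering clauses (1)--(3).

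For clause (5), I would extract a sharp perfect-set-type property from the L\'evy collapse and the $\omega_2$-closure of $\dot{\U}$. Given a closed $E \subseteq {^{\omega_2}}\omega_2$ in the extension, set $X := \bigcup\{E \cap N_\sigma : \sigma \in {^{<\omega_2}}\omega_2, |E \cap N_\sigma| \leq \omega_2\}$, so $|X| \leq \omega_2$ by $\GCH$. The idea is to verify $\omega_1$-perfectness of $E \setminus X$ by a game-theoretic argument: at successor rounds Player II uses $|E \cap N_{x_\eta \restriction \alpha_{\eta+1}}| > \omega_2 \geq |X|$ to find extensions outside $X$; at countable limit rounds Player II plays the unique continuous coherent limit, which, by the reflection properties supplied by the L\'evy collapse of an inaccessible, has initial segments of ``fat'' type and so admits extensions in $E \setminus X$. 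This is the sharpest form of Cantor--Bendixson compatible with the existence of $\omega_2$-Kurepa trees, since $(\omega_1 + 1)$-perfectness fails for any $[S]$ with $S$ an $\omega_2$-Kurepa tree.

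Clause (4) is the crux. Suppose for contradiction that $S \subseteq {^{<\omega_2}}\omega_2$ is an $\omega_2$-Kurepa tree and $f : {^{\omega_2}}\omega_2 \to [S]$ is a continuous surjection in the extension. By the $\omega_3$-Knaster property of $\P$, both $S$ and $f$ have nice names in a subforcing $\P_\theta$ of size less than $\kappa$ for some regular $\theta < \kappa$. In the intermediate model $V^{\P_\theta}$, the tree $S$ has fewer-than-$\kappa$ many cofinal branches, so the remaining $\kappa$-many branches of $S$ must be added by the quotient forcing $\P/\P_\theta$, which is $\omega_2$-closed over $V^{\P_\theta}$. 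A Silver-style fusion, modeled on the last part of the proof of Theorem~\ref{Th:negKH+noAsubtrees}, then constructs a tree of conditions in the quotient labeled by ${^{<\omega_2}}2$ along which $f$ is forced to take pairwise incompatible values; this produces $\omega_3$-many potential images which no single continuous surjection $f$ can simultaneously accommodate, given the $\omega_2$-closure of the quotient, mirroring the branch-counting phenomenon of Lemma~\ref{lemma: strat_closed_silver}.

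The main obstacle is clause (4): the reflection-plus-fusion argument must handle uniformly every $\omega_2$-Kurepa tree $S$ and every continuous surjection $f$ present in the extension, not merely the generic tree added by $\dot{\U}$. The delicate step will be to translate the branch-preservation property of $\omega_2$-closed forcing into a statement about continuous surjections---essentially, to show that any continuous $f$ onto $[S]$, when restricted to the quotient forcing, is already effectively determined by a ``trace'' in $V^{\P_\theta}$, which cannot then be continuously extended to cover the $\omega_3$-many generically-added branches of $S$.
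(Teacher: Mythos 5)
Your global architecture matches the paper's (L\'evy collapse combined with a forcing adding a thin tree with $\kappa$ branches, reflection of $S$ and the surjection via the chain condition, a Silver-style tree of conditions in the quotient for clause (4), and a Cantor--Bendixson derivative for clause (5)), but there are genuine gaps at the two places you yourself flag as the crux. First, the quotient forcing is \emph{not} $\omega_2$-closed: once the tree coordinate is frozen in the intermediate model, a lower bound for an $\omega_1$-sequence of quotient conditions would have to put up to $2^{\omega_1}=\omega_2$ vanishing branches into a level of the generic tree, which is impossible. The paper only gets \emph{countable} closure of the quotient (Claim \ref{ctbly_closed_claim}), and even that requires building the requirement that the trees in conditions be $({<}\omega_1)$-closed into the definition of the Kurepa forcing --- a requirement your $\dot{\U}$ omits. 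Consequently the fusion must be indexed by ${}^{<\omega_1}2$, not ${}^{<\omega_2}2$, and the contradiction is not that ``$\omega_3$-many images cannot be accommodated'' (a continuous surjection onto a set of size $\omega_3$ is not in itself absurd); it is that one produces $2^{\omega_1}=\omega_2$ many pairwise distinct \emph{nodes} of $S$ below a fixed level $\xi<\omega_2$, contradicting $S$ being an $\omega_2$-tree. To make even that work you need the limit nodes $b_\nu$ ($\nu\in{}^{\omega_1}2$) to actually lie in $S$ rather than merely being increasing unions of nodes of $S$; this is exactly the ``trace'' you defer, and the paper supplies it via the monotone map $h(\sigma)=\bigwedge\{g(x)\mid x\in N_\sigma\}$, the set $W$ of $\sigma$ with $\dom(h(\sigma))\ge\dom(\sigma)$, and the club $C_x$ of levels where $x\restriction\alpha\in W$ (Claim \ref{dense_claim}); the construction interleaves approximations $x_\sigma\in W$ to a preimage with the nodes $b_\sigma=h(x_\sigma)$ so that at limits $h(x_\nu)\sqsupseteq b_\nu$ forces $b_\nu\in S$. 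Without this device the argument does not close.

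Second, for clause (5) the successor rounds are indeed easy, but the limit rounds are the entire difficulty, and ``the unique continuous coherent limit\dots by the reflection properties supplied by the L\'evy collapse'' is not an argument: the coherence conditions only determine $x_\xi\restriction\alpha_\xi$, and one must produce a \emph{new} element of $E\setminus X$ extending it, which requires knowing that the fat part of $T(E)$ is suitably closed --- precisely what fails in general (this is why Proposition \ref{prop: kpsp} only gives $\omega$-perfectness). The paper's proof instead decomposes the quotient into a ${<}\omega_2$-support product of $\kappa$-many mutually generic, countably closed factors, fixes a single name $\dot{y}$ for a new element of $E\cap N_{\sigma_1}$, and has Player II play the realizations of $\dot{y}$ along different coordinates, using the countable closure of a single factor and genericity to pass limit rounds. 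Some version of this mutual-genericity argument (or a substitute) is needed; it does not follow from the collapse alone.
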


\begin{proof}
  We can assume that $\GCH$ holds in the ground model. Let $\P := 
  \Coll(\omega_2, < \kappa)$, and let $\Q$ be the forcing to add a
  countably closed $\omega_2$-Kurepa tree with $\kappa$-many cofinal branches. 
  More precisely, $\Q$ consists of all pairs $q = (T_q, f_q)$ such that
  \begin{itemize}
    \item there is an $\eta_q < \omega_2$ such that $T_q$ is a normal, splitting, 
    countably closed subtree of ${^{<\eta_q + 1}}\omega_2$;
    \item for all $\xi \leq \eta_q$, $|T_q \cap {^\xi}\omega_2| \leq 
    \omega_1$;
    \item $f_q$ is a partial function of size $\omega_1$ from $\kappa$ to 
    $T_q \cap {^{\eta_q}}\omega_2$.
  \end{itemize}
  If $q_0, q_1 \in \Q$, then $q_1 \leq q_0$ if
  \begin{itemize}
    \item $\eta_{q_1} \geq \eta_{q_0}$;
    \item $T_{q_1} \cap {^{<\eta_{q_0}+1}}\omega_2 = T_{q_0}$;
    \item $\dom(f_{q_1}) \supseteq \dom(f_{q_0})$;
    \item for all $\alpha \in \dom(f_{q_0})$, $f_{q_1}(\alpha) 
    \sqsupseteq f_{q_0}(\alpha)$.
  \end{itemize}
  We also include $(\emptyset, \emptyset)$ in $\Q$ as $1_{\Q}$. 
  By standard arguments, $\P \times \Q$ is $\omega_2$-closed and 
  has the $\kappa$-cc.
  
  For a cardinal $\delta < \kappa$, let $\P_\delta := 
  \Coll(\omega_2, < \delta)$, and let $\Q_\delta$ be the set 
  of $q \in \Q$ for which $\dom(q) \subseteq \delta$.
  It is routine to verify that, for all $\delta < \kappa$, the map 
  $\pi_\delta : \P \times \Q \ra \P_\delta \times \Q_\delta$
  defined by letting $\pi_\delta(p,q) = (p \restriction \delta, 
  (T_q, f_q \restriction \delta))$ is a projection.
  
  \begin{claim} \label{ctbly_closed_claim}
    Let $\delta < \kappa$ be a cardinal. Then, in $V^{\P_\delta 
    \times \Q_\delta}$, the quotient forcing $\R_\delta := 
    (\P \times \Q)/(\P_\delta \times \Q_\delta)$ is countably 
    closed.
  \end{claim}
  
  \begin{proof}
    Let $G_\delta \times H_\delta$ be $\P_\delta \times 
    \Q_\delta$-generic over $V$, and move to $V[G_\delta \times 
    H_\delta]$. Let $\langle (p_n,q_n) \mid n < \omega \rangle$ be 
    be a decreasing sequence from $\P \times \Q$ such that, for all 
    $n < \omega$, we have $\pi_\delta(p_n,q_n) \in G_\delta \times 
    H_\delta$. Note that this sequence is in fact in $V$, by the 
    distributivity of $\P \times \Q$. 
    
    Let $p = \bigcup\{p_n \mid n < \omega\}$. It is evident that 
    $p \in \P$ and $p \restriction \delta \in G_\delta$. We next 
    define a condition $q \in \Q$. If the sequence $\langle T_{q_n} 
    \mid n < \omega \rangle$ is eventually constant, then, by 
    removing an initial segment of the sequence, we can assume it is 
    constant. Let $T_q$ be this constant value, and let 
    $f_q = \bigcup\{f_{q_n} \mid n < \omega\}$. Then $(p,q)$ is a 
    lower bound for $\langle (p_n,q_n) \mid n < \omega \rangle$ 
    and $\pi_\delta(p,q) \in G_\delta \times H_\delta$.
    
    So suppose that $\langle T_{q_n} \mid n < \omega \rangle$ is 
    not eventually constant. Let $T^* = \bigcup\{T_{q_n} \mid n < 
    \omega \}$. Then $T^*$ is a normal subtree 
    of ${^{<\eta}}\omega_2$ for some $\eta < \omega_2$ of countable 
    cofinality. Moreover, by $\CH$, we have $|[T^*]| \leq \omega_1$. 
    Let $T_q := T^* \cup [T^*]$. By construction 
    and the fact that each $T_{q_n}$ is countably closed, it follows 
    that $T_q$ is a normal countably closed subtree of 
    ${^{<\eta+1}}\omega_2$. Since the 
    trees in the first coordinate of conditions in $\Q$ are 
    required to be countably closed, we must have $(T_q, 
    \emptyset) \in H_\delta$. Let $D = \bigcup\{\dom(f_{q_n}) \mid 
    n < \omega\}$. Define $f_q:D \ra B$ by letting 
    \[
      f_q(\alpha) = \bigcup\{f_{q_n}(\alpha) \mid n < \omega ~ \wedge
      ~ \alpha \in \dom(f_{q_n})\}
    \]
    for all $\alpha \in D$. Then $(p,q)$ is a lower bound 
    for $\langle (p_n, q_n) \mid n < \omega \rangle$ and 
    $\pi_\delta(p,q) \in G_\delta \times H_\delta$.
  \end{proof}
  
  Let $G \times H$ be $\P \times \Q$-generic over $V$ and, for all 
  cardinals $\delta < \kappa$, let $G_\delta \times H_\delta$ be 
  the $\P_\delta \times \Q_\delta$-generic filter induced by 
  $G \times H$. We claim that $V[G \times H]$ is the desired forcing 
  extension. We have $(\omega_2)^V = (\omega_2)^{V[G \times H]}$, 
  and $\kappa = (\omega_3)^{V[G \times H]}$; moreover, $\GCH$ holds in 
  $V[G \times H]$. Let $T = \bigcup\{T_q \mid q \in H\}$ and, for each 
  $\alpha < \kappa$, let 
  \[
    b_\alpha = \bigcup\{f_q(\alpha) \mid q \in H ~ \wedge ~ \alpha 
    \in \dom(f_q)\}.
  \]
  Then $T$ is a normal, countably closed subtree of ${^{<\omega_2}}\omega_2$, 
  all of whose levels have size at most $\omega_1$. Moreover, for 
  all $\alpha < \kappa$, $b_\alpha$ is a cofinal branch through $T$ 
  and, by genericity, for all $\alpha < \beta < \kappa$, we have 
  $b_\alpha \neq b_\beta$. Therefore, $T$ is an $\omega_2$-Kurepa 
  tree in $V[G \times H]$.
  
  To verify clause (4) in the statement of the theorem, we must 
  show that, in $V[G \times H]$, there is no 
  $\omega_2$-Kurepa tree $S \subseteq {^{<\omega_2}}\omega_2$ such 
  that $[S]$ is a continuous image of ${^{\omega_2}}\omega_2$. Suppose 
  for the sake of contradiction that $S$ is such a tree, and let 
  $g:{^{\omega_2}}{\omega_2} \ra [S]$ be a continuous surjection.
  Define a function $h:{^{<\omega_2}}\omega_2 \ra S \cup [S]$ by 
  letting $h(\sigma) = \bigwedge \{g(x) \mid x \in N_\sigma\}$ for 
  all $\sigma \in {^{<\omega_2}}\omega_2$, and let 
  \[
    W = \{\sigma \in {^{<\omega_2}}\omega_2 \mid \dom(h(\sigma)) \geq 
    \dom(\sigma)\}.
  \]
  
  Note that $h$ is monotone, i.e., for all $\sigma, \tau 
  \in {^{<\omega_2}}\omega_2$, if $\sigma \sqsubseteq \tau$, 
  then $h(\sigma) \sqsubseteq h(\tau)$. It follows that 
  $W$ is $(<\omega_2)$-closed, i.e., if $\eta < \omega_2$ 
  and $\langle \sigma_\xi \mid \xi < \eta \rangle$ is a 
  $\sqsubseteq$-increasing sequence of elements of $W$, then 
  $\bigcup\{\sigma_\xi \mid \xi < \eta\} \in W$.
  
  \begin{claim} \label{dense_claim}
    For all $x \in {^{\omega_2}}\omega_2$, $C_x := \{\alpha < 
    \omega_2 \mid x \restriction \alpha \in W\}$ is a club in 
    $\omega_2$.
  \end{claim}
  
  \begin{proof}
    Fix $x \in {^{\omega_2}}\omega_2$. The fact that $C_x$ is closed 
    follows immediately from the fact that $h$ is monotone.
    To see that it is unbounded, fix an $\alpha_0 < \omega_2$. We will 
    find $\alpha \in C_x \setminus \alpha_0$. Starting with 
    $\alpha_0$, recursively define an increasing 
    sequence $\langle \alpha_n \mid n < \omega \rangle$ of ordinals 
    below $\omega_2$ such that, for all $n < \omega$ and all 
    $y \in N_{x \restriction \alpha_{n+1}}$, we have
    $g(y) \restriction \alpha_n = g(x) \restriction \alpha_n$. It is 
    straightforward to build such a sequence due to the continuity of $g$. 
    
    Let $\alpha = \sup\{\alpha_n \mid n < \omega\}$. For all 
    $y \in N_{x \restriction \alpha}$ and all $n < \omega$, we have 
    $g(y) \restriction \alpha_n = g(x) \restriction \alpha_n$; therefore, 
    $g(y) \restriction \alpha = g(x) \restriction \alpha$. In particular, 
    $h(x \restriction \alpha) \sqsupseteq g(x) \restriction \alpha$, so 
    $\alpha \in C_x$.
  \end{proof}
  
  Note that $S$, $h$, and $W$ are objects of size $\omega_2$, so, by 
  the chain condition of $\P \times \Q$, we can fix a $\delta < \kappa$ 
  so that $S, h, W \in V[G_\delta \times H_\delta]$ and, in that model, 
  the quotient forcing $\R_\delta$ forces them to have the relevant properties 
  isolated above. In particular, letting $\dot{g}$ be an $\R_\delta$-name 
  for $g$, it is forced by every condition in $\R_\delta$ that $S$ is an 
  $\omega_2$-Kurepa tree, $\dot{g}$ is a continuous surjection from 
  ${^{\omega_2}}\omega_2$ to $[S]$, and $h$ and $W$ are as defined above 
  from $S$ and $\dot{g}$.
  
  Work now in $V[G_\delta \times H_\delta]$. In this model, $2^{\omega_2} 
  < \kappa$; therefore, forcing with $\R_\delta$ must add new cofinal 
  branches to $S$. Let $\dot{b}$ be an $\R_\delta$-name for a cofinal 
  branch through $S$ such that $\Vdash_{\R_\delta} \dot{b} \notin 
  V[G_\delta \times H_\delta]$. Let $\dot{x}$ be an $\R_\delta$-name 
  for an element of ${^{\omega_2}}\omega_2$ such that 
  $\Vdash_{\R_\delta} \dot{g}(\dot{x}) = \dot{b}$.
  
  Now, by recursion on $\dom(\sigma)$, we construct a labeled tree 
  $\langle (r_\sigma, b_\sigma, x_\sigma) \mid \sigma \in {^{<\omega_1}}2 \rangle$ 
  of elements of $\R_\delta \times S \times {^{<\omega_2}}\omega_2$.
  We will arrange so that the following requirements are satisfied.
  \begin{enumerate}
    \item For all $\sigma \in {^{<\omega_1}}2$, we have
    \begin{enumerate}
      \item $r_\sigma \Vdash_{\R_\delta}``\dot{b} \sqsupseteq 
      b_\sigma \text{ and } \dot{x} \sqsupseteq x_\sigma$";
      \item $x_\sigma \in W$;
      \item $h(x_\sigma) \sqsupseteq b_\sigma$.
    \end{enumerate}     
    \item For all $\sigma \sqsubseteq \tau \in {^{<\omega_1}}2$, we have 
    $r_\tau \leq r_\sigma$, $b_\tau \sqsupseteq b_\sigma$, and 
    $x_\tau \sqsupseteq x_\sigma$.     
    \item For all $\sigma \in {^{<\omega_1}}2$, $b_{\sigma^\frown 
    \langle 0 \rangle}$ and $b_{\sigma^\frown \langle 1 \rangle}$ are 
    incomparable in $S$.
  \end{enumerate}
  
  Begin by letting $r_\emptyset = (\emptyset, \emptyset)$ and 
  $b_\emptyset = x_\emptyset = \emptyset$. Next, suppose that 
  $\sigma \in {^{<\omega_1}}2$ is of 
  limit length, and suppose that $(r_{\sigma \restriction \eta}, 
  b_{\sigma \restriction \eta}, x_{\sigma \restriction \eta})$ has 
  been defined for all $\eta < \dom(\sigma)$. By Lemma \ref{ctbly_closed_claim}, 
  $\R_\delta$ is countably closed, so we can let $r_\sigma$ be a 
  lower bound of $\langle r_{\sigma \restriction \eta} \mid \eta < 
  \dom(\sigma) \rangle$. Let $b_\sigma = \bigcup\{b_{\sigma \restriction \eta} 
  \mid \eta < \dom(\sigma)\}$ and $x_\sigma = \bigcup\{
  x_{\sigma \restriction \eta} \mid \eta < \dom(\sigma)\}$.
  For all $\eta < \dom(\sigma)$, we have $r_\sigma \Vdash_{\R_\delta} ``\dot{b} 
  \sqsupseteq b_{\sigma \restriction \eta} \text{ and } \dot{x} 
  \sqsupseteq x_{\sigma \restriction \eta}$". As a result, we get 
  $r_\sigma \Vdash_{\R_\delta}``\dot{b} \sqsupseteq b_\sigma \text{ and } 
  \dot{x} \sqsupseteq x_\sigma$". In particular, it 
  follows that $b_\sigma$ is in fact in $S$. The fact that $x_\sigma \in W$ 
  and $h(x_\sigma) \sqsupseteq b_\sigma$ follows from the closure of 
  $W$ and the monotonicity of $h$. We have therefore satisfied all of 
  the requirements of the construction.
  
  Finally, suppose that $\sigma \in {^{<\omega_1}}2$ and we have 
  defined $(r_\sigma, b_\sigma, x_\sigma)$. We describe how to 
  define $(r_{\sigma ^\frown \langle i \rangle}, b_{\sigma ^\frown 
  \langle i \rangle}, x_{\sigma ^\frown \langle i \rangle})$ for 
  $i < 2$. First, since $\Vdash_{\R_\delta} \dot{b} \notin V[G_\delta 
  \times H_\delta]$, we can find $r^*_i \leq r_\sigma$ and $b^*_i 
  \sqsupseteq b_\sigma$ for $i < 2$ such that $b^*_0$ and $b^*_1$ are 
  incomparable in $S$ and, for $i < 2$, we have $r^*_i \Vdash_{\R_\delta} 
  \dot{b} \sqsupseteq b^*_i$.
  
  By Claim \ref{dense_claim}, there are forced to be unboundedly many 
  $\xi < \omega_2$ such that $\dot{x} \restriction \xi \in W$. Therefore, 
  for each $i < 2$, we can find $r_{\sigma ^\frown \langle i \rangle} 
  \leq r^*_i$ and $x_{\sigma ^\frown \langle i \rangle} \in W$ such that
  \begin{itemize}
    \item $x_{\sigma ^\frown \langle i \rangle} \sqsupseteq x_\sigma$;
    \item $\dom(x_{\sigma ^\frown \langle i \rangle}) \geq \dom(b^*_i)$;
    \item $r_{\sigma ^\frown \langle i \rangle} \Vdash_{\R_\delta} 
    \dot{x} \sqsupseteq x_{\sigma ^\frown \langle i \rangle}$.
  \end{itemize}
  Finally, for $i < 2$, let $b_{\sigma ^\frown \langle i \rangle} = 
  h(x_{\sigma^\frown \langle i \rangle})$. Note first that, since 
  \[
    r_{\sigma^\frown \langle i \rangle} \Vdash \dot{g}(\dot{x}) = 
    \dot{b} \wedge \dot{x} \sqsupseteq x_{\sigma^\frown \langle i \rangle},
  \]
  it follows that $r_{\sigma^\frown \langle i \rangle} \Vdash \dot{b} 
  \sqsupseteq b_{\sigma^\frown \langle i \rangle}$. Next observe that 
  $b_{\sigma ^\frown \langle i \rangle}$ must be in $S$, since if it were in 
  $[S]$, then we would have $r_{\sigma^\frown \langle i \rangle} \Vdash 
  \dot{b} = b_{\sigma^\frown \langle i \rangle}$, contradicting the fact 
  that $\dot{b}$ is forced not to be in $V[G_\delta \times H_\delta]$. 
  We have therefore satisfied requirements (1)--(3) above, and we can 
  continue with the construction.
  
  At the end of the construction, since $\CH$ holds, $\{b_\sigma \mid 
  \sigma \in {^{<\omega_1}}2\}$ is a subset of $S$ of size $\omega_1$. 
  Therefore, we can fix a $\xi < \omega_2$ such that $b_\sigma 
  \in {^{<\xi}}\omega_2$ for all $\sigma \in {^{<\omega_1}}2$.
  For each $\nu \in {^{\omega_1}}2$, let $b_\nu = \bigcup\{ b_{\nu \restriction 
  \eta} \mid \eta < \omega_1\}$.
  
  \begin{claim}
    For all $\nu \in {^{\omega_1}}2$, $b_\nu \in S$.
  \end{claim}
  
  \begin{proof}
    Fix $\nu \in {^{\omega_1}}2$. Let $x_\nu = \bigcup\{x_{\nu \restriction 
    \eta} \mid \eta < \omega_1\}$. By monotonicity of $h$, we have 
    $h(x_\nu) \sqsupseteq h(x_{\nu \restriction \eta}) \sqsupseteq 
    b_{\nu \restriction \eta}$ for all $\eta < \omega_1$. Therefore, by 
    definition of $b_\nu$, it follows that $h(x_\nu) \sqsupseteq b_\nu$. 
    Since $h$ maps into $S \cup [S]$, we get $b_\nu \in S$, as desired.
  \end{proof}
  
  \begin{claim}
    For all distinct $\nu_0, \nu_1 \in {^{\omega_1}}2$, we have $b_{\nu_0} \neq 
    b_{\nu_1}$.
  \end{claim}
  
  \begin{proof}
    Fix distinct $\nu_0, \nu_1 \in {^{\omega_1}}2$. Let $\eta < \omega_1$ be least 
    such that $\nu_0(\eta) \neq \nu_1(\eta)$. Without loss of generality, 
    assume that $\nu_i(\eta) = i$ for $i < 2$. Let $\sigma = \nu_0 \restriction \eta$. 
    Then, for $i < 2$, we have $b_{\nu_i} \sqsupseteq b_{\sigma ^\frown \langle i 
    \rangle}$. Since $b_{\sigma ^\frown \langle 0 \rangle}$ and $b_{\sigma 
    ^\frown \langle 1 \rangle}$ are incomparable in $S$,
    $b_{\nu_0}$ and $b_{\nu_1}$ are incomparable, as well; in particular, 
    they are not equal.
  \end{proof}
  
  It follows that $\{b_\nu \mid \nu \in {^{\omega_1}}2\}$ is a collection of 
  $\omega_2$-many elements of $S_{\leq \xi}$. This contradicts the fact that $S$ 
  is an $\omega_2$-tree, thus completing the verification of clause (4).
  
  We finally verify clause (5) in the statement of the theorem. To this end, 
  fix a closed set $E \subseteq {^{\omega_2}}\omega_2$ in $V[G \times H]$. 
  Let $\Sigma = \{\sigma \in {^{<\omega_2}}\omega_2 \mid |E \cap N_\sigma| 
  \leq \omega_2\}$. Equivalently, by the chain condition of $\P \times \Q$, 
  $\Sigma$ is the set of all $\sigma \in 
  {^{<\omega_2}}\omega_2$ for which there exists $\delta < \kappa$ such 
  that $E \cap N_\sigma \subseteq V[G_\delta \times H_\delta]$. 
  Let $X = \bigcup\{E \cap N_\sigma \mid \sigma \in \Sigma\}$. Then 
  $|X| \leq \omega_2$ and, since $E$ is closed and $\bigcup\{N_\sigma \mid 
  \sigma \in \Sigma\}$ is open, it follows that $E \setminus X$ is closed. 
  It remains to show that, for all $x_0 \in E \setminus X$, Player II has a 
  winning strategy in $G_{\omega_2}(E \setminus X, x_0, \omega_1)$. 
  To this end, fix an arbitrary $x_0 \in E \setminus X$.
  
  Recall that $T(E) = \{x \restriction \eta \mid x \in E \text{ and } \eta < 
  \omega_2\}$ is a subtree of ${^{<\omega_2}}\omega_2$; since $E$ is 
  closed, it follows that $E = [T(E)]$. Moreover, $T(E) \setminus \Sigma$ 
  is a subtree of ${^{<\omega_2}}\omega_2$, and $E \setminus X = 
  [T(E) \setminus \Sigma]$. In $V$, let 
  $\dot{E}$ be a $\P \times \Q$-name for $E$ and $\dot{X}$ be a $\P \times 
  \Q$-name for $X$. By the chain condition of 
  $\P \times \Q$, we can find a nonzero cardinal $\gamma < \kappa$ such that 
  \begin{itemize}
    \item $x_0, T(E), \Sigma, X \in V[G_\gamma \times H_\gamma]$;
    \item interpreting $\dot{E}$ in $V[G_\gamma \times H_\gamma]$ as an 
    $\bb{R}_\gamma$-name, the empty condition in $\bb{R}_\gamma$ forces all 
    of the following statements:
    \begin{itemize}
      \item $\dot{E} = [T(E)]$;
      \item $\dot{E} \setminus X = [T(E) \setminus \Sigma]$;
      \item $\forall \sigma \in T(E) \setminus \Sigma ~ (|\dot{E} 
      \cap N_\sigma| = \kappa)$.
    \end{itemize}
  \end{itemize}
  To describe a winning strategy for Player II in $G_{\omega_2}(E 
  \setminus X, x_0, \omega_1)$, we look more carefully at the quotient 
  forcing $\bb{R}_\gamma$ in $V[G_\gamma \times H_\gamma]$. Note that 
  $T$, the subtree of ${^{<\omega_2}}\omega_2$ added by $H$, is in 
  $V[G_\gamma \times H_\gamma]$. It is not hard 
  to show that $\bb{R}_\gamma$ is forcing equivalent to 
  $\bb{S} \times \bb{T}$, where 
  $\bb{S} = \Coll(\omega_2, [\gamma, \kappa))$ and 
  $\bb{T}$ is the set of all pairs $(\eta, f)$ such that
  $\eta < \omega_2$ and $f$ is a partial function of size $\omega_1$ 
  from $\kappa \setminus \gamma$ to $T \cap {^\eta}\omega_2$. Given 
  two elements $(\eta,f), (\xi,g) \in \bb{T}$, we have 
  $(\xi,g) \leq_{\bb{T}} (\eta,f)$ if and only if 
  $\xi \geq \eta$, $\dom(g) \supseteq \dom(f)$, and, for all $\alpha \in 
  \dom(f)$, we have $g(\alpha) \sqsupseteq f(\alpha)$.
  
  Recall that elements of $\Coll(\omega_2, [\gamma,\kappa))$ are all 
  functions $s$ such that $\dom(s)$ is a subset of $\mathrm{Card} \cap 
  [\gamma, \kappa)$ of size $\leq \omega_1$ and, for each $\nu \in \dom(s)$, 
  $s(\nu)$ is a partial function from $\omega_2$ to $\nu$ of size 
  $\leq \omega_1$. Given $I \subseteq \kappa \setminus \gamma$, let 
  $\bb{S}_I$ be the set of all $s \in \bb{S}$ such that 
  $\dom(s) \subseteq I$, and let $\bb{T}_I$ be the set of all 
  $(\eta,f) \in \bb{T}$ such that $\dom(f) \subseteq I$. 
  
  Note that, if $I_0, I_1 \subseteq \kappa \setminus \gamma$ and 
  $\pi: I_0 \ra I_1$ is a bijection, then 
  $\bb{T}_{I_0} \cong \bb{T}_{I_1}$ via the map $(\eta, f) \mapsto 
  (\eta, \hat{\pi}(f))$, where $\dom(\hat{\pi}(f)) = \pi[\dom(f)]$ and, 
  for all $\alpha \in \dom(f)$, we let $\hat{\pi}(f)(\alpha) = 
  f(\alpha)$. If, moreover, $\pi$ is a bijection between 
  $\mathrm{Card} \cap I_0$ and $\mathrm{Card} \cap I_1$ such that 
  $\pi(\nu) \leq \nu$ for all $\nu \in \mathrm{Card} \cap I_0$, then 
  $\pi$ introduces a projection $\tilde{\pi} : \bb{S}_{I_0} \ra 
  \bb{S}_{I_1}$ defined as follows: for each $s \in \bb{S}_{I_0}$, 
  let $\tilde{\pi}(s) \in \bb{S}_{I_1}$ be such that
  \begin{itemize}
    \item $\dom(\tilde{\pi}(s)) = \pi[\dom(s)]$;
    \item for all $\nu \in \dom(s)$, we have $\dom(\tilde{\pi}(s)(\nu)) 
    = \dom(s(\nu))$;
    \item for all $\nu \in \dom(s)$ and all $\eta \in \dom(s(\nu))$, 
    \begin{align*}
      \tilde{\pi}(s)(\pi(\nu))(\eta) = 
      \begin{cases}
        s(\nu)(\eta) & \text{if } s(\nu)(\eta) < \pi(\nu) \\ 
        0 & \text{otherwise.}
      \end{cases}
    \end{align*}
  \end{itemize}
  Suppose in particular that $\delta \in (\gamma, \kappa)$ is a cardinal, 
  say $\delta = \gamma^{+\varepsilon}$ for some $\varepsilon < \kappa$. 
  Then we can partition the interval $[\gamma, \kappa)$ into intervals 
  $\{I_\zeta \mid \zeta < \kappa\}$, where $I_\zeta = [\gamma^{+\varepsilon 
  \cdot \zeta}, \gamma^{+\varepsilon \cdot (\zeta + 1)})$ for all 
  $\zeta < \kappa$. The preceding discussion then implies that
  \begin{itemize}
    \item $\bb{R}_\gamma$ is forcing equivalent to the ${<}\omega_2$-support 
    product of $\langle \bb{S}_{I_\zeta} \times \bb{T}_{I_\zeta} \mid 
    \zeta < \kappa \rangle$;
    \item for all $\zeta < \kappa$, there is a projection from 
    $\bb{S}_{I_\zeta} \times \bb{T}_{I_\zeta}$ to $\bb{S}_{I_0} \times 
    \bb{T}_{I_0}$.
  \end{itemize}
  In particular, forcing with $\bb{R}_\gamma$ adds $\kappa$-many 
  pairwise mutually $(\bb{S}_{I_0} \times \bb{T}_{I_0})$-generic filters.
  
  We are now ready to describe Player II's winning strategy in 
  $G_{\omega_2}(E \setminus X, x_0, \omega_1)$. Some aspects of 
  the strategy can only be precisely specified after Player I makes their 
  first move, so suppose that, in 
  round 1 of the game, Player I plays the ordinal $\alpha_1 < \omega_2$.
  Let $\sigma_1 = x_0 \restriction \alpha_1$. Since 
  $\sigma_1 \in T(E) \setminus \Sigma$, moving to $V[G_\gamma \times 
  H_\gamma]$, we can find a nice $\bb{R}_\gamma$-name $\dot{y}$ that is forced 
  to be an element of $\dot{E} \cap N_{\sigma_1} \setminus V[G_\gamma 
  \times H_\gamma]$.
  By the chain condition of $\bb{R}_\gamma$, we can find a limit ordinal 
  $\varepsilon$ such that, letting $I_0 = [\gamma, \gamma^{+\varepsilon})$, 
  $\dot{y}$ is an $\bb{S}_{I_0} \times \bb{T}_{I_0}$-name. Let 
  $\bb{U}$ denote the ${<}\omega_2$-support product of $\kappa$-many copies 
  of $\bb{S}_{I_0} \times \bb{T}_{I_0}$; for $\zeta < \kappa$, let $\bb{U}(\zeta)$ 
  denote its $\zeta^{\mathrm{th}}$ factor. By the preceding discussion, there 
  is a projection from $\bb{R}_\gamma$ to $\bb{U}$, so we can view 
  $\bb{R}_\gamma$ as a two-step iteration of the form $\bb{U} \ast \dot{\bb{W}}$.
  For each $\zeta < \kappa$, let $\dot{y}_\zeta$ denote the name for the 
  interpretation of $\dot{y}$ with respect to the generic filter for 
  $\bb{U}(\zeta)$. By the product lemma, for all 
  $\zeta_0 < \zeta_1 < \kappa$, $\dot{y}_{\zeta_0}$ and $\dot{y}_{\zeta_1}$ 
  are forced to be distinct elements of $N_{\sigma_1} \cap (\dot{E} \setminus 
  X)$.
  
  Let $K \ast L$ be $\bb{U} \ast \dot{\bb{W}}$-generic over 
  $V[G_\gamma \times H_\gamma]$ such that $V[G \times H] = 
  V[G_\gamma \times H_\gamma][K \ast L]$. For $\zeta < \kappa$, let 
  $K_\zeta$ denote the $\bb{S}_{I_0} \times \bb{T}_{I_0}$-generic filter 
  induced by the $\zeta^{\mathrm{th}}$ factor of $\bb{U}$.
  In the course of the run of $G_{\omega_2}(E \setminus X, x_0, \omega_1)$ 
  in which Player II plays according to the strategy we will specify here, 
  producing the play, $\langle (\alpha_i, x_i) \mid 1 \leq i < \omega_1 \rangle$, 
  Player II will construct a strictly increasing sequence 
  $\langle \zeta_i \mid 1 \leq i < \omega_1 \rangle$ of ordinals below $\kappa$ 
  and a decreasing sequence $\langle p_i \mid 1 \leq i < \omega_1 \rangle$ of 
  conditions in $\bb{S}_{I_0} \times \bb{T}_{I_0}$ satisfying the following 
  requirements for all $1 \leq i < \omega_1$:
  \begin{enumerate}
    \item $x_i$ is the interpretation of $\dot{y}_{\zeta_i}$ in 
    $V[G_\gamma \times H_\gamma][K \ast L]$;
    \item $p_{i+1} \in K_{\zeta_i}$;
    \item $p_i$ decides the value of $\dot{y} \restriction \alpha_i$.
  \end{enumerate}
  
  We first describe Player II's first move. Let $\zeta_1 = 0$, let 
  their play $x_1$ be the interpretation of $\dot{y}_0$ in 
  $V[G_\gamma \times H_\gamma][K \ast L]$, and, let 
  $p_1 = 1_{\bb{S}_{I_0} \times \bb{T}_{I_0}}$. 
  
  Now suppose that $1 \leq j < \omega_1$ and, in our run of 
  $G_{\omega_2}(E \setminus X, x_0, \omega_1)$, the players have played 
  $\langle \alpha_i \mid i \leq j \rangle$ and $\langle x_i \mid i < j 
  \rangle$, with Player II playing according to their winning strategy 
  and also specifying $\langle (p_i, \zeta_i) \mid i < j \rangle$. 
  
  Suppose first that $j$ is a successor ordinal, say $j = j_0 + 1$. 
  First, choose $p_j \leq p_{j_0}$ such that $p_j \in K_{\zeta_{j_0}}$ 
  and $p_j$ decides the value of $\dot{y} \restriction \alpha_j$. Note 
  that, in $V[G_\gamma \times H_\gamma]$, the set
  \[
    \{f \in \bb{U} \mid \exists \zeta \in (\zeta_{j_0}, \kappa) ~ 
    (f(\zeta) \leq p_j)\}
  \]  
  is a dense open subset of $\bb{U}$. Therefore, by genericity, 
  Player II can choose $\zeta_j \in (\zeta_{j_0}, \kappa)$ such that 
  $p_j \in K_{\zeta_j}$. Then Player II plays the interpretation of 
  $\dot{y}_{\zeta_j}$ as $x_j$.
  
  Next, suppose that $j$ is a limit ordinal. In this case, we have 
  $\alpha_j = \sup\{\alpha_i \mid i < j\}$. Since $\bb{S}_{I_0} 
  \times \bb{T}_{I_0}$ is countably closed, we can fix a lower bound 
  $p_j$ for $\langle p_i \mid i < j \rangle$. As in the previous paragraph, 
  by genericity we can choose a $\zeta_j < \kappa$ such that 
  $p_j \in K_{\zeta_j}$ and $\zeta_j > \zeta_i$ for all $i < j$. 
  Then Player II plays the interpretation of $\dot{y}_{\zeta_j}$ 
  as $x_j$.
  
  It is readily verified that this describes a winning strategy for 
  Player II in $G_{\omega_2}(E \setminus X, x_0, \omega_1)$, thus completing 
  the proof that $E \setminus X$ is $\omega_1$-perfect in 
  $V[G \times H]$.
\end{proof}

\section{Full trees} \label{section: full}

Recall that, given a tree $T$ and an ordinal $\beta < \height(T)$, we let 
$[T_{<\beta}]$ denote the set of cofinal branches through $T_{<\beta}$, 
i.e., the set of all elements of $b \in \prod_{\alpha < \beta} T_\alpha$ 
such that the range of $b$ is linearly ordered by $<_T$. Note that we can 
identify each element of $T_\beta$ with an element of $[T_{<\beta}]$, namely 
the branch given by its predecessors; if $T$ is normal, then this 
identification is injective. With a slight abuse of notation, then, we 
let $[T_{<\beta}] \setminus T_\beta$ denote the set of \emph{vanishing} 
branches through $T$ of length $\beta$, i.e., the set of $b \in 
[T_{<\beta}]$ such that the range of $b$ does not have an upper bound in 
$T_\beta$.

\begin{definition}
  A tree $T$ is \emph{full} if, for every limit ordinal $\beta < 
  \height(T)$, there is at most one vanishing branch through $T$ of 
  length $\beta$.
\end{definition}

There has been some research in recent years into the existence of full 
$\kappa$-Suslin trees. For example, in \cite{shelah_full}, Shelah 
establishes the consistency of the existence of full $\kappa$-Suslin trees 
for a Mahlo cardinal $\kappa$, answering a question of Jech (cf.\ \cite{miller_list}). 
In \cite{rinot_full}, Rinot, Yadai, and You 
prove the consistency of the existence of full $\kappa$-Suslin trees at 
accessible cardinals $\kappa$; for example, they can consistently exist 
at all successors of regular uncountable cardinals. 

Here, we are interested in full, splitting trees that may contain some cofinal branches. 
We begin by investigating full trees of height and size $\omega_1$. We first 
show that, under $\diamondsuit$, we have considerable control over the number of 
cofinal branches through such trees, establishing Theorem C(1).

\begin{theorem} \label{thm: diamond}
  Suppose that $\diamondsuit$ holds. Then, for every cardinal $\nu \in 
  \omega \cup \{\omega, \omega_1, 2^{\omega_1}\}$, there is a normal, full, 
  splitting tree $T \subseteq {^{<\omega_1}}\omega_1$ such that 
  $|[T]| = \nu$.
\end{theorem}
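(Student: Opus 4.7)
The plan is to handle $\nu = 2^{\omega_1}$ separately by taking $T = {^{<\omega_1}}2$, which is normal, splitting, vacuously full (at every countable limit $\beta$, the level $T_\beta = {^\beta}2$ realizes every branch through $T_{<\beta}$, so no branch vanishes), and has $|[T]| = 2^{\omega_1}$. For $\nu \leq \omega_1$, I build $T$ by recursion on its levels, using $\diamondsuit$ to predict and destroy all cofinal branches other than a pre-specified preserved family.

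Fix a $\diamondsuit$-sequence $\langle A_\beta \mid \beta < \omega_1 \rangle$ together with a bijection of ${^{<\omega_1}}\omega_1$ with $\omega_1$ so that $A_\beta$ can be decoded as a subset of $T_{<\beta}$; this lets me interpret $A_\beta$ as a candidate cofinal branch through $T_{<\beta}$. Designate the preserved branches $\{b_i \mid i < \nu\}$ by $b_i = \langle i \rangle {}^\frown \langle 0, 0, \ldots \rangle$, so that they are pairwise distinct from level $1$ onward and it is easy to recognize whether a candidate equals some $b_i \restriction \beta$. At successor stages I add at least two immediate successors to every node (guaranteeing splitting) and continue each $b_i$ along its prescribed values. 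At a limit stage $\beta$: if $A_\beta$ decodes to a cofinal branch of $T_{<\beta}$ that is not of the form $b_i \restriction \beta$ for any $i < \nu$, then I add a new node to $T_\beta$ for every cofinal branch of $T_{<\beta}$ except that one, making it the unique vanishing branch at level $\beta$; otherwise, I add a new node to $T_\beta$ for every cofinal branch of $T_{<\beta}$. Since $\diamondsuit$ implies $\CH$, we have $|[T_{<\beta}]| \leq \omega_1^\omega = \omega_1$ at each countable limit $\beta$, so each level of $T$ has size at most $\omega_1$ and $T$ fits inside ${^{<\omega_1}}\omega_1$.

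By construction $T$ is normal (limit-level nodes are in bijection with the extended branches), full (at most one vanishing branch per limit), and splitting (handled at every successor stage), and each $b_i$ lies in $[T]$. Conversely, suppose $b \in [T] \setminus \{b_i \mid i < \nu\}$. From the explicit form of the preserved branches, $b \restriction \alpha = b_i \restriction \alpha$ is possible only if $b(0) = i$ and $b(\xi) = 0$ for every $0 < \xi < \alpha$; since $b$ is not equal to any $b_i$, there exists $\alpha_0 < \omega_1$ above which $b \restriction \beta \neq b_i \restriction \beta$ for every $i < \nu$. By $\diamondsuit$, the set of limit ordinals $\beta > \alpha_0$ at which $A_\beta$ correctly codes $b \restriction \beta$ is stationary, and at any such $\beta$ the construction designates $b \restriction \beta$ as the unique vanishing branch, contradicting $b \in [T]$. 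Hence $|[T]| = \nu$ exactly.

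The main delicate point is the bookkeeping in the case $\nu = \omega_1$, where the preserved family is as wide as the entire first level of $T$; the explicit, structured choice of $b_i$ above is what makes recognizing the preserved branches at each limit uniform and compatible with the $\diamondsuit$ prediction scheme.
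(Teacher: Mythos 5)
Your proposal is correct and follows essentially the same strategy as the paper: a level-by-level construction in which $\diamondsuit$ guesses candidate branches and, at each limit level, the guessed branch is the unique vanishing one unless it restricts a designated preserved branch. The only (cosmetic) difference is that you fix the preserved family in advance in the explicit form $b_i = \langle i \rangle {}^\frown \langle 0,0,\ldots\rangle$, whereas the paper introduces the preserved branches gradually via an increasing system of injections $f_\alpha : \alpha \to T_\alpha$; both devices serve the same purpose of making the "is the guess a preserved branch?" test uniform at each limit stage.
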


\begin{proof}
  If $\nu = 2^{\omega_1}$, we can simply let $T = {^{<\omega_1}}\omega_1$. 
  For concreteness, we will prove the theorem in case $\nu = \omega_1$; the 
  proof for smaller values of $\nu$ is similar but easier.
  
  We will construct a tree $T$ by recursively specifying its $\alpha^{\mathrm{th}}$ 
  level $T_\alpha = T \cap {^{\alpha}}\omega_1$ for $\alpha < \omega_1$. 
  When specifying $T_\alpha$, we will also specify an injective function 
  $f_\alpha : \alpha \rightarrow T_\alpha$ with the requirement that, 
  for all $\eta < \alpha < \beta < \omega_1$, we have $f_\alpha(\eta) 
  \subseteq f_\beta(\eta)$. The idea is that, at the end of the construction, 
  for each $\eta < \omega_1$, $b_\eta := \bigcup \{f_\alpha(\eta) \mid \eta < 
  \alpha < \omega_1\}$ will be a cofinal branch through $T$, and we will arrange 
  so that \emph{every} cofinal branch through $T$ is equal to $b_\eta$ for some 
  $\eta < \omega_1$.
  
  Since $\diamondsuit$ holds, we can fix a sequence $\langle a_\alpha \mid \alpha < 
  \omega_1 \rangle$ such that
  \begin{itemize}
    \item for all $\alpha < \omega_1$, $a_\alpha:\alpha \ra \alpha$;
    \item for all $b:\omega_1 \ra \omega_1$, there are stationarily many 
    $\alpha < \omega_1$ for which $b \restriction \alpha = a_\alpha$.
  \end{itemize}
  We now describe the construction of $T$. We must set $T_0 = f_0 = \emptyset$. 
  Given $T_\alpha$ and $f_\alpha$, first form $T_{\alpha + 1}$ by splitting 
  maximally, i.e., $T_{\alpha + 1} = \{\sigma ^\frown i \mid 
  \sigma \in T_\alpha, ~ i < \omega_1\}$. For each $\eta < \alpha$, let 
  $f_{\alpha+1}(\eta) = f_\alpha(\eta)^\frown 0$, and let 
  $f_{\alpha+1}(\alpha)$ be any element of $T_{\alpha+1}$ not equal to 
  $f_{\alpha+1}(\eta)$ for some $\eta < \alpha$.
  
  Now suppose that $\beta < \omega_1$ is a limit 
  ordinal and $T_{<\beta}$, together with $\langle f_\alpha \mid \alpha < 
  \beta\}$, has been specified. For each $\eta < \beta$, 
  let $b^\beta_\eta := \bigcup \{f_\alpha(\eta) \mid \eta < \alpha < \beta\}$. 
  By construction, each $b^\beta_\eta$ is in $[T_{<\beta}]$. Now consider the function $a_\beta$ given by our 
  $\diamondsuit$-sequence. There are two cases to consider.
  
  \textbf{Case 1:} $a_\beta$ is a branch through $T_{<\beta}$ and, for all 
  $\eta < \beta$, we have $a_\beta \neq b^\beta_\eta$. In this case, 
  let $T_\beta = [T_{<\beta}] \setminus \{a_\beta\}$. For all $\eta < \beta$, 
  let $f_\beta(\eta) = b^\beta_\eta$.
  
  \textbf{Case 2:} otherwise. In this case, let $T_\beta = [T_{<\beta}]$ and, again, 
  let $f_\beta(\eta) = b^\beta_\eta$ for all $\eta < \beta$.
  
  This completes the construction of $T$ and $\langle f_\alpha \mid \alpha < \omega_1 
  \rangle$. It is easily verified that $T$ is a normal, full, splitting tree and 
  $\langle b_\eta \mid \eta < \omega_1 \rangle$ is an injective sequence of cofinal 
  branches through $T$. It remains to show that every cofinal branch through $T$ is 
  equal to $b_\eta$ for some $\eta < \omega_1$.
  
  To this end, fix $b \in {^{\omega_1}}\omega_1$ such that, for all $\eta < \omega_1$, 
  we have $b \neq b_\eta$. We will find $\beta < \omega_1$ such that 
  $b \restriction \beta \notin T$. First, let $C$ be the set of limit 
  ordinals $\beta < \omega_1$ such that
  \begin{itemize}
    \item $b[\beta] \subseteq \beta$;
    \item for all $\eta < \beta$, we have $b_\eta \restriction \beta \neq b 
    \restriction \beta$.
  \end{itemize}
  Then $C$ is a club in $\omega_1$, so we can fix $\beta \in C$ such that 
  $b \restriction \beta = a_\beta$. Now consider stage $\beta$ of the construction 
  of $T$. If $a_\beta \notin [T_{<\beta}]$, then we are done, since 
  this immediately implies that $b \restriction \beta \notin T$. If $a_\beta$ 
  \emph{is} a branch through $T_{<\beta}$, then, for all $\eta < \beta$, we have 
  $a_\beta \neq b_\eta \restriction \beta = b^\beta_\eta$. We are therefore in 
  Case 1 of the construction, and hence we have $b \restriction \beta = 
  a_\beta \notin T$, as desired.
\end{proof}

$\CH$ is a necessary condition for the existence of full, splitting trees of 
height $\omega_1$ with few cofinal branches as it is easily seen that, if $\CH$ fails, 
then every full, splitting tree of height $\omega_1$ has at least 
$2^{\aleph_0}$-many cofinal branches. However, we now show that $\CH$ is not 
a sufficient condition for this, i.e., the hypothesis of $\diamondsuit$ in 
Theorem \ref{thm: diamond} cannot be weakened to $\CH$. 
Recall that a forcing notion $\P$ is 
\emph{totally proper} if it is proper and adds no reals. We will need an iterable 
strengthening of total properness known as \emph{complete properness}, introduced 
in \cite{moore_minimal} (cf.\ also \cite{eisworth_moore_milovich}). 
Let us recall the relevant definitions, beginning with $\alpha$-properness.

\begin{definition}
  Suppose that $\Q$ is a forcing notion and $\theta$ is a sufficiently large 
  regular cardinal. A countable elementary submodel $M$ of $H(\theta)$ is said to 
  be \emph{suitable for $\Q$} if $\Q, \power(\Q) \in M$. 
  
  If $M$ is suitable for $\Q$ and $q \in \Q$, then we say that $q$ is $(M,\Q)$-generic if, for every 
  dense open subset $D$ of $\Q$ with $D \in M$ and every $r \leq_{\Q} q$, $r$ is 
  compatible with an element of $D \cap M$. If $G \subseteq \Q \cap M$ is a filter, then we say that $G$ is 
  $(M,\Q)$-generic if $G \cap D \neq \emptyset$ for every dense open subset $D$ of $\Q$ 
  with $D \in M$. A condition $q \in \Q$ is \emph{totally $(M,\Q)$-generic} if the set 
  $\{p \in M \cap \Q \mid q \leq p\}$ is an $(M,\Q)$-generic filter.
\end{definition}

\begin{definition}
  Suppose that $\Q$ is a forcing notion
  and $\alpha < \omega_1$. A \emph{suitable $\alpha$-tower for $\Q$} is a 
  continuous, $\subseteq$-increasing sequence $\langle M_\eta \mid \eta < \alpha \rangle$ 
  of countable elementary submodels of $H(\theta)$ for some sufficiently large regular 
  cardinal $\theta$ such that 
  \begin{itemize}
    \item $M_0$ is suitable for $\Q$;
    \item for all $\xi$ with $\xi + 1 < \alpha$, we have $\langle M_\eta \mid \eta \leq \xi 
    \rangle \in M_{\xi + 1}$.
  \end{itemize}
  We say that $\Q$ is \emph{$\alpha$-proper} if, for every suitable $\alpha$-tower 
  $\langle M_\eta \mid \eta < \alpha \rangle$ for $\Q$ and every $q_0 \in M_0 \cap \Q$, there is 
  $q \leq_{\Q} q_0$ such that $q$ is $(M_\eta,\Q)$-generic for all $\eta < \alpha$.
  We say that $\Q$ is \emph{totally $\alpha$-proper} if we can additionally require that 
  $q$ is totally $(M_\eta,\Q)$-generic for all $\eta < \alpha$.
\end{definition}

We now turn to Moore's notion of \emph{complete properness}.

\begin{definition}
  If $M$ and $N$ are sets, then the notation $M \rightarrow N$ denotes the existence of 
  an elementary embedding $\varepsilon : (M, \in) \rightarrow (N, \in)$ such 
  that $\varepsilon \in N$ and $N \models ``M \text{ is countable}"$, i.e., 
  $N$ contains an injection of $M$ into $\omega$. If $X \in M$ and 
  $M \rightarrow N$, as witnessed by $\varepsilon$, then we will let $X^N$ denote 
  $\varepsilon(X)$. If $X \subseteq M$ is not an element of $M$, then $X^N$ 
  denotes the pointwise image $\varepsilon[X]$.
\end{definition}

\begin{definition}
  Suppose that $\Q$ is a forcing notion, $M$ is suitable for $\Q$, and $M \rightarrow N$. 
  Then a filter $G \subseteq \Q \cap M$ is \emph{$\overrightarrow{MN}$-prebounded} if 
  whenever $N \rightarrow P$ and $G \in P$, then $p \models ``G^P \text{ has a lower 
  bound in } \Q^P$, where $G^P$ is defined via the composition $M \rightarrow 
  N \rightarrow P$.
\end{definition}

\begin{definition}
  Suppose that $\Q$ is a forcing notion. We say that $\Q$ is \emph{completely proper} if 
  whenever $M$ is suitable for $\Q$, $q \in \Q \cap M$, and $M \rightarrow N_i$ for $i < 2$, there is 
  an $(M,\Q)$-generic filter $G \subseteq \Q \cap M$ that is $\overrightarrow{MN_i}$-prebounded 
  for all $i < 2$ with $q \in G$.
\end{definition}

Moore proved in \cite[Lemma 4.11]{moore_minimal} that completely proper forcings are $2$-complete with 
respect to some completeness system $\mathbb{D}$ (cf.\ \cite[\S V.5]{proper_and_improper}). 
Combined with Shelah's iteration theorem concerning such forcings 
(\cite[Theorem VIII.4.5]{proper_and_improper}), this yields the following theorem 
(cf.\ \cite[Main Theorem]{eisworth_moore_milovich}).

\begin{theorem} \label{thm: iteration_thm}
  Suppose that $\langle \P_\eta, \dot{\Q}_\xi \mid \eta \leq \delta, \ \xi < \delta 
  \rangle$ is a countable support iteration of totally proper forcing notions such that, for all 
  $\eta < \delta$, we have
  \[
    \Vdash_{\P_\eta}``\dot{\Q}_\eta \text{ is completely proper and totally $\alpha$-proper for all } 
    \alpha < \omega_1".
  \]
  Then $\P_\delta$ is totally proper.
\end{theorem}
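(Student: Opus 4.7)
The plan is to obtain this theorem as an immediate consequence of two results already in the literature, since no new argument is needed beyond combining them.

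First, I would invoke Moore's \cite[Lemma 4.11]{moore_minimal}, which shows that if $\Q$ is completely proper, then $\Q$ is $2$-complete with respect to a specific completeness system $\mathbb{D}_\Q$ in the sense of \cite[\S V.5]{proper_and_improper}. The completeness system is built directly from the notion of $\overrightarrow{MN}$-prebounded filters: roughly, $\mathbb{D}_\Q(M)$ encodes, for each pair $(N_0, N_1)$ with $M \rightarrow N_i$, the collection of $(M,\Q)$-generic filters that are $\overrightarrow{MN_i}$-prebounded for both $i < 2$. Complete properness is then precisely the assertion that, for every $q \in \Q \cap M$ and every such pair, this collection is nonempty and contains a filter through $q$, which is exactly the content of $2$-completeness with respect to $\mathbb{D}_\Q$.

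Second, I would apply Shelah's preservation theorem \cite[Theorem VIII.4.5]{proper_and_improper}, which asserts that countable support iterations of totally proper forcings that are additionally totally $\alpha$-proper for every $\alpha < \omega_1$ and $2$-complete with respect to some completeness system preserve total properness along the iteration. Under the hypotheses of our theorem, each $\dot{\Q}_\eta$ is forced to be totally proper, totally $\alpha$-proper for all $\alpha < \omega_1$, and (via Step 1) $2$-complete with respect to the associated $\mathbb{D}_{\dot{\Q}_\eta}$, so Shelah's theorem yields that $\P_\delta$ is totally proper.

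The main obstacle, if one were to work the argument out from scratch rather than by citation, would be verifying that Moore's construction of the completeness system $\mathbb{D}_\Q$ satisfies the precise technical requirements of Shelah's framework, in particular the right kind of absoluteness of the parameters defining $\mathbb{D}_\Q$ and compatibility with how the iteration interprets these parameters at each stage. This amounts to a careful tracking of the fact that the definition of $\overrightarrow{MN}$-prebounded is absolute enough between transitive models containing $M$, $N$, and $\Q$. Both of these verifications are carried out in detail in \cite{moore_minimal} and adapted to the iteration setting in \cite{eisworth_moore_milovich}, so for our purposes the theorem follows by direct citation of their main results.
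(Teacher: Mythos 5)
Your proposal is correct and matches the paper's treatment exactly: the paper also derives this theorem by combining Moore's \cite[Lemma 4.11]{moore_minimal} (completely proper implies $2$-complete with respect to a completeness system) with Shelah's iteration theorem \cite[Theorem VIII.4.5]{proper_and_improper}, citing \cite{eisworth_moore_milovich} for the assembled statement. No further argument is given in the paper, so your citation-based route is the same one.
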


We are now ready to prove that the assumption of $\diamondsuit$ cannot be 
weakened to $\CH$ in Theorem \ref{thm: diamond}. In fact, we will prove that 
it is consistent with $\CH$ that every full, splitting tree of height 
$\omega_1$ contains a copy of ${^{<\omega_1}}2$, thus establishing 
Theorem C(2).

\begin{theorem}
  It is consistent that $\CH$ holds and every full, splitting tree of height 
  $\omega_1$ contains a copy of ${^{<\omega_1}}2$.
\end{theorem}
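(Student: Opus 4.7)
The plan is to perform a countable-support iteration $\langle \P_\alpha, \dot{\Q}_\beta : \alpha \leq \omega_2, \beta < \omega_2 \rangle$ over a ground model of GCH, where each iterand $\dot{\Q}_\beta$ forces an isomorphic embedding of ${^{<\omega_1}}2$ into a full, splitting tree $T \subseteq {^{<\omega_1}}\omega_1$ of height $\omega_1$ chosen by standard bookkeeping. Since fullness is preserved when passing to inner models (any new branch in a larger model can only add to the supply of vanishing branches), every full, splitting tree in the final extension is also full at the earlier stage where it first appears and is thus handled at some $\beta < \omega_2$. To preserve CH, we will verify that the iteration adds no reals via Theorem~\ref{thm: iteration_thm}: this reduces to showing that each iterand $\Q(T)$ is forced to be completely proper and totally $\alpha$-proper for every $\alpha < \omega_1$.

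For a full, splitting tree $T$, let $\Q(T)$ consist of pairs $(F_p, f_p)$ where $F_p$ is a countable subtree of ${^{<\omega_1}}2$ closed under restrictions and cofinal limits, and $f_p : F_p \to T$ is an isomorphic embedding, ordered by extension. A generic $G$ yields an isomorphic embedding $\iota_G = \bigcup_{p \in G} f_p$ with domain $\bigcup_{p \in G} F_p$, and a density argument shows this domain equals ${^{<\omega_1}}2$, producing the desired copy. The density of $D_x := \{p : x \in F_p\}$ is where fullness first enters: for $x$ of limit length $\beta$, extending $p$ to contain $x$ requires choosing $f_q(x \restriction \gamma)$ for $\gamma \in (\gamma^*, \beta]$ (where $\gamma^* = \sup\{\gamma < \beta : x \restriction \gamma \in F_p\}$) so that the resulting chain in $T$ does not vanish at its limit level. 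Since $T$ is full, at most one branch through $T_{<\eta}$ vanishes at any limit level $\eta$; the splittingness of $T$ provides multiple choices at each intermediate level, allowing us to steer away from the unique forbidden branch.

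The main task is verifying that $\Q(T)$ is completely proper and totally $\alpha$-proper for every $\alpha < \omega_1$. The subtlety, present even when simply taking lower bounds of countable decreasing sequences, is that the union $\bigcup_n F_{p_n}$ of a decreasing sequence need not be closed under cofinal limits, so to form a genuine lower bound one must close under cofinal limits and extend $\bigcup_n f_{p_n}$ to the new nodes; for each such new cofinal-limit node $x$, the chain $\bigcup_n f_{p_n}(x \restriction \gamma)$ must have an upper bound in $T$, i.e., must not vanish. Fullness reduces the set of ``forbidden'' vanishing branches at each limit level to at most one, and this unique branch is a ground-model-defined object, so we can diagonalize in $V$, using the freedom provided by splittingness, to choose the $p_n$ (living in the external countable submodel $M$) so as to avoid all the relevant vanishing branches. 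The standard techniques of \cite{moore_minimal} and \cite{eisworth_moore_milovich} provide the skeleton for building totally generic towers and completely proper filters; our task is to interweave the avoidance of vanishing branches into these constructions.

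The main obstacle is the complete properness argument, where one must simultaneously ensure $(M, \Q)$-genericity, $\overrightarrow{MN_i}$-preboundedness for two external embeddings $M \to N_i$ ($i < 2$), and the avoidance of the (cofinally in $M \cap \omega_1$) vanishing branches relevant to the construction. The crucial structural feature enabling this is exactly the fullness hypothesis: it reduces the diagonalization against vanishing branches to avoiding one specific ground-model-defined target per limit level, which is made tractable by the splittingness of $T$.
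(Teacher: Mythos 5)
Your overall architecture — a countable-support iteration of tree-embedding forcings, verified to be completely proper and totally $\alpha$-proper so that Theorem~\ref{thm: iteration_thm} applies, with bookkeeping over $\omega_2$ and fullness entering as ``at most one forbidden branch per limit level, avoidable by splitting'' — is exactly the paper's. However, your definition of the iterand contains a genuine flaw: you require $F_p$ to be \emph{countable} and \emph{closed under cofinal limits}, and these two demands are jointly incompatible with total properness. For every finite $\sigma \in {}^{<\omega}2$ the set $D_\sigma = \{p \mid \sigma \in F_p\}$ is dense (adding finitely many finite nodes creates no new limit-length branches, so this is harmless) and lies in any suitable $M$. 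Hence any $(M,\Q(T))$-generic filter $G$ satisfies $\bigcup_{p \in G} F_p \supseteq {}^{<\omega}2$, and a totally $(M,\Q(T))$-generic condition $q$ must have $F_q \supseteq {}^{<\omega}2$; closure under cofinal limits then forces $F_q \supseteq {}^{\omega}2$, so $|F_q| = 2^{\aleph_0} > \aleph_0$ and $q$ is not a condition. The same problem already sinks your lower-bound construction: the union of an $\omega$-sequence of conditions meeting all the $D_\sigma$ contains the full binary tree of height $\omega$, and ``closing under cofinal limits'' would add continuum many nodes. So as written, $\Q(T)$ has no totally generic conditions, is not totally proper, and Theorem~\ref{thm: iteration_thm} cannot be invoked.

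The repair is to weaken the closure requirement to a condition on the \emph{images} rather than the domain: do not put the limit nodes into $S_p$, but demand that for every $\sigma \in \partial S_p$ (limit-length nodes outside $S_p$ all of whose proper initial segments lie in $S_p$) the chain $\bigcup_\eta f_p(\sigma \restriction \eta)$ is a node of $T$, i.e.\ does not vanish. This keeps conditions countable while guaranteeing that the limit node \emph{can} be added by a later condition, and it is here that fullness does its work: to certify the boundary condition for a lower bound one shows that no image chain converges to a vanishing branch $b_\gamma$, which is achieved by splitting the embedding against each relevant $b_\gamma$ once (both successors mapped incompatibly with $b_\gamma$) and observing that this obstruction persists under all further extensions — this is the content of Lemmas~\ref{lemma: pruning} and~\ref{lemma: split}. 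Two smaller points you should also attend to: the theorem concerns arbitrary full splitting trees of height $\omega_1$, so you need the preliminary reduction to trees of height and size $\omega_1$ (take $T \cap M$ for an $\omega$-closed $M \prec H(\theta)$ of size $\omega_1$; $\omega$-closure is what keeps $T \cap M$ full); and in the complete properness argument the branches to be avoided at the top level $\delta = M \cap \omega_1$ are the vanishing branches of $\varepsilon_i(T)$ as computed in the external models $N_0$ and $N_1$ — two possibly distinct, non-ground-model-definable targets — not the single ground-model branch your sketch refers to.
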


\begin{proof}
  We first note that it is enough to consider trees of height and \emph{size} 
  $\omega_1$. To see this, suppose that $\CH$ holds and $T$ is a tree of height 
  $\omega_1$. Let $\theta$ be a sufficiently large regular cardinal and let 
  $M \prec H(\theta)$ be such that
  \begin{itemize}
    \item $|M| = \omega_1$;
    \item ${^{\omega}}M \subseteq M$;
    \item $T \in M$.
  \end{itemize}
  Then $T \cap M$ is a full, splitting subtree of $T$ of size $\omega_1$; 
  if $T \cap M$ contains a copy of ${^{<\omega_1}}2$ then, \emph{a fortiori}, 
  so does $T$.
  
  Suppose that $\GCH$ holds.
  Fix for now a full, splitting tree $T$ of height and size $\omega_1$. Without 
  loss of generality, we can assume that $T$ is a subtree of 
  ${^{<\omega_1}}\omega_1$. We will describe a totally proper 
  forcing $\P(T)$ of size $\omega_1$ that adds a copy of ${^{<\omega_1}}2$ to $T$ 
  and then show that this forcing can be iterated without adding reals.
  
  Given a subtree $S \subseteq {^{<\omega_1}}\omega_1$, 
  let $\partial S$ be the set of $\sigma \in {^{<\omega_1}}\omega_1$ such that
  \begin{itemize}
    \item $\dom(\sigma)$ is a limit ordinal;
    \item $\sigma \notin S$;
    \item for all $\eta < \dom(\sigma)$, $\sigma \restriction \eta \in S$.
  \end{itemize}
  Conditions of $\P(T)$ are all pairs of the form $p = (S_p, f_p)$ such that
  \begin{enumerate}
    \item $S_p$ is a countable subtree of ${^{<\omega_1}}2$;
    \item $f_p : S_p \ra T$ is an isomorphic embedding;
    \item \label{closure_condition} 
    for all $\sigma \in \partial S_p$, we have $\bigcup \{f_p(\sigma 
    \restriction \eta) \mid \eta < \dom(\sigma)\} \in T$.
  \end{enumerate}
  If $p,q \in \P(T)$, then $q \leq p$ if and only if $S_q \supseteq S_p$ and 
  $f_q \restriction S_p = f_p$.
  
  Since $\CH$ holds, $\P(T)$ is of size $\omega_1$. Given $p = (S_p, f_p) \in 
  \P(T)$, let $T_p$ denote the $\leq_T$-downward closure of $f_p``S_p$. 
  Condition (\ref{closure_condition}) above can then be expressed as the assertion 
  that $\partial T_p \subseteq T$.  
  
  Let $\mc L$ be the set of $\beta \in \lim(\omega_1)$ such that 
  $[T_{<\beta}] \setminus T_\beta \neq \emptyset$ and, for each $\beta \in 
  \mc L$, let $b_\beta$ be the unique element of $[T_{<\beta}] \setminus 
  T_\beta$. When constructing elements of $\P(T)$, we need to be careful to 
  avoid the branches $b_\beta$ for $\beta \in \mc L$. The following lemmas show 
  that this can be done.
  
  \begin{lemma} \label{lemma: pruning}
    Suppose that $\beta \in \lim(\omega_1)$ and $b \in {^\beta}\omega_1$. 
    Suppose moreover that
    \begin{itemize}
      \item $p = (S_p, f_p) \in \P(T)$;
      \item $\sigma \in {^{<\omega_1}}2$ is such that 
      $\sigma, \sigma^\frown 0, \sigma^\frown 1 \in S_p$;
      \item $f_p(\sigma) \sqsubseteq b$ but, for all $i < 2$, we have 
      $f_p(\sigma^\frown i ) \perp b$.                 
    \end{itemize}
    Let $\xi < \beta$ be such that $f_p(\sigma^\frown i ) \perp 
    (b \restriction \xi)$ for all $i < 2$.
    Then, for every $q \leq p$ and every $\tau \in S_q$, it is not the case 
    that $f_q(\tau) \sqsupseteq b \restriction \xi$.
  \end{lemma}
  
  \begin{proof}
    Note that $f_p(\sigma) = b \restriction \eta$ for some $\eta < \xi$.
    Fix $q \leq p$ and $\tau \in S_q$. There are three cases to consider.
    
    First, if $\tau \sqsubseteq \sigma$, then $f_q(\tau) = f_p(\tau) 
    \sqsubseteq b \restriction \eta$, so $f_q(\tau) \not\sqsupseteq b \restriction \xi$.
    
    Second, if $\sigma \sqsubseteq \tau$, then there is $i < 2$ such that 
    $\sigma^\frown \langle i \rangle \sqsubseteq \tau$, and hence 
    $f_p(\sigma^\frown \langle i \rangle) \sqsubseteq f_q(\tau)$. 
    Since $f_p(\sigma^\frown \langle i \rangle) \perp b \restriction \xi$, it follows that 
    $f_q(\tau) \perp b \restriction \xi$.
    
    The remaining case is that in $\tau \perp \sigma$. In this case, $f_q(\tau) \perp 
    f_p(\sigma) = b \restriction \eta$.
  \end{proof}
  
  \begin{lemma} \label{lemma: split}
    Suppose that $M$ is suitable for $\P(T)$, $p \in M \cap \P(T)$,
    $\beta = M \cap \omega_1$, and $\beta \in \mc L$.
    Let    
    \[
      \xi := \sup\{\eta < \beta \mid \exists \sigma \in S_p ~ [
      f_p(\sigma) = b_\beta \restriction \eta]\},
    \] 
    and assume that $b_\beta \restriction \xi \in M$. 
    Then there is $q \leq p$ with $q \in M$ and $\sigma \in S_q$ such that
    \begin{itemize}
      \item $f_q(\sigma) \sqsubseteq b_\beta$;
      \item for all $i < 2$, $\sigma^\frown i  \in S_q$ 
      and $f_q(\sigma^\frown i) \perp b_\beta$.
    \end{itemize}
  \end{lemma}
  
  \begin{proof}
    We may assume that $p$ itself does not satisfy the conclusion of the theorem.
    Since $p \in \P(T)$ and $b_\beta \notin T$, we must have $\xi < \beta$. 
    Assume first that there is $\sigma \in S_p$ such that 
    $f_p(\sigma) = b_\beta \restriction \xi$. By the definition of $\xi$, 
    if $i < 2$ and $\sigma^\frown i \in S_p$, then $f_p(\sigma^\frown i) \perp 
    b_\beta$. We will define a condition 
    $q \leq p$ in $M$ such that $S_q = S_p \cup \{\sigma^\frown 0, 
    \sigma^\frown 1 \}$. It suffices to define $f_q(\sigma^\frown 
    i )$ for $i < 2$. Using the fact that $T$ is splitting, we know 
    that there are two distinct nodes $t_0, t_1 \in T_{\xi + 2} \cap M$ 
    such that, for all $i < 2$, we have $b_\beta \restriction \xi 
    \sqsubseteq t_i$ but $b_\beta \restriction (\xi + 2) \perp t_i$. 
    For $i < 2$, if $\sigma^\frown i \notin S_p$, set $f_q(\sigma^\frown i ) = t_i$. 
    It is readily verified that $q \leq p$ is as desired.     
    
    Now suppose that there is no $\sigma \in S_p$ such that $f_p(\sigma) = 
    b_\beta \restriction \xi$. By our assumptions about $p$, there must be an increasing sequence 
    of ordinals $\langle \eta_n \mid n < \omega \rangle$ and a $\sqsubseteq$-increasing 
    sequence $\langle \sigma_n \mid n < \omega \rangle$ from $S_p$ such that
    \begin{itemize}
      \item $\sup\{\eta_n \mid n < \omega\} = \xi$;
      \item $\bigcup \{\sigma_n \mid n < \omega\} \notin S_p$; and
      \item for all $n < \omega$, $f_p(\sigma_n) = b_\beta \restriction 
      \eta_n$.
    \end{itemize}
    Let $\sigma := \bigcup\{\sigma_n \mid n < \omega\}$. Then $\sigma \in M$, 
    since it is definable in $M$ as the unique element $\tau$ of 
    $\partial S_p$ such that $\bigcup \{f_p(\tau \restriction \eta) \mid 
    \eta < \dom(\tau)\} = b_\beta \restriction \xi$. We will define a condition 
    $q \leq p$ in $M$ such that $S_q = S_p \cup \{\sigma, \sigma^\frown 
    0, \sigma^\frown 1 \}$. Work entirely in $M$. 
    First, set $f_q(\sigma) = b_\beta \restriction \xi$. Then define 
    $f_q(\sigma^\frown i )$ for $i < 2$ exactly as in the previous case.
    It is again readily verified that $q \leq p$ is as desired.
  \end{proof}
  
  \begin{lemma}
    $\P(T)$ is totally $\alpha$-proper for all $\alpha < \omega_1$.
  \end{lemma}
  
  \begin{proof}
    The proof is by induction on $\alpha$. We will in fact prove the following stronger 
    statement, which will be used in the inductive step:
    \begin{quote}
      For every suitable $(\alpha+1)$-tower $\langle M_\eta \mid \eta \leq \alpha \rangle$ 
      and every $p_0 \in M_0 \cap \P(T)$, there is $p \leq p_0$ such that 
      \begin{itemize}
        \item $p$ is totally $(M_\eta, \P(T))$-generic for all $\eta \leq \alpha$;
        \item $S_p, f_p \subseteq M_\alpha$.
      \end{itemize}
    \end{quote}
    
    Fix $\beta < \omega_1$ and suppose that we have established the induction hypothesis 
    for all $\alpha < \beta$. Let $\langle M_\eta \mid \eta \leq \beta \rangle$ be a suitable 
    $(\beta+1)$-tower for $\P(T)$, and fix $p_0 \in M_0 \cap \P(T)$. We will assume that 
    $\beta$ is a limit ordinal, and hence $M_\beta = \bigcup_{\alpha < \beta} M_\alpha$; 
    the case in which $\beta$ is a successor is easier and proven similarly. Note also that 
    if a condition $p$ is totally $(M_\alpha,\P(T))$-generic for all $\alpha < \beta$, then 
    it is also totally $(M_\beta, \P(T))$-generic.
    
    Let $\delta := M_\beta \cap \omega_1$. 
    Assume that $\delta \in \mc{L}$; the case in which $\delta \notin \mc{L}$ is similar and 
    easier. There are now two cases to consider; we will deal with them in parallel:
    \begin{itemize}
      \item \textbf{Case 1:} There is $\xi^* < \delta$ such that $b_\delta \restriction 
      \xi^* \notin M_\beta$;
      \item \textbf{Case 2:} For every $\xi < \delta$, $b_\delta \restriction \xi
      \in M_\beta$.
    \end{itemize}      
    
    If we are in Case 2, let 
    \[
      \xi := \sup\{\eta < \beta \mid \exists \sigma \in S_{p_0} ~ [
      f_{p_0}(\sigma) = b_\beta \restriction \eta]\}.
    \]
    There are now two subcases to consider:
    \begin{itemize}
      \item \textbf{Case 2a: } $b_\delta \restriction \xi \in M_0$;
      \item \textbf{Case 2b: } $b_\delta \restriction \xi \notin M_0$. 
    \end{itemize}      
    If we are in Case 2a, begin by applying Lemma \ref{lemma: split} to find a $p_0' \leq p_0$ and 
    a $\sigma_0 \in S_{p_0'}$ such that
    \begin{itemize}
      \item $p_0' \in M_0$;
      \item $f_{p_0'}(\sigma_0) \sqsubseteq b_\delta$;
      \item for all $i < 2$, $\sigma_0{}^\frown i \in S_{p_0'}$ 
      and $f_{p_0'}(\sigma_0{}^\frown i ) \perp b_\delta$.
    \end{itemize}
    If we are in Case 1 or 2b, let $p_0' = p_0$ and leave $\sigma_0$ undefined for now.    
    
    Let $\langle \alpha_n \mid n < \omega \rangle$ be a strictly increasing sequence of ordinals 
    that is cofinal in $\beta$. If we are in Case 2b, additionally choose $\alpha_0$ so that 
    $\alpha_0 + 1$ is the least ordinal $\varepsilon < \beta$ such that $b_\delta 
    \restriction \xi \in M_\varepsilon$ (note that this $\varepsilon$ must be a successor 
    ordinal by the continuity of $\langle M_\eta \mid \eta < \beta \rangle$).
    
    We will build a decreasing sequence $\langle p_n \mid n < 
    \omega \rangle$ of conditions in $\P(T)$ such that, for all $n < \omega$, we have
    \begin{itemize}
      \item $p_{n+1} \in M_{\alpha_n + 1}$;
      \item $p_{n+1}$ is totally $(M_\eta,\P(T))$-generic for all $\eta \leq \alpha_n$.
    \end{itemize}
    We will also arrange so that $\langle p_n \mid n < \omega \rangle$ will have a lower bound 
    \[
      p_\infty = \left(\bigcup_{n < \omega} S_{p_n}, \bigcup_{n < \omega} f_{p_n}\right)
    \] 
    in $\P(T)$. This lower bound will then be the desired condition that is totally 
    $(M_\eta,\P(T))$-generic for all $\eta \leq \beta$. Note that, by construction, we will 
    have $S_{p_\infty} \subseteq M_\beta$.
    
    Let $\langle \gamma_n \mid 0 < n < \omega \rangle$ enumerate $\mc{L} \cap \delta$ (with 
    repetitions, if necessary) in such a way that $\gamma_n \in M_{\alpha_n+1}$ for all 
    $0 < n < \omega$. We will arrange so that, for all $0 < n < \omega$, there is 
    $\sigma_n \in S_{p_n}$ such that 
    \begin{itemize}
      \item $f_{p_n}(\sigma_n) \sqsubseteq b_{\gamma_n}$;
      \item for all $i < 2$, $\sigma_n{}^\frown i \in S_{p_n}$ and 
      $f_{p_n}(\sigma_n{}^\frown i ) \perp b_{\gamma_n}$.
    \end{itemize}
    We now describe the rest of the construction. The first step will be different from the 
    others due to the need to take care of Case 2b. We begin by applying the inductive 
    hypothesis in $M_{\alpha_0 + 1}$ to find $p_1' \leq p_0'$ such that
    \begin{itemize}
      \item $p_1' \in M_{\alpha_0 + 1}$ and $S_{p_1'}, f_{p_1'} \subseteq M_{\alpha_0}$;
      \item $p'_1$ is totally $(M_\eta, \P(T))$-generic for all $\eta \leq \alpha_0$.
    \end{itemize}
    Suppose first that we are in Case 2b. By our choice of $\alpha_0$ and the fact 
    that $S_{p'_1} \subseteq M_{\alpha_0}$, we know that $b_\delta \restriction \xi 
    \notin S_{p'_1}$. Thus, we still have
    \[
      \xi = \sup\{\eta < \beta \mid \exists \sigma \in S_{p'_1} ~ [
      f_{p'_1}(\sigma) = b_\beta \restriction \eta]\}.
    \]
    We can therefore apply Lemma \ref{lemma: split} to find 
    $p^*_1 \leq p'_1$ and $\sigma_0 \in S_{p^*_1}$ such that
    \begin{itemize}
      \item $p^*_1 \in M_{\alpha_0 + 1}$;
      \item $f_{p^*_1}(\sigma_0) \sqsubseteq b_\delta$;
      \item for all $i < 2$, $\sigma_0{}^\frown \langle i \rangle \in S_{p^*_1}$ and 
      $f_{p_1^*}(\sigma_0{}^\frown \langle i \rangle) \perp b_\delta$.
    \end{itemize}
    If we are in Case 1 or 2a, let $p^*_1 = p'_1$; in Case 1, leave $\sigma_0$ permanently undefined.
    Now, regardless of the case we are in, apply Lemma \ref{lemma: split} once again to find 
    $p_1 \leq p^*_1$ and $\sigma_1 \in S_{p_1}$ such that
    \begin{itemize}
      \item $p_1 \in M_{\alpha_0 + 1}$;
      \item $f_{p_1}(\sigma_1) \sqsubseteq b_{\gamma_1}$;
      \item for all $i < 2$, $\sigma_1{}^\frown i \in S_{p_1}$ and 
      $f_{p_1}(\sigma_1{}^\frown  i ) \perp b_{\gamma_1}$.
    \end{itemize}
    The construction is now uniform across all cases. Suppose that $0 < n < \omega$ and we have 
    constructed $p_n$. Apply the inductive hypothesis to the tower 
    $\langle M_\eta \mid \alpha_{n-1} < \eta \leq \alpha_n \rangle$ inside $M_{\alpha_n + 1}$ to find 
    $p'_{n+1} \leq p_n$ that is totally $(M_\eta, \P(T))$-generic for all 
    $\eta \in (\alpha_{n-1}, \alpha_n]$. Since $p'_{n+1} \leq p_n$ and $p_n$ is totally $(M_\eta, 
    \P(T))$-generic for all $\eta \leq \alpha_{n-1}$, it follows that $p'_{n+1}$ is in fact 
    totally $(M_\eta, \P(T))$-generic for all $\eta \leq \alpha_n$. Now apply Lemma 
    \ref{lemma: split} to find $p_{n+1} \leq p'_{n+1}$ and $\sigma_{n+1} \in S_{p_{n+1}}$ such that
    \begin{itemize}
      \item $p_{n+1} \in M_{\alpha_n + 1}$;
      \item $f_{p_{n+1}}(\sigma_{n+1}) \sqsubseteq b_{\gamma_{n+1}}$;
      \item for all $i < 2$, $\sigma_{n+1}{}^\frown i \in S_{p_{n+1}}$ 
      and $f_{p_{n+1}}(\sigma_{n+1}{}^\frown i ) \perp b_{\gamma_{n+1}}$.
    \end{itemize}
    
    This completes the construction. It remains to verify that $p_\infty \in \P(T)$. 
    Since $S_{p_\infty} \subseteq M_\beta$, the only way that $p_\infty$ could fail to be in 
    $\P(T)$ is if there is $\gamma \in \mc{L} \cap (\delta + 1)$ and $\sigma \in 
    \partial S_\infty$ such that $\bigcup \{f_{p_\infty}(\sigma \restriction \eta) \mid 
    \eta < \dom(\sigma)\} = b_\gamma$. Suppose there are such a $\gamma$ and 
    $\sigma$. 
    
    Assume first that $\gamma = \delta$. If we are in Case 1, then there is 
    $\xi^* < \delta$ such that $b_\delta \restriction \xi^* \notin M_\beta$. 
    Since $S_{p_\infty}, f_{p_\infty} \subseteq M_\beta$, there can be no 
    $\eta < \dom(\sigma)$ such that $f_{p_\infty}(\sigma \restriction \eta) 
    \sqsupseteq b_\delta \restriction \xi^*$, which is a contradiction. If we are in 
    Case 2, then recall our choice of $\sigma_0$, and let $\xi' < \delta$ be such that 
    $f_{p_1}(\sigma_0{}^\frown i ) \perp b_\delta \restriction \xi'$ 
    for all $i < 2$. Then, by Lemma \ref{lemma: pruning}, there is no $\eta < \dom(\sigma)$ 
    such that $f_{p_\infty}(\sigma \restriction \eta) \sqsupseteq b_\delta \restriction \xi'$, 
    which is again a contradiction.
    
    Finally, assume that $\gamma \in \mc{L} \cap \delta$. Then there is $0 < n < \omega$ 
    such that $\gamma = \gamma_n$. Let $\xi' < \gamma$ be such that 
    $f_{p_n}(\sigma_n{}^\frown i ) \perp b_\gamma \restriction \xi'$ 
    for all $i < 2$. Then, again by Lemma \ref{lemma: pruning}, there is no 
    $\eta < \dom(\sigma)$ such that $f_{p_\infty}(\sigma \restriction \eta) 
    \sqsupseteq b_\gamma \restriction \xi'$, yielding the final contradiction and 
    completing the proof.
  \end{proof}
  
  \begin{lemma}
    $\P(T)$ is completely proper.
  \end{lemma}
  
  \begin{proof}
    Suppose that $M$ is suitable for $\bb{Q}$ and we are given 
    $M \ra N_i$ for $i < 2$. Let $\varepsilon_i : (M, \in) \ra (N_i, 
    \in)$ witness this. Let $\delta := M \cap \omega_1$. Note that 
    $\delta \in N_0 \cap N_1$. By elementarity, 
    for $i < 2$, the restriction $\varepsilon_i \restriction \delta$ must 
    be the identity map on $\delta$, and hence $\pi \restriction (T
    \cap M)$ is the identity map on $T \cap M$. 
    Without loss of generality, assume that 
    $\delta \in \mc{L}^{N_0} \cap \mc{L}^{N_1}$; the other cases are similar 
    but easier. For each $i < 2$, let $b^i_\delta$ be such that
    \[
      N_i \models ``b^i_\delta \text{ is the unique element of } 
      [T^{N_i}_{<\delta}] \setminus T^{N_i}_\delta".
    \]
    
    Let $p \in M \cap \P(T)$ be arbitrary. 
    We will now build a decreasing sequence $\langle p_n \mid n < \omega 
    \rangle$ from $M \cap \P(T)$ below $p$ such that the upward closure of 
    $\{p_n \mid n < \omega\}$ in $M \cap \P(T)$ is an 
    $(M,\P(T))$-generic filter. Begin by 
    letting $p_0 = p$. Now define $p_1$ as follows. 
    If there is $\xi < \delta$ such that $b^0_\delta \restriction \xi 
    \notin M$, then simply let $p_1 = p_0$. Otherwise, apply Lemma 
    \ref{lemma: split} to find $p_1 \leq p_0$ and $\sigma_1 \in S_{p_1}$ 
    such that
    \begin{itemize}
      \item $p_1 \in M$;
      \item $f_{p_1}(\sigma_1) \sqsubseteq b^0_\delta$;
      \item for all $j < 2$, $\sigma_1{}^\frown j 
      \in S_{p_1}$ and $f_{p_1}(\sigma_1{}^\frown j ) 
      \perp b^0_\delta$.
    \end{itemize}
    Next, find $p_2 \leq p_1$ in the same way, but replacing $b^0_\delta$ 
    by $b^1_\delta$. Finally, let $\langle D_n \mid n < \omega \rangle$ 
    enumerate all dense open subsets of $\P(T)$ that are elements of $M$, and 
    recursively define a decreasing sequence $\langle p_n \mid 2 < n < 
    \omega \rangle$ from $M \cap \P(T)$ such that $p_n \in D_{n-3}$ for 
    all $n < \omega$. Notice that, for all $\gamma \in \mc{L} \cap \delta$, 
    the set $D^*_\gamma$ of all $q \in \P(T)$ for which there is 
    $\sigma \in S_q$ such that
    \begin{itemize}
      \item $f_q(\sigma) \sqsubseteq b_\gamma$; and
      \item for all $j < 2$ $\sigma^\frown j \in S_q$ 
      and $f_q(\sigma^\frown j ) \perp b_\gamma$
    \end{itemize}
    is a dense open subset of $\P(T)$ that is in $M$; there is therefore 
    $n < \omega$ such that $p_n \in D^*_\gamma$.
    
    Let $G$ be the upward closure of $\{p_n \mid n < \omega\}$ in 
    $M \cap \P(T)$. By construction, it is clear that $G$ is an 
    $(M, \P(T))$-generic filter. It remains to show that it is 
    $\overrightarrow{MN_i}$-prebounded for all $i < 2$. To this end, 
    fix $i < 2$ and an arrow $N_i \ra P$, witnessed by a map 
    $\epsilon : (N_i, \in) \ra (P, \in)$, such that $G \in P$. 
    Note that, for each $n < \omega$, we have 
    $\epsilon \circ \varepsilon_i(p_n) = p_n$, and hence $G^P = G$. 
    We will therefore be done if we show that 
    \[
      p_\infty = \left(\bigcup_{n < \omega} S_{p_n}, \bigcup_{n < \omega} f_{p_n}\right) 
      = \left(\bigcup_{q \in G} S_q, \bigcup_{q \in G} f_q \right)
    \]
    is in $\P(T)^P$.
    
    First note that $p_\infty \in P$, since $G \in P$. Also, the range 
    of $f_{p_\infty}$ is contained in $T^P_{<\delta} = T_{<\delta}$. 
    Therefore, the only way that $p_\infty$ could fail to be in $\P(T)^P$ 
    is if there is $\gamma \in \mc{L}^P \cap (\delta + 1)$ 
    and $\sigma \in \partial S_\infty$ such that 
    $\bigcup \{f_{p_\infty}(\sigma \restriction \eta) \mid \eta < 
    \dom(\sigma)\} = b_\gamma^P$, where $b_\gamma^P$ is such that
    \[
      P \models ``b^P_\gamma \text{ is the unique element of } 
      [T^P_{<\gamma}] \setminus T^P_\gamma".
    \] 
    Suppose that there are such a $\gamma$ and $\sigma$.
    
    Note that $\delta < N_i \cap \omega_1$, so $\epsilon \restriction (\delta + 1)$ 
    is the identity map. It follows that $\mc{L}^P \cap (\delta + 1) = 
    \mc{L}^{N_i} \cap (\delta + 1)$, and, moreover, $\mc{L}^P \cap \delta = 
    \mc{L} \cap \delta$. Suppose first that $\gamma = \delta$, in which case 
    we have $b^P_\delta = b^i_\delta$. If there is $\xi < \delta$ such 
    that $b^i_\delta \restriction \xi \notin M$, then, since 
    $S_{p_\infty}, f_{p_\infty} \subseteq M$, there can be no $\eta < 
    \dom(\sigma)$ such that $f_{p_\infty}(\sigma \restriction \eta) 
    \sqsupseteq b^i_\delta \restriction \xi$, which is a contradiction. 
    Otherwise, when constructing $p_{i+1}$, we fixed a $\sigma_{i+1} \in 
    S_{p_{i+1}}$ such that $f_{p_{i+1}}(\sigma_{i+1}) \sqsubseteq b^i_\delta$ 
    and for which there exists some $\xi' < \delta$ such that 
    $f_{p_{i+1}}(\sigma_{i+1}{}^\frown j ) \perp 
    b^i_\delta \restriction \xi'$ for all $j < 2$. Then, by 
    Lemma \ref{lemma: pruning}, there is no $\eta < \dom(\sigma)$ such 
    that $f_{p_\infty}(\sigma \restriction \eta) \sqsupseteq 
    b^i_\delta \restriction \xi'$, which is again a contradiction.
    
    Suppose finally that $\gamma < \delta$, in which case $b^P_\gamma = b_\gamma$. 
    Then there is $n < \omega$ such that $p_n \in D^*_\gamma$, and we reach 
    a contradiction exactly as in the second case in the previous paragraph. 
    Thus, $G$ is in fact $\overrightarrow{MN_i}$-prebounded, and hence 
    $\P(T)$ is completely proper.
  \end{proof}
  
  By a standard genericity argument, if $G$ is $\P(T)$-generic over 
  $V$, then $\bigcup \{f_p \mid p \in G\}$ witnesses that, in $V[G]$, 
  $T$ contains a copy of ${^{<\omega_1}}2$ (which, since $\P(T)$ is totally 
  proper, is the same when calculated in $V$ or in $V[G]$).
  
  We now define a countable support iteration $\langle \P_\eta, \dot{\Q}_\xi 
  \mid \eta \leq \omega_2, ~ \xi < \omega_2 \rangle$ such that, for each 
  $\eta < \omega_2$, there is a $\P_\eta$-name $\dot{T}_\eta$ for 
  a full, splitting subtree of ${^{<\omega_1}}\omega_1$ 
  of height and size $\omega_1$ such that
  \[
    \Vdash_{\P_\eta} \dot{\Q}_\eta = \P(\dot{T}_\eta).
  \]
  By Theorem \ref{thm: iteration_thm}, $\P_\eta$ will be totally proper 
  for each $\eta \leq \omega_2$, and hence 
  \begin{itemize}
    \item $\CH$ will hold in $V^{\P_\eta}$; and
    \item $({^{<\omega_1}}\omega_1)^{V^{\P_\eta}} = 
    ({^{<\omega_1}}\omega_1)^V$.
  \end{itemize}    
  It follows that, for each $\eta < \omega_2$, we have
  \[
    \Vdash_{\P_\eta} ``\dot{\Q}_\eta \text{ is a proper forcing of 
    size } \omega_1".
  \]
  Therefore, by \cite[Theorem 2.10]{abraham_proper_forcing}, $\P_\eta$ 
  has the $\omega_2$-cc and is of size $\leq 2^{\omega_1} = \omega_2$ 
  for each $\eta \leq \omega_2$. Thus, by a standard 
  bookkeeping argument, we can arrange so that, for every 
  $\P_{\omega_2}$-name $\dot{T}$ for a full, splitting subtree of 
  ${^{<\omega_1}}\omega_1$ of height and size $\omega_1$, there is 
  $\eta < \omega_2$ such that $\Vdash_{\P_{\omega_2}} \dot{T} = \dot{T}_\eta$.
  By genericity,
  \[
    \Vdash_{\P_{\eta+1}} ``\dot{T}_\eta \text{ contains a copy of } 
    {^{<\omega_1}}2".
  \]
  Since ${^{<\omega_1}}2$ is the same when calculated in $V$ or in 
  $V^{\P_\eta}$ for any $\eta \leq \omega_2$, it follows that
  \[
    \Vdash_{\P_{\omega_2}} ``\dot{T} \text{ contains a copy of } 
    {^{<\omega_1}}2".
  \]
  Hence, $V^{\P_{\omega_2}}$ is our desired model.
\end{proof}

If $\kappa$ is a regular uncountable cardinal, then we follow \cite{lucke_schlicht_descriptive} 
and say that a $\kappa$-tree $T$ is \emph{superthin} if $|[T_{<\delta}]| < \kappa$ 
for all limit ordinals $\delta < \kappa$. Note that cofinally splitting 
$\omega_1$-trees cannot be superthin, so the notion is primarily of interest 
for $\kappa > \omega_1$. In \cite{lucke_schlicht_descriptive}, 
L\"{u}cke and Schlicht prove that if $\kappa$ is a regular uncountable cardinal 
and there exists a superthin $\kappa$-Kurepa tree, then there exists a superthin 
$\kappa$-Kurepa tree $T \subseteq {^{<\kappa}}\kappa$ such that $[T]$ is a retract 
of ${^{\kappa}}\kappa$. They moreover show that such trees consistently exist; 
for example, if $V = L$, then superthin $\kappa$-Kurepa trees exist whenever 
$\kappa$ is the successor of a cardinal of uncountable cofinality. 
Superthinness seems to be in tension with fullness, 
but we show now that, for regular $\kappa > \omega_1$, we can consistently have 
full, superthin, $\kappa$-Kurepa trees. For concreteness, we focus on the case 
$\kappa = \omega_2$.

\begin{theorem}
  Suppose that $\diamondsuit + (2^{\omega_1} = \omega_2)$ holds. Then there is a 
  cardinality- and cofinality-preserving poset $\P$ such that, in $V^{\P}$, there is a normal, 
  splitting, full, superthin $\omega_2$-Kurepa tree.
\end{theorem}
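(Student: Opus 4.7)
The plan is to define a natural poset $\P$ that simultaneously adds the tree and its many cofinal branches. A condition is a pair $q = (T_q, f_q)$ where for some successor ordinal $\eta_q + 1 < \omega_2$, $T_q \sub {}^{<\eta_q+1}\omega_2$ is a normal, splitting, full, superthin subtree with every level of size ${<}\omega_2$, and $f_q$ is a partial function of size at most $\omega_1$ from $\omega_3$ to the top level $T_q \cap {}^{\eta_q}\omega_2$. The ordering is the natural end-extension order: $q' \le q$ iff $T_{q'}$ end-extends $T_q$, $\dom(f_{q'}) \supseteq \dom(f_q)$, and $f_{q'}(\alpha) \sqsupseteq f_q(\alpha)$ for all $\alpha \in \dom(f_q)$.

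The first main step is to verify that $\P$ is $\omega_2$-strategically closed. At a limit stage $\xi$ of countable cofinality, the tree $T^* = \bigcup_{\zeta < \xi} T_{q_\zeta}$ has limit height $\eta^*$; since $\diamondsuit$ implies $\CH$, $|[T^*]| \le \omega_1$, so Player II can use a fixed $\diamondsuit$-sequence as a bookkeeping device to designate, in advance, which branches of $T^*$ to continue at level $\eta^*$ and which (at most one) to leave vanishing, thereby preserving fullness. At a limit stage of cofinality $\omega_1$, strategic closure lets Player II arrange that the only cofinal branches through the partial tree $T^*$ are those already ``claimed'' by values of the functions $f_{q_\zeta}$, of which there are at most $\omega_1$; again one extends all but (at most) one branch to the new top level, preserving both fullness and superthinness.

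Next, I would verify the $\omega_3$-cc by a standard $\Delta$-system argument using $2^{\omega_1} = \omega_2$: any family of $\omega_3$-many conditions contains $\omega_3$-many sharing a single underlying tree $T_q$ (there being only $\omega_2$ candidates), and then the domains of the $f_q$ form a $\Delta$-system with countable root, producing compatible conditions in the usual way. Combined with $\omega_2$-strategic closure, $\P$ preserves all cardinals and cofinalities. Finally, a routine density/genericity argument shows that $T = \bigcup_{q \in G} T_q$ has height $\omega_2$ with levels of size ${<}\omega_2$, is normal, splitting, full and superthin, and that the $\omega_3$-many branches $b_\alpha = \bigcup_{q \in G,\, \alpha \in \dom(f_q)} f_q(\alpha)$ are pairwise distinct, making $T$ the desired $\omega_2$-Kurepa tree.

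The main obstacle is the interaction between fullness, superthinness, and strategic closure at limit stages of cofinality $\omega_1$. One must ensure that no ``accidental'' cofinal branches creep into $T^*$ beyond those anticipated by the strategy, since otherwise either fullness (too many vanishing branches) or superthinness ($|[T^*]| \ge \omega_2$) could fail. The strategic closure strategy must therefore be designed so that at every stage Player II fixes exactly the cofinal branches that will persist, closing off potential alternative branches at the tree's top level; this is where the careful use of the $\diamondsuit$-sequence to coordinate decisions across the $\omega_1$-sized levels is critical.
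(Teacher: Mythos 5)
Your poset and overall architecture are the same as the paper's (conditions $(T_q,f_q)$ with $T_q$ a bounded-height normal, splitting, full approximation whose limit levels have at most $\omega_1$-many branches, $f_q$ an $\omega_1$-sized partial function from $\omega_3$ into the top level; $\omega_3$-cc by a $\Delta$-system argument using $2^{\omega_1}=\omega_2$; strategic closure via $\diamondsuit$). But the step you describe at limit stages of cofinality $\omega_1$ --- ``strategic closure lets Player II arrange that the only cofinal branches through $T^*$ are those already claimed by the $f_{q_\zeta}$'' --- is the theorem you are supposed to be proving at that stage, not something strategic closure hands you. The actual content, which your sketch omits, is a club-guessing argument against the $\diamondsuit$-sequence: given any cofinal branch $b$ of the union tree $T^{<\omega_1}$ that is not of the form $b_\gamma$ for $\gamma$ in the accumulated domain of $f$, one codes $b$ as a function $\omega_1\to\omega_1$ via surjections $\pi_\alpha:\omega_1\to T^{\alpha}_{\eta_\alpha}$ fixed along the play, intersects the stationary set of diamond-guessing stages with the club of stages $\alpha'$ at which $b\restriction\eta_{\alpha'}$ already differs from every claimed branch, and concludes that Player II deleted $b\restriction\eta_{\alpha'}$ from the tree at such a stage --- contradiction. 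Without this argument there is no bound on $|[T^{<\omega_1}]|$, so neither superthinness nor the existence of a lower bound (hence fullness: you may be forced to vanish $\omega_2$-many branches) is secured.

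Two further points. First, you claim $\omega_2$-strategic closure, but the diamond indexing only lines up with the stage indexing in a game of length $\omega_1+1$; at a cofinality-$\omega_1$ stage $\beta>\omega_1$ of a longer game the guessing done at earlier stages is not obviously calibrated to the branches of $T^{<\beta}$. The paper proves only $(\omega_1+1)$-strategic closure, which together with the $\omega_3$-cc already preserves all cardinals and cofinalities, and whose proof serves as the template for the density of arbitrarily tall conditions. Second, deleting the diamond-guessed branch at a countable-cofinality stage must not destroy normality: you need every node of the current top level to lie on a branch claimed by $f$, which the paper arranges by forcing $f$ to be a bijection onto the top level at even stages; your sketch does not address this, and without it the ``kill one branch'' move can orphan nodes.
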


\begin{proof}
  The poset $\P$ consists of all pairs $p = (T_p, f_p)$ such that
  \begin{itemize}
    \item there is an $\eta_p < \omega_2$ such that $T_p$ is a normal, splitting, full 
    subtree of ${^{<\eta_p + 1}}\omega_2$;
    \item for all $\xi \leq \eta_p$, we have $|T_p \cap {^{\xi}}\omega_2| \leq \omega_1$;
    \item for every limit ordinal $\xi \leq \eta_p$, we have $|[(T_p)_{<\xi}]| \leq \omega_1$;
    \item $f_p$ is a partial function of size $\omega_1$ from $\omega_3$ to 
    $T_p \cap {^{\eta_q}}\omega_2$.
  \end{itemize}
  If $p_0, p_1 \in \P$, then $p_1 \leq p_0$ if
  \begin{itemize}
    \item $\eta_{p_1} \geq \eta_{p_1}$;
    \item $T_{p_1} \cap {^{<\eta_{p_0}+1}}\omega_2 = T_{p_0}$;
    \item $\dom(f_{p_1}) \supseteq \dom(f_{p_0})$;
    \item for all $\alpha \in \dom(f_{p_0})$, we have $f_{p_1}(\alpha) \sqsupseteq 
    f_{p_0}(\alpha)$.
  \end{itemize}
  By a standard $\Delta$-system argument, and using the assumption that $2^{\omega_1} 
  = \omega_2$, it follows that $\P$ has the $\omega_3$-cc. 
  
  \begin{claim} \label{claim: strategic_closure}
    $\P$ is $(\omega_1 + 1)$-strategically closed.
  \end{claim}
  
  \begin{proof}
    We describe a winning strategy for Player II in $\Game_{\omega_1+1}(\P)$. 
    Given a (partial) play $\langle p_\alpha \mid \alpha < \gamma \rangle$ of 
    $\Game_{\omega_1+1}(\P)$, for all $\alpha < \gamma$, we let $T^\alpha$, $f^\alpha$, 
    and $\eta_\alpha$ denote $T_{p_\alpha}$, $f_{p_\alpha}$, and $\eta_{p_\alpha}$, 
    respectively. As part of Player II's winning strategy, they also fix, as they play 
    a round $\langle p_\alpha \mid \alpha \leq \omega_1 \rangle$, arbitrary surjections 
    $\pi_\alpha : \omega_1 \ra T^\alpha_{\eta_\alpha}$ for each ordinal $\alpha < \omega_1$.
    We will also ensure that, for every even ordinal $\alpha < \omega_1$, the map $f^\alpha$ is a 
    bijection between its domain and $T^\alpha_{\eta_\alpha}$.
    
    Using $\diamondsuit$, fix a sequence $\langle a_\alpha : \alpha \ra \alpha \mid \alpha < 
    \omega_1 \rangle$ such that, for all $f: \omega_1 \ra \omega_1$, the set
    \[
      \{\alpha < \omega_1 \mid f \restriction \alpha = a_\alpha\}
    \]
    is stationary in $\omega_1$.
    
    Suppose now that $\beta \leq \omega_1$ is a nonzero even ordinal and 
    $\langle p_\alpha \mid \alpha < \beta \rangle$ is a partial run of $\Game_{\omega_1 + 1}(\P)$, 
    with Player II playing thus far according to their winning strategy. Suppose also that 
    Player II has fixed surjections $\pi_\alpha : \omega_1 \ra T^\alpha_{\eta_\alpha}$ for each 
    $\alpha$ such that there exists an even ordinal $\beta'$ with $\alpha \leq \beta' < \beta$. 
    We now describe how Player II should select $p_\beta$.
    
    Suppose first that $\beta = \alpha + 1$ is a successor ordinal. In this case, simply let 
    $\eta_\beta = \eta_\alpha + 1$ and 
    \[
      T^\beta_{\eta_\beta} = \{\sigma^\frown i \mid \sigma \in 
      T^\alpha_{\eta_\alpha} \text{ and } i < \omega_1\}.
    \]
    We now describe how to choose $f^\beta$. We are going to let $\dom(f^\beta) = 
    \dom(f^\alpha) \cup e_\beta$ for some (possibly empty) $e_\beta \in [\omega_3]^{\leq 
    \omega_1}$ disjoint from $\dom(f^\alpha)$. First, for all $\gamma \in \dom(f^\alpha)$, 
    let $f^\beta(\gamma) = f^\alpha(\gamma)^\frown i$ for some $i < \omega_1$ 
    in such a way that $f^\beta \restriction \dom(f^\alpha)$ is injective (this is possible, since each 
    element of $T^\alpha_{\eta_\alpha}$ has $\omega_1$-many successors in $T^\beta_{\eta_\beta}$). 
    The resulting function may not be surjective, though, so choose some $e_\beta$ as above 
    of the appropriate cardinality and define $f^\beta$ on $e_\beta$ in such a way that the resulting 
    function is a bijection. Finally, let $\pi_\alpha$ and $\pi_\beta$ be arbitrary surjections 
    from $\omega_1$ onto $T^\alpha_{\eta_\alpha}$ and $T^\beta_{\eta_\beta}$, respectively.
    
    Suppose next that $\beta$ is a limit ordinal of countable cofinality. Let 
    $\eta_\beta = \sup\{\eta_\alpha \mid \alpha < \beta\}$. Note that, by the specification of 
    Player II's strategy at successor stages, we know that $\eta_\beta > \eta_\alpha$ for 
    all $\alpha < \beta$; in particular, $\eta_\beta$ is a limit ordinal of countable cofinality.
    Let $T^{<\beta} = \bigcup \{T^\alpha \mid \alpha < \beta\}$, so $T^{<\beta}$ is a normal, 
    splitting, full subtree of ${^{<\eta_\beta}}\omega_2$. Let 
    $d_\beta = \bigcup \{\dom(f^\alpha) \mid \alpha < \beta\}$ and, for each 
    $\gamma \in d_\beta$, let 
    \[
      b^\beta_\gamma = \bigcup \{f^\alpha(\gamma) \mid \alpha < \beta \text{ and } \gamma \in 
      \dom(f^\alpha)\}.
    \]
    Note that $b^\beta_\gamma$ is in $[T^{<\beta}]$. Our $\diamondsuit$ 
    sequence gives us a function $a_\beta : \beta \ra \beta$. Consider the subset 
    $A^\beta  = \{\pi_\alpha(a_\beta(\alpha)) \mid \alpha < \beta\}$ of $T^{<\beta}$. 
    If $A^\beta$ is linearly ordered by $\subseteq$, then let $b^\beta_\ast = \bigcup A^\beta$, 
    and note that $b^\beta_\ast \in [T^{<\beta}]$. There are now two cases to consider.
    
    \textbf{Case 1.} Suppose first that $A^\beta$ is linearly ordered and, for every $\gamma 
    \in d_\beta$, we have $b^\beta_\ast \neq b^\beta_\gamma$. In this case, let 
    $T^\beta_{\eta_\beta} = [T^{<\beta}] \setminus \{b^\beta_\ast\}$. Note that, since 
    $\cf(\beta) = \omega$, $\CH$ holds, and every level of $T^{<\beta}$ has size at most 
    $\omega_1$, we have $|T^\beta_{\eta_\beta}| = \omega_1$. To define $f^\beta$, 
    we will let $\dom(f^\beta) = d_\beta \cup e_\beta$ for some $e_\beta \in [\omega_3]^{\leq \omega_1}$ 
    disjoint from $d_\beta$. For $\gamma \in d_\beta$, let $f^\beta(\gamma) = b^\beta_\gamma$. Note that 
    $f^\beta \restriction d_\beta$ is injective since, for all even $\alpha < \beta$, 
    $f^\alpha$ is injective. As in the successor case, now choose a set $e_\beta$ of the 
    appropriate cardinality and define $f^\beta$ on $e_\beta$ so that the resulting function 
    is a bijection. Finally, let $\pi_\beta$ be an arbitrary surjection from $\omega_1$ 
    onto $T^\beta_{\eta_\beta}$.
    
    \textbf{Case 2.} If we were not in Case 1, i.e., if either $A^\beta$ is not linearly 
    ordered or $b^\beta_\ast = b^\beta_\gamma$ for some $\gamma \in d^*$, then proceed exactly 
    as in Case 1, except let $T^\beta_{\eta_\beta} = [T^{<\beta}]$.
    
    Finally, suppose that $\beta = \omega_1$. In this case, we just need to show that 
    $\langle p_\alpha \mid \alpha < \omega_1 \rangle$ has a lower bound. As above, 
    let $\eta_{\omega_1} = \sup\{\eta_\alpha \mid \alpha < \omega_1\}$, let 
    $T^{<\omega_1} = \bigcup \{T^\alpha \mid \alpha < \omega_1\}$, and let $d_{\omega_1} = 
    \bigcup\{\dom(f^\alpha) \mid \alpha < \omega_1\}$. For each $\gamma \in d_{\omega_1}$, let
    \[
      b^{\omega_1}_\gamma = \bigcup \{f^\alpha(\gamma) \mid \alpha < \omega_1 \text{ and } 
      \gamma \in \dom(f^\alpha)\}.
    \]
    For all $\gamma \in d_{\omega_1}$, we have $b^{\omega_1}_\gamma \in [T^{<\omega_1}]$.
    
    \begin{subclaim}
      For every $b \in [T^{<\omega_1}]$, there is $\gamma \in d_{\omega_1}$ such that 
      $b = b^{\omega_1}_\gamma$.
    \end{subclaim}
    
    \begin{proof}
      Suppose for the sake of contradiction that $b \in [T^{<\omega_1}]$ and, for all 
      $\gamma \in d_{\omega_1}$, we have $b \neq b^{\omega_1}_\gamma$. Recall that, for each 
      $\alpha \in \lim(\omega_1)$, the function $f^\alpha$ is bijective; therefore, there 
      is a unique $\gamma_\alpha \in \dom(f^\alpha)$ such that $f^\alpha(\gamma_\alpha) 
      \sqsubseteq b$. By assumption, for each $\alpha \in \lim(\omega_1)$, we can find 
      $\alpha^\dagger \in \lim(\omega_1)$ such that $b \restriction \alpha^\dagger 
      \neq b^{\omega_1}_{\gamma_\alpha} \restriction \alpha^\dagger$. Let 
      \[
        C = \{\alpha' \in \lim(\omega_1) \mid \forall \alpha \in \lim(\alpha') \ 
        \alpha^\dagger < \alpha'\}.
      \]
      Then $C$ is a club in $\omega_1$ and, for all $\alpha' \in C$ and all $\gamma \in 
      d_{\alpha'}$, we have $b \restriction \eta_{\alpha'} \neq b^{\alpha'}_\gamma$.
      
      Let $f: \omega_1 \ra \omega_1$ be such that, for each $\alpha < \omega_1$, 
      we have $\pi_\alpha(f(\alpha)) = b \restriction \eta_\alpha$. We can then 
      find $\alpha' \in C$ such that $f \restriction \alpha' = a_{\alpha'}$. Now 
      recall the specification of Player II's strategy at stage $\alpha'$ of this 
      run of the game. Unraveling the definitions, we have $b^{\alpha'}_\ast 
      = b \restriction \eta_{\alpha'}$, and, for every $\gamma \in d_{\alpha'}$, 
      we have $b^{\alpha'}_\ast \neq b^{\alpha'}_\gamma$. Therefore, Player II played 
      $T^{\alpha'}_{\eta_{\alpha'}} = [T^{<\alpha'}] \setminus \{b^{\alpha'}_\ast\}$. 
      In particular, $b \restriction \eta_{\alpha'} \notin T^{<\omega_1}$, contradicting 
      the assumption that $b \in [T^{<\omega_1}]$.
    \end{proof}
    
    Since $|d_{\omega_1}| = \omega_1$, it follows that $|[T^{<\omega_1}]| = \omega_1$.
    Define a subtree $T^{\omega_1}$ of ${^{<\eta_{\omega_1}+1}}\omega_2$ 
    by letting $T^{\omega_1} \cap {^{<\eta_{\omega_1}}}\omega_2 = T^{<\omega_1}$ and 
    $T^{\omega_1}_{\eta_{\omega_1}} = [T^{<\omega_1}]$. The normality of $T^{\omega_1}$ 
    follows from the fact that $f^\alpha$ is surjective onto $T^\alpha_{\eta_\alpha}$ for all 
    even ordinals $\alpha < \omega_1$, and hence each element of $T^{<\omega_1}$ is 
    an initial segment of $b^{\omega_1}_\gamma$ for some $\gamma \in d_{\omega_1}$. 
    Let $\dom(f^{\omega_1}) = d_{\omega_1}$ and, for each $\gamma \in d_{\omega_1}$, let 
    $f^{\omega_1}(\gamma) = b^{\omega_1}_\gamma$. Then $(T^{\omega_1}, f^{\omega_1})$ 
    is a lower bound for $\langle p_\alpha \mid \alpha < \omega_1 \rangle$ in $\P$.
  \end{proof}
  
  Since $\P$ has the $\omega_3$-cc and is $(\omega_1+1)$-strategically closed, it follows 
  that it preserves all cardinalities and cofinalities. Let $G$ be $\P$-generic over $V$, let 
  $T = \bigcup \{T_p \mid p \in G\}$, let $d = \bigcup \{\dom(f_p) \mid p \in G\}$, and define 
  a function $f$ with domain $d$ by letting $f(\gamma) = \bigcup \{f_p(\gamma) \mid p \in G 
  \text{ and } \gamma \in \dom(p)\}$. Then standard genericity arguments, combined with the 
  arguments of the proof of Claim \ref{claim: strategic_closure}, show that
  \begin{itemize}
    \item $T$ is a normal, splitting, full, superthin $\omega_2$-tree;
    \item $d = \omega_3$;
    \item $f$ is an injective function from $\omega_3$ to $[T]$.
  \end{itemize}
  Thus, $T$ is the desired tree as in the statement of the theorem.
\end{proof}

\section{Adding superthin subtrees} \label{section: superthin}

In this section, we prove Theorem D.
Fix for the remainder of the section a regular uncountable cardinal $\mu$ such that $\mu^{<\mu} = \mu$, 
$2^\mu = \mu^+$, and $2^{\mu^+} = \mu^{++}$, and let $\kappa = \mu^+$. Fix for now a closed set 
$E \subseteq {^{<\kappa}}\kappa$ such that $|E| > \kappa$. 

We introduce a forcing notion $\bb{P}(E)$ that will add a superthin $\kappa$-Kurepa 
subtree to $T(E)$. We first thin $E$ out to a subset such that all nonempty neighborhoods are 
large. Let $\Sigma = \{\sigma \in {^{<\kappa}}\kappa \mid |E \cap N_\sigma| \leq 
\kappa\}$, and let 
\[
  E' = E \setminus \bigcup \{N_\sigma \mid \sigma \in \Sigma\}.
\]
By replacing $E$ with $E'$, assume from now on that $E$ has the property that, for all 
$\sigma \in {^{<\kappa}}\kappa$, either $E \cap N_\sigma = \emptyset$ or 
$|E \cap N_\sigma| > \kappa$.

Conditions of $\bb{P}(E)$ are all triples of the form $p = (B^p, \gamma^p, t^p)$ such that
\begin{itemize}
  \item $B^p \in [E]^{\leq \mu}$ is nonempty;
  \item $\gamma^p < \kappa$ is such that, for all distinct $b,b' \in B^p$, we have 
  $b \restriction \gamma^p \neq b' \restriction \gamma^p$;
  \item $t^p = \{b \restriction \alpha \mid b \in B^p, \ \alpha \leq \gamma^p\}$;
  \item $t^p$ looks like an initial segment of a normal superthin subtree of 
  $T(E)$, i.e., 
  \begin{itemize}
    \item $t^p$ is a normal subtree of ${^{\leq \gamma^p}}\kappa$;
    \item all levels of $t^p$ have cardinality at most $\mu$;
    \item for all limit ordinals $\gamma \leq \gamma^p$, we have $|[t^p_{<\gamma}]| 
    \leq \mu$.
  \end{itemize}
\end{itemize} 

We note that, for a condition $p \in \bb{P}(E)$, $t^p$ is uniquely determined by 
$B^p$ and $\gamma^p$; we include it in the notation for convenience.

Given $p,q \in \bb{P}(E)$, we set $q \leq p$ if and only if
\begin{itemize}
  \item $B^q \supseteq B^p$;
  \item $\gamma^q \geq \gamma^p$;
  \item $t^q$ end-extends $t^p$, i.e., $t^q \cap {^{\leq \gamma^p}}\kappa = t^p$.
\end{itemize}
We also include $(\emptyset, \emptyset, \emptyset)$ as $1_{\P(E)}$. 

\begin{proposition} \label{prop: knaster}
  $\bb{P}(E)$ is $\kappa^+$-Knaster.
\end{proposition}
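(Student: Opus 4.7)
The plan is a two-stage reduction: first pigeonhole to fix the ``skeleton'' of the conditions up to level $\gamma^*$, then a $\Delta$-system argument on the branch sets, followed by an amalgamation step in which the main obstacle is preserving the superthinness requirement on the resulting tree.

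Given a family $\{p_\alpha : \alpha < \kappa^+\}$, I would first refine to a subset of size $\kappa^+$ on which $\gamma^{p_\alpha} = \gamma^*$ is constant. Next I would fix $t^{p_\alpha} = t^*$ on a further subset of size $\kappa^+$: each $t^{p_\alpha}$ is a subset of ${^{\leq \gamma^*}}\kappa$ of cardinality at most $\mu$, and the cardinal arithmetic assumptions give $|{^{\leq \gamma^*}}\kappa| \leq \kappa$ and $\kappa^\mu = (\mu^+)^\mu = 2^\mu = \kappa$, so there are only $\kappa$ possibilities. Since $t^p_{\gamma^p} = B^p \restriction \gamma^p$ by the definition of $\P(E)$, after this refinement the set $X^* := t^*_{\gamma^*}$ equals $B^{p_\alpha} \restriction \gamma^*$ for every $\alpha$ in the refined family, and the map $b \mapsto b \restriction \gamma^*$ is a bijection from $B^{p_\alpha}$ onto $X^*$.

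Next I would apply the $\Delta$-system lemma to $\{B^{p_\alpha}\}$. Each $B^{p_\alpha}$ has size $\leq \mu$, and the cardinal arithmetic again yields $\nu^\mu \leq \kappa < \kappa^+$ for all $\nu < \kappa^+$, which is precisely the hypothesis needed to obtain a $\Delta$-system of size $\kappa^+ = \mu^{++}$ with some root $R \subseteq E$. I claim this refinement is pairwise compatible. For any two $p_\alpha, p_\beta$ in the refinement, I would amalgamate by setting $B^r := B^{p_\alpha} \cup B^{p_\beta}$, choosing some $\gamma^r \geq \gamma^*$ below $\kappa$ that separates all pairs of distinct branches in $B^r$ (which exists since $|B^r| \leq \mu < \kappa$ and $\kappa$ is regular), and letting $t^r$ be determined.

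The routine checks -- that $t^r$ end-extends both $t^{p_\alpha}$ and $t^{p_\beta}$, is normal, and has levels of size $\leq \mu$ -- follow from the bijection $B^{p_\alpha} \leftrightarrow X^*$ (and its analogue for $\beta$), since any $b \in B^{p_\beta}$ restricts at $\gamma^*$ to some $s \in X^*$, which already lies on a branch of $t^*$ via a unique element of $B^{p_\alpha}$. The main obstacle, and the step I would write out most carefully, is the superthinness bound $|[t^r_{<\delta}]| \leq \mu$ at limit $\delta$ with $\gamma^* < \delta \leq \gamma^r$ (for $\delta \leq \gamma^*$ we have $t^r_{<\delta} = t^*_{<\delta}$, so the bound is inherited from $p_\alpha$). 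Any $f \in [t^r_{<\delta}]$ is determined by its restriction $s := f \restriction \gamma^* \in X^*$ together with a choice of which branch in $B^r(s) := \{b \in B^r : b \restriction \gamma^* = s\}$ it follows past the splitting level. The key point is that $B^r(s)$ has at most two elements, with exactly one when $s \in R \restriction \gamma^*$: indeed, if $s \in R \restriction \gamma^*$ and $b \in R$ witnesses this, then $b = b_{\alpha, s} = b_{\beta, s}$ by injectivity of $b' \mapsto b' \restriction \gamma^*$ on each of $B^{p_\alpha}$ and $B^{p_\beta}$, whereas if $s \notin R \restriction \gamma^*$ then $b_{\alpha, s} \neq b_{\beta, s}$ since their equality would place them in $B^{p_\alpha} \cap B^{p_\beta} = R$. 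Summing over $s$ gives $|[t^r_{<\delta}]| \leq 2|X^*| \leq \mu$, completing the verification.
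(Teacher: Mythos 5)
Your proof is correct and follows essentially the same route as the paper: stabilize $(\gamma^p, t^p)$ on a subfamily of size $\kappa^+$ using the cardinal arithmetic, then amalgamate any two such conditions by taking the union of the branch sets and passing to a level that separates them, with the superthinness check reduced to the observation that every new vanishing branch at a limit level in $(\gamma^*, \gamma^r]$ is the restriction of some $b \in B^r$. The only real difference is that your $\Delta$-system refinement is superfluous: once $t^{p_\alpha} = t^{p_\beta} = t^*$, the bound $|B^r(s)| \leq 2$ (hence $|[t^r_{<\delta}]| \leq 2|X^*| \leq \mu$) already follows from the injectivity of $b \mapsto b \restriction \gamma^*$ on each branch set separately, without any need to identify the root.
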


\begin{proof}
  Let $\langle p_\delta \mid \delta < \kappa^+ \rangle$ be a sequence of conditions 
  from $\bb{P}(E)$, with each $p_\delta$ of the form $(B^\delta, \gamma^\delta, t^\delta)$. 
  Since $\kappa^{<\kappa} = \kappa$, by thinning out our sequence if necessary we can 
  assume that there are fixed $\gamma < \kappa$ and $t \subseteq {^{\leq \gamma}}\kappa$ 
  such that, for all $\delta < \kappa^+$, we have $\gamma^\delta = \gamma$ and 
  $t^\delta = t$.
  
  We claim that, for all $\delta_0 < \delta_1 < \kappa^+$, the conditions $p_{\delta_0}$ and 
  $p_{\delta_1}$ are compatible. To this end, fix such $\delta_0 < \delta_1$. 
  For each $\sigma \in t_\gamma$ and each $i < 2$, there is a unique $b_{\sigma, i} \in 
  B^{\delta_i}$ such that $b_{\sigma,i} \restriction \gamma = \sigma$. Choose 
  $\gamma' \in [\gamma, \kappa)$ large enough so that, for all $\sigma \in t_\gamma$, 
  either $b_{\sigma,0} = b_{\sigma,i}$ or $b_{\sigma,0} \restriction \gamma' 
  \neq b_{\sigma,1} \restriction \gamma'$. Let $B' = B^{\delta_0} \cup B^{\delta_1}$ and 
  $t' = \{b \restriction \alpha \mid b \in B', \ \alpha \leq \gamma'\}$. Then it is readily 
  verified that $p' = (B', \gamma', t')$ is a common extension of $p^{\delta_0}$ and 
  $p^{\delta_1}$ in $\bb{P}(E)$. For example, to verify that $|[t_{<\beta}]| \leq \mu$ 
  for all limit ordinals $\beta \leq \gamma'$, fix such a $\beta$. If $\beta \leq \gamma$, 
  then the desired conclusion follows from the fact that $p^{\delta_0}$ and $p^{\delta_1}$ 
  are in $\bb{P}(E)$. If $\beta \in (\gamma, \gamma']$, then our construction immediately 
  implies that every element of the form $[t_{<\beta}]$ is of the form $b \restriction \beta$ 
  for some $b \in B'$.
\end{proof}

\begin{proposition} \label{prop: strategically closed}
  $\bb{P}(E)$ is $\kappa$-strategically closed.
\end{proposition}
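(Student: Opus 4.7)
The plan is to describe a winning strategy for Player II in $\Game_\kappa(\bb{P}(E))$. At each successor stage $\xi + 1$ played by Player II (following Player I's move $p_\xi$), Player II plays a near-trivial extension $p_{\xi+1} \leq p_\xi$ with $\gamma^{p_{\xi+1}} = \gamma^{p_\xi} + 1$ and $B^{p_{\xi+1}} \supseteq B^{p_\xi}$, possibly augmented by a single element of $E$ selected according to a fixed bookkeeping enumeration described below. At limit stages $\delta < \kappa$, Player II plays the natural union $p_\delta := (\bigcup_{\xi<\delta} B^{p_\xi}, \sup_{\xi<\delta} \gamma^{p_\xi}, \bigcup_{\xi<\delta} t^{p_\xi})$.

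The verification that each such $p_\delta$ is a condition in $\bb{P}(E)$ reduces to the clause $|[t^{p_\delta}_{<\alpha}]| \leq \mu$ for limit ordinals $\alpha \leq \gamma^{p_\delta}$: the sizes of $B^{p_\delta}$ and each level of $t^{p_\delta}$ are routinely $\leq \mu$, and the separation requirement at level $\gamma^{p_\delta}$ transfers from the separation at each $\gamma^{p_\xi}$. For $\alpha < \gamma^{p_\delta}$ the branch-count bound follows by reducing to $p_\xi$ for any $\xi$ with $\gamma^{p_\xi} \geq \alpha$. The key case is $\alpha = \gamma^{p_\delta}$ (necessarily limit when $\delta$ is), and we split by $\cf(\delta)$: when $\cf(\delta) < \mu$, the cofinal branches of a tree with height of cofinality $< \mu$ and levels of size $\leq \mu$ number at most $\mu^{\cf(\delta)} \leq \mu^{<\mu} = \mu$.

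The main obstacle is the case $\cf(\delta) = \mu$, where the naive bound yields only $\mu^\mu = 2^\mu = \kappa$. To overcome this, Player II's bookkeeping is designed to realize candidate coherent sequences in advance: using $\kappa^{<\kappa} = \kappa$, fix an enumeration $\langle (\alpha_\xi, c_\xi) \mid \xi < \kappa \rangle$ in which every pair $(\alpha, c)$ with $c \in {}^{\alpha}\kappa$ occurs cofinally often. At stage $\xi + 1$, if $c_\xi$ is a branch of $t^{p_\xi}$ (that is, $c_\xi \restriction \beta \in t^{p_\xi}$ for all $\beta < \alpha_\xi$), Player II adjoins to $B^{p_{\xi+1}}$ some $b \in E$ extending $c_\xi$; the existence of $b$ follows from the standing assumption that $E \cap N_\sigma$ has cardinality $> \kappa$ whenever nonempty (combined with the closedness of $E$).

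The crux is to show that at a limit $\delta$ with $\cf(\delta) = \mu$, every cofinal branch $c \in [t^{p_\delta}_{<\gamma^{p_\delta}}]$ is already realized by some $b \in B^{p_\delta}$, giving $|[t^{p_\delta}_{<\gamma^{p_\delta}}]| \leq |B^{p_\delta}| \leq \mu$. This requires a reflection argument using an increasing continuous chain $\langle M_\xi \mid \xi < \kappa \rangle$ of elementary submodels of $H(\theta)$, with $|M_\xi| = \mu$, each $M_\xi$ closed under ${<}\mu$-sequences, containing the enumeration, and with $\langle p_\eta \mid \eta \leq \xi \rangle \in M_{\xi+1}$. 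The anticipated difficulty is verifying that at such $\delta$ any cofinal branch through $t^{p_\delta}_{<\gamma^{p_\delta}}$ must reflect to $M_\delta$ and hence correspond to some pair $(\alpha_\xi, c_\xi)$ with $\xi < \delta$, ensuring its realization was added at stage $\xi + 1$ and persists into $B^{p_\delta}$.
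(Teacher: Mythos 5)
Your setup (take unions at limits; reduce everything to the bound $|[t^{p_\delta}_{<\gamma^{p_\delta}}]|\leq\mu$; split on $\cf(\delta)$) matches the paper, and you correctly identify the crux as the case $\cf(\delta)=\mu$. But the mechanism you propose there is a genuine gap, and I do not believe it can be repaired in the form you describe. Your bookkeeping only pre-realizes \emph{bounded} branches $c_\xi\in{}^{\alpha_\xi}\kappa$ that are visible at some stage $\xi<\delta$; a cofinal branch $d$ through $t^{p_\delta}_{<\gamma^{p_\delta}}$, where $\cf(\gamma^{p_\delta})=\cf(\delta)=\mu$, is assembled across all $\mu$-many stages and is in general a \emph{new} object: it need not equal (or extend to) any $c_\xi$ with $\xi<\delta$, and it need not lie in $M_\delta$ even for a carefully built chain of elementary submodels --- this is exactly the ``new branches at limits'' phenomenon, and with $2^\mu=\kappa$ there are potentially $\kappa$-many candidates, so no enumeration of length $<\delta$ can catch them all. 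Adding a single extension of each anticipated $c_\xi$ gives you no control over which length-$\gamma^{p_\delta}$ branches the resulting tree acquires. You flag this step as an ``anticipated difficulty'' rather than proving it, and as stated it fails.

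The fix requires no bookkeeping at all: it is already built into the definition of a condition that $t^p=\{b\restriction\alpha\mid b\in B^p,\ \alpha\leq\gamma^p\}$ and that $b\mapsto b\restriction\gamma^p$ is injective on $B^p$. Hence for any cofinal branch $d$ and any $\xi<\delta$ there is a \emph{unique} $b_\xi\in B^{p_\xi}$ with $d\restriction\gamma^{p_\xi}=b_\xi\restriction\gamma^{p_\xi}$. Fix a club $\langle\alpha_i\mid i<\mu\rangle$ in $\delta$; since Player II takes unions at limits, $B^{p_{\alpha_j}}=\bigcup_{i<j}B^{p_{\alpha_i}}$ for limit $j$, so $j\mapsto i(j)$ (the least $i$ with $b_{\alpha_j}\in B^{p_{\alpha_i}}$) is regressive on limits. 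Fodor's Lemma gives a stationary $S$ and a fixed $i$ with $b_{\alpha_j}\in B^{p_{\alpha_i}}$ for all $j\in S$; injectivity of restriction to $\gamma^{p_{\alpha_i}}$ then forces all these $b_{\alpha_j}$ to be a single $b$, and $d=b\restriction\gamma^{p_\delta}$. Thus every cofinal branch is realized by an element of $B^{p_\delta}$ and the bound $\leq\mu$ follows. This is the paper's argument, and it shows your strategy's extra machinery is unnecessary as well as insufficient.
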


\begin{proof}
  We describe a winning strategy for Player II in $\Game_\kappa(\bb{P}(E))$. The winning 
  strategy is very simple: at every even stage of the game, Player II does the minimal amount 
  of work necessary. More precisely, suppose that $\beta < \kappa$ is an even ordinal and 
  $\langle p_\alpha \mid \alpha < \beta \rangle$ is a partial play of the game, with Player II 
  playing so far according to the strategy being described here. For each $\alpha < \beta$, 
  let $p_\alpha = (B^\alpha, \gamma^\alpha, t^\alpha)$.
  
  If $\beta$ is a successor ordinal, then Player II can play arbitrarily; for instance, 
  they can simply play $p_\beta = p_{\beta-1}$. If $\beta$ is a limit ordinal, then 
  they specify $p_\beta$ by setting $B^\beta = \bigcup \{B^\alpha \mid \alpha < \beta\}$ 
  and $\gamma^\beta = \sup\{\gamma^\alpha \mid \alpha < \beta\}$ (this uniquely determines 
  $t^\beta$). 
  
  In order to prove that this is a winning strategy, we must only verify that 
  $p_\beta$ thus defined is indeed a condition in $\P(E)$. Let $\gamma = \gamma^\beta$ 
  and $t = t^\beta$. The only nontrivial condition to verify is the requirement that 
  $|[t_{<\gamma}]| \leq \mu$. If $\cf(\beta) < \mu$, then this follows immediately from 
  the fact that $\mu^{<\mu} = \mu$ and all levels of $t$ have cardinality at most $\mu$. 
  
  Thus, assume that $\cf(\beta) = \mu$. We claim that, in this case, every element of 
  $[t_{<\gamma}]$ is of the form $b \restriction \gamma$ for some $b \in B$. Since 
  $|B| \leq \mu$, this will suffice. To this end, fix $d \in [t_{<\gamma}]$.
  Let $\langle \alpha_i \mid i < \mu \rangle$ be a strictly increasing enumeration 
  of a club in $\beta$. By construction, for each $i < \mu$, there is a unique 
  $b_i \in B^{\alpha_i}$ such that $d \restriction \gamma^{\alpha_i} = b_i \restriction 
  \gamma^{\alpha_i}$. If $j < \mu$ is a limit ordinal, then the assumption that Player II 
  has played so far according to the described strategy implies that 
  $B^{\alpha_j} = \bigcup \{B^{\alpha_i} \mid i < j\}$ and $\gamma^{\alpha_j} = 
  \sup\{\gamma^{\alpha_i} \mid i < j\}$. Therefore, for each limit ordinal 
  $j < \mu$, we can find $i(j) < j$ such that $b_j \in B^{\alpha_{i(j)}}$. 
  By Fodor's Lemma, we can find a stationary $S \subseteq \mu$ and a fixed $i < \mu$ 
  such that $i(j) = i$ for all $j \in S$. Since distinct elements of $B^{\alpha_i}$ have 
  distinct restrictions to $\gamma^{\alpha_i}$, it follows that there must be a fixed 
  $b \in B^{\alpha_i}$ such that $b_j = b$ for all $j \in S$. But then 
  $d \restriction \gamma^{\alpha_j} = b \restriction \gamma^{\alpha_j}$ for all $j \in S$, 
  and hence $d = b \restriction \gamma$.
\end{proof}

By Propositions \ref{prop: knaster} and \ref{prop: strategically closed}, forcing with 
$\P(E)$ preserves all cardinalities and cofinalities. We now show that it adds a normal, 
cofinally splitting, superthin $\kappa$-Kurepa subtree of $T(E)$.

\begin{theorem}
  Suppose that $G$ is $\P(E)$-generic over $V$, and let $T = \bigcup \{t^p \mid p \in G\}$. 
  Then in $V[G]$, $T$ is a normal, cofinally splitting, superthin $\kappa$-Kurepa subtree of 
  $T(E)$.
\end{theorem}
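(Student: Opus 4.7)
The plan is to verify each requirement via standard density arguments, invoking the strategic closure from Proposition~\ref{prop: strategically closed} only to handle the superthin clause. By construction each $t^p \subseteq T(E)$, so $T \subseteq T(E)$ is immediate. For the fact that $T$ has height $\kappa$ and is normal, I would show that for every $\gamma < \kappa$ the set $D_\gamma = \{p \in \P(E) : \gamma^p \geq \gamma\}$ is dense: given $p$, the triple $q = (B^p,\gamma,\{b \restriction \alpha \mid b \in B^p,\ \alpha \leq \gamma\})$ is a condition extending $p$, since the new levels of $t^q$ have size at most $|B^p| \leq \mu$, and for each limit $\beta \in (\gamma^p,\gamma]$ every branch through $t^q_{<\beta}$ must be of the form $b \restriction \beta$ for some $b \in B^p$ (because $B^p$ already bijects onto $t^p_{\gamma^p}$ and nothing new is added below level $\gamma^p$). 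Normality of $T$ then follows from normality of each $t^p$, noting that the limit-uniqueness clause of normality is automatic for subtrees of ${^{<\kappa}}\kappa$.

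For cofinal splitting I would use the preprocessing step ensuring that every nonempty $E \cap N_\sigma$ has size greater than $\kappa$: given $\sigma \in T$, pick $p \in G$ with $\sigma \in t^p$, choose distinct $b,b' \in E \cap N_\sigma$, let $\gamma < \kappa$ witness $b \restriction \gamma \neq b' \restriction \gamma$, and observe that extending $p$ by setting $B^q = B^p \cup \{b,b'\}$ and $\gamma^q = \max(\gamma^p,\gamma)$ produces a condition below $p$ in which $\sigma$ has two incomparable extensions. For the level bound, the end-extension clause in the order of $\P(E)$ forces that whenever $p \in G$ has $\gamma^p \geq \gamma$ one has $T_\gamma = t^p_\gamma$, which therefore has cardinality at most $\mu < \kappa$; in particular $T$ is a $\kappa$-tree. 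Kurepa-ness follows from a further density argument: for each $b \in E$ the set $\{p : b \in B^p\}$ is dense (extend by adjoining $b$ to $B^p$ and raising $\gamma^p$ if necessary to separate $b$ from the other branches in $B^p$), so in $V[G]$ each $b \in E$ yields a distinct cofinal branch through $T$. Since cardinals are preserved by Propositions~\ref{prop: knaster} and~\ref{prop: strategically closed}, we obtain $|[T]|^{V[G]} \geq |E|^V \geq \kappa^+$.

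The main step is superthin-ness, for which strategic closure is essential. Proposition~\ref{prop: strategically closed} shows $\P(E)$ is $\kappa$-strategically closed, which implies $(<\kappa)$-distributivity: given any name $\dot{f}$ for a function from some $\gamma < \kappa$ into $V$ and any $p$, one plays a run of $\Game_\kappa(\P(E))$ in which Player~I's moves are used to decide $\dot{f}$ value by value while Player~II follows a winning strategy, producing after $\gamma$-many steps a condition below $p$ forcing $\dot{f} \in V$. Consequently $\P(E)$ adds no new $\mu$-sequences of ordinals, and in particular adds no new cofinal branches to any tree of height $<\kappa$ from $V$. Given any limit $\gamma < \kappa$, fix $p \in G$ with $\gamma^p \geq \gamma$; as above $T_{<\gamma} = t^p_{<\gamma}$, a tree of size at most $\mu$ lying in $V$, so $[T_{<\gamma}]^{V[G]} = [t^p_{<\gamma}]^V$, and the latter has cardinality at most $\mu$ by the definition of $\P(E)$. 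This is the only place where the proof requires more than straightforward bookkeeping; once the distributivity consequence of strategic closure is isolated, all the remaining clauses reduce to the density arguments sketched above.
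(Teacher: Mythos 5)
Your proposal goes wrong in two places, and both failures have the same source: you are neglecting the end-extension clause $t^q \cap {^{\leq \gamma^p}}\kappa = t^p$ in the definition of $q \leq p$. In the cofinal-splitting argument you take $\sigma \in t^p$ (possibly of height well below $\gamma^p$), pick arbitrary $b, b' \in E \cap N_\sigma$, and set $B^q = B^p \cup \{b,b'\}$. But then $t^q$ contains $b \restriction \gamma^p$, which need not lie in $t^p$, so $q$ is not an extension of $p$. This one is repairable exactly as the paper does it: first pass to a node $\sigma' \in t^p_{\gamma^p}$ above $\sigma$, use the unique $b \in B^p$ through $\sigma'$ as one of the two branches, and choose the second branch $b'$ from $E \cap N_{\sigma'}$ (not $E \cap N_\sigma$), so that $b' \restriction \gamma^p = \sigma' \in t^p$ and the extension is legitimate.

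The same oversight is fatal, not merely cosmetic, in your Kurepa argument. The set $\{p : b \in B^p\}$ is \emph{not} dense for an arbitrary $b \in E$: once a condition $p$ is fixed, only branches $b$ with $b \restriction \gamma^p \in t^p_{\gamma^p}$ can ever be adjoined to any extension of $p$, and since $t^p_{\gamma^p}$ has at most $\mu$ nodes while $T(E)$ may have up to $\kappa$ nodes at that level, most $b \in E$ are permanently excluded below $p$. So you cannot conclude $E \subseteq [T]$, and the claim $|[T]|^{V[G]} \geq |E|^V$ does not follow. The paper instead argues by contradiction using the $\kappa^+$-chain condition: if some $p$ forced $|\dot{B}| \leq \kappa$ (where $\dot{B}$ names $\bigcup\{B^q \mid q \in \dot{G}\}$), one could find $q \leq p$ and a set $A \in [E]^{\kappa}$ in the ground model with $q \Vdash \dot{B} \subseteq A$; then, picking $\sigma \in t^q_{\gamma^q}$ and using $|E \cap N_\sigma| > \kappa$ to find $b \in (E \cap N_\sigma) \setminus (A \cup B^q)$, one legitimately extends $q$ to a condition adding $b$ to $\dot{B}$, a contradiction. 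Some argument of this name-counting or chain-condition flavour is genuinely needed here; a pointwise density argument cannot work. Your treatment of the height/normality density, the level bounds, and superthinness via $({<}\kappa)$-distributivity is correct and in fact supplies detail the paper leaves implicit.
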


\begin{proof}
  In $V$, let $\dot{G}$ be the canonical $\P(E)$-name for the generic filter, and 
  let $\dot{T}$ be a $\P(E)$-name for $\bigcup\{t^p \mid p \in \dot{G}\}$. 
  It is immediate that, for every $\gamma < \kappa$, the set 
  $D_\gamma = \{p \in \P(E) \mid \gamma^p \geq \gamma\}$ is dense in $\P(E)$. As a result, 
  the definition of $\P(E)$ implies that $\dot{T}$ is forced to be a superthin normal 
  subtree of $T(E)$. 
  
  We now show that $\dot{T}$ is forced to be cofinally splitting. To this end, fix a 
  condition $p \in \P(E)$ and a $\sigma \in {^{<\kappa}}\kappa$ such that 
  $p \Vdash \sigma \in \dot{T}$. Note that there must be $b \in B^p$ such that 
  $\sigma \sqsubseteq b$; otherwise, if $\gamma \in [\gamma^p, \kappa)$ is such that 
  $\sigma \in {^{\leq \gamma}}\kappa$, then the condition $q$ with $B^q = B^p$ and 
  $\gamma^q = \gamma$ would extend $p$ and force $\sigma \notin \dot{T}$. 
  By increasing $\gamma^p$ if necessary, we can assume that $\sigma \in 
  {^{\leq\gamma^p}}\kappa$. Let $\sigma' \in t^p_{\gamma^p}$ be such that 
  $\sigma \sqsubseteq \sigma'$, and let $b$ be the unique element of $B^p$ such 
  that $\sigma' \sqsubseteq b$. By our assumption about $E$, we know that 
  $|E \cap N_{\sigma'}| > \kappa$, so we can fix $b' \in E \cap N_{\sigma'}$ such that 
  $b' \neq b$. Define a condition $q$ by letting $B^q = B^p \cup \{b'\}$ and letting 
  $\gamma^q \in (\gamma^p, \kappa)$ be large enough so that $b \restriction \gamma^q 
  \neq b' \restriction \gamma^q$. Then $q$ extends $p$ and forces that $b \restriction 
  \gamma^q$ and $b' \restriction \gamma^q$ are incomparable elements of $\dot{T}$ 
  extending $\sigma$.
  
  We finally show that $\dot{T}$ is forced to have at least $\kappa^+$-many cofinal 
  branches. Let $\dot{B}$ be a $\P(E)$-name for $\bigcup \{B^p \mid p \in \dot{G}\}$. 
  It will suffice to show that $\dot{B}$ is forced to have cardinality greater than 
  $\kappa$. Suppose for the sake of contradiction that $p \in \P(E)$ and 
  $p \Vdash |\dot{B}| \leq \kappa$. Since $\P(E)$ has the $\kappa^+$-c.c., we can 
  find $q \leq p$ and $A \in [E]^\kappa$ such that $q \Vdash \dot{B} \subseteq A$.
  Let $\sigma$ be an arbitrary element of $t^q_{\gamma^q}$. Since 
  $|E \cap N_\sigma| > \kappa$, we can find $b \in (E \cap N_\sigma) \setminus (A \cup B^q)$.
  Precisely as in the previous paragraph, we can find $r \leq q$ such that $b \in B^r$, 
  contradicting the fact that $r \notin A$ and $q \Vdash \dot{B} \subseteq A$.
\end{proof}

We now show that appropriate iterations of forcings of the form $\P(E)$ are well-behaved.

\begin{theorem} \label{theorem: well-behaved}
  Suppose that $\langle \P_i, \dot{\Q}_j \mid i \leq \varepsilon, j < \varepsilon \rangle$ 
  is a $({\leq}\mu)$-support iteration such that, for all $i < \varepsilon$, there is a 
  $\P_i$-name $\dot{E}_i$ such that
  \begin{itemize}
    \item $\dot{E}_i$ is forced to be a nonempty closed subset of ${^\kappa}\kappa$ such that, 
    for all $\sigma \in T(\dot{E}_i)$, we have $|\dot{E}_i \cap N_\sigma| > \kappa$;
    \item $\Vdash_{\P_i} \dot{\Q}_i = \P(\dot{E})$.
  \end{itemize}
  Then $\P_\varepsilon$ is $\kappa^+$-Knaster and $\kappa$-strategically closed.
\end{theorem}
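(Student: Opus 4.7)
The plan is to prove both properties simultaneously by induction on $\varepsilon$. The base case is trivial, and at a successor stage $\varepsilon = \eta+1$ we have $\P_{\eta+1} \cong \P_\eta \ast \dot{\Q}_\eta$. By induction $\P_\eta$ has both properties, and by the hypothesis on $\dot{E}_\eta$ together with Propositions \ref{prop: knaster} and \ref{prop: strategically closed} applied in $V^{\P_\eta}$, the factor $\dot{\Q}_\eta = \P(\dot{E}_\eta)$ is forced to be $\kappa^+$-Knaster and $\kappa$-strategically closed. Two-step preservation of each property, which I would extract directly from the structural witness (for Knaster, the fact that two conditions in $\P(E)$ with the same $(\gamma,t)$-pair are compatible), gives the successor step.

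For the limit case, I would handle $\kappa^+$-Knaster by a $\Delta$-system argument. Given $\langle p_\alpha \mid \alpha < \kappa^+ \rangle$ in $\P_\varepsilon$, each support has size $\leq \mu < \kappa^+$; since $\mu^{<\mu} = \mu$ and $\kappa^+$ is regular, the $\Delta$-system lemma produces $I_0 \in [\kappa^+]^{\kappa^+}$ whose supports form a $\Delta$-system with root $r$, $|r| \leq \mu$. Outside $r$ compatibility is automatic, so everything reduces to behavior on $r$. If $\sup(r) < \varepsilon$, the induction hypothesis applied to $\P_{\sup(r)+1}$ finishes the job. Otherwise $r$ is cofinal in $\varepsilon$ and $|r| \leq \mu$; here I would exploit the structural feature highlighted in Proposition \ref{prop: knaster}, namely that each $\P(\dot{E}_j)$ splits into only $\leq \kappa$ pairwise-compatible classes indexed by the $(\gamma,t)$-pair (using $\kappa^{<\kappa} = \kappa$). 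Representing $p_\alpha(j)$ via antichains of size $\leq \kappa$ below $\P_j$ (possible by the $\kappa^+$-cc given by induction), I can thin to $\kappa^+$ conditions agreeing across $r$ on these representations and thus pairwise compatible.

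For $\kappa$-strategic closure at limit $\varepsilon$, Player II plays coordinate-wise. They maintain, for each $j < \varepsilon$, a $\P_j$-name for the winning strategy $\dot{\sigma}_j$ supplied by Proposition \ref{prop: strategically closed} in $V^{\P_j}$, and at each round they respond on every coordinate in the current support by applying the corresponding $\dot{\sigma}_j$. At a limit game-stage $\gamma < \kappa = \mu^+$, the union of the supports of $\langle p_\alpha \mid \alpha < \gamma \rangle$ has cardinality $\leq \gamma \cdot \mu \leq \mu$, so the support constraint is preserved; on each active coordinate the strategy $\dot{\sigma}_j$ furnishes a lower bound in the appropriate extension, and glueing these coordinate-wise yields a lower bound in $\P_\varepsilon$.

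I expect the main obstacle to be the $\kappa^+$-Knaster preservation at limits with $\cf(\varepsilon) \leq \mu$, where the $\Delta$-system root $r$ can be cofinal in $\varepsilon$ and no initial segment of the iteration absorbs it. The technical heart is the bookkeeping needed to pin down the $(\gamma,t)$-types of $p_\alpha(j)$ simultaneously across all $j \in r$, balancing the $\kappa^+$-cc of the intermediate forcings (by induction) against the cardinal arithmetic hypotheses $\mu^{<\mu} = \mu$, $2^\mu = \mu^+$, and $2^{\mu^+} = \mu^{++}$. The strategic closure argument, by contrast, is essentially formal once the support bound is maintained, and I do not anticipate serious obstacles there.
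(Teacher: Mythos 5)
Your high-level shape is right -- reduce Knasterness to the root of a $\Delta$-system, exploit the fact that two conditions of $\P(E)$ with the same $(\gamma,t)$-pair are compatible, and play coordinate-wise for strategic closure (the latter is exactly the paper's one-line argument and is fine). But the mechanism you propose for the hard case has a genuine gap. You suggest representing the $(\gamma,t)$-type of the name $p_\alpha(j)$ by a maximal antichain of size $\leq\kappa$ below $p_\alpha\restriction j$ together with the decided values, and then thinning $\kappa^+$ conditions to ones that ``agree across $r$ on these representations.'' The counting does not work: $|\P_j|$ can already be $\kappa^+$ (the iteration in the application has length $\kappa^+$), so there are $(\kappa^+)^\kappa = 2^\kappa = \kappa^+$ possible antichain-representations per coordinate, and you cannot thin a $\kappa^+$-sequence to a single one. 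Moreover, even literal agreement of such representations at $j$ does not by itself produce a common extension of $p_\alpha\restriction j$ and $p_\beta\restriction j$ forcing $p_\alpha(j)$ and $p_\beta(j)$ compatible, since the branch sets $B$ are themselves sets of names whose restrictions must be decided. The same issue already infects your successor step: ``two-step preservation of Knaster'' is not automatic, precisely because the $(\gamma,t)$-pair of $\dot q_\alpha$ is not a ground-model object.

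What the paper does instead, and what is missing from your proposal, is to prove that the set $\P^*_j$ of conditions $p$ such that, for \emph{every} $i\in\dom(p)$, the restriction $p\restriction i$ outright decides $p(i)=(B^{p,i},\gamma^{p,i},t^{p,i})$ with $\gamma^{p,i}$ and $t^{p,i}$ ground-model objects and decides $\dot b\restriction\gamma^{p,i}$ for each $\dot b\in B^{p,i}$, is dense (Claim \ref{claim: dense_subset}). This requires a fusion lemma at limits (Claim \ref{claim: technical_claim}), where one must re-run the Fodor-style argument of Proposition \ref{prop: strategically closed} to verify that the fused trees still satisfy $|[t_{<\gamma}]|\leq\mu$ -- this is the real technical content you flagged but did not supply. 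Once conditions are determined in this sense, the type at coordinate $i$ is just the tree $t^{p,i}$, of which there are only $\kappa$ possibilities (using $\mu^{<\mu}=\mu$ and $2^\mu=\kappa$), so the $\Delta$-system thinning succeeds; one then needs a separate induction on the iteration (Claim \ref{claim: compatibility}) showing that two determined conditions agreeing on the trees over the common domain are compatible, which again uses $({<}\kappa)$-distributivity of the initial segments to separate or identify branches and the fusion lemma at limit coordinates. Without these two lemmas your argument does not close.
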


\begin{proof}
  Since $\kappa = \mu^+$, the iteration is taken with supports of size ${\leq}\mu$, 
  and each iterand is forced to be $\kappa$-strategically closed, standard arguments 
  imply that $\P_\varepsilon$ is $\kappa$-strategically closed (roughly speaking, 
  Player II simply plays according to their winning strategy on each coordinate). 
  
  To show that $\P_\varepsilon$ is $\kappa^+$-Knaster, we first isolate a well-behaved 
  dense subset of $\P_\varepsilon$. For concreteness, for all $j \leq \varepsilon$, 
  we will think of conditions of $\P_j$ as being functions whose domains are subsets of 
  $j$ of cardinality ${\leq}\mu$.
  
  \begin{claim} \label{claim: dense_subset}
    For all $j \leq \varepsilon$, let $\P^*_j$ be the set of $p \in \P_j$ such that, 
    for all $i \in \dom(p)$, there are $\gamma^{p,i} < \kappa$, $t^{p,i} \subseteq 
    {^{\leq \gamma^{p,i}}}\kappa$, a collection $B^{p,i}$ of $\P_i$-names, and a 
    bijection $\pi^{p,i}:B^{p,i} \ra t^{p,i}_{\gamma^{p,i}}$ such that 
    \begin{itemize}
      \item $p \restriction i \Vdash_{\P_i} p(i) = (B^{p,i}, \gamma^{p,i}, t^{p,i})$;
      \item for all $\dot{b} \in B^{p,i}$, $p \restriction i \Vdash_{\P_i} 
      \dot{b} \restriction \gamma^{p,i} = \pi^{p,i}(\dot{b})$.
    \end{itemize}        
    Then $\P^*_j$ is dense in $\P_j$.
  \end{claim}
  
  Before we prove Claim \ref{claim: dense_subset}, we establish the following useful fact.
  
  \begin{claim} \label{claim: technical_claim}
    Fix $j \leq \varepsilon$, let $\beta < \kappa$ be a limit ordinal, and suppose that 
    $\vec{p} = \langle p_\alpha \mid \alpha < \beta \rangle$ is a decreasing sequence of conditions 
    from $\P_j$ such that
    \begin{itemize}
      \item for all $\alpha < \beta$, we have $p_\alpha \in \P^*_j$, as witnessed by 
      $\langle (B^{p_\alpha, i}, \gamma^{p_\alpha,i}, t^{p_\alpha,i}, 
      \pi^{p_\alpha,i}) \mid i \in \dom(p_\alpha) 
      \rangle$;
      \item for all $\alpha < \alpha' < \beta$ and all $i \in \dom(p_\alpha)$, we have 
      $B^{p_\alpha, i} \subseteq B^{p_{\alpha'},i}$;
      \item for all limit ordinals $\alpha' < \beta$, we have
      \begin{itemize}
        \item $\dom(p_{\alpha'}) = \bigcup \{\dom(p_\alpha) \mid \alpha < \alpha'\}$;
        \item for all $i \in \dom(p_{\alpha'})$, we have $\gamma^{p_{\alpha'},i} = 
        \sup\{\gamma^{p_\alpha,i} \mid \alpha < \alpha' \text{ and } i \in \dom(p_\alpha)\}$ 
        and $B^{p_{\alpha'},i} = 
        \bigcup \{B^{p_\alpha,i} \mid \alpha < \alpha' \text{ and } i \in \dom(p_\alpha)\}$.
      \end{itemize}
    \end{itemize}
    Then $\vec{p}$ has a lower bound $p_\beta$ that is in $\P^*_j$, as witnessed by
    $\langle (B^{p_\beta,i}, \gamma^{p_\beta,i}, t^{p_\beta,i}, \pi^{p_\beta,i}) \mid 
    i \in \dom(p_\beta) \rangle$ satisfying:
    \begin{itemize}
      \item $\dom(p_\beta) = \bigcup\{\dom(p_\alpha) \mid \alpha < \beta\}$;
      \item for all $i \in \dom(p_\beta)$, we have $\gamma^{p_{\beta},i} = 
        \sup\{\gamma^{p_\alpha,i} \mid \alpha < \beta \text{ and } i \in \dom(p_\alpha)\}$ 
        and $B^{p_{\beta},i} = 
        \bigcup \{B^{p_\alpha,i} \mid \alpha < \beta \text{ and } i \in \dom(p_\alpha)\}$.
    \end{itemize}
  \end{claim}  
  
  \begin{proof}
  We define $p_\beta$ as follows. First, let $\dom(p_\beta) = \bigcup \{\dom(p_\alpha) \mid 
  \alpha < \beta\}$. Then, for each $i \in \dom(p_\beta)$, we let $p_\beta(i)$ be a 
  $\P_i$-name for the triple $(B^{p_\beta,i}, \gamma^{p_\beta,i}, t^{p_\beta,i})$, defined 
  as follows. First, let $B^{p_\beta,i} = \bigcup \{B^{p_\alpha,i} \mid \alpha < \beta 
  \text{ and } i \in B^{p_\alpha}\}$ and let $\gamma^{p_\beta,i} = \sup\{\gamma^{p_\alpha, 
  i} \mid \alpha < \beta \text{ and } i \in \dom(p_\alpha)\}$.
  
  Let $s^{p_\beta,i} = \bigcup\{t^{p_\alpha,i} \mid \alpha < \beta \text{ and } 
  i \in \dom(p_\alpha)\}$ and, for each $\dot{b} \in B^{p_\beta,i}$, let 
  \[
    d_{\dot{b}} = \bigcup \{\pi^{p_\alpha,i}(\dot{b}) \mid \alpha < \beta, \ i 
    \in \dom(p_\alpha), \text{ and } \dot{b} \in B^{p_\alpha,i}\}.
  \]
  Then $d_{\dot{b}}$ is a cofinal branch through $s^{p_\beta,i}$ and, if $\dot{b}$ and 
  $\dot{b}'$ are distinct elements of $B^{p_\beta,i}$, then they both appear in 
  $B^{p_\alpha,i}$ for some $\alpha < \beta$, and the fact that $\pi^{p_\alpha,i}$ 
  is injective implies that $d_{\dot{b}}$ and $d_{\dot{b}'}$ are distinct. We end by setting 
  $t^{p_\beta,i} = s^{p_\beta,i} \cup \{d_{\dot{b}} \mid \dot{b} \in B^{p_\beta,i}\}$ 
  and, for each $\dot{b} \in B^{p_\beta,i}$, setting $\pi^{p_\beta,i}(\dot{b}) = d_{\dot{b}}$.
  
  We claim that $p_\beta$ thus defined is as desired. The only nontrivial requirement to verify 
  is that, for all $i \in \dom(p_\beta)$, the tree $t^{p_\beta,i}$ looks like an initial 
  segment of a normal superthin subtree of ${^{<\kappa}}\kappa$. This amounts to verifying 
  that $|[s^{p_\beta,i}]| \leq \mu$. This is proven exactly as in the proof of Proposition 
  \ref{prop: strategically closed}, so we leave it to the reader.
\end{proof}
  
  \begin{proof}[Proof of Claim \ref{claim: dense_subset}]
    The proof is by induction on $j$. We will actually establish the following technical 
    strengthening of the density of $\P^*_j$:
    \begin{quote}
      Suppose that $p_{00} \in \P^*_j$, as witnessed by $\langle (B^{p_{00},i}, 
      \gamma^{p_{00},i}, t^{p_{00},i}, \pi^{p_{00},i}) \mid i \in \dom(p_{00}) \rangle$, 
      and $p_0 \leq_{\P_j} p_{00}$. Then there is $q \leq p_0$ in $\P^*_j$ witnessed 
      by $\langle (B^{q,i}, \gamma^{q,i}, t^{q,i}, \pi^{q,i}) \mid i \in 
      \dom(q) \rangle$ such that, for all $i \in \dom(p_{00})$, we have 
      $B^{p_{00},i} \subseteq B^{q,i}$.
    \end{quote}
    To see that this does indeed yield the density of $\P^*_j$, note that in the above 
    statement we can let $p_0 \in \P_j$ be arbitrary and take $p_{00}$ to be such 
    that $\dom(p_{00}) = \emptyset$. Then $p_0 \leq p_{00}$, and $p_{00}$ is trivially 
    in $\P^*_j$.
    
    Fix $j \leq \varepsilon$, and suppose that the inductive hypothesis
    has been established for all $i < j$. Fix $p_0 \leq p_{00}$ as in the statement of 
    the hypothesis; we will find $q \leq p_0$ as desired.
    
    If $j = 0$, then we can take $q = p_0$. Suppose next that $j$ is a 
    successor ordinal, say $j = j_0 + 1$. Assume that $j_0 \in \dom(p_0)$; otherwise, 
    $p \in \P^*_{j_0}$, and we can apply the inductive hypothesis to obtain the desired 
    conclusion. Suppose also that $j_0 \in \dom(p_{00})$; the argument is similar but 
    easier in the other case.
    
    Since $\P_{j_0}$ is $\kappa$-strategically closed and hence 
    $({<}\kappa)$-distributive, we can find $p_1 \leq p_0 \restriction j_0$ in 
    $\P_{j_0}$, an ordinal $\gamma^{j_0} < \kappa$, a tree 
    $t^{j_0} \subseteq {^{\leq\gamma^{j_0}}}\kappa$, a set $B^{j_0}$ of 
    $\P_{j_0}$-names, and a bijection $\pi^{j_0}:B^{j_0} \ra t^{j_0}_{\gamma^{j_0}}$ 
    such that
    \begin{itemize}
      \item $p_1 \Vdash_{\P_{j_0}} p_0(j_0) = (B^{j_0}, \gamma^{j_0}, t^{j_0})$;
      \item for all $\dot{b} \in B^{j_0}$, $p_1 \Vdash \dot{b} \restriction 
      \gamma^{j_0} = \pi^{j_0}(\dot{b})$.
    \end{itemize}
	Since $p_0 \restriction j_0 \Vdash p_0(j_0) \leq p_{00}$, by extending $p_1$ 
	further if necessary we can assume that, for each $\dot{b} \in B^{p_{00},j_0}$, 
	there is a unique $\dot{b}' \in B^{j_0}$ such that 
	\[
		p_1 \Vdash_{\P_{j_0}}``\Vdash_{\dot{\Q}_{j_0}} \dot{b}' = \dot{b}".
	\]
    By replacing each $\dot{b}'$ with $\dot{b}$ in $B^{j_0}$ and redefining 
    $\pi^{j_0}$ in the obvious way, we may thus assume that $B^{j_0} \supseteq 
    B^{p_{00},j_0}$.
    
    Now apply the inductive hypothesis to find $p_2 \leq_{\P_{j_0}} p_1$ 
    such that $p_2 \in \P^*_{j_0}$ witnessed by 
    $\langle (B^{p_2,i}, \gamma^{p_2,i}, t^{p_2,i}, \pi^{p_2,i}) \mid 
    i \in \dom(p_2) \rangle$ such that, for all $i \in \dom(p_{00}) \cap j_0$, 
    we have $B^{p_{00},i} \subseteq B^{p_2,i}$. Finally, define $q \in \P_j$ by setting 
    $\dom(q) = \dom(p_2) \cup \{j_0\}$, $q \restriction j_0 = p_2$, and 
    $q(j_0) = (B^{j_0}, \gamma^{j_0}, t^{j_0})$. Then $q$ is as desired.
    
    Suppose next that $j > 0$ is a limit ordinal. If $\cf(j) > \mu$, then there is 
    $i < j$ such that $p_0 \in \P_i$, so we can apply the inductive hypothesis to 
    obtain the desired conclusion. Thus, assume that $\cf(j) \leq \mu$. 
    Let $\langle i_\eta \mid \eta < \cf(j) \rangle$ be an increasing enumeration of 
    a club in $j$. Using the inductive hypothesis and Claim \ref{claim: technical_claim}, 
    recursively construct a sequence $\langle q_\eta \mid \eta < \cf(j) \rangle$ such 
    that, for all $\eta < \cf(j)$, we have:
    \begin{itemize}
      \item $q_\eta \in \P^*_{i_\eta}$, as witnessed by 
      $\langle (B^{q_\eta,i}, \gamma^{q_\eta,i}, t^{q_\eta,i}, \pi^{q_\eta,i}) \mid 
      i \in \dom(q_\eta) \rangle$;
      \item $q_\eta \leq_{\P_{i_\eta}} p_0 \restriction i_\eta$;
      \item for all $i \in \dom(p_{00}) \cap i_\eta$, $B^{p_{00},i} \subseteq B^{q_\eta,i}$;
      \item for all $\xi < \eta$, we have
      \begin{itemize}
        \item $q_\eta \restriction i_\xi \leq_{\P_{i_\xi}} q_\xi$;
        \item for all $i \in \dom(q_\xi)$, $B^{q_\xi,i} \subseteq B^{q_\eta,i}$;
      \end{itemize}
      \item if $\eta$ is a limit ordinal, then
      \begin{itemize}
        \item $\dom(q_\eta) = \bigcup \{\dom(q_\xi) \mid \xi < \eta\}$;
        \item for all $i \in \dom(q_\eta)$, we have $\gamma^{q_\eta,i} = \sup\{ 
        \gamma^{q_\xi,i} \mid \xi < \eta \text{ and } i \in \dom(q_\xi)\}$ and 
        $B^{q_\eta,i} = \bigcup \{B^{q_\xi,i} \mid \xi < \eta \text{ and } i \in 
        \dom(q_\xi)\}$.
      \end{itemize}
    \end{itemize}
    The construction is straightforward, so we leave it to the reader. At the end of the 
    construction, another appeal to Claim \ref{claim: technical_claim} yields a condition 
    $q \in \P^*_j$, as witnessed by $\langle (B^{q,i}, \gamma^{q,i}, t^{q,i}, \pi^{q,i}) \mid 
    i \in \dom(q) \rangle$ such that
    \begin{itemize}
      \item $q$ is a lower bound of $\langle q_\eta \mid \eta < \cf(j) \rangle$;
      \item $\dom(q) = \bigcup \{\dom(q_\eta) \mid \eta < \cf(j)\}$;
      \item for all $i \in \dom(q)$, we have $B^{q,i} = \bigcup \{B^{q_\eta,i} \mid 
      \eta < \cf(j) \text{ and } i \in \dom(q_\eta)\}$.
    \end{itemize}
    Then $q$ is as desired, completing the proof of the claim.
  \end{proof}
  
  The following will be the key claim in establishing that $\P_\varepsilon$ is 
  $\kappa^+$-Knaster.
  
  \begin{claim} \label{claim: compatibility}
    Suppose that $j \leq \varepsilon$ and, for $\ell < 2$, $p_\ell \in \P^*_j$, as witnessed 
    by 
    \[
    \langle (B^{p_\ell,i}, \gamma^{p_\ell,i}, t^{p_\ell,i}, \pi^{p_\ell,i}) \mid 
    i \in \dom(p_\ell) \rangle.
    \] 
    Suppose moreover that, for all $i \in \dom(p_0) \cap 
    \dom(p_1)$, we have $t^{p_0,i} = t^{p_1,i}$. Then $p_0 \parallel p_1$.
  \end{claim}
  
  \begin{proof}
    The proof is by induction on $j$. If $j = 0$, the conclusion is trivial. Suppose 
    next that $j$ is a successor ordinal, say $j = j_0 + 1$. Assume that $j_0 \in 
    \dom(p_0) \cap \dom(p_1)$, as otherwise we can simply apply the induction hypothesis. 
    First, apply the induction hypothesis to find $q_0$ such that 
    $q_0 \leq_{\P_{j_0}} p_\ell \restriction j_0$ for $\ell < 2$. Next, let 
    $\gamma = \gamma^{p_0, j_0} = \gamma^{p_1, j_0}$ and $t = t^{p_0, j_0} = t^{p_1, j_0}$.
    For every $u \in t_\gamma$ and $\ell < 2$, there is a unique $\dot{b}_{u,\ell} \in 
    B^{p_\ell, j_0}$ such that $q_0 \Vdash_{\P_{j_0}} \dot{b}_{u,\ell} \restriction 
    \gamma = u$. Since $\P_{j_0}$ is $({<}\kappa)$-distributive, we can find 
    $q_1 \leq_{\P_{j_0}} q_0$ such that, for all $u \in t_\gamma$,
    \begin{itemize}
      \item $q_1$ decides the statement $``\dot{b}_{u,0} = \dot{b}_{u,1}"$;
      \item if $q_1 \Vdash \dot{b}_{u,0} \neq \dot{b}_{u,1}$, then there is 
      $\gamma_u < \kappa$ such that $q_1 \Vdash \dot{b}_{u,0} \restriction \gamma_u 
      \neq \dot{b}_{u,1} \restriction \gamma_u$. 
    \end{itemize}
    Let $s_0 = \{u \in t_\gamma \mid q_1 \Vdash \dot{b}_{u,0} \neq \dot{b}_{u,1}\}$ 
    and $s_1 = t_\gamma \setminus s_0$. Let $\gamma^* = \sup\{\gamma_u \mid u \in s_0\}$ 
    (or $\gamma^* = \gamma$ if $s_0 = \emptyset$). Again using the $({<}\kappa)$-distributivity 
    of $\P_{j_0}$, find $q_2 \leq_{\P_{j_0}} q_1$ such that, for all $u \in t_\gamma$ and $\ell < 2$, 
    $q_2$ decides the value of $\dot{b}_{u,\ell} \restriction \gamma^*$, say as $b^*_{u,\ell} \in {^{\gamma^*}}\kappa$. 
    Let $t^*$ be the downward closure of the set $\{b^*_{u,\ell} \mid u \in t_\gamma, \ 
    \ell < 2\}$ in ${^{<\kappa}}\kappa$, and let 
    \[
      B^* = \{\dot{b}_{u,0} \mid u \in t_\gamma\} \cup \{\dot{b}_{u,1} \mid u \in s_0\}.
    \]
    Define a condition $q \in \P_j$ by letting $\dom(q) = \dom(q_2) \cup \{j_0\}$, 
    $q \restriction j_0 = q_2$, and $q(j_0)$ be such that 
    $q_2 \Vdash q(j_0) = (B^*, \gamma^*, t^*)$.
    
    Finally, suppose that $j$ is a limit ordinal. If $\cf(j) > \mu$, then there is 
    $j_0 < j$ such that $p_0, p_1 \in \P_{j_0}$, so we can simply apply the induction 
    hypothesis. Thus, assume that $\cf(j) \leq \mu$. Let $\langle i_\eta \mid 
    \eta < \cf(j) \rangle$ be an increasing enumeration of a club in $j$. Recursively build 
    a sequence $\langle q_\eta \mid \eta < \cf(j) \rangle$ such that, for all 
    $\eta < \cf(j)$, we have:
    \begin{itemize}
      \item $q_\eta \in \P^*_{i_\eta}$, as witnessed by $\langle (B^{q_\eta,i}, 
      \gamma^{q_\eta,i}, t^{q_\eta,i}, \pi^{q_\eta,i}) \mid i \in \dom(q_\eta) \rangle$;
      \item $q_\eta \leq_{\P_{i_\eta}} p_0 \restriction i_\eta, p_1 \restriction i_\eta$;
      \item for all $\xi < \eta$, we have
      \begin{itemize}
        \item $q_\eta \restriction i_\xi \leq_{\P_{i_\xi}} q_\xi$;
        \item for all $i \in \dom(q_\xi)$, $B^{q_\xi,i} \subseteq B^{q_\eta,i}$;
      \end{itemize}
      \item if $\eta$ is a limit ordinal, then
      \begin{itemize}
        \item $\dom(q_\eta) = \bigcup \{\dom(q_\xi) \mid \xi < \eta\}$;
        \item for all $i \in \dom(q_\eta)$, we have $\gamma^{q_\eta,i} = 
        \sup\{\gamma^{q_\xi,i} \mid \xi < \eta \text{ and } i \in \dom(q_\xi)\}$ and 
        $B^{q_\eta,i} = \bigcup\{B^{q_\xi,i} \mid \xi < \eta \text{ and } i \in 
        \dom(q_\xi)\}$.
      \end{itemize}
    \end{itemize}
    The construction is straightforward using the induction hypothesis and Claim 
    \ref{claim: technical_claim}, so we leave it to the reader. At the end, another 
    appeal to Claim \ref{claim: technical_claim} yields $q \in \P_j$ that is a lower bound 
    for $\langle q_\eta \mid \eta < \cf(j) \rangle$. This condition $q$ extends both 
    $p_0$ and $p_1$, thus establishing the claim.
  \end{proof}
  
  We are now finally ready to prove that $\P_\varepsilon$ is $\kappa^+$-Knaster. 
  To this end, suppose that $\langle p_\eta \mid \eta < \kappa^+ \rangle$ is a 
  sequence of conditions in $\P_\varepsilon$. By extending the conditions if necessary, 
  we may assume that each $p_\eta$ is in $\P^*_\varepsilon$, as witnessed by 
  $\langle (B^{\eta,i}, \gamma^{\eta,i}, t^{\eta,i}, \pi^{\eta,i}) \mid 
  i \in \dom(p_\eta) \rangle$. Since $2^\mu = \kappa$, we can find an unbounded 
  $A \subseteq \kappa^+$ such that
  \begin{itemize}
    \item $\langle \dom(p_\eta) \mid \eta \in A \rangle$ forms a $\Delta$-system, 
    with root $r$;
    \item there is a sequence $\langle t^i \mid i \in r \rangle$ such that, for all 
    $\eta \in A$ and all $i \in r$, we have $t^{\eta,i} = t^i$.
  \end{itemize}
  Then, by Claim \ref{claim: compatibility}, $\langle p_\eta \mid \eta \in A \rangle$ 
  is a sequence of pairwise compatible conditions.
\end{proof}

We can now easily prove the main consistency result of this section. To make its statement 
self-contained, we recall the standing cardinal assumptions of this section at the 
start of the theorem.

\begin{theorem}
  Suppose that $\mu$ is a regular uncountable cardinal such that $\mu^{<\mu} = \mu$, 
  $2^\mu = \mu^+$, and $2^{\mu^+} = \mu^{++}$, and let $\kappa = \mu^+$. Then there is a forcing extension in which 
  all cardinalities and cofinalities are preserved and in which 
  \begin{enumerate}
    \item there exists a $\kappa$-Kurepa tree;
    \item every $\kappa$-Kurepa tree contains a normal superthin $\kappa$-Kurepa subtree;
    \item the cardinal arithmetic of the ground model is preserved; in particular, if 
    $\GCH$ holds in the ground model, then it continues to hold in the extension.
  \end{enumerate}
\end{theorem}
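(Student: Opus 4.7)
The plan is to perform a $({\leq}\mu)$-support iteration of forcings of the form $\P(E)$ of length $\kappa^+ = \mu^{++}$, using bookkeeping to process every potential $\kappa$-Kurepa tree in the eventual extension. So I would define an iteration $\langle \P_i, \dot{\Q}_j \mid i \leq \kappa^+, \ j < \kappa^+ \rangle$, where at each stage $j$ a bookkeeping function selects a $\P_j$-name $\dot{T}_j$ for a $\kappa$-Kurepa subtree of ${^{<\kappa}}\kappa$. I would first thin $[\dot{T}_j]$ to a closed set $\dot{E}_j$ all of whose nonempty basic neighborhoods have cardinality $> \kappa$ (this uses that $\dot{T}_j$ is forced to be a $\kappa$-Kurepa tree, so removing the neighborhoods of size $\leq \kappa$ preserves the fact that the resulting closed set still has size $> \kappa$ and in fact arises from a $\kappa$-Kurepa subtree of $\dot{T}_j$). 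Then I would set $\dot{\Q}_j = \P(\dot{E}_j)$. The first step $\dot{E}_0$ can be taken to be ${^\kappa}\kappa$ (say, realized as $[{^{<\kappa}}\kappa]$), which immediately produces a $\kappa$-Kurepa tree after one iterand.

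Preservation of the structural properties of the iteration follows from Theorem \ref{theorem: well-behaved}: provided each $\dot{E}_j$ is forced to be a nonempty closed set in which every nonempty basic neighborhood has cardinality $> \kappa$, the iteration $\P_{\kappa^+}$ is $\kappa^+$-Knaster and $\kappa$-strategically closed. Hence cofinalities and cardinals are preserved. For cardinal arithmetic, $\kappa$-strategic closure preserves $2^\nu$ for $\nu < \kappa$. A counting of nice names using the $\kappa^+$-cc together with the assumptions $\mu^{<\mu} = \mu$, $2^\mu = \mu^+$, and $2^{\mu^+} = \mu^{++}$ shows that $|\P_{\kappa^+}| \leq \mu^{++}$, so $2^\kappa$ and $2^{\kappa^+}$ in the extension are at most $\mu^{++}$; in particular, under $\GCH$ in the ground model, the cardinal arithmetic (in particular $\GCH$) is preserved.

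For the bookkeeping, under the cardinal arithmetic hypotheses there are $\kappa^+$ nice $\P_{\kappa^+}$-names for subsets of $\kappa$, and each such name is a $\P_j$-name for some $j < \kappa^+$ by the chain condition. Hence I can enumerate all nice names for $\kappa$-Kurepa subtrees of ${^{<\kappa}}\kappa$ with cofinally many appearances; this is the standard diagonal bookkeeping on an iteration of length $\kappa^+$. Given any $\kappa$-Kurepa tree $T$ in $V[G_{\kappa^+}]$, by the $\kappa^+$-cc it has a nice $\P_j$-name for some $j$, and by Lemma \ref{lemma: strat_closed_silver} applied to the $(\mu+1)$-strategically closed tail forcing, $T$ already has $\geq \kappa^+$ cofinal branches in $V[G_j]$. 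Bookkeeping then guarantees some $j' \geq j$ at which $\dot{T}_{j'}$ is a name for $T$, so $\dot{\Q}_{j'}$ adds a normal, cofinally splitting, superthin $\kappa$-Kurepa subtree $T^* \subseteq T$ in $V[G_{j'+1}]$. Applying Lemma \ref{lemma: strat_closed_silver} once more to the forcing from $V[G_{j'+1}]$ to $V[G_{\kappa^+}]$, no new $\kappa$-branches are added to $T^*$, so $T^*$ retains $\geq \kappa^+$ branches and (since the tree structure below $\kappa$ is unchanged) remains normal and superthin with small levels in the final model.

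The hardest part will not be the high-level plan but rather verifying that $\P(\dot{E}_j)$ genuinely satisfies the hypotheses required by Theorem \ref{theorem: well-behaved} at each stage; specifically, that after the thinning procedure the $\P_j$-name $\dot{E}_j$ is forced to be nonempty and to have the ``fat neighborhood'' property. This reduces to showing that every $\kappa$-Kurepa tree contains a $\kappa$-Kurepa subtree $T'$ such that, for each $\sigma \in T'$, the set $[T'] \cap N_\sigma$ has cardinality $> \kappa$; this in turn follows by iteratively pruning nodes above which only $\leq\kappa$-many branches pass, a procedure that terminates inside a $\kappa$-Kurepa tree because otherwise the collection of pruned neighborhoods would partition $[T]$ into $\leq \kappa$-many pieces each of size $\leq \kappa$, contradicting $|[T]| \geq \kappa^+$. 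Once this is in place, everything else is assembled from ingredients already proven in the section.
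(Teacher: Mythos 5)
Your proposal is correct and follows essentially the same route as the paper: a $({\leq}\mu)$-support iteration of length $\kappa^+$ of posets $\P(E)$ with standard bookkeeping, relying on Theorem \ref{theorem: well-behaved} for the Knaster property and strategic closure, on the thinning to fat-neighborhood Kurepa subtrees, and on Lemma \ref{lemma: strat_closed_silver} applied to the tails to catch every Kurepa tree and to preserve the added superthin subtrees. The only (harmless) deviation is that you obtain clause (1) by taking $\dot{E}_0 = {}^{\kappa}\kappa$ as the first iterand, whereas the paper assumes a $\kappa$-Kurepa tree in the ground model or adds one by a preliminary forcing.
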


\begin{proof}
  We can assume that there is a $\kappa$-Kurepa tree in $V$; if not, then first force 
  with one of the standard forcings to add a $\kappa$-Kurepa tree with $\kappa^+$-many 
  cofinal branches (e.g., the analogue of the forcing $\Q$ from the proof of 
  Theorem \ref{main_thm}).
  
  We will force with a $({\leq}\mu)$-support iteration $\langle \P_i, 
  \dot{\Q}_j \mid i \leq \kappa^+, \ j < \kappa^+ \rangle$ such that, for all 
  $i < \kappa^+$, $\dot{\Q}_i$ is forced by $\P_i$ to be either trivial forcing or of the form 
  $\P([\dot{T}_i])$, where $\dot{T}_i$ is a $\P_i$-name for a $\kappa$-Kurepa tree
  such that
  \[
    \Vdash_{\P_i} \forall \sigma \in \dot{T}_i \ |[\dot{T}_i] \cap N_\sigma| = \kappa^+.
  \]
  By Theorem \ref{theorem: well-behaved}, $\P = \P_{\kappa^+}$ will be 
  $\kappa^+$-Knaster and $\kappa$-strategically closed. It will therefore preserve 
  all cardinalities and cofinalities, and hence will preserve the $\kappa$-Kurepa 
  tree that exists in the ground model. Moreover, since the length of the iteration 
  is $\kappa^+$, it will not change the value of $2^\nu$ for any cardinal $\nu$. 
  Moreover, for every $i < \kappa^+$, the quotient forcing $\P/\P_i$ is equivalent 
  to an iteration of the same shape as $\P$ and therefore shares the same properties.
  
  We will recursively specify $\P$ by concurrently specifying a sequence 
  $\langle \dot{T}_i \mid i < \kappa^+ \rangle$ such that
  \begin{itemize}
    \item for all $i < \kappa^+$, $\dot{T}_i$ is a $\P_i$-name for a subtree of 
    ${^{<\kappa}}\kappa$;
    \item it is forced by $\P_i$ that:
    \begin{itemize}
      \item if $\dot{T}_i$ is not a $\kappa$-Kurepa tree, then $\dot{\Q}_i$ is trivial 
      forcing;
      \item if $\dot{T}_i$ is a $\kappa$-Kurepa tree, then $\dot{Q}_i$ is 
      $\P([\dot{T}^*_i])$, where $\dot{T}^*_i$ is the $\P_i$-name for the set of 
      $\sigma \in \dot{T}_i$ that are contained in $\kappa^+$-many 
      distinct cofinal branches through $\dot{T}_i$ (note that, in this case, 
      $\dot{T}^*_i$ is forced to be a $\kappa$-Kurepa tree itself).
    \end{itemize}
  \end{itemize}
  By the cardinal arithmetic assumptions and the chain condition of $\P$, there will only be 
  $\kappa^+$-many nice $\P$-names for subtrees of ${^{<\kappa}}\kappa$. Moreover, 
  also by the arithmetic and chain condition, each such name $\dot{T}$ will in fact be a
  $\P_i$-name for some $i < \kappa^+$. Therefore, by a standard bookkeeping argument, 
  we can construct the sequence $\langle \dot{T}_i \mid i < \kappa^+ \rangle$ in such a way 
  that, in $V^{\P}$, for every subtree $T$ of ${^{<\kappa}}\kappa$, there is $i < \kappa^+$ 
  such that $T$ equals the interpretation of $\dot{T}_i$ in $V^{\P}$.
  
  Let $G$ be $\P$-generic over $V$. For all $i < \kappa^+$, let $G_i$ be the $\P_i$-generic 
  filter induced by $G$. We claim that $V[G]$ is the desired forcing extension. We have already 
  argued that requirements (1) and (3) in the statement of the theorem hold. It remains to 
  verify (2). To this end, let $T \in V[G]$ be a $\kappa$-Kurepa tree. We may assume that 
  it is a subtree of ${^{<\kappa}}\kappa$. By our construction of $\P$, we can 
  find $i < \kappa$ such that $T$ equals the interpretation of $\dot{T}_i$. In particular, 
  $T \in V[G_i]$. Moreover, since the quotient forcing $\P/\P_i$ is $\kappa$-strategically 
  closed, Lemma \ref{lemma: strat_closed_silver} implies that all of the cofinal branches 
  through $T$ in $V[G]$ are already in $V[G_i]$. In particular, $T$ is a 
  $\kappa$-Kurepa tree in $V[G_i]$. Therefore, $\Q_i$ is $\P([T^*])$, where 
  $T^*$ is the set of all $\sigma \in T$ such that $\kappa^+$-many cofinal branches through 
  $T$ contain $\sigma$. Forcing with $\Q_i$ therefore adds a normal superthin 
  $\kappa$-Kurepa subtree to $T^*$, and hence to $T$. Since $T$ was arbitrary, this 
  completes the proof of the theorem.
\end{proof}

\section{Open questions} \label{section: questions}

In this final section, we present some closing remarks and survey some of the remaining 
open questions. We first make the following observations about Theorem \ref{thm: 
omega_plus_one_perfect}.
\begin{itemize}
  \item In Theorem \ref{thm: omega_plus_one_perfect}, the assumption that 
  $E$ is a continuous image of ${^\kappa}\kappa$ is slightly more than is 
  needed. As the proof makes clear, it is enough to assume that $E$ is 
  the continuous image of $[T]$ for some normal, splitting, 
  $({<}\omega_1)$-closed tree $T \subseteq {^{<\kappa}}\kappa$. Note that, 
  for such a tree $T$, the set $[T]$ is necessarily $(\omega+1)$-perfect.
  \item The conclusion of Theorem \ref{thm: omega_plus_one_perfect} falls 
  slightly short of the assertion that $E \setminus X$ is $(\omega+1)$-perfect, 
  since it only gives a winning strategy for games 
  $G_\kappa(E, x_0, \omega+1)$ rather than $G_\kappa(E \setminus X, x_0, \omega+1)$.
\end{itemize} 
In light of these observations, the following questions are natural.

\begin{question}
  Suppose that $\kappa$ is a regular uncountable cardinal and $E \subseteq 
  {^\kappa}\kappa$ is a closed set.
  \begin{enumerate}
    \item Suppose that $E$ is a continuous image of ${^\kappa}\kappa$. Must 
    there be $X \subseteq E$ such that $|X| \leq \kappa^{<\kappa}$ and 
    $E \setminus X$ is $(\omega+1)$-perfect?
    \item Suppose that there is an $(\omega+1)$-perfect set $D \subseteq 
    {^\kappa}\kappa$ such that $E$ is a continuous image of $D$. Must there 
    be $X \subseteq E$ such that $|X| \leq \kappa^{<\kappa}$ and, for 
    every $x_0 \in E \setminus X$, Player II has a winning strategy in 
    $G_\kappa(E, x_0, \omega+1)$? Must there be $X \subseteq E$ such that 
    $|X| \leq \kappa^{<\kappa}$ and $E \setminus X$ is $(\omega+1)$-perfect?
  \end{enumerate}
\end{question}

We now turn to some questions arising from Theorem \ref{main_thm}. The first 
concerns whether an inaccessible cardinal is necessary to obtain the conclusion 
of the theorem.

\begin{question}
  Suppose that $\GCH$ holds, there is an $\omega_2$-Kurepa tree and, for every 
  $\omega_2$-Kurepa tree $S \subseteq {^{<\omega_2}}\omega_2$, $[S]$ is not a 
  continuous image of ${^{\omega_2}}\omega_2$. Must $(\omega_3)^V$ be inaccessible 
  in $L$?
\end{question}

We also ask whether the opposite extreme to that obtained in Theorem \ref{main_thm} 
can hold.

\begin{question} \label{q: extreme}
  Is it consistent with $\GCH$ that there are $\omega_2$-Kurepa trees and, 
  for every $\omega_2$-Kurepa tree $T \subseteq {^{<\omega_2}}\omega_2$, 
  $[T]$ \emph{is} a continuous image (or even a retract) of 
  ${^{\omega_2}}\omega_2$?
\end{question}

In all of the known consistent examples of $\omega_2$-Kurepa trees $T \subseteq 
{^{<\omega_2}}\omega_2$, there exist $x \in [T]$ and $\sigma \in T$ such 
that $\sigma \sqsubseteq x$ and $|[T] \cap N_\sigma| \leq \omega_2$. We conjecture 
that this is in necessarily the case.

\begin{conjecture} \label{conj}
  Suppose that $\GCH$ holds, and let $T \subseteq {^{<\omega_2}}\omega_2$ be 
  an $\omega_2$-Kurepa tree such that, for every $\sigma \in T$, we have 
  $|[T] \cap N_\sigma| > \omega_2$. Then $[T]$ is not a continuous image of 
  ${^{\omega_2}}\omega_2$
\end{conjecture}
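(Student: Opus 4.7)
The plan is to adapt the argument from the proof of Theorem B to the pure $\ZFC + \GCH$ setting, replacing the forcing-theoretic splitting step (which used a name $\dot{b}$ for a branch through the tree not in the intermediate model) with a purely combinatorial splitting derived from the fatness hypothesis $|[T] \cap N_\sigma| > \omega_2$. Suppose for contradiction that $g : {^{\omega_2}}\omega_2 \to [T]$ is a continuous surjection. I would define, exactly as in the proof of Theorem B, the monotone map $h(\sigma) := \bigwedge \{g(x) : x \in N_\sigma\}$ with values in $T \cup [T]$, and the $({<}\omega_2)$-closed set $W := \{\sigma \in {^{<\omega_2}}\omega_2 : \dom(h(\sigma)) \geq \dom(\sigma)\}$. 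The analogue of Claim \ref{dense_claim} goes through verbatim: for each $x \in {^{\omega_2}}\omega_2$, the set $C_x := \{\alpha < \omega_2 : x \restriction \alpha \in W\}$ is a club in $\omega_2$.

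The key new observation is the following. Let $F := \{x \in [T] : g^{-1}\{x\} \text{ has non-empty interior}\}$. Distinct elements of $F$ correspond to disjoint basic opens, so $|F| \leq |{^{<\omega_2}}\omega_2| = \omega_2$ under $\GCH$. By the fatness hypothesis, $|[T]| > \omega_2$, so $[T] \setminus F$ is nonempty (indeed, of size $\omega_3$). Moreover, for $\sigma \in {^{<\omega_2}}\omega_2$, one has $h(\sigma) \in T$ (rather than in $[T]$) precisely when $g$ is not eventually constant on $N_\sigma$, and this is guaranteed whenever $\sigma$ is an initial segment of some $y \in g^{-1}\{x\}$ with $x \in [T] \setminus F$. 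When $h(\sigma) \in T$, the map $g$ realizes at least two distinct branches on $N_\sigma$, and any two distinct branches through $T$ are incomparable in $T$ above their common initial segment $h(\sigma)$.

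Then, mimicking the construction in Theorem B, I would build a labeled binary tree $\langle (y_\sigma, b_\sigma) : \sigma \in {^{<\omega_1}}2 \rangle$ in $W \times T$ satisfying $h(y_\sigma) \in T$, $b_\sigma \sqsubseteq h(y_\sigma)$, $\sqsubseteq$-monotonicity in $\sigma$, and $b_{\sigma ^\frown 0} \perp b_{\sigma ^\frown 1}$ in $T$. At successor stages, one aims to pick $z^0, z^1 \in N_{y_\sigma}$ with $g(z^0) \neq g(z^1)$ both in $[T] \setminus F$, and to take $y_{\sigma ^\frown i} := z^i \restriction \beta_i$ for a sufficiently large $\beta_i \in C_{z^i}$; this preserves the invariants because $g(z^i) \notin F$ forces $h(y_{\sigma ^\frown i}) \in T$. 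Limits are handled by unions, using the $({<}\omega_2)$-closure of $W$ and the fact that $b_\sigma \sqsubseteq h(y_\sigma) \in T \cup [T]$ is an element of $T$ whenever $\dom(y_\sigma)$ remains bounded below $\omega_2$. Since $\GCH$ gives $|{^{<\omega_1}}2| = \omega_1$, all $b_\sigma$ lie below some level $\xi < \omega_2$; for $\nu \in {^{\omega_1}}2$, the union $b_\nu := \bigcup_{\eta < \omega_1} b_{\nu \restriction \eta}$ sits in $T_{\leq \xi}$, and distinct $\nu$'s give incomparable $b_\nu$'s. This would yield $2^{\omega_1} = \omega_2$ pairwise distinct nodes in $T_{\leq \xi}$, contradicting the fact that $|T_{\leq \xi}| \leq \omega_1$ (since $T$ is an $\omega_2$-tree and $\GCH$ holds).

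The main obstacle will be the successor construction, specifically the demand that both $g(z^0), g(z^1) \in [T] \setminus F$. A priori, the preimage structure of $g$ may be pathological enough that $g[N_{y_\sigma}]$ contains at most one element of $[T] \setminus F$: concretely, it appears consistent with all the hypotheses to this point that $N_{y_\sigma}$ decomposes as a union of $\omega_2$-many basic opens $N_{y_\sigma ^\frown \langle \gamma \rangle}$, each contained in some $g^{-1}\{x_\gamma\}$ with $x_\gamma \in F$, leaving no proper extension of $y_\sigma$ on which $h$ remains in $T$. Overcoming this pathology will likely require either a Baire-category or covering-type argument exploiting the fatness of $T$ at \emph{every} node, or a more delicate invariant relating the images of nested neighborhoods to the branching structure of $T$, rather than using only the global inequality $|F| \leq \omega_2$. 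This appears to be the real content of the conjecture and the reason it has been left open.
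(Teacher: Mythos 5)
The statement you are trying to prove is labelled as a \emph{conjecture} in the paper: the authors explicitly leave it open (Section \ref{section: questions}), so there is no proof of record to compare against, and your proposal does not close the gap either --- as you yourself acknowledge in your final paragraph. Your overall strategy (run the binary-tree construction from the proof of Theorem \ref{main_thm} with the forcing-theoretic ``newness'' of $\dot{b}$ replaced by a combinatorial largeness condition) is the natural first attempt, and your bookkeeping of $h$, $W$, and the clubs $C_x$ is correct. However, the obstacle is not located quite where you put it. The successor-stage pathology you describe (every immediate extension of $y_\sigma$ landing inside a single fibre of an element of $F$) can in fact be circumvented: it forces $|g[N_{y_\sigma}]| \leq \omega_2$, so if you maintain the invariant that $|g[N_{z_\sigma \restriction \gamma}]| > \omega_2$ for all $\gamma < \omega_2$ for a designated witness $z_\sigma \in N_{y_\sigma}$ --- which Claim \ref{claim: diverse_image} (from the proof of Theorem \ref{thm: omega_plus_one_perfect}) lets you do, since under $\GCH$ the topology is $\omega_2$-additive and so removing the $\leq \omega_1$ previously used fibres leaves an open set with large image --- then at every successor stage you get two points with distinct images and you never need the set $F$ at all.

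The genuine gap is at the \emph{limit} stages of the length-$\omega_1$ recursion. After taking the union $y_\sigma = \bigcup_{\eta < \lambda} y_{\sigma \restriction \eta}$ at a limit $\lambda < \omega_1$, the neighbourhood $N_{y_\sigma} = \bigcap_{\eta<\lambda} N_{y_{\sigma\restriction\eta}}$ may have image of size $\leq \omega_2$ (indeed $g$ may be constant on it), even though each $N_{y_{\sigma\restriction\eta}}$ had image of size $>\omega_2$: the largeness invariant is not preserved under countable intersections, and nothing in the hypotheses rules out that this degeneration occurs on all but $\omega_1$-many branches of ${}^{<\omega_1}2$, which is too few to contradict $|T_{\leq\xi}|\leq\omega_1$. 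This is precisely the point at which the proof of Theorem \ref{main_thm} invokes the genericity of the new branch $\dot{b}$ over the intermediate model (so that $b_\sigma$ is never a full branch and splitting is always possible below any condition), and that mechanism has no $\ZFC$ substitute here. Your proposal also never uses the local fatness hypothesis $|[T] \cap N_\sigma| > \omega_2$ for \emph{every} $\sigma \in T$ beyond the global inequality $|[T]| > \omega_2$, which is a further sign that the essential idea is missing. In short: your diagnosis that the conjecture's real content lies in forcing the splitting to survive is right in spirit, but the construction as proposed does not succeed, and the statement remains open.
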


Note that, if $\GCH$ holds and $T \subseteq {^{<\omega_2}}\omega_2$ is an 
$\omega_2$-Kurepa tree, then $T' := \{\sigma \in T \mid |[T] \cap N_\sigma| > 
\omega_2\}$ is an $\omega_2$-Kurepa tree satisfying the hypothesis of Conjecture \ref{conj}. 
Therefore, a positive resolution of Conjecture \ref{conj} would entail a negative 
answer to Question \ref{q: extreme}.

\bibliographystyle{plain}
\bibliography{bib}

\end{document}